\documentclass[reqno,12pt]{preprint}
\usepackage{geometry}
\usepackage{hyperref}

\usepackage[full]{textcomp}
\usepackage[osf]{newtxtext}

\usepackage[cal=boondoxo]{mathalfa}
\usepackage[dvipsnames]{xcolor}
\usepackage{forest}
\usepackage{amsmath, stmaryrd, amsfonts, amssymb, bbm, mathrsfs,amsbsy}
\usepackage{mathrsfs} 
\usepackage[all]{xy}
\usepackage[utf8]{inputenc} 
\usepackage{comment}
\usepackage{hyperref}
\usepackage{breakurl}
\usepackage{enumitem}
\usepackage{booktabs}
\usepackage{tikz}
\usetikzlibrary{cd}
\usepackage{dsfont}
\usepackage{mhequ} 
\usepackage{mhsymb} 
\usepackage{mhenvs} 
\usepackage{array}
\usepackage{wasysym}
\usepackage{centernot}
\usepackage{mathtools}

\setlist{noitemsep,nolistsep,leftmargin=1.7em}
\usetikzlibrary{snakes}
\usetikzlibrary{decorations}
\usetikzlibrary{decorations.markings}
\usetikzlibrary{positioning}
\usetikzlibrary{shapes}
\usetikzlibrary{external}

\providecommand{\figures}{false}
{ \ifthenelse{\equal{\figures}{false}} {#1}{\[ {\rm Figure \ missing !} \]} }{}

\newcommand{\Up}{\Upsilon}
\newcommand{\bt}{\boldsymbol{\tau}}
\newcommand{\bj}{\boldsymbol{j}}
\renewcommand{\N}{\mathbb{N}}
\renewcommand{\k}{\mathbf{k}}

\newcommand{\cA}{{\mathcal A}}
\newcommand{\cP}{{\mathcal P}}
\newcommand{\cF}{{\mathcal F}}
\newcommand{\cT}{{\mathcal T}}
\newcommand{\cR}{{\mathcal R}}

\def\cH{\mathcal{H}}
\def\X{\mathbb X}

\def\CC{\mathcal{C}}

\def\CT{\mathcal{T}}

\renewcommand{\R}{\mathbb{R}}

\def\d{\, {\mathrm d}}

\def\graf{\curvearrowright}

\newtheorem{assumption}[lemma]{Assumption}

\theorembodyfont{\rmfamily}
\newtheorem{example}[lemma]{Example}

\newfont{\indic}{bbmss12}
\def\un#1{\hbox{{\indic 1}$_{#1}$}}

\colorlet{symbols}{blue!90!black}
\colorlet{testcolor}{green!60!black}
\colorlet{connection}{red!30!black}

\def\drawx{\draw[-,solid] (-3pt,-3pt) -- (3pt,3pt);\draw[-,solid] (-3pt,3pt) -- (3pt,-3pt);}
\tikzset{
	root/.style={circle,fill=testcolor,inner sep=0pt, minimum size=2mm},
	dot/.style={circle,fill=black,draw=black, solid,inner sep=0pt,minimum size=1.2mm},
	square/.style={rectangle,fill=black,draw=black, solid,inner sep=0pt,minimum size=1mm},
	empty/.style={circle,fill=white,draw=white, solid,inner sep=0pt,minimum size=0.5mm},
	var/.style={circle,fill=black!10,draw=black,inner sep=0pt, minimum size=
		2mm},
	symb/.style={circle,fill=symbols,draw=symbols, solid,inner sep=0pt,minimum size=0.5mm},
	yy/.style={circle,fill=magenta!20,draw=black,inner sep=0pt,minimum size=0.8mm},
	>=stealth,
	dotred/.style={circle,fill=black!50,inner sep=0pt, minimum size=2mm},
	generic/.style={semithick,shorten >=1pt,shorten <=1pt},
	dist/.style={ultra thick,draw=testcolor,shorten >=1pt,shorten <=1pt},
	testfcn/.style={ultra thick,testcolor,shorten >=1pt,shorten <=1pt,<-},
	testfcnx/.style={ultra thick,testcolor,shorten >=1pt,shorten <=1pt,<-,
		postaction={decorate,decoration={markings,mark=at position 0.6 with {\drawx}}}},
	kprime/.style={semithick,shorten >=1pt,shorten <=1pt,densely dashed,->},
	kprimex/.style={semithick,shorten >=1pt,shorten <=1pt,densely dashed,->,
		postaction={decorate,decoration={markings,mark=at position 0.4 with {\drawx}}}},
	kernel/.style={semithick,shorten >=1pt,shorten <=1pt,->},
	multx/.style={shorten >=1pt,shorten <=1pt,
		postaction={decorate,decoration={markings,mark=at position 0.5 with {\drawx}}}},
	kernelx/.style={semithick,shorten >=1pt,shorten <=1pt,->,
		postaction={decorate,decoration={markings,mark=at position 0.4 with {\drawx}}}},
	kernel1/.style={->,semithick,shorten >=1pt,shorten <=1pt,postaction={decorate,decoration={markings,mark=at position 0.45 with {\draw[-] (0,-0.1) -- (0,0.1);}}}},
	kernel2/.style={->,semithick,shorten >=1pt,shorten <=1pt,postaction={decorate,decoration={markings,mark=at position 0.45 with {\draw[-] (0.05,-0.1) -- (0.05,0.1);\draw[-] (-0.05,-0.1) -- (-0.05,0.1);}}}},
	kernelBig/.style={semithick,shorten >=1pt,shorten <=1pt,decorate, decoration={zigzag,amplitude=1.5pt,segment length = 3pt,pre length=2pt,post length=2pt}},
	rho/.style={dotted,semithick,shorten >=1pt,shorten <=1pt},
	renorm/.style={shape=circle,fill=white,inner sep=1pt},
	labl/.style={shape=rectangle,fill=white,inner sep=1pt},
	xi/.style={circle,fill=symbols!10,draw=symbols,inner sep=0pt,minimum size=1.2mm},
	xix/.style={crosscircle,fill=symbols!10,draw=symbols,inner sep=0pt,minimum size=1.2mm},
	xib/.style={circle,fill=symbols!10,draw=symbols,inner sep=0pt,minimum size=1.6mm},
	xibx/.style={crosscircle,fill=symbols!10,draw=symbols,inner sep=0pt,minimum size=1.6mm},
	not/.style={circle,fill=symbols,draw=symbols,inner sep=0pt,minimum size=0.5mm},
	>=stealth,
}

\tikzset{ individus/.style={scale=0.40,draw,circle,thick,fill=black!10},
	individu/.style={scale=0.40,draw,circle,thick,fill=black!50},       } 
\makeatletter
\def\DeclareSymbol#1#2#3{\expandafter\gdef\csname MH@symb@#1\endcsname{\tikz[baseline=#2,scale=0.15,draw=symbols]{#3}}\expandafter\gdef\csname MH@symb@#1s\endcsname{\scalebox{0.7}{\tikz[baseline=#2,scale=0.15,draw=symbols]{#3}}}}
\def\<#1>{\csname MH@symb@#1\endcsname}
\makeatother

\def\un#1{\hbox{{\indic 1}$_{#1}$}}

\DeclareSymbol{0}{0}{\draw (-1,0.5) node[dot] {};}
\DeclareSymbol{1}{0}{\draw[white] (-.4,0) -- (.4,0); \draw (0,-0.3) node[dot] {} -- (0,1.7) node[dot] {};}

\DeclareSymbol{3}{0}{\draw (0,0) -- (0,2.2) node[dot] {}; \draw (-1.2,1.8) node[dot] {} -- (0,0) -- (1.2,1.8) node[dot] {};}

\DeclareSymbol{31}{-3}{\draw (0,0) -- (0,-1) -- (1,0) node[dot] {}; \draw (0,0) -- (0,1.2) node[dot] {}; \draw (-.7,1) node[dot] {} -- (0,0) -- (.7,1) node[dot] {};}

\DeclareSymbol{32}{-3}{\draw (0,0) -- (0,-1) -- (1,0) node[dot] {}; \draw (0,0) -- (0,1.2) node[dot] {}; \draw (-.7,1) node[dot] {} -- (0,0) -- (.7,1) node[dot] {};
	\draw (0,-1) -- (-1,0) node[dot] {};}

\DeclareSymbol{30}{-3}{\draw (0,0) -- (0,-1); \draw (0,0) -- (0,1.2) node[dot] {}; \draw (-.7,1) node[dot] {} -- (0,0) -- (.7,1) node[dot] {};}

\DeclareSymbol{30}{-3}{\draw (0,0.5) -- (0,-1.4); \draw (0,0.5) -- (0,2.2) node[dot] {}; \draw (-1.2,1.8) node[dot] {} -- (0,0) -- (1.2,1.8) node[dot] {};}

\DeclareSymbol{2}{0}{\draw (-0.9,1.8) node[dot] {} -- (0,0) node[dot] {}-- (0.9,1.8) node[dot] {};}
\DeclareSymbol{201}{0}{\draw (-0.9,1.8) node[dot] {} -- (0,0) node[dot] {}-- (0.9,1.8) node[dot] {}; \draw (-0.9,1.8) node[dot] {} -- (-0.9,3.4) node[dot] {};}
\DeclareSymbol{102}{0}{\draw (-0.9,1.8) node[dot] {} -- (0,0) node[dot] {}-- (0.9,1.8) node[dot] {}; \draw (0.9,1.8) node[dot] {} -- (0.9,3.4) node[dot] {};}

\DeclareSymbol{301}{0}{\draw (-0.9,1.8) node[dot] {} -- (0,0) node[dot] {}-- (0.9,1.8) node[dot] {}; \draw (-0.9,1.8) node[dot] {} -- (0,3.4) node[dot] {}; \draw (-0.9,1.8) node[dot] {} -- (-1.8,3.4) node[dot] {};}
\DeclareSymbol{103}{0}{\draw (-0.9,1.8) node[dot] {} -- (0,0) node[dot] {}-- (0.9,1.8) node[dot] {}; \draw (0.9,1.8) node[dot] {} -- (0,3.4) node[dot] {};\draw (0.9,1.8) node[dot] {} -- (1.8,3.4) node[dot] {};}

\DeclareSymbol{20}{0}{\draw (-0.9,1.8) node[dot] {} -- (0,0) node[dot] {} -- (0.9,1.8) node[dot] {};}

\DeclareSymbol{200}{0}{\draw (-0.9,1.8) node[dot] {} -- (0,0) node[dot] {} -- (0.9,1.8) node[dot] {}; \draw (0,0) -- (0,-1.5);}

\DeclareSymbol{11}{-3}{\draw (0,-1.4) node[dot] {} -- (0,0.2)  node[dot] {}; \draw (0,0.2) -- (0,2.1) node[dot] {};}

\DeclareSymbol{111}{-2}{\draw (0,0.5) node[dot] {} -- (0,2.5)  node[dot] {}; \draw (0,2.5) -- (0,4.5) node[dot] {}; \draw (0,4.5) -- (0,6) node[dot] {};}

\DeclareSymbol{110}{-3}{\draw[white] (-.4,0) -- (.4,0); \draw (0,-0.5)  node[dot] {}; \draw (0,-0.5) -- (0,1.5) node[dot] {};}

\DeclareSymbol{4}{-3}{\draw (0,-1.4) -- (0,0.2)  node[dot] {}; \draw (0,0.2) -- (0,2.1) node[dot] {};\draw (0,2.1) -- (0,4) node[dot] {};}

\DeclareSymbol{12}{0}{\draw (0,0)  node[dot] {} -- (0,1.7) node[dot] {}; \draw (-1.2,3) node[dot] {} -- (0,1.7) -- (1.2,3) node[dot] {};}

\DeclareSymbol{10}{-3}{\draw (0,0) -- (0,-1) {};  \draw (-.7,1) node[dot] {} -- (0,0);}

\DeclareMathAlphabet{\mathpzc}{OT1}{pzc}{m}{it}

\newcommand{\subsectionnotoc}[1]{%
	\refstepcounter{subsection}%
	\subsection*{\thesubsection\quad #1}%
}

\definecolor{AColor}{rgb}{0.0, 0.42, 0.24}
\definecolor{LAColor}{rgb}{0.64, 0.0, 0.0}

\begin{document}
	
	\title{Remainders of generalised Taylor expansions and a priori bounds for rough differential equations}
	\author{L. Agabiti$^a$, A. Bonicelli$^b$, L. Zambotti$^c$}
	
	\institute{ Laboratoire de Probabilit\'es Statistique et Mod\'elisation,\\ Sorbonne Université, Paris \\
		\email{$^a$agabiti@lpsm.paris,\, $^b$bonicelli@lpsm.paris,\, $^c$zambotti@lpsm.paris}
	}
	
	\maketitle
	
	\begin{abstract}
		In this article we establish global a priori estimates on the solution of a generic rough differential equation driven by an $\alpha$-H\"older path covering the full range of regularity $\alpha\in(0,1]$, under the hypothesis of Lipschitz continuity (but neither boundedness nor coercivity) of specific combinations of the coefficients and their derivatives, called elementary differentials. Our main tool is a closed formula for the remainder of a generalised Taylor expansion in terms of products of gradients of the elementary differentials, which allows to take advantage of cancellations in
an optimal way. 
	\end{abstract}

\bigskip
\emph{Keywords}: Rough paths, generalised Taylor expansions, pre-Lie algebras

\bigskip
\emph{MSC2020}: 60L20, 
16T05, 
05C05 
	
	\setcounter{tocdepth}{3}
	\tableofcontents

\section{Introduction}

In this article we study rough differential equations driven by branched rough paths, which naturally arise as extensions of controlled differential equations when the driving path is too irregular for classical theories to apply. Consider the simplest example of a controlled ODE
\begin{equation}\label{Eq: controlled ODE}
	\dot{Z}_t=\sum_{i=1}^d\sigma_i(Z_t)\, \dot{X_t^i}\,, \qquad  Z_0=z_0\in\R^n\,,\, t\in[0,T]\,,
\end{equation}
where $Z:[0,T]\to\R^n$, $X^i:[0,T]\to\R$ are differentiable paths for $i=1,\ldots,d$, and $\sigma_i:\R^n\to\R^n$ are sufficiently smooth non-linear coefficients. In this case \eqref{Eq: controlled ODE} admits the integral form
		\begin{equation}\label{integral form} 
		Z_t = z_0 + \int_0^t \sigma (Z_s) \, \dot{X_s} \d s\,, \qquad t \in [0, T]\,.
		\end{equation}

Lyons' theory of rough paths \cite{lyons1998differential} aims at finding an extension of the solution map $C^1\ni X\to Z$ to Hölder-continuous $X\in C^\alpha$,  $\alpha\in(0,1]$, allowing to cover the case of Itô stochastic differential equations
\begin{equation*}
	\d {Z}_t=\sum_{i=1}^d\sigma_i(Z_t) \d{B_t^i}\,, \qquad  Z_0=z_0\in\R^n\,,\qquad t\in[0,T]\,,
\end{equation*}
where $B^i$ are Brownian motions.

We focus our attention on Davie's notion of solutions to \eqref{Eq: controlled ODE}, first introduced in \cite{davie2008differential},
 in the context of branched $\alpha$-rough paths \cite{gubinelli2010ramification}
for any $\alpha\in(0,1]$. 
This introduction is intended to provide an intuitive overview of the problems and ideas that motivate this manuscript, while deferring precise definitions to later sections.

First, a branched rough path is a set of 
real-valued functions $(\X_{st}(\tau))_{0\le s\le t\le T}$, labelled by a combinatorial index $\tau$ which runs in a finite set of \emph{decorated trees}. These functions satisfy suitable algebraic and analytical constraints and are related to the path $X=(X^1,
\ldots,X^d)$ by identifying $\X_{st}(\bullet_i)=X^i_t-X^i_s$, while the exponent $\alpha$ refers to the Hölder continuity of $X$, see Definition \ref{Def: branched rough paths} below.

A Davie solution to \eqref{Eq: controlled ODE} is a path $Z:[0,T]\to\R^n$ that satisfies $Z_0=z_0$ and
\begin{equation}\label{eq:davie}
	Z_{t}-Z_s-\sum_{\tau\in\cT^{\le N}}\Up_\tau(Z_s) \, \X_{st}(\tau)=o(t-s)
\end{equation}
uniformly for $0\le s\le t\le T$, 
where $N:=\lfloor\frac1\alpha\rfloor$ and $\cT^{\le N}$ is the set of decorated rooted trees with at most $N$ nodes,  see Definition \ref{def:aladavie} below. Trading terminology from numerical analysis, the coefficients $\Up_\tau:\R^n\to\R^n$
are called \emph{elementary differentials} and are defined recursively in terms of $\sigma=(\sigma_i)_{i=1,\ldots,d}$, see Definition \ref{Def: elementary differential trees} below. Examples of such elementary differentials are
\[
\Upsilon_\bullet = \sigma, \qquad \Upsilon_{\<1>}= \nabla\sigma \cdot \sigma, \qquad 
\Upsilon_{\<2>}= \nabla^2\sigma \cdot \sigma \cdot \sigma.
\]
The family $(\X_{st}(\tau))_{\tau\in\cT^{\leq N}}$ plays the role of a set of \emph{generalised monomials} in the local expansion of the solution,
while $(\Upsilon_\tau(Z_s))_{\tau\in\cT^{\leq N}}$ should be interpreted as \emph{generalised derivatives} of the solution with respect to $(\X_{st}(\tau))_{\tau\in\cT^{\leq N}}$.

The theory of rough paths gives mainly local in time results, and global well-posedness results are often derived under the restrictive assumption 
of boundedness of the coefficients of the generalised Taylor expansion of the solution,
while the case of linear coefficients is treated separately, see e.g. \cite{lyons1998differential}, \cite[Thm: 8.4]{friz2020course} and \cite[Def. 3.7]{bailleul2015flows}. 

On the other hand, in the case of a rough equation driven by a 
geometric rough path over a path of $p$-bounded variation, $p\in[2,3)$, global existence under more general assumptions had already been obtained 
long before, see e.g. \cite{lejay2009rough,davie2008differential}. Other recent papers \cite{bonnefoi2022priori,chevyrev2025large} rely on coercivity 
assumptions, namely on the existence of a strong drift towards the origin of $\R^n$, that has the effect of confining the solution in a compact set. 

Since the formulation \eqref{eq:davie} relies only on the set of coefficients $(\Up_\tau)_{\tau\in\cT^{\le N}}$ for the definition of a solution, it would be natural
to expect that a solution theory could be given by assuming suitable analytical properties of such coefficients. By analogy with ordinary differential equations, Lipschitz continuity appears to be the natural assumption, while boundedness should not be necessary. Indeed, as observed by Lejay in \cite{lejay2012global}, for rough equations driven by the geometric rough path lift of a path with bounded $p$-variation, $p\in[2,3)$, a natural hypothesis on the coefficient $\sigma$ is $\nabla\sigma$ bounded and $\nabla\sigma\cdot\sigma$ H\"older continuous with exponent $\gamma\in(p-2,1]$. In particular, no H\"older continuity of $\nabla\sigma$ is required, in contrast with \cite{lejay2009rough, davie2008differential, friz2008euler}. 

Building on this key observation, the primary objective of this article is to establish strong a priori estimates for Davie solutions over the full range of Hölder regularity $\alpha \in (0,1)$ of the driving path $X$ under minimal assumptions on the elementary differentials, with the ultimate goal of developing a global solution theory. 

The aim of this article is to prove that, just assuming Lipschitz continuity (and neither coercivity nor boundedness) for all the coefficients that appear in \eqref{eq:davie}, one can establish a priori estimates on the solution and, ultimately, a complete solution theory. To achieve our goal, we introduce so-called solution remainders
\begin{equation}\label{Eq: Z remainders0}
	Z_{st}^{[m+1]}:=Z_{t}-Z_s-\sum_{\tau\in\cT^{\le m}}\Up_\tau(Z_s) \, \X_{st}(\tau)\,, \qquad m=0,\ldots,N\,,
\end{equation}
and find suitable analytic bounds on these quantities. 

In the following, for $\zeta>0$ we denote by $\|\cdot\|_\zeta$ certain H\"older-like norms on continuous functions defined on simplices, whose precise definition is postponed to Section \ref{Sec: notation}. For $\mu>0$, we will be also working with suitable weighted versions $\|\cdot\|_{\zeta,\mu}$ of these norms, see Definition \ref{Def: weighted norms}.
Our main result on Davie solutions is the following
\begin{theorem}\label{th: a priori branched0}
	Assume that $\Upsilon_\tau$ is globally Lipschitz continuous for all $\tau\in\cT^{\leq N}$. Then, for sufficiently small $\varepsilon:=T\wedge\mu$ and $Z$ any solution of \eqref{eq:davie}, 
	\begin{align*}
		\| Z^{[N+1]} \|_{(N+1)\alpha, \mu} &\lesssim_{\alpha, \mathbb{X}, \sigma} \sum_{i=1}^N \|Z^{[i]}\|_{i\alpha,\mu} 
		 \le 2\sum_{\tau\in\cT^{\le N}} |\tau| \,|\Up_{\tau}(Z_0)| \, \|\mathbb{X}(\tau)\|_{|\tau|\alpha}\,.
	\end{align*}
\end{theorem}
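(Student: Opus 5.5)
The plan has two parts: a sewing-type estimate for the top remainder $Z^{[N+1]}$, and an algebraic recursion that expresses each lower remainder $Z^{[i]}$ through $Z^{[N+1]}$.

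\emph{Step 1: the top remainder.} We compute the increment
\[
\delta Z^{[N+1]}_{sut}:=Z^{[N+1]}_{st}-Z^{[N+1]}_{su}-Z^{[N+1]}_{ut}.
\]
Using Chen's relation $\X_{st}=\X_{su}\star\X_{ut}$ and the definition \eqref{Eq: Z remainders0}, $\delta Z^{[N+1]}_{sut}$ becomes a sum over $\tau\in\cT^{\le N}$ of terms
\[
\bigl(\Up_\tau(Z_u)-\text{(Taylor expansion of }\Up_\tau\text{ around }Z_s\text{ along }Z\text{ up to order }N-|\tau|)\bigr)\,\X_{ut}(\tau).
\]
The key tool is the closed remainder formula announced in the abstract. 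It writes the bracket as a finite combination of factors
\[
\int_0^1 \nabla\Up_{\tau'}\bigl(Z_s+\theta Z_{su}\bigr)\,d\theta\cdot Z^{[j]}_{su},
\]
times products of $\X_{su}(\cdot)$, with $|\tau'|+\ldots$ matching homogeneity. Here only gradients of elementary differentials appear, never higher derivatives of $\sigma$. Since every $\Up_\tau$ is Lipschitz, these gradients are bounded. This gives
\[
|\delta Z^{[N+1]}_{sut}|\lesssim \sum_{i=1}^{N+1}|Z^{[i]}_{su}|\,|t-s|^{(N+1-i)\alpha+\ldots},
\]
with total homogeneity $>1$ because $(N+1)\alpha>1$. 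The sewing lemma, in its weighted-norm form with weight $\mu$ (Definition \ref{Def: weighted norms}), applies because $Z^{[N+1]}_{st}=o(t-s)$. It yields
\[
\|Z^{[N+1]}\|_{(N+1)\alpha,\mu}\lesssim \sum_{i\le N+1}\|Z^{[i]}\|_{i\alpha,\mu}\cdot(\text{a small factor in }\varepsilon).
\]
The term $i=N+1$ carries a positive power of $\varepsilon$, so for $\varepsilon$ small it can be absorbed into the left-hand side. This gives the first inequality.

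\emph{Step 2: the lower remainders.} For $i\le N$ we use the algebraic identity
\[
Z^{[i]}_{st}=\sum_{i\le|\tau|\le N}\Up_\tau(Z_s)\X_{st}(\tau)+Z^{[N+1]}_{st}.
\]
We write $\Up_\tau(Z_s)=\Up_\tau(Z_0)+\bigl(\Up_\tau(Z_s)-\Up_\tau(Z_0)\bigr)$ and bound the difference by $\mathrm{Lip}\,|Z_{0s}|$, which we control through $Z^{[1]}$. The weighted norm is built so that $|Z_{0s}|$ times the weight is at most $\|Z^{[1]}\|$ times a factor vanishing with $\varepsilon$. Summing over $i$ gives
\[
\sum_i\|Z^{[i]}\|\le\sum_\tau |\tau|\,|\Up_\tau(Z_0)|\,\|\X(\tau)\|_{|\tau|\alpha}+o_\varepsilon(1)\sum_i\|Z^{[i]}\|+\sum_i\|Z^{[N+1]}\|.
\]
The factor $|\tau|$ counts the $i\le|\tau|$ remainders in which $\tau$ appears. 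We then insert Step 1, absorb the small terms for $\varepsilon$ small, and the constant $2$ appears from $1/(1-\tfrac12)$.

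\emph{Main obstacle.} The hardest step is Step 1: obtaining the remainder formula that involves only gradients of the $\Up_\tau$, rather than derivatives of $\sigma$, with the right homogeneities. This requires the pre-Lie/Grossman–Larson structure relating $\nabla\Up_\tau\cdot\Up_{\tau'}$ to grafting. I would first try to prove it by induction on $N-|\tau|$ using the fundamental theorem of calculus on $\theta\mapsto\Up_\tau(Z_s+\theta Z_{su})$.
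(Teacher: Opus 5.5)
Your route is the paper's: the weighted sewing bound for $Z^{[N+1]}$, the Chen identity $\delta Z^{[N+1]}_{sut}=\sum_{\tau\in\cT^{\le N}}B^{[N-|\tau|+1]}_{su}(\tau)\,\X_{ut}(\tau)$, the gradient-only remainder formula of Theorem \ref{Thm: Taylor Lipschitz}, and then the decomposition of $Z^{[i]}$ combined with \eqref{supholdweighted} and an absorption step. One minor correction about that formula: the coefficients $H_{z_1z_2}(w;\tau_0)$ are simplex integrals of \emph{products} of gradients $\nabla\Up_{G(\rho_i)}$ evaluated at distinct points $u_{j_i}$, as already seen in \eqref{Eq: B^2}. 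A single averaged gradient per term is not what comes out, although the bounds are unaffected. The real problem is that you place the smallness in $\varepsilon$ in the wrong step, and as written the final absorption does not close.

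\textbf{Step 1 needs no absorption, and its constant is not small.} $Z^{[N+1]}$ never enters $\delta Z^{[N+1]}$: in \eqref{Eq: identity remainder H} with $m=N-|\tau|$, only $z^{[j]}$ with $j\le N-|\tau|+1\le N$ occur. So Step 1 gives $\|Z^{[N+1]}\|_{(N+1)\alpha,\mu}\le C\sum_{i=1}^N\|Z^{[i]}\|_{i\alpha,\mu}$ directly. This matters, because absorbing $\|Z^{[N+1]}\|_{(N+1)\alpha,\mu}$ would require knowing in advance that it is finite, and you only know $Z^{[N+1]}_{st}=o(t-s)$. However, $C$ is \emph{not} small. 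For $|\tau|=N$, the term $(\Up_\tau(Z_u)-\Up_\tau(Z_s))\,\X_{ut}(\tau)$ has homogeneity exactly $(N+1)\alpha$. It therefore contributes $\mathrm{Lip}(\Up_\tau)\,\|\X(\tau)\|_{N\alpha}\,\|Z^{[1]}\|_{\alpha,\mu}$ with no power of $\varepsilon$.

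\textbf{The smallness comes from Step 2.} Because $C$ is not small, the term you write as $\sum_i\|Z^{[N+1]}\|$ cannot be absorbed after inserting Step 1. What saves the argument, and what the paper uses, is that $Z^{[N+1]}$ enters $\|Z^{[i]}\|_{i\alpha,\mu}$ only through the weaker norm. By \eqref{eta1}, $\|Z^{[N+1]}\|_{i\alpha,\mu}\le\varepsilon^{(N+1-i)\alpha}\|Z^{[N+1]}\|_{(N+1)\alpha,\mu}$ with $N+1-i\ge 1$. Together with the factor $3\varepsilon^{\alpha}$ from \eqref{supholdweighted}, this makes every correction term $O(\varepsilon^{\alpha})\sum_{i\le N}\|Z^{[i]}\|_{i\alpha,\mu}$.

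\textbf{Finiteness before absorbing.} Absorbing that last quantity into the left-hand side requires $\sum_{i\le N}\|Z^{[i]}\|_{i\alpha,\mu}<\infty$ beforehand, which you do not address. The paper proves this in Proposition \ref{Prop: regularity Z}, using boundedness of $Z$ on $[0,T]$ (which follows from the $o(t-s)$ condition) and continuity of the $\Up_\tau$.
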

For a Davie solution, the main remainder $Z^{[N+1]}_{st}$ is merely supposed to be $o(t-s)$. 
Theorem \ref{th: a priori branched0} gives an a priori quantitative control over $Z^{[N+1]}$ in terms of the data of the problem:
$\X,\Upsilon,z_0$ (see Theorem \ref{th: a priori branched} for a more precise statement). Moreover it shows that $Z^{[N+1]}$ can be controlled by the lower solution remainders
$(Z^{[i]})_{i=1,\ldots,N}$. As a result one can obtain 
a full well-posedness theory for Davie's solutions and continuity of the map $(z_0,\X)\mapsto Z$ in the appropriate topologies. 
We postpone the proof of the latter results to a forthcoming companion paper. 

In Theorem \ref{th: a priori branched} we only need to assume $\Upsilon_\tau$ is globally Lipschitz continuous for all $\tau\in\cT^{\leq N-1}$
and $\Upsilon_\tau$ is globally Hölder continuous for all $\tau\in\cT^{= N}$, with an exponent $\beta\in(\frac{1}{\alpha}-N,1]$; in this case we
have a bound rather for $\| Z^{[N+1]} \|_{(N+\beta)\alpha, \mu}$, see \eqref{eq: a priori branched}.
In Proposition \ref{lem:bounded} below we also reprove a priori estimates
for bounded coefficients with their derivatives.

\subsectionnotoc{Analytic bounds}


A crucial tool in this setting is the celebrated \emph{Sewing Lemma} \cite{gubinelli2004controlling,FeDe06}, from which we extract
the following \emph{Sewing bound}, see \cite[Thm. 1.9]{CGZ25}. Given a function $R$ on the simplex $\{(s,t)\in[0,T]^2\,:\;s\le t\}$, we define for $s\le u\le t$ the three-point increment $\delta R_{sut}:=R_{st}-R_{su}-R_{ut}$. 
\begin{theorem}[Sewing bound]
	Consider a function $(R_{st})_{s<t\in[0,T]}$ such that $R_{st}=o(t-s)$. Then,  for any $\eta\in(1,\infty)$ the following bound holds
	\begin{equation*}
		\|R\|_{\eta}\leq K_\eta\,\|\delta R\|_\eta\,,\qquad K_\eta:=(1-2^{1-\eta})^{-1}\,.
	\end{equation*}
\end{theorem}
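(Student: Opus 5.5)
The plan is the classical dyadic-refinement argument. Throughout I take $\|R\|_\eta:=\sup_{s<t}|R_{st}|/|t-s|^\eta$ and $\|\delta R\|_\eta:=\sup_{s<u<t}|\delta R_{sut}|/|t-s|^\eta$; the precise conventions are in Section \ref{Sec: notation}, and only this scaling behaviour is used.

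Fix $s<t$ and set $t^n_k:=s+k2^{-n}(t-s)$ for $k=0,\dots,2^n$. Define the dyadic sums
\[
R^n_{st}:=\sum_{k=0}^{2^n-1}R_{t^n_k t^n_{k+1}} .
\]
The idea is to compare successive refinements. Each interval $[t^n_k,t^n_{k+1}]$ is split at its midpoint $m^n_k=t^{n+1}_{2k+1}$, so that
\[
R^n_{st}-R^{n+1}_{st}=\sum_{k=0}^{2^n-1}\delta R_{t^n_k m^n_k t^n_{k+1}} .
\]
Each term has modulus at most $\|\delta R\|_\eta\,(2^{-n}|t-s|)^\eta$. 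Summing the $2^n$ terms gives
\[
|R^n_{st}-R^{n+1}_{st}|\le \|\delta R\|_\eta\,2^{n(1-\eta)}|t-s|^\eta .
\]

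Next I telescope from $n=0$ to $n=M-1$, obtaining $R_{st}=R^0_{st}=R^M_{st}+\sum_{n=0}^{M-1}(R^n_{st}-R^{n+1}_{st})$. Since $\eta>1$, the geometric series converges:
\[
\sum_{n\ge0}2^{n(1-\eta)}=(1-2^{1-\eta})^{-1}=K_\eta .
\]
Hence $|R_{st}|\le |R^M_{st}|+K_\eta\|\delta R\|_\eta|t-s|^\eta$ for every $M$.

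It remains to show that $R^M_{st}\to0$ as $M\to\infty$; this is the only place where the hypothesis $R_{st}=o(t-s)$ enters, and it is the step needing care. The hypothesis is uniform, so for any $\epsilon>0$ there is $h>0$ with $|R_{uv}|\le\epsilon|v-u|$ whenever $0<v-u\le h$. Once $2^{-M}|t-s|\le h$, this gives
\[
|R^M_{st}|\le\sum_k\epsilon\,2^{-M}|t-s|=\epsilon|t-s| .
\]
So $\limsup_M|R^M_{st}|\le\epsilon|t-s|$ for every $\epsilon>0$, and therefore $R^M_{st}\to0$.

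Letting $M\to\infty$, dividing by $|t-s|^\eta$ and taking the supremum over $s<t$ yields $\|R\|_\eta\le K_\eta\|\delta R\|_\eta$. If $\|\delta R\|_\eta=\infty$ there is nothing to prove.
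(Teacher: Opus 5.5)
Your proposal is correct and follows the paper's proof essentially verbatim: the same dyadic partitions, the midpoint-removal identity expressing successive differences of Riemann sums through $\delta R$, the bound $2^{n(1-\eta)}\|\delta R\|_\eta|t-s|^\eta$ summed as a geometric series, and the hypothesis $R_{st}=o(t-s)$ (used uniformly) to make the fine-mesh sums vanish. The only difference is presentational. The paper isolates that vanishing in a separate lemma, valid for arbitrary partitions with mesh tending to zero, and writes $R_{ab}$ directly as an infinite telescoping series. You instead truncate at level $M$ and pass to the limit.
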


Observe that, by definition, for a Davie solution $Z$ we have $Z^{[N+1]}_{st}=o(t-s)$. Then the Sewing bound allows to control
$\| Z^{[N+1]} \|_{(N+1)\alpha}$ with the H\"older-like seminorm of $ \delta Z^{[N+1]}$, which turns out to have a simpler structure. 
In particular, it is well known \cite[Lemma 8.3]{gubinelli2010ramification} that
	\begin{align*}
		\delta Z_{s u t}^{[N+ 1]}
		= \sum_{\tau\in\CT^{\leq N}} B_{s u}^{[N-|\tau|+1]}(\tau) 
		\, \mathbb{X}_{u t}(\tau)\,,
	\end{align*}
where 
\begin{equation*}
		B^{[m+1]}_{st}(\tau) := 
		\Up_\tau(Z_t) - \sum_{w\in\CF^{\leq m}}\frac{\Up_{w\curvearrowright\tau}(Z_s)}{\pi(w)}  \, \mathbb{X}_{s t}(w)\,,
\end{equation*}
that we interpret as remainders of generalised Taylor expansions of $\Upsilon_\tau(Z_t)$ 
with respect to the family of (generalised) monomials $(\X_{st}(\tau))_{\tau\in\cT^{\le N}}$. This calls into play the notion of a \emph{forest}, namely a disjoint union of finitely many trees. We denote by $\mathcal{F}^{\le m}$ the set of forests with at most $m$ nodes. A key role in our analysis is played by the grafting operation $\graf$ on forests; see Definition~\ref{Def: grafting} and Appendix~\ref{App: grafting}. Finally, $\pi(w)$ denotes a combinatorial factor.

In order to derive global a priori estimates from this formula, we need a representation of $B^{[m+1]}(\tau)$ that is bounded if $\Upsilon_\tau$ is globally Lipschitz continuous for all $\tau\in\cT^{\leq N}$. The following theorem addresses this problem.

\begin{theorem}\label{Thm: metatheorem remainder}
	Assume that $\Upsilon_\tau$ is globally Lipschitz continuous for all $\tau\in\cT^{\leq N}$. Let $Z$ be a Davie solution to \eqref{Eq: controlled ODE}.
Then for all $\tau_0\in\cT^{\le N}$ there exists a function $H_{st}(\cdot;\tau_0):\cF^{\le N-|\tau_0|} \to\R^n\otimes(\R^n)^\ast$ such that for $m\le N-|\tau_0|$\begin{align*}
		B^{[m+1]}_{st}(\tau_0)=\sum_{w\in\cF^{\leq m}}H_{st}(w;\tau_0) \cdot Z_{st}^{[m+1-|w|]}\, \X_{st}(w)\,, 
	\end{align*}
	where the $Z^{[i]}$'s are the solution remainders defined in \eqref{Eq: Z remainders0} and
		\[
	|H_{st}(w;\tau_0)| \lesssim \prod_{\tau\in\cT^{\le |w|+|\tau_0|}}\|\nabla\Upsilon_{\tau}\|_\infty\,.
	\]
\end{theorem}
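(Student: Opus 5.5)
Our plan is to prove the statement by induction on $m$, for fixed $\tau_0$, using the fundamental theorem of calculus in place of Taylor's formula; this is what lets only first derivatives $\nabla\Upsilon_\tau$ enter. The base case $m=0$ is immediate: $B^{[1]}_{st}(\tau_0)=\Upsilon_{\tau_0}(Z_t)-\Upsilon_{\tau_0}(Z_s)$. We write it as
\[
B^{[1]}_{st}(\tau_0)=\int_0^1\nabla\Upsilon_{\tau_0}\big(Z_s+\theta(Z_t-Z_s)\big)\,d\theta\cdot Z^{[1]}_{st}.
\]
So we may take $H_{st}(\emptyset;\tau_0):=\int_0^1\nabla\Upsilon_{\tau_0}(Z_s+\theta Z^{[1]}_{st})\,d\theta$, which is bounded by $\|\nabla\Upsilon_{\tau_0}\|_\infty$ by the Lipschitz assumption. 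Throughout we write $\bar\nabla\Upsilon_\tau(s,t)$ for such averaged gradients.

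For the inductive step we compare $B^{[m+1]}(\tau_0)$ with $B^{[m]}(\tau_0)$. By definition,
\[
B^{[m+1]}_{st}(\tau_0)=B^{[m]}_{st}(\tau_0)-\sum_{|w|=m}\frac{\Upsilon_{w\curvearrowright\tau_0}(Z_s)}{\pi(w)}\,\mathbb X_{st}(w).
\]
We apply the induction hypothesis to $B^{[m]}$. This gives terms $H_{st}(w;\tau_0)\cdot Z^{[m-|w|]}_{st}\,\mathbb X_{st}(w)$, and each remainder can be split as
\[
Z^{[m-|w|]}_{st}=Z^{[m+1-|w|]}_{st}+\sum_{|\tau|=m-|w|}\Upsilon_\tau(Z_s)\,\mathbb X_{st}(\tau).
\]
The first piece already has the desired form. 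The second piece produces terms $H_{st}(w;\tau_0)\cdot\Upsilon_\tau(Z_s)\,\mathbb X_{st}(w)\mathbb X_{st}(\tau)$ of total degree $m$, which must cancel against $-\Upsilon_{w'\curvearrowright\tau_0}(Z_s)\mathbb X_{st}(w')/\pi(w')$. Using the shuffle/product structure $\mathbb X(w)\mathbb X(\tau)=\mathbb X(w\tau)$, these combine over forests $w'=w\tau$ of size $m$. It then suffices to show that, forest by forest, the sum $\sum_{w\tau=w'}H_{st}(w;\tau_0)\cdot\Upsilon_\tau(Z_s)-\Upsilon_{w'\curvearrowright\tau_0}(Z_s)/\pi(w')$ can itself be written as a bounded matrix applied to a lower remainder $Z^{[j]}_{st}$ with $j\ge1$.

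This identification is where the structure of $H$ must be guessed explicitly, and we expect it to be the main obstacle. Our guess is that $H_{st}(w;\tau_0)$ should be a sum over chains $\tau_0=\sigma_0,\sigma_1,\dots,\sigma_k$, where each $\sigma_{j+1}$ is obtained from $\sigma_j$ by grafting a single tree of $w$ (pre-Lie recursion). Each chain should carry a product of factors of the form $\bar\nabla\Upsilon_{\sigma_j}(s,t)$ evaluated at intermediate points, divided by symmetry factors. The key algebraic input is the pre-Lie identity $\nabla\Upsilon_\sigma\cdot\Upsilon_\tau=\Upsilon_{\tau\curvearrowright\sigma}$ (Definition~\ref{Def: elementary differential trees} and Appendix~\ref{App: grafting}), extended to forests through the Guin--Oudom construction. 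This identity shows that $\nabla\Upsilon_\sigma(Z_s)\cdot\Upsilon_\tau(Z_s)$ reproduces the needed coefficient exactly, and the remaining mismatch $(\bar\nabla\Upsilon_\sigma(s,t)-\nabla\Upsilon_\sigma(Z_s))\cdot\Upsilon_\tau(Z_s)$ is handled by another application of the fundamental theorem of calculus to $\Upsilon_{\tau\curvearrowright\sigma}$. To avoid ever differentiating $\nabla\Upsilon$, we rewrite $\bar\nabla\Upsilon_\sigma\cdot\Upsilon_\tau$ directly as a difference quotient of $\Upsilon_\sigma$ along $\Upsilon_\tau$. Concretely, we would first verify the case $m=1,2$ by hand, extract the pattern, and then define $H$ by the chain formula and check the recursion combinatorially; the combinatorial factors $\pi(w)$ should match the forest-grafting coproduct counts.

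Finally, the bound $|H_{st}(w;\tau_0)|\lesssim\prod_\tau\|\nabla\Upsilon_\tau\|_\infty$ follows because each chain contributes a product of at most $|w|+1$ averaged gradients of trees of size at most $|w|+|\tau_0|$. Since the number of chains is finite and depends only on $N$, the implicit constant is combinatorial.
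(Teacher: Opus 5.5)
Your reduction is the same as the paper's proof of Theorem \ref{Thm: Taylor Lipschitz}. You induct on $m$, raise $Z^{[m-|w|]}$ to $Z^{[m+1-|w|]}$ via \eqref{Eq: inductive z}, regroup using multiplicativity of $\X$, and reduce everything to the forest-by-forest identity
\[
\sum_{i=1}^k\frac{1}{\pi(\tau_i;w)}H_{st}(w\setminus\tau_i;\tau_0)\,\Upsilon_{\tau_i}(Z_s)-\frac{1}{\pi(w)}\Upsilon_{w\curvearrowright\tau_0}(Z_s)=H_{st}(w;\tau_0)\cdot Z^{[1]}_{st},
\]
which is Proposition \ref{Prop: recursive form remainder}. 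By degree counting, the remainder on the right must be $Z^{[1]}$. However, constructing an $H$ that satisfies this recursion and contains only first gradients is the entire content of the theorem, and you do not do it. You conjecture a shape for $H$ and defer the verification. As written this is a plan, not a proof, and the gap is exactly at the step you flag as the main obstacle.

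The conjectured shape is also wrong. Already at first order, \eqref{Eq: B^2} gives $H(\bullet;\tau_0)=\int_{[0,1]^2_\ge}\bigl(\nabla\Upsilon_{\bullet\curvearrowright\tau_0}(u_1)-\nabla\Upsilon_{\tau_0}(u_0)\,\nabla\Upsilon_{\bullet}(u_1)\bigr)\,\mathrm{d}t_1\mathrm{d}t_0$.
\begin{itemize}
\item It contains the gradient of the grafted tree $\bullet$ itself, not of an element of a grafting chain starting at $\tau_0$.
\item The two evaluation points are coupled through $t_1\le t_0$. Replacing them by separately averaged gradients (any product weights) breaks the identity; test $f(x)=x^2/2$, $g(x)=x^2$.
\end{itemize}
At second order, \eqref{Eq: B^3} contains $\nabla\Upsilon_{\tau_0}\nabla\Upsilon_{\bullet\curvearrowright\bullet}$, $\nabla\Upsilon_{\tau_0}\nabla\Upsilon_\bullet\nabla\Upsilon_\bullet$, and $-\nabla\Upsilon_{(\bullet\curvearrowright\bullet)\curvearrowright\tau_0}$ evaluated at both $u_1$ and $u_2$. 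These terms come from two sources, neither captured by successive single graftings onto $\tau_0$:
\begin{itemize}
\item the Guin--Oudom correction terms, in which trees of $w$ are grafted onto one another with alternating signs;
\item the reordering of nested integrals into simplices.
\end{itemize}
These are precisely the two problems of Remark \ref{rem:problems}. The paper resolves them with:
\begin{itemize}
\item the planar binary tree expansion of $(\tau_k\cdots\tau_1)\curvearrowright\tau_0$ (Lemma \ref{lem: GUBinary});
\item the multi-indices $J_{\rho,\ell}$, which count reduction chains and fix the evaluation points and their multiplicities (Lemma \ref{lem: tuple correspondence});
\item the telescoping Lemma \ref{lem:difffun}.
\end{itemize}

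That last lemma also settles the step you leave vague. Comparing $\bar\nabla\Upsilon_\sigma$ with $\nabla\Upsilon_\sigma(Z_s)$ would require differentiating $\nabla\Upsilon_\sigma$, and a difference quotient is not an exact identity. Instead, re-centre $\Upsilon_\tau$ at the evaluation point $u$ of the gradient,
\[
\nabla\Upsilon_\sigma(u)\,\Upsilon_\tau(z_1)=\Upsilon_{\tau\curvearrowright\sigma}(u)-\nabla\Upsilon_\sigma(u)\bigl(\Upsilon_\tau(u)-\Upsilon_\tau(z_1)\bigr),
\]
and then apply the fundamental theorem of calculus to $\Upsilon_{\tau\curvearrowright\sigma}$ and to $\Upsilon_\tau$ separately. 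Your closing counting argument for the bound on $H$ is fine, but it only becomes meaningful once $H$ is explicitly a finite sum of simplex integrals of such products.
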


\subsectionnotoc{Remainders of generalised Taylor expansions}\label{Sec: remainders}

Theorem \ref{Thm: metatheorem remainder} crucially relies on a representation of the coefficients $H_{st}(w;\tau_0)$ that only contains gradients of $\Upsilon_\tau$ for $\tau\in\cT^{\le |w|+|\tau_0|}$, see Definition \ref{Def:H} and Theorem \ref{Thm: Taylor Lipschitz}. In fact these results are stated 
in terms of remainders of generalised Taylor expansions without reference to rough paths
and differential equations, as in the next Theorem. For this we need the following ingredients:
	\begin{itemize}
		\item two points $z_1,z_2\in\R^n$,
		\item a multiplicative functional $\X:\cF^{\le m}\to\R$, namely $\X(w_1w_2)=\X(w_1)\X(w_2)$ where $w_1w_2=w_1\sqcup w_2$ is the juxtaposition of forests, $w_1,w_2\in\cF^{\le m}$,
		\item the abstract remainders for $m\ge 0$
		\begin{align}\label{eq:zk12}
			z^{[m+1]}& :=z_2-z_1 -\sum_{\tau\in\cT^{\le m}}\Upsilon_\tau(z_1) \, \X(\tau)\in\R^n\,,
			\\ B^{[m+1]}_{z_1z_2}(\tau)& :=\Up_\tau(z_2)-\sum_{w\in\cF^{\le m}}\frac{\Up_{w\curvearrowright\tau}(z_1)}{\pi(w)} \, \X(w) \in\R^n\,.
			\label{eq:B12}
		\end{align}
	\end{itemize}
Then we can state the 
\begin{theorem}\label{Thm: metatheorem remainder2}
	Assume that $\Upsilon_\tau$ is globally Lipschitz continuous for all $\tau\in\cT^{\leq N}$. 
	Then for all $\tau_0\in\cT^{\le N}$ there exists a function $H_{z_1z_2}(\cdot;\tau_0):\cF^{\le N-|\tau_0|} \to\R^n\otimes(\R^n)^\ast$ such that
	for $m\le N-|\tau_0|$ 
	\begin{align}
		B^{[m+1]}_{z_1z_2}(\tau_0)=\sum_{w\in\cF^{\leq m}}H_{z_1z_2}(w;\tau_0) \cdot z^{[m+1-|w|]}\, \X(w)\,, \label{Eq: identity remainder H0}
	\end{align}
	where 
		\[
	|H_{z_1z_2}(w;\tau_0)| \lesssim \prod_{\tau\in\cT^{\le |w|+|\tau_0|}}\|\nabla\Upsilon_{\tau}\|_\infty\,.
	\]
\end{theorem}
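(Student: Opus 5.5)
We argue by induction on $m$, for fixed $\tau_0$, and build $H$ recursively from mean-value integrals of gradients. The basic tool is the exact first-order formula
\[
\Up_\tau(z_2)-\Up_\tau(z_1)=\int_0^1\nabla\Up_\tau\big(z_1+\theta(z_2-z_1)\big)\d\theta\cdot(z_2-z_1)=:G_\tau\cdot(z_2-z_1),
\]
where $|G_\tau|\le\|\nabla\Up_\tau\|_\infty$ by the Lipschitz assumption. This immediately gives the case $m=0$: $B^{[1]}(\tau_0)=\Up_{\tau_0}(z_2)-\Up_{\tau_0}(z_1)=G_{\tau_0}\cdot z^{[1]}$, so we set $H(\emptyset;\tau_0):=G_{\tau_0}$.

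For the inductive step we compare $B^{[m+1]}(\tau_0)$ with $B^{[m]}(\tau_0)$. Their difference is $-\sum_{|w|=m}\Up_{w\graf\tau_0}(z_1)\X(w)/\pi(w)$. The strategy is to write $z_2-z_1=z^{[m+1]}+\sum_{\tau\in\cT^{\le m}}\Up_\tau(z_1)\X(\tau)$ in the first-order formula and then re-expand the gradient coefficient. Concretely, we aim to prove a slightly stronger, matrix-valued statement that is stable under the recursion, namely a Taylor remainder identity for the linear maps $z\mapsto\nabla\Up_\tau(z)\cdot v$ along the direction $v=\Up_{\tau'}(z_1)$. The pre-Lie identity $\nabla\Up_\tau\cdot\Up_{\tau'}=\Up_{\tau'\graf\tau}$ (single grafting, Definition \ref{Def: grafting}) converts these products into elementary differentials of larger trees. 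We then use the multiplicativity $\X(w_1w_2)=\X(w_1)\X(w_2)$ together with the combinatorics of grafting a forest $w=\tau_1\cdots\tau_k$ onto $\tau_0$, which is iterated single grafting up to the corrections in Appendix \ref{App: grafting}, with the factor $\pi(w)$ accounting for symmetries. This should show that every monomial of order $\le m$ cancels against the subtracted sum in $B^{[m+1]}$. What is left is a sum of terms of the form $G\cdot(\text{lower remainder})\cdot\X(w)$, and these terms define $H(w;\tau_0)$ recursively:
\[
H(w;\tau_0)=\sum G_{\tau_0}\cdot\prod(\text{averaged }\nabla\Up_{\tau_i\graf\cdots}),
\]
each factor being an integral of a gradient of some $\Up_\sigma$ with $|\sigma|\le|w|+|\tau_0|$. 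The bound $|H|\lesssim\prod\|\nabla\Up_\tau\|_\infty$ is then immediate, since every building block is such an average and the number of terms depends only on $N$.

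The most natural concrete route is to prove the identity by telescoping in $m$:
\[
B^{[m+1]}(\tau_0)=G_{\tau_0}\cdot z^{[m+1]}+\sum_{\tau\in\cT^{\le m}}\X(\tau)\,B^{[m+1-|\tau|]}(\tau\graf\tau_0)\,,
\]
with appropriate symmetry factors, where $\tau\graf\tau_0$ is understood as a linear combination of trees. To obtain this, we need $G_{\tau_0}\cdot\Up_\tau(z_1)$ at the averaged point. We therefore write $\nabla\Up_{\tau_0}(\xi)\cdot\Up_\tau(z_1)=\Up_{\tau\graf\tau_0}(\xi)-\nabla\Up_{\tau_0}(\xi)\cdot\big(\Up_\tau(\xi)-\Up_\tau(z_1)\big)$ and iterate. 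Iterating the telescoped identity, with the induction hypothesis applied to the trees $\tau\graf\tau_0$ of size $\le N$ at lower order, yields the claimed form, with $H$ built from products of $G$'s indexed by the forest $w$.

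The main obstacle is the combinatorial bookkeeping. First, we must verify that the multi-grafting coefficients produced by iterating single graftings, together with the products $\X(\tau_1)\cdots\X(\tau_k)$, reproduce exactly $\Up_{w\graf\tau_0}/\pi(w)$, so that the cancellation is exact rather than approximate. Second, the averaging points $\xi$ differ between factors. Because of this, we must never expand in a way that requires a second derivative: every difference $\Up_\tau(\xi)-\Up_\tau(z_1)$ has to be rewritten as a gradient times a remainder, and not Taylor-expanded further. We would first try to settle this by an induction on $|w|$ in which only the tree being grafted onto changes and the Guin–Oudom identity for $w\graf\tau_0$ is used, checking the symmetry factors on small forests such as $\bullet\bullet$ before trusting the general formula.
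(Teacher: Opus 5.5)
Your proposal has a genuine gap, and it sits exactly where the content of the theorem lies. The one concrete identity you propose,
\[
B^{[m+1]}(\tau_0)=G_{\tau_0}\cdot z^{[m+1]}+\sum_{\tau\in\cT^{\le m}}\X(\tau)\,B^{[m+1-|\tau|]}(\tau\graf\tau_0),
\]
is false whatever symmetry constants you insert.

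Take $m=1$ and $n=d=1$, so that $\bullet$ is the only tree of size one. With a constant $c$ in front of the sum, the right-hand side minus the left-hand side equals
\[
\X(\bullet)\bigl[\Up_{\bullet\graf\tau_0}(z_1)-G_{\tau_0}\Up_\bullet(z_1)+c\,\bigl(\Up_{\bullet\graf\tau_0}(z_2)-\Up_{\bullet\graf\tau_0}(z_1)\bigr)\bigr].
\]
For $\sigma(z)=z^2$ and $\tau_0=\bullet$ this is $\X(\bullet)(z_2-z_1)\bigl[2c(z_1^2+z_1z_2+z_2^2)-z_1^2\bigr]\neq0$.

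The reason is visible in your own rewriting $\nabla\Up_{\tau_0}(\xi)\Up_\tau(z_1)=\Up_{\tau\graf\tau_0}(\xi)-\nabla\Up_{\tau_0}(\xi)\bigl(\Up_\tau(\xi)-\Up_\tau(z_1)\bigr)$.
\begin{itemize}
\item The first term gives $\Up_{\tau\graf\tau_0}$ averaged along the segment, not evaluated at $z_2$, so no remainder $B_{z_1z_2}(\tau\graf\tau_0)$ appears.
\item The correction is only $O(z^{[1]})$. Re-expanding $z^{[1]}=z^{[j]}+\sum_\gamma\Up_\gamma(z_1)\X(\gamma)$ regenerates terms of the form (product of gradients at intermediate points)$\cdot\Up_\gamma(z_1)$.
\end{itemize}
So the procedure does not close into a recursion for the $B_{z_1z_2}(\sigma)$; it is an open iteration whose outcome you do not control.

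Carrying that iteration out, peeling gradients off from the right as in Lemma \ref{lem:difffun}, leaves two kinds of terms. There are gradient terms of the correct order. There are also leftovers $\Up_c(z_1)\X(w)$, where $c=((\gamma\graf\sigma_r)\graf\cdots)\graf\sigma_1$ are nested \emph{single} graftings, weighted by simplex volumes that depend on the evaluation points. The theorem holds precisely because these leftovers cancel exactly against $-\sum_{1\le|w|\le m}\Up_{w\graf\tau_0}(z_1)\X(w)/\pi(w)$. That is exactly what you leave as ``should show'' and propose to check on small forests.

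This is not a bookkeeping formality. First, $w\graf\tau_0$ is the Guin--Oudom product, not an iterated single grafting. Second, the same grafted tree can appear at several evaluation points, as with the two green terms in \eqref{Eq: B^3}. The paper settles the cancellation with Proposition \ref{Prop: recursive form remainder}. Its proof rests on the binary-tree expansion of the Guin--Oudom product (Lemma \ref{lem: GUBinary}, Corollary \ref{Cor: C and J}) and on the multi-indices $J_{\rho,\ell}$ of Definition \ref{Def: J}, which record the evaluation points.

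If you want to avoid that machinery, you need some other argument for the cancellation. A soft one that works is as follows.
\begin{itemize}
\item Suppose your procedure yields, for every multiplicative $\X$, $B^{[m+1]}(\tau_0)=\sum_{|w|\le m}\tilde H(w)\cdot z^{[m+1-|w|]}\X(w)+\sum_{|w|\le m}c_w(z_1)\X(w)$, with $\tilde H$ continuous and independent of $\X$.
\item Substitute $\X(\tau)\mapsto\lambda^{|\tau|}\X(\tau)$ and $z_2=z_1+\sum_{|\tau|\le m}\lambda^{|\tau|}\Up_\tau(z_1)\X(\tau)$. Then $z^{[j]}=O(\lambda^j)$, so the $\tilde H$-terms are $O(\lambda^{m+1})$.
\item The smooth Taylor formula together with Lemma \ref{lem: morphism upsilon} makes the left side $O(\lambda^{m+1})$ as well.
\item The leftover is a polynomial in $\lambda$ of degree at most $m$, and the $\X(\tau)$ are free. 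This forces every $c_w=0$.
\end{itemize}
As it stands, your proposal supplies neither this argument nor the paper's.
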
	

Even though our motivation is primarily analytic, namely to obtain a priori estimates under minimal regularity assumptions on the coefficients of the rough equation \eqref{eq:davie}, we emphasize that Theorem \ref{Thm: metatheorem remainder2} is entirely combinatorial and algebraic, see Theorem \ref{Thm: Taylor Lipschitz} and its proof. This provides yet another illustration of the deep interplay between algebraic and analytic methods in rough paths theory. 

We derive representation formulae of $H_{z_1z_2}(w;\tau_0)$ that involve only
$(\nabla\Upsilon_\tau)_{\tau\in \cT^{\le |w|+|\tau_0|}}$. This turns out to be surprisingly difficult: merely stating such formulae requires introducing several nontrivial combinatorial objects, and the final result is Theorem \ref{Thm: Taylor Lipschitz} below. This problem is treated in Section \ref{Sec: Lipschitz elementary differentials}, the longest of the paper, which contains the conceptual core of our argument. See \eqref{Eq: B^2}-\eqref{Eq: B^3} for two simple examples that
are explained in detail.

We give \emph{two} novel representation formulae for the coefficients $H_{z_1z_2}(w;\tau_0)$ that appear in \eqref{Eq: identity remainder H0}. The first expression \eqref{Eq: identity remainder H1} is based on Theorem \ref{Thm: Taylor a priori} and requires boundedness of $\Upsilon_\tau$ and its derivatives. This could be seen as an alternative to the generalised Taylor expansion derived in \cite[Proposition A.1]{hairer2014theory} and used in the context of rough paths in \cite{bonnefoi2022priori}. The second, more complex representation formula 
\eqref{Eq: identity remainder H} allows to obtain a priori bounds for Davie solutions when only Lipschitz continuity of $\Upsilon_\tau$ is assumed. The main tools that we use to prove these representation formulae are combinatorial and algebraic in nature. In particular, we must study in detail the Guin-Oudom extension of grafting \cite{oudom2008lie}, for which we establish a representation formula for $(\tau_1\cdots\tau_k)\graf\tau_0$ as a sum indexed by \emph{planar} binary rooted
trees, see Lemma \ref{lem: GUBinary} and Corollary \ref{Cor: C and J}. Proving this result requires introducing several non-trivial objects, such as
reduction chains (Definition \ref{Def: reduction chain}) and a family of multi-indices $J_{\rho,\ell}$ (Definition \ref{Def: J}) associated with a binary tree $\rho$.
	
\subsectionnotoc{Comparison with other works}
	In \cite{bonnefoi2022priori}, the authors pursue a similar objective, namely the derivation of a priori estimates for solutions to rough differential equations that are independent of the initial condition. They consider two distinct classes of equations: those whose coefficients have bounded derivatives and those whose derivatives exhibit polynomial growth but contain a coercive drift term. Coercivity is the key ingredient that controls the polynomial growth of the combinations of derivatives of the coefficients appearing in the remainder terms of the expansion, both for the solution itself and for its composition with smooth functions. 
	
	A similar approach is adopted in \cite{chevyrev2025large}, where the authors show how the coercive nature of a damping term can be exploited to obtain a priori estimates for solutions, both in the setting of rough differential equations and for singular stochastic PDEs within the framework of regularity structures. In the rough ODE setting, they likewise rely on a generalised Taylor formula for the remainder, which coincides with the one implicitly used in \cite{bonnefoi2022priori}. 
	
	The problem of finding optimal conditions on the coefficient vector field to ensure global in time existence of solutions to rough equations has also been addressed in \cite{bailleul2020non}, where the authors work in the setting of solution flows, see \cite{bailleul2015flows}. Working with weak geometric rough paths, they cover the case of linear growth of the coefficients, yet assuming boundedness of their derivatives. Here we go beyond this regime by only assuming Lipschitz continuity of the 
	elementary differentials that appear in \eqref{eq:davie}.
	
	During the preparation of this manuscript, another related work was published \cite{ying2026non}, in which the problem of deriving a priori bounds for solutions in order to establish the absence of finite-time explosion is investigated. Once again exploiting the separation between drift and diffusion, the author extends the results of \cite{li2025strong} to cover the full range of regularity of the driving signal. The superlinear polynomial growth of the drift is controlled by imposing a suitable sublinear growth condition on the diffusion coefficient and its derivatives.
	
	We stress that a priori estimates for stochastic/rough PDEs have become a major theme recently, see \cite{Mourrat_2017,mourrat2017global,ottoweber1,ottoweber2,otto2024priori,chevyrev2025large}.
	We are currently working on the extension of the techniques of this paper to the more difficult setting of regularity structures and models for SPDEs \cite{hairer2014theory}. In this setting the algebraic framework is more complex \cite{bruned2019algebraic} and, as recently observed, it involves post-Lie structure rather than the pre-Lie setting which is used in this paper for branched rough paths, see \emph{e.g.}
	\cite{kuruschpostLie,linares2023structure,BK23, BM23, JZ26}. 
	
	\subsectionnotoc{Structure of the paper}
	In section \ref{Sec: branched rough paths} we briefly recall Gubinelli's definition of a branched rough path and related notions needed to formulate the Ansatz on the expansion of the solution. We adopt a framework that allows for minimal algebraic and combinatorial machinery. For instance, we prefer to work with a version of grafting that reduces the appearance of combinatorial factors in formulae like \eqref{eq:davie}, see Appendix \ref{App:A} for a discussion. We also avoid the introduction of coproducts.

Section \ref{Sec: generalised Taylor formulae} is devoted to a closed form of the elementary differential remainders well suited for the derivation of a priori estimates under a boundedness assumption on the elementary differentials. Then, in Section \ref{Sec: Lipschitz elementary differentials} we study the problem at the heart of this article, first finding an explicit expression for the grafting of forests in terms of binary trees and then using the latter to describe the algorithm underlying the construction of the generalised Taylor formula for the remainder. In particular, in Section \ref{Sec: Algebraic formula for the remainder} we state and prove a precise version of Theorem \ref{Thm: metatheorem remainder}. Finally, in Section \ref{Sec: well-posedness} we establish global a priori estimates on the solution remainders.

	Appendix \ref{App: grafting} collects remarks on our choice of grafting operation in connection with other prescriptions, as well as a simple proof of the morphism property of elementary differentials. In Appendix \ref{App: smooth rough paths} we make a number of observations on the canonical rough path lift of differentiable paths in connection with the notion of Davie solution, while Appendix \ref{App: analytic tools} summarises useful inequalities and technical lemmas used in the main body.

	\subsectionnotoc{Notation}\label{Sec: notation}
	
	We set notation and conventions that will be adopted throughout the article. 
	For $k\geq 1$ and $T\geq0$ we define the simplices
	\begin{align*}
		&[0, T]^{k+1}_{\leq} :=
		\left\{ (t_0, \ldots, t_k) \in[0,T]^{k+1} :  0 \leq t_0 \leq \ldots \leq
		t_k \leq T \right\}\,,\\
		&[0, T]^{k+1}_{\geq} :=
		\left\{ (t_0, \ldots, t_k) \in[0,T]^{k+1} :  0 \leq t_k \leq \ldots \leq
		t_0 \leq T \right\}\,.
	\end{align*}
	We  will also adopt the shorthand notation $\d \mathbf{t}_k:=\d t_k\ldots\d t_0$ to denote the corresponding Lebesgue measure. 
	
	We introduce a class of norms on continuous functions defined on simplices. Given $\eta
	\in (0, \infty)$, we define for $F \in C ([0, T]^2_{\leq}, \mathbb{R}^n)$
	and $G \in C ([0, T]^3_{\leq}, \mathbb{R}^n)$ 
	\[ \| F \|_{\eta} :=\sup_{0 \leq s < t \leq T} \frac{| F_{s t}
		|}{(t - s)^{\eta}}\,, \qquad \| G \|_{\eta} :=
	\sup_{\substack{0 \leq s \leq u \leq t \leq T\\
			s < t}} \frac{| G_{s u t} |}{(t - s)^{\eta}}\,, \]
	and the spaces
	\[ \mathcal{C}^{\eta}_2 :=\left\{ F \in C ([0, T]^2_{\leq}, \mathbb{R}^n) :
	\| F \|_{\eta} < \infty \right\}\,, \qquad \mathcal{C}^{\eta}_3 :=\left\{ G
	\in C ([0, T]^3_{\leq}, \mathbb{R}^n) : \| G \|_{\eta} < \infty
	\right\} . \]
	We endow the space $C ([0, T], \mathbb{R}^n)$ of continuous functions $f
	: [0, T] \rightarrow \mathbb{R}^n$ with the usual H{\"o}lder semi-norm.
	Defining the increments $\delta f_{s t} := f_t - f_s$ for $s,t\in[0,T]_{\leq}^2$, we denote it by
	\begin{equation*}
		[f]_\alpha:=\|\delta f\|_\alpha=\sup_{0 \leq s < t \leq T}
		\frac{| f_t - f_s |}{(t - s)^{\alpha}}\,,\qquad \alpha \in (0, 1]\,,
	\end{equation*}
	and the corresponding space of $\alpha$-H\"older functions
	\[ \mathcal{C}^{\alpha} := \left\{ f : [0, T] \rightarrow \mathbb{R}^n :
	\quad [ \delta f]_{\alpha} < \infty \right\} \,. \]
	We also consider the operator $\delta:C ([0, T]^2_{\leq}, \mathbb{R}^n)\to C ([0, T]^3_{\leq}, \mathbb{R}^n)$,
	\[
	\delta A_{sut}:=A_{st}-A_{su}-A_{ut}, \qquad 0\le s\le u\le t\le T.
	\]
	Henceforth, we will denote by $d,n\in\N_\ast$ the dimension of Euclidean spaces.
	Throughout the paper, the symbol $A\lesssim_a B$ will denote $A\leq c_a B$ for a constant $c_a\in\R$ which depends on the datum $a$.

	\subsection*{Acknowledgements}
	L.A. has received funding from the European Union’s Horizon Europe research and innovation programme under the Marie Sk\l{}odowska-Curie grant agreement No 101126554.
	A.B. is supported by a Postdoctoral fellowship from the Fondation Sciences Mathématiques de Paris, which is gratefully acknowledged. The work of L.Z. was partially supported by the ANR project 
	RAN-DOP ANR-24-CE40-3377, and by the Institut Universitaire de France.

	\section{Branched rough paths and elementary differentials}\label{Sec: branched rough paths}
	Since the seminal work of Cayley \cite{cayley1857xxviii}, the close correspondence between families of combinatorial trees and vector fields has been well established. To illustrate this correspondence, consider once again the case of a smooth driving path. An expansion of the solution
	to \eqref{integral form} up to order $N=3$ yields (with Einstein's sommation convention)
	\begin{align}\label{Eq: approximation smmoth}
		Z^i_t&=Z^i_s+\sigma^i_{j_1}(Z_s)\int_s^t \d X^{j_1}_{t_0}+\nabla_{j_1}\sigma^i_{ j_2}(Z_s) \,\sigma^{j_1}_{j_3}(Z_s)\int_s^t\int_s^{t_0}\d X^{j_3}_{t_1}\d X^{j_2}_{t_0}\nonumber\\
		&+\nabla_{j_1}\sigma^i_{j_2}(Z_s)\,\nabla_{j_3}\sigma^{j_1}_{j_4}(Z_s)\, \sigma^{j_3}_{j_5}(Z_s)\int_s^t\int_s^{t_0}\int_s^{t_1}\d X^{j_5}_{t_2}\d X^{j_4}_{t_1}\d X^{j_2}_{t_0}\nonumber\\
		&+\frac{1}{2}\nabla^2_{j_1j_2}\sigma^{i}_{j_3}(Z_s)\,\sigma_{j_4}^{j_2}(Z_s)\,\sigma_{j_5}^{j_3}(Z_s)\int_s^t\left(\int_s^{t_0}\d X^{j_5}_{t_1}\right)\left(\int_s^{t_0}\d X^{j_4}_{t_2}\right)\d X^{j_3}_{t_0}+O((t-s)^4)\,.
	\end{align}
	See Proposition \ref{pr:daviesmooth} below for a rigorous proof of this expansion at any order. 
	
	The nested nature of the iterated integrals of the driver path $X$ in \eqref{Eq: approximation smmoth} can be encoded via the hierarchical structure of rooted trees. Interpreting the vertices as integrations against the rough drivers, the natural correspondence between nested iterated integrals and rooted trees for the lower orders reads  
	\begin{align*}
		&\int_s^t\d X^{j_1}_{t_0}=:\X_{st}(\bullet_{j_1}),
		\qquad \int_s^t\int_s^{t_0}\!\int_s^{t_1}\d X^{j_5}_{t_2}\d X^{j_4}_{t_1}\d X^{j_2}_{t_0}=:\X_{st}\left(
		\!\!\begin{tikzpicture}[scale=0.2,baseline=0.1cm]
			\node at (0,-0.2) [dot, label={[label distance=-2pt]right:{\tiny $j_2$}}] (root2) {};
			\node at (0,1) [dot, label=left:{\tiny $j_4$}] (centerl) {};
			\draw (centerl) to
			node [sloped,below] {\small }     (root2);
			\node at (0,2.2) [dot, label={[label distance=-2pt]right:{\tiny $j_5$}}] (centerr) {};
			\draw (centerr) to
			node [sloped,below] {\small }     (centerl);
		\end{tikzpicture}\right)\,,\nonumber\\ 
		&\frac{1}{2}\int_s^t\left(\int_s^{t_0}\d X^{j_5}_{t_1}\right)\left(\int_s^{t_0}\d X^{j_4}_{t_2}\right)\d X^{j_1}_{t_0}=:\X_{st}(
		\begin{tikzpicture}[scale=0.2,baseline=0.1cm]
			\node at (0,0.5) [dot, label={[label distance=-2pt]right:{\tiny $j_1$}}] (root2) {};
			\node at (-0.7,1.7) [dot, label={[label distance=-2pt]left:{\tiny $j_5$}}] (centerl) {};
			\node at (0.7,1.7) [dot, label={[label distance=-2pt]right:{\tiny $j_4$}}] (centerr) {};
			\draw (centerl) to
			node [sloped,below] {\small }     (root2);
			\draw (centerr) to
			node [sloped,below] {\small }     (root2);
		\end{tikzpicture}
		)\,,\label{Eq: example iterated integral}
	\end{align*}
	where, in the case of differentiable $X$, all integrals are well-defined. This section is devoted to realising this correspondence when $X$ is not smooth. First we introduce the set of combinatorial trees used to label the expansion coefficients of the Davie solution. 
	\begin{definition}\label{Def: rooted trees}
		We call \textit{decorated rooted tree} a finite connected acyclic graph $\tau=(\mathcal{E}_\tau,\mathcal{V}_\tau,\mathfrak{n})$ with edge set $\mathcal{E}_\tau$ and vertex set $\mathcal{V}_\tau$, such that
		\begin{itemize}
			\item $\tau$ has a preferred vertex, called the \emph{root},
			\item each pair of vertices is connected by at most one edge, 
			\item $\mathfrak{n}:\mathcal{V}_\tau\to\{1,\ldots, d\}$.
		\end{itemize}
			If two vertices $u,v$ are connected by an edge, and $u$ is closer to the root than $v$, then $u$ is called the \emph{parent}
			and $v$ is the \emph{child}. Leaves are vertices without children, while the root has no parent.
			
		We denote by $\cT$ the set of decorated rooted trees. In addition we introduce the number of vertices $|\tau|:=|\mathcal{V}_\tau|$, which induces a graded structure on $\cT$:
		\begin{equation*}
			\cT=\bigoplus_{k=1}^\infty\cT^{=k}\,,
		\end{equation*}
		where $\cT^{=k}:=\{\tau\in\cT\,:\,|\tau|=k\}$. We also define $\cT^{\leq k}:=\bigcup_{k'\leq k}\cT^{=k'}$ and we denote by $\cF$ the set of decorated forests, obtained by juxtaposition of trees, endowed with the (commutative) \textit{forest product} $\cdot:\cF\times\cF\to\cF$ that amounts to the disjoint union of forests. In addition we denote by $\one$ the empty forest intended as the unit of the forest product. Finally, we denote by $\mathcal{H}$ the linear span of $\cF$ and $\cF^{= m}$ the set of forests with exactly $m$ nodes and $\cF^{\leq k}:=\bigcup_{k'\leq k}\cF^{=k'}$.
	\end{definition}
	
	Note that trees in $\cT$ are drawn on the plane but they are \emph{non-planar}: for example $\<201>= \<102>$.
	We observe that there is a one-to-one correspondence between $\cF$ and the subset of  $\cT$ of rooted trees with root decoration $i\in\{1,\ldots, d\}$, realised by the map $[\cdot]_i:\cF\to\cT$ whose action amounts to attaching the roots of the trees lying in the forest to a common new root $\bullet_i$ via new edges. It will be extensively exploited in the following. When the decoration of the root does not play any role, we will simply denote it by $[\cdot]$.
	
	The fourth term in the truncated expansion of the solution in \eqref{Eq: approximation smmoth} comes equipped with a numerical prefactor linked with the symmetry of the iterated integral or, equivalently, with the symmetry of the corresponding rooted tree. This motivates the following definition. 
	\begin{definition}\label{Def: symmetry factor}
		The \emph{symmetry factor} of a tree is defined recursively by imposing $s(\bullet)=1$ and,
		for distinct $\tau_1,\ldots,\tau_k\in\cT$, $n_1,\ldots, n_k\in\N$ and considering the tree $\tau=[\tau_1^{\cdot n_1}\cdots\tau_k^{\cdot n_k}]$, by
		\begin{equation}\label{Eq: symmetry factor}
			s(\tau):=\prod_{i=1}^ks(\tau_i)^{n_i}n_i!\,.
		\end{equation}
		It extends to forests by identifying $s(\tau_1^{\cdot n_1}\ldots\tau_k^{\cdot n_k})$ with \eqref{Eq: symmetry factor}.
		We also define, for the forest $w=\tau_1^{n_1}\cdots\tau_k^{n_k}\in\cF$, the \emph{permutation factor} 
		\begin{equation}\label{Eq: permutation factor}
			\pi(w):=\prod_{i=1}^kn_i!\,,
		\end{equation}
		which counts the number of permutations of identical trees in the forest $w$. Note that $\pi\equiv 1$ on trees. 
		In addition, we denote by $\pi(\tau_i;w)$ the number of times the tree $\tau_i$ appears in the forest $w$,  i.e. for $w=\hat\tau^{\ell_1}_1\cdots\hat\tau_r^{\ell_r}$ and $\hat\tau_i\ne\hat\tau_j$ for $i\ne j$, we have $\pi(\tau_i;w)=\ell_i$.
Finally $w\setminus\tau_i$ is the forest obtained by removing an instance of $\tau_i$ in $w$. 
	\end{definition}
	\begin{definition}\label{Def: grafting}
		For $\tau\in\cT, w\in\cF$ we recursively define the grafting of trees in $\cH$ as
		\begin{equation}
			\tau\graf\one=0\,,\qquad\tau\curvearrowright [w]_i=[\tau\cdot w+\tau\curvearrowright w]_i\,,\qquad i=1,\ldots, d\,,\label{Eq: grafting iterative}
		\end{equation}
		where we implicitly extend $\graf$ to forests on the right by Leibniz rule, i.e, for $\tau_1,\ldots, \tau_k\in\cT$
		\begin{equation}\label{Eq GU Leibniz}
			\tau\curvearrowright(\tau_1\cdots\tau_k)=\sum_{i=1}^k\tau_1\cdots(\tau\curvearrowright\tau_i)\cdots\tau_k\,.
		\end{equation}
	\end{definition}
		Observe that $\tau\curvearrowright\tau'$ can be seen graphically as the collection of trees obtained by linking the root of $\tau$ to all the vertices of $\tau'$ via a new edge, see Appendix \ref{App: grafting}, where 
		we also discuss two possible choices of grafting and their relation. 
In particular, it is well known that $(\langle\cT\rangle,\graf)$,
		where $\langle\cT\rangle$ is the linear span of $\cT$ in $\cH$, forms a pre-Lie algebra, namely for all $\tau_1,\tau_2,\tau_3\in\cT$
\[
    \tau_1\graf (\tau_2\graf \tau_3) - (\tau_1\graf \tau_2)\graf \tau_3 = \tau_2\graf (\tau_1\graf \tau_3) - (\tau_2 \graf \tau_1)\graf \tau_3\,.
\]
	As shown in  \cite{oudom2008lie}, this property allows to extend canonically the grafting operation to $\graf:\cH\times\cH\to\cH$ by the following inductive prescription, which we will call the Guin-Oudom extension. For $\tau_1,\ldots, \tau_k,\tau\in\cT$ we impose
	\begin{align}\label{Eq: GU definition}
		(\tau_1\cdots \tau_k)\graf\tau=\tau_1\graf((\tau_2\cdots\tau_k)\graf\tau)-(\tau_1\graf(\tau_2\cdots\tau_k))\graf\tau\,.
	\end{align} 
	From this non-associative operation we can build an associative product, see \cite{oudom2008lie}, which encodes a fundamental algebraic property of rough paths, namely the Chen relation, see \eqref{Eq: Chen's relation} below.
	\begin{definition}
		Let $\graf:\cH\times\cH\to\cH$ be the Guin-Oudom extension of grafting. We define the bilinear associative product $\star:\cH\times\cH\to\cH$ as
		\begin{align}\label{Eq: star product}
			w\star v=\sum_{w_1\cdot w_2=w}w_1\cdot(w_2\graf v)\,, \qquad w,v\in\cF.
		\end{align}
	\end{definition}
	
	We define the pairing $\langle\cdot,\cdot\rangle:\cF\times\cF\to\R$ by 
	\begin{equation}\label{Eq: pairing}
		\langle\frac{w}{\pi(w)},u\rangle:=\un{\{w=u\}}.
	\end{equation}
	We denote by $\cH_N$ the linear span of $\cF$ truncated at order $N$ and by $\cH_N^\ast$ its dual, whose elements can be seen as series in $\cH_N$ weighted by the permutation factor, i.e.
	\begin{equation}\label{Eq: dual identification}
		f=\sum_{w\in\cF^{\leq N}}f(w)\,\frac{w}{\pi(w)}\,,
	\end{equation}
	endowed with the duality pairing 
	\[
	\langle f,u\rangle=\sum_{w\in\cF^{\leq N}}f(w)\,\langle\frac{w}{\pi(w)},u\rangle,
	\] 
	for $u\in\cF$, $f\in\mathcal{H}^\ast_N$.
	We naturally restrict the associative product $\star$ to this truncated dual, namely 
	we define $\star:\cH_N^\ast\times \cH_N^\ast\to \cH_N^\ast$, via the identification in \eqref{Eq: dual identification}. 
	Notice that for a tree $\tau$ the permutation factor $\pi(\tau)$ is equal to $1$, so that in particular for $\tau,\tau'\in\cT$ we can rewrite the grafting operation as
	\[
	\tau\curvearrowright\tau'=\sum_{\rho\in\cT}\langle\tau\curvearrowright\tau',\rho\rangle\rho\,.
	\]

	We are in the position to give the definition of a branched rough path, first introduced by Gubinelli in \cite{gubinelli2010ramification} and then
	reformulated by \cite{hairer2015geometric}.
	
	\begin{definition}\label{Def: branched rough paths}
		For $\alpha\in(0,1]$, $N:=\lfloor\frac{1}{\alpha}\rfloor$ and $T>0$, a branched $\alpha$-rough path is a map $\X:[0,T]_\leq^2\to \cH^\ast_N$ such that 
		\begin{itemize}
			\item For any $(s,u,t)\in[0,T]_\leq^3$, $\X$ satisfies Chen's relation in $\cH_N^\ast$
			\begin{equation}\label{Eq: Chen's relation}
				\X_{st}=\X_{su}\star\X_{ut}\,.
			\end{equation}
			\item For any $\tau\in\cT$
			\begin{equation*}
				|\X_{st}(\tau)|\lesssim |t-s|^{|\tau|\alpha}\, ,
			\end{equation*}
			uniformly on $0\le s\le t\le T$.
			\item For any forest $w=\tau_1\cdots\tau_k\in\cF^{\leq N}$,
			\[\X_{st}(w)=\prod_{i=1}^k\X_{st}(\tau_i)\,.\]
		\end{itemize}
		If in addition $\X_{st}(\bullet_i)=\delta X^i_{st}$ for a path $X:[0,T]\to \R^d$, we say that $\X$ is an $\alpha$-branched rough path lift of $X$. 
	\end{definition}
	Note however that our choice for the product $\star$ is not exactly the same as in \cite{gubinelli2010ramification,hairer2015geometric}: see Appendix \ref{App: smooth rough paths} for a discussion.
	
	The family of functions $(\X_{st}(\tau))_{\tau\in\cT^{\le N}}$ plays the role of a set of generalised monomials in \eqref{eq:davie}.
	We still have to define the coefficients $\Upsilon_\tau$ which appear in the expansion and which play the role of generalised derivatives
	of the solution. Borrowing a term from numerical analysis,
	the one-to-one correspondence between rooted trees and the coefficients of the expansion \eqref{eq:davie} is realized by the \emph{elementary differential} defined below. 
	\begin{definition}\label{Def: elementary differential trees}
		For $\sigma\in C^\infty(\R^n,\R^n\otimes (\R^d)^\ast)$ we inductively define the \emph{elementary differential} $\Upsilon:\cT\to C( \R^n;\R^n)$ by imposing that, for all $z\in\R^n$, 
		$[\Upsilon_{\bullet_j}(z)]^i:=\sigma^i_k(z)$, $i=1,\ldots,n$, $j=1,\ldots,d$, and, for all $\tau_1,\ldots, \tau_k\in\cT$,
		\begin{align}\label{Eq: elementary differential tree}
			&[\Upsilon_{[\tau_1\cdots \tau_k]_j}(z)]^i:=\sum_{i_1,\ldots,i_k=1}^n \frac{\partial^k\sigma^i_j(z)}{\partial z_{i_1}
				\cdots \partial z_{i_k}} \, [\Upsilon_{\tau_1}(z)]^{i_1}\cdots [\Upsilon_{\tau_k}(z)]^{i_k}\,.
		\end{align}
		We extend it to a map $\Upsilon:\cF\to C( \R^n;\R^n)$ as a morphism with respect to the forest product such that $\Upsilon_\one=1$.
	\end{definition}	
	\begin{notation}
		To ease the notation we will adopt the following convention to represent matrix products as the one in \eqref{Eq: elementary differential tree}. For $f: \R^n\to\R^n$ and $v_1,\ldots, v_k\in\R^n$ we write
		\begin{align*}
			&[\nabla^k f]_{i_1,\ldots,i_k} :=\frac{\partial^k f}{\partial z_{i_1}
				\cdots \partial z_{i_k}}\,,
			&\nabla^k f \cdot [v_1\cdots v_k]:= \sum_{i_1,\ldots, i_k=1}^n [\nabla^k f]_{i_1,\ldots, i_k} v_1^{i_1}\cdots v_k^{i_k}\,.
		\end{align*} 
		In particular, \eqref{Eq: elementary differential tree} can be rewritten as
		\[
		\Upsilon_{[\tau_1\cdots \tau_k]_j} = \nabla^k \sigma_j \cdot  [\Upsilon_{\tau_1}\cdots\Upsilon_{\tau_k}]\,.
		\]
	\end{notation}
	
	We state a (well-known) morphism property of elementary differentials, whose proof is postponed to Appendix \ref{App: grafting}. 
	\begin{lemma}\label{lem: morphism upsilon}
		Let  $\Up$ be the elementary differential of Definition \ref{Def: elementary differential trees}.	
		For any $\tau, \tau_1,\ldots,\tau_k\in\cT$ we have that 
		\begin{equation}\label{eq: morphism upsilon}
			\Up_{(\tau_1\cdots\tau_k)\graf\tau}=\nabla^k \Up_{\tau}\cdot [\Upsilon_{\tau_1}\cdots\Upsilon_{\tau_k}]\,.
		\end{equation}
	\end{lemma}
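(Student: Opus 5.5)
I will argue by induction on $k$, the number of trees in the forest on the left, using the Guin-Oudom recursion \eqref{Eq: GU definition} together with the inductive definition \eqref{Eq: elementary differential tree} of $\Upsilon$. The first task is the case $k=1$: for $\tau_1,\tau\in\cT$ I will show $\Upsilon_{\tau_1\graf\tau}=\nabla\Upsilon_\tau\cdot\Upsilon_{\tau_1}$. This goes by induction on $|\tau|$. For $\tau=\bullet_j$ we have $\tau_1\graf\bullet_j=[\tau_1]_j$, since $\tau_1\graf\one=0$. Then $\Upsilon_{[\tau_1]_j}=\nabla\sigma_j\cdot\Upsilon_{\tau_1}=\nabla\Upsilon_{\bullet_j}\cdot\Upsilon_{\tau_1}$.

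For the inductive step of the case $k=1$, write $\tau=[\rho_1\cdots\rho_r]_j$, so that
\[
\tau_1\graf\tau=[\tau_1\rho_1\cdots\rho_r]_j+\sum_{i}[\rho_1\cdots(\tau_1\graf\rho_i)\cdots\rho_r]_j .
\]
Applying $\Upsilon$ gives $\nabla^{r+1}\sigma_j\cdot[\Upsilon_{\tau_1}\Upsilon_{\rho_1}\cdots\Upsilon_{\rho_r}]$ plus a sum of terms. In each of these terms, the induction hypothesis replaces $\Upsilon_{\tau_1\graf\rho_i}$ by $\nabla\Upsilon_{\rho_i}\cdot\Upsilon_{\tau_1}$. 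Here we use linearity of $\Upsilon$ in the tree slot, together with multilinearity of $\nabla^r\sigma_j\cdot[\cdots]$. By the Leibniz rule, this whole expression is exactly the derivative of $z\mapsto\nabla^r\sigma_j(z)\cdot[\Upsilon_{\rho_1}(z)\cdots\Upsilon_{\rho_r}(z)]$ in the direction $\Upsilon_{\tau_1}$. That derivative is $\nabla\Upsilon_\tau\cdot\Upsilon_{\tau_1}$, as required.

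For general $k$, I take as induction hypothesis that the formula holds for every forest of fewer than $k$ trees and every tree $\tau$. Set $w=\tau_2\cdots\tau_k$ and $F:=\nabla^{k-1}\Upsilon_\tau\cdot[\Upsilon_{\tau_2}\cdots\Upsilon_{\tau_k}]$. By the hypothesis, $\Upsilon_{w\graf\tau}=F$. Applying the $k=1$ case to each tree appearing in $w\graf\tau$ (extended linearly), we get
\[
\Upsilon_{\tau_1\graf(w\graf\tau)}=\nabla F\cdot\Upsilon_{\tau_1},
\]
and expanding $\nabla F$ by the product rule gives
\[
\nabla F\cdot\Upsilon_{\tau_1}=\nabla^k\Upsilon_\tau\cdot[\Upsilon_{\tau_1}\cdots\Upsilon_{\tau_k}]+\sum_{i=2}^k\nabla^{k-1}\Upsilon_\tau\cdot[\Upsilon_{\tau_2}\cdots(\nabla\Upsilon_{\tau_i}\cdot\Upsilon_{\tau_1})\cdots\Upsilon_{\tau_k}].
\]
On the other side, $\tau_1\graf w=\sum_i\tau_2\cdots(\tau_1\graf\tau_i)\cdots\tau_k$ by \eqref{Eq GU Leibniz}. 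This is a linear combination of forests with $k-1$ trees each, so the induction hypothesis applies (again extended linearly). Combined with the $k=1$ case for each $\tau_1\graf\tau_i$, it shows that $\Upsilon_{(\tau_1\graf w)\graf\tau}$ equals the sum over $i$ above. Subtracting as prescribed by \eqref{Eq: GU definition} leaves exactly $\nabla^k\Upsilon_\tau\cdot[\Upsilon_{\tau_1}\cdots\Upsilon_{\tau_k}]$.

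The main obstacle is bookkeeping rather than ideas. One has to check that $\Upsilon$ and the maps $w\mapsto\Upsilon_{w\graf\tau}$ are linear in the forest argument, so that the hypothesis can be applied to linear combinations such as $\tau_1\graf w$. One also has to check that the recursion \eqref{Eq: GU definition} is compatible with the commutativity of the forest product. For the latter I will simply cite \cite{oudom2008lie}, which guarantees that the extension is well defined. The right-hand side $\nabla^k\Upsilon_\tau\cdot[\cdots]$ is symmetric in its arguments, so no ordering issue arises there.
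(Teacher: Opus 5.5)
Your proposal is correct and takes essentially the same route as the paper's proof in Appendix~\ref{App: grafting}. That proof is a twofold induction: first on $|\tau|$ for the one-tree case $\Upsilon_{\tau_1\curvearrowright\tau}=\nabla\Upsilon_\tau\cdot\Upsilon_{\tau_1}$, via the recursive definition of grafting and the Leibniz rule, and then on the number $k$ of trees, via the Guin--Oudom recursion \eqref{Eq: GU definition}, the one-tree case applied to $\tau_1\curvearrowright(w\curvearrowright\tau)$, and the product rule; the linearity bookkeeping you flag is used implicitly there as well, so nothing is missing.
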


	We recall the definition of the solution remainders:
	\begin{equation}\label{Eq: Z remainders}
		Z_{st}^{[m+1]}:=\delta Z_{st}-\sum_{\tau\in\cT^{\le m}}\Up_\tau(Z_s) \, \X_{st}(\tau)\,, \qquad m\ge 0\,.
	\end{equation}
	Note that $Z^{[1]}_{st}=\delta Z_{st}$. We can now introduce the notion of solution \emph{à la} Davie of the rough equation
	\eqref{Eq: controlled ODE}, already anticipated in the introduction.
	\begin{definition}\label{def:aladavie}
		Fix $\alpha\in(0,1]$, $N:=\lfloor \frac1\alpha\rfloor$ and let $\X$ be an $\alpha$-branched rough path lift of $X\in\CC^\alpha([0,T],\R^d)$ in the sense of Definition \ref{Def: branched rough paths}. Consider $\sigma\in C^{N-1}(\R^n,\R^n\otimes(\R^d)^\ast)$.
		A solution \emph{à la} Davie to the rough equation
		\eqref{Eq: controlled ODE} is a map $Z:[0,T]\to\R^n$ that satisfies
		\begin{equation}\label{eq: sol Davie branched}
			\delta Z_{st}=\sum_{\tau\in\cT^{\leq N}}\Up_\tau(Z_s) \, \X_{st}(\tau)+o\left(t-s\right)\,,
		\end{equation}
		uniformly for $0\le s\le t\le T$.
	\end{definition}
	
	The Ansatz that underpins this definition is based on a rigorous control of the solution remainders in the case of
	$X\in C^1$: see Proposition \ref{pr:daviesmooth} and Remark \ref{rem:davie} below.

	\section{Generalized Taylor formulae for branched rough paths}\label{Sec: generalised Taylor formulae}
	The rationale behind the Davie solution to a rough ODE is to locally approximate it with a truncated series over a prescribed set of \emph{generalised monomials} with sufficient control on the truncation error. The goal of this section is to derive explicit expressions for the elementary differential remainders that enter the study of a priori estimates on the Davie solutions.
	
	We fix for the rest of this section $\alpha\in(0,1]$, $N:=\lfloor \frac1\alpha\rfloor$, a driving path $X\in\mathcal{C}^{\alpha}$ and an $\alpha$-branched rough path $\X$ over $X$.	Moreover we consider coefficients $\sigma\in C^{N-1}(\R^n,\R^n\otimes(\R^d)^\ast)$.
	We recall the definition \eqref{Eq: Z remainders} of the solution remainders. Then by Definition \ref{def:aladavie}, $Z$ is a solution \emph{à la} Davie of the rough equation
	\eqref{Eq: controlled ODE} if $Z_{st}^{[N+1]}=o(|t-s|)$.
	
	An important step in the derivation of a priori estimates for the remainders $Z^{[i]}$ consists of finding a convenient expression for the three-point increment $\delta Z^{[N+1]}$. A good control of the latter transfers to $Z^{[N+1]}$ through the Sewing bound, see Section \ref{Sec: sewing bound}.
	
	We set for $\tau\in\CT$ and $m\geq 0$:
	\begin{equation}\label{eq:Bn}
		B^{[m+1]}_{st}(\tau) := 
		\Up_\tau(Z_t) - \sum_{w\in\CF^{\leq m}}\frac{\Up_{w\curvearrowright\tau}(Z_s)}{\pi(w)}  \, \mathbb{X}_{s t}(w)\,.
	\end{equation}
	We interpret $B^{[m+1]}_{st}(\tau)$ as the remainder of a generalised Taylor expansion of 
$\Up_\tau(Z):[0,T]\to\R^n$, with respect to the set of generalised monomials $(\mathbb{X}_{s t}(w))_{w\in\CF^{\leq m}}$.
	\begin{proposition}\label{Lem: delta Z}
		Let $Z$ satisfy \eqref{eq: sol Davie branched} and $Z^{[N+1]}$ be as in \eqref{Eq: Z remainders}. Then
		\begin{align}
			\delta Z_{s u t}^{[N+ 1]} 
			= \sum_{\tau\in\CT^{\leq N}} B_{s u}^{[N-|\tau|+1]}(\tau) 
			\, \mathbb{X}_{u t}(\tau)\,.
			\label{dZN+1}
		\end{align}
	\end{proposition}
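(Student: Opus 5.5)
We compute $\delta Z^{[N+1]}_{sut}$ directly from the definition \eqref{Eq: Z remainders} and use Chen's relation \eqref{Eq: Chen's relation} to regroup the terms. Since $\delta(\delta Z)_{sut}=0$, we have
\begin{equation*}
\delta Z^{[N+1]}_{sut}= -\sum_{\tau\in\cT^{\le N}}\Big(\Up_\tau(Z_s)\X_{st}(\tau)-\Up_\tau(Z_s)\X_{su}(\tau)-\Up_\tau(Z_u)\X_{ut}(\tau)\Big).
\end{equation*}
The plan is to expand $\X_{st}(\tau)=\langle \X_{su}\star\X_{ut},\tau\rangle$ using the definition \eqref{Eq: star product} of $\star$. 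For a tree $\tau$, the forests $w_1\cdot(w_2\graf v)$ can only contain $\tau$ if $w_1=\one$ and $w_2\graf v$ contains $\tau$ as a term, or $v=\one$ and $w_1=\tau$, $w_2=\one$. Here we use that $w_2\graf v$ is a combination of trees when $v$ is a tree, and vanishes or is trivial for $v=\one$ except when $w_2=\one$. This should give the identity
\begin{equation*}
\X_{st}(\tau)=\X_{su}(\tau)+\X_{ut}(\tau)+\sum_{\rho\in\cT}\ \sum_{w\in\cF\setminus\{\one\}} \frac{\X_{su}(w)}{\pi(w)}\,\X_{ut}(\rho)\,\langle w\graf\rho,\tau\rangle,
\end{equation*}
where the $\X_{ut}(\rho)$ term comes from $w=\one$, since $\one\graf\rho=\rho$. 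Gradings force $|w|+|\rho|=|\tau|\le N$. This is the step I expect to be the main obstacle: one has to track the normalisations $\pi(w)$ coming from the dual identification \eqref{Eq: dual identification} and the pairing \eqref{Eq: pairing}, and check that multiplicativity of $\X$ on forests produces exactly the factor $\X_{su}(w)/\pi(w)$.

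Plugging this in, the terms with $\X_{su}(\tau)$ cancel. The remaining terms give
\begin{equation*}
\delta Z^{[N+1]}_{sut}=\sum_{\rho\in\cT^{\le N}}\X_{ut}(\rho)\Big(\Up_\rho(Z_u)-\Up_\rho(Z_s)-\sum_{\substack{w\ne\one\\ |w|\le N-|\rho|}}\frac{\X_{su}(w)}{\pi(w)}\sum_{\tau}\langle w\graf\rho,\tau\rangle\Up_\tau(Z_s)\Big).
\end{equation*}
By linearity of $\Up$, the inner sum over $\tau$ equals $\Up_{w\graf\rho}(Z_s)$. Here $w\graf\rho\in\langle\cT\rangle$, since the Guin-Oudom grafting of a forest onto a tree is a combination of trees, and Lemma \ref{lem: morphism upsilon} is consistent with this. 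Including the $w=\one$ term, for which $\Up_{\one\graf\rho}=\Up_\rho$, the bracket is exactly $B^{[N-|\rho|+1]}_{su}(\rho)$ as defined in \eqref{eq:Bn}, with the forest sum restricted to $\cF^{\le N-|\rho|}$. This yields \eqref{dZN+1}.

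A minor point is to justify the truncation. Every $\tau$ involved has $|\tau|\le N$, so $\X\in\cH^\ast_N$ suffices and no forests of size larger than $N$ appear. The Davie property $Z^{[N+1]}=o(t-s)$ is not needed for the identity itself, which is purely algebraic.
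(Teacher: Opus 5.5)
Your proof is correct and is essentially the paper's argument: both use $\delta(\delta Z)=0$, expand $\delta\X_{sut}(\tau)$ through Chen's relation as $\sum_{w\ne\one,\,\rho\in\cT}\frac{\X_{su}(w)}{\pi(w)}\X_{ut}(\rho)\langle w\graf\rho,\tau\rangle$, and regroup by $\rho$ to recognise $B^{[N-|\rho|+1]}_{su}(\rho)$. The differences are cosmetic: the paper handles the first step with the product rule $\delta(fA)_{sut}=f_s\,\delta A_{sut}-\delta f_{su}\,A_{ut}$ rather than writing out the three terms, and, as you note, the Davie property plays no role in the identity.
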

	\begin{proof}
		Recalling that $\one\notin\cT$, then the definition of the $(N+1)$-st order remainder of the solution reads
		\[
		Z^{[N+1]}_{st}=\delta Z_{st} - \sum_{\tau\in\CT^{\leq N}} \Up_\tau(Z_s) \, \mathbb{X}_{st}(\tau)\,.
		\]
		Observe that, if $C_{st}=f_sA_{st}$, then
		\[
		\delta C_{sut} = f_s\delta A_{sut} - \delta f_{su} A_{ut}\,,\hspace{1.5cm}(s,u,t)\in[0,T]_\leq^3\,.
		\]
		From this identity and recalling that $\delta\circ \delta=0$, we have
		\[
		\delta Z^{[N+1]}_{sut} = \sum_{\tau\in\CT^{\leq N}} \left[(\Up_\tau(Z_u)-\Up_\tau(Z_s)) \, \mathbb{X}_{ut}(\tau)-\Up_\tau(Z_s) \, \delta \mathbb{X}_{sut}(\tau)
		\right]\,.
		\]
		By Chen's relation \eqref{Eq: Chen's relation}, for all $\tau\in\cT$:
		\begin{align*}
			\delta  \mathbb{X}_{sut}(\tau) 
			&=\sum_{w\in\cF\setminus\{\one\}\,,\,\tau'\in\cT} \frac{1}{\pi(w)}\, \mathbb{X}_{su}(w)\, 
			\mathbb{X}_{ut}(\tau') \, \langle w\curvearrowright\tau',\tau\rangle\,.
		\end{align*}
		Then we obtain the formula
		\begin{align*}
			\delta Z^{[N+1]}_{sut} &= \sum_{\tau\in\CT^{\leq N}} \left(\Up_\tau(Z_u)-\Up_\tau(Z_s)\right) \, \mathbb{X}_{ut}(\tau)
			\\ & \quad -\sum_{\tau\in\CT^{\leq N}}
			\sum_{w\in\cF\setminus\{\one\}\,,\,\tau'\in\cT^{\leq N}}\frac{1}{\pi(w)} \Up_\tau(Z_s) \, \mathbb{X}_{su}(w) \, 
			\mathbb{X}_{ut}(\tau') \, \langle w\curvearrowright\tau',\tau\rangle
			\\ & = \sum_{\tau\in\CT^{\leq N}} \left(\Up_\tau(Z_u)-
			\sum_{w\in\cF^{\leq N-|\tau|}} \frac{1}{\pi(w)}\Up_{w\curvearrowright\tau}(Z_s)\, \mathbb{X}_{su}(w)
			\right) \, \X_{ut}(\tau)\,.
		\end{align*}
		The identification in \eqref{eq:Bn} allows to conclude.
	\end{proof}
	Proposition \ref{Lem: delta Z} clearly shows that, after resorting to the Sewing bound, the remainder norm can be controlled as
	\[
	\begin{split}
		\|Z^{[N+1]}\|_{(N+1)\alpha} & \le K_{(N+1)\alpha} \|\delta Z^{[N+1]}\|_{(N+1)\alpha} 
		\\ & \le K_{(N+1)\alpha} \sum_{\tau\in\CT^{\leq N}}
		\|B^{[N-|\tau|+1]}(\tau)\|_{(N-|\tau|+1)\alpha} \|\mathbb{X}_{u t}(\tau)\|_{|\tau|\alpha}\,.
	\end{split}
	\]
	Since by assumption $\|\mathbb{X}(\tau)\|_{|\tau|\alpha}<+\infty$, the desired bounds directly follow from suitable estimates on
	\[
	\|B^{[N-|\tau|+1]}(\tau)\|_{(N-|\tau|+1)\alpha}\,, \qquad \forall \, \tau\in\CT^{\leq N}.
	\]
	The main aim of this article is to derive closed formulae for the remainders $B^{[i]}$ that would guarantee control of the latter under minimal conditions on the relevant elementary differentials. 
	
	\subsection{Taylor expansions for bounded elementary differentials}\label{Sec: bounded}
	Throughout this section, we set aside the rough path framework and work in a more abstract setting, as the algebraic and analytic identities established below hold for a broad class of expansions sharing a common prescribed structure. For this reason, we adopt the general framework introduced in Section \ref{Sec: remainders}, in particular the definitions \eqref{eq:zk12}-\eqref{eq:B12} of $z^{[m+1]}$ and $B^{[m+1]}_{z_1z_2}$.
		We also use the following shorthand notation: for $w=\tau_1\cdots\tau_k\in\cF$ we write
	\[
	[\Upsilon \X]^{w}:=\bigotimes_{i=1}^{k}\Up_{\tau_i}(z_1) \, \X(\tau_i)\,.
	\] 
	Leveraging on the morphism property of $\Upsilon$, we derive a first explicit formula for the remainder of a generic elementary differential. 	
	\begin{theorem}\label{Thm: Taylor a priori}
		For any $\tau\in\cT$, $m\geq0$, recalling \eqref{eq:zk12}-\eqref{eq:B12}:
		\begin{align}
			B^{[m+1]}_{z_1z_2}(\tau)	=\sum_{k=0}^m\sum_{w=\tau_1\cdots \tau_k\in\cF^{\le m}}\int_{[0,1]}\nabla^{k+1}\Up_\tau(z_1+rz^{[1]})\cdot z^{[m+1-|w|]} \,
			\frac{[\Upsilon\X]^w}{\pi(w)} \, (1-r)^{k}\d r\,.\label{Eq: Taylor}
		\end{align}
	\end{theorem}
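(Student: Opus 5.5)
Set $h:=z^{[1]}=z_2-z_1$ and $a:=\sum_{\sigma\in\cT^{\le m}}\Up_\sigma(z_1)\X(\sigma)$, so that $h=a+z^{[m+1]}$, and more generally $h=a_{\le j}+z^{[j+1]}$ with $a_{\le j}:=\sum_{\sigma\in\cT^{\le j}}\Up_\sigma(z_1)\X(\sigma)$. The plan is to prove \eqref{Eq: Taylor} by induction on $m$, using only the integral Taylor formula, the morphism property of Lemma \ref{lem: morphism upsilon}, and multiplicativity of $\X$.

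\emph{Step 1: rewrite the subtracted sum.} By Lemma \ref{lem: morphism upsilon}, for $w=\tau_1\cdots\tau_k$ we have $\Up_{w\graf\tau}(z_1)\X(w)=\nabla^k\Up_\tau(z_1)\cdot[\Upsilon\X]^w$. Summing $[\Upsilon\X]^w/\pi(w)$ over forests $w$ with $k$ trees, where the tree sizes sum to at most $m$, gives $\frac1{k!}$ times the symmetrised multinomial expansion of $(a_{\le\cdot})^{\otimes k}$ restricted to total degree $\le m$. Indeed, $\pi(w)$ exactly converts ordered $k$-tuples into unordered multisets: the number of orderings of $w$ is $k!/\pi(w)$. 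Hence
\[
\sum_{w\in\cF^{\le m}}\frac{\Up_{w\graf\tau}(z_1)}{\pi(w)}\X(w)=\sum_{k=0}^m\frac1{k!}\sum_{\substack{\sigma_1,\dots,\sigma_k\in\cT\\ \sum|\sigma_i|\le m}}\nabla^k\Up_\tau(z_1)\cdot[\Up_{\sigma_1}(z_1)\X(\sigma_1)\cdots\Up_{\sigma_k}(z_1)\X(\sigma_k)],
\]
where the inner sum runs over ordered tuples.

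\emph{Step 2: an algebraic identity replacing the Taylor remainder.} Rather than fighting the truncation directly, I would prove the claimed formula by telescoping. Let $R_m$ denote the right-hand side of \eqref{Eq: Taylor}. The base case $m=0$ reads $\Up_\tau(z_2)-\Up_\tau(z_1)=\int_0^1\nabla\Up_\tau(z_1+rh)\cdot h\,\d r$, which holds. For the inductive step, compute $R_m-R_{m-1}$ and show that it equals $-\sum_{|w|=m}\Up_{w\graf\tau}(z_1)\X(w)/\pi(w)$. Using $z^{[j]}-z^{[j+1]}=\sum_{|\sigma|=j}\Up_\sigma(z_1)\X(\sigma)$, the difference $R_m-R_{m-1}$ splits into two groups:
\begin{itemize}
\item terms with $|w|\le m-1$, which collect $\int\nabla^{k+1}\Up_\tau(z_1+rh)\cdot(z^{[m+1-|w|]}-z^{[m-|w|]})\cdots$, that is, minus the degree-$(m-|w|)$ trees appended to $w$;
\item new terms with $|w|=m$, containing $z^{[1]}=h$.
\end{itemize}
Appending a tree $\sigma$ to $w$ produces a forest $w\sigma$ with $k+1$ trees, and a counting argument with $\pi(w\sigma)/\pi(w)=\pi(\sigma;w\sigma)$ regroups everything as
\[
\sum_{|v|=m,\ v\text{ with }k+1\text{ trees}}\frac{[\Upsilon\X]^v}{\pi(v)}\cdot\Bigl(\int_0^1\nabla^{k+1}\Up_\tau(z_1+rh)\cdot h\,(1-r)^k\d r\ \text{(from the new terms)}-(k+1)\int_0^1\nabla^{k+1}\Up_\tau(z_1+rh)(1-r)^k\d r\Bigr).
\]
Integrating by parts in $r$,
\[
\int_0^1\nabla^{k+1}\Up_\tau(z_1+rh)\cdot h\,(1-r)^k\d r=-\nabla^k\Up_\tau(z_1)+k\int_0^1\nabla^{k}\Up_\tau(z_1+rh)(1-r)^{k-1}\d r,
\]
so the bracket telescopes in $k$, leaving exactly $-\nabla^k\Up_\tau(z_1)\cdot[\Upsilon\X]^v$. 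By Step 1 this is the desired increment of the subtracted sum.

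\emph{Main obstacle.} The delicate point is the combinatorial bookkeeping in the regrouping: each forest $v=w\sigma$ must arise with total weight $(k+1)/\pi(v)$ (resp.\ $k/\pi(v)$) from the pairs $(w,\sigma)$, and the symmetric-tensor contraction against $\nabla^{k+1}$ must absorb the ordering of $\sigma$ among the slots. I would verify it via $\sum_{\sigma\in v}\pi(\sigma;v)/\pi(v)=\sum_{\sigma}1/\pi(v\setminus\sigma)$ over distinct tree types, using the symmetry of $\nabla^{k+1}\Up_\tau$. A fallback, if the telescoping gets messy, is to expand $\Up_\tau(z_2)=\Up_\tau(z_1+a+z^{[m+1]})$ directly by Taylor with integral remainder in the $z^{[m+1]}$-slot and re-express the polynomial part by degree. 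The induction seems cleaner, however.
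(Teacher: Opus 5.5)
Your argument is correct and is essentially the paper's proof. Both proceed by induction on $m$, raise each $z^{[m+1-|w|]}$ by one level, and regroup $w\sigma=v$ with total weight $(k+1)/\pi(v)$ via $\pi(v\setminus\sigma)\,\pi(\sigma;v)=\pi(v)$. Your integration by parts in $r$ is the paper's step of writing $\nabla^{k+1}\Up_\tau(z_1+rz^{[1]})-\nabla^{k+1}\Up_\tau(z_1)=\int_0^r\nabla^{k+2}\Up_\tau(z_1+uz^{[1]})\cdot z^{[1]}\d u$ and applying Fubini, read backwards.

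The indices in your displayed bracket need fixing. For $v$ with $k+1$ trees, the new term is $\int_0^1\nabla^{k+2}\Up_\tau(z_1+rh)\cdot h\,(1-r)^{k+1}\d r$. The cancellation then happens for each fixed $v$, with no telescoping across $k$, and leaves $-\nabla^{k+1}\Up_\tau(z_1)\cdot[\Upsilon\X]^v/\pi(v)=-\Up_{v\graf\tau}(z_1)\X(v)/\pi(v)$ by Lemma \ref{lem: morphism upsilon}.
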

	\begin{proof}
		We proceed by induction. Since $z^{[1]}=z^2-z^1$, for $m=0$ we have 
		\[
		\Up_\tau(z_2)-	\Up_\tau(z_1)=\int_{[0,1]}\nabla	\Up_\tau(z_1+rz^{[1]})\cdot z^{[1]} \d r\,,
		\]
		which proves the claim. Now suppose that \eqref{Eq: Taylor} holds true for $m\geq 0$. Then
		\begin{align*}
			&		\underbrace{\Up_\tau(z_2)-\sum_{w\in\cF^{\leq m+1}}\frac{\Up_{w\curvearrowright\tau}(z_1)}{\pi(w)} \, \X(w)}_{A} \\
			&=\underbrace{\Up_\tau(z_2)-\sum_{w\in\cF^{\leq m}}\frac{\Up_{w\curvearrowright\tau}(z_1)}{\pi(w)} \, \X(w)}_{B}-
			\underbrace{\sum_{w\in\cF^{=m+1}}\frac{\Up_{w\curvearrowright\tau}(z_1)}{\pi(w)} \, \X(w)}_{C}\,.
		\end{align*}
		By the morphism nature of the elementary differential, see Lemma \ref{lem: morphism upsilon}, we deduce that
		\begin{align*}
			C& =
			\sum_{k=1}^{m+1}\sum_{w=\tau_1\cdots\tau_k\in\cF^{=m+1}}\nabla^k\Up_\tau(z_1)\cdot\frac{[\Upsilon\X]^{w}}{\pi(w)} 
			=
			\sum_{k=0}^m\sum_{w=\tau_1\cdots\tau_{k+1}\in\cF^{=m+1}}\nabla^{k+1}\Up_\tau(z_1)\cdot\frac{[\Upsilon\X]^{w}}{\pi(w)}\,.
		\end{align*}
		By the induction hypothesis, writing $z^{[m+1-|w|]}=z^{[m+2-|w|]}+\sum_{\gamma\in\CT^{=m+1-|w|}}\Up_\gamma(z_1)\, \X(\gamma)$:
		\begin{align*}
			B		&=\sum_{k=0}^m\sum_{w=\tau_1\cdots\tau_k\in\cF^{\le m}}\int_{[0,1]}\nabla^{k+1}\Up_\tau(z_1+rz^{[1]})\cdot z^{[m+1-|w|]} \, 
			\frac{[\Upsilon\X]^{w}}{\pi(w)} \, (1-r)^{k}\d r \\ 
			& = \underbrace{\sum_{k=0}^m\sum_{w=\tau_1\cdots\tau_k\in\cF^{\le m}}\int_{[0,1]}\nabla^{k+1}\Up_\tau(z_1+rz^{[1]})\cdot z^{[m+2-|w|]} \,
				\frac{[\Upsilon\X]^{w}}{\pi(w)} \, (1-r)^{k}\d r}_{D}
		\end{align*}
		\begin{align*}
		 & + \underbrace{\sum_{k=0}^m\sum_{w=\tau_1\cdots\tau_k\in\cF^{\le m}}\sum_{\gamma\in\cT^{=m+1-|w|}}\int_{[0,1]}\nabla^{k+1}\Up_\tau(z_1+rz^{[1]})\cdot \Upsilon_{\gamma}(z_1) \, \X(\gamma)\,\frac{[\Upsilon\X]^{w}}{\pi(w)} \, (1-r)^{k}\d r}_{E}.
		\end{align*}
		We wish to reshape $E$ in a form that resembles $C$. Care must be taken in including the sum over $\gamma\in\cT^{=m+1-|w|}$ as part of the sum over a larger forest as there is a hidden degeneracy that must be preserved. Indeed 
		\begin{align*}
			E&= \sum_{k=0}^m\sum_{w=\tau_1\cdots\tau_{k+1}\in\cF^{\le m}}\sum_{i=1}^{k+1}\frac{1}{\pi(\tau_i;w)}\int_{[0,1]}\nabla^{k+1}\Up_\tau(z_1+rz^{[1]})\cdot \frac{[\Upsilon\X]^w}{\pi(w\setminus\tau_i)} \, (1-r)^{k}\d r\,,
		\end{align*}
		where $\pi(\tau_i;w)$ has been introduced in Definition \ref{Def: symmetry factor}.
		Upon observing that $\pi(w\setminus\tau_i)\,\pi(\tau_i;w)=\pi(w)$ and that the sum over $i$ yields a factor $k+1$, we obtain
		\begin{align*}
			E&= \sum_{k=0}^m\sum_{w=\tau_1\cdots\tau_{k+1}\in\cF^{\le m}} (k+1) \int_{[0,1]}\nabla^{k+1}\Up_\tau(z_1+rz^{[1]})\cdot \frac{[\Upsilon\X]^w}{\pi(w)} \, (1-r)^{k}\d r\,.
		\end{align*}
		As a result
		\begin{align*}
			&		E - C \\
			& = \sum_{k=0}^m
			\sum_{w=\tau_1\cdots\tau_{k+1}\in\cF^{=m+1}}(k+1)
			\int_{[0,1]}	\left(\nabla^{k+1}\Up_\tau(z_1+rz^{[1]})-\nabla^{k+1}\Up_\tau(z_1)\right)\cdot 
			\frac{[\Upsilon\X]^{w}}{\pi(w)} \, (1-r)^{k}\d r
			\\ & = \sum_{k=0}^m
			\sum_{w=\tau_1\cdots\tau_{k+1}\in\cF^{=m+1}}
			\int_{[0,1]} (k+1)\,(1-r)^{k} \d r \int_{[0,r]}\nabla^{k+2}\Up_\tau(z_1+uz^{[1]})\cdot 
			z^{[1]} \,\frac{[\Upsilon\X]^{w}}{\pi(w)} \d u 
			\\ & = \sum_{k=0}^m
			\sum_{w=\tau_1\cdots\tau_{k+1}\in\cF^{=m+1}}
			\int_{[0,1]}\nabla^{k+2}\Up_\tau(z_1+uz^{[1]})\cdot  z^{[1]}\,
			\frac{[\Upsilon\X]^{w}}{\pi(w)} \, (1-r)^{k+1}	\d u 
			\\ & = \sum_{k=0}^{m+1}
			\sum_{w=\tau_1\cdots\tau_{k}\in\cF^{=m+1}}
			\int_{[0,1]}\nabla^{k+1}\Up_\tau(z_1+uz^{[1]})\cdot z^{[1]}\,
			\frac{[\Upsilon\X]^{w}}{\pi(w)} \, (1-r)^{k}\d u \,.
		\end{align*}
		Note that in the last line we performed the change of summation variable $k'=k+1$ and we added the boundary case $k=0$, which has vanishing argument by the constraint $|w|= m+1>0$. Then
		\[
		\begin{split}
			&	B-C = D+E-C=
			\\ & = \sum_{k=0}^{m+1}\sum_{w=\tau_1\cdots\tau_k\in\cF^{\le m}}\int_{[0,1]}\nabla^{k+1}\Up_\tau(z_1+rz^{[1]})\cdot z^{[m+2-|w|]}  \frac{[\Upsilon\X]^{w}}{\pi(w)} \, (1-r)^{k}\d r
			\\ & \quad +  \sum_{k=0}^{m+1}
			\sum_{w=\tau_1\cdots\tau_{k}\in\cF^{=m+1}}
			\int_{[0,1]}\nabla^{k+1}\Up_\tau(z_1+uz^{[1]})\cdot z^{[1]}\,
			\frac{[\Upsilon\X]^{w}}{\pi(w)} \, (1-r)^{k}\d u = A\,,
		\end{split}
		\]
		where in $D$ we added to the sum the boundary case $k=m+1$ since the corresponding term is equal to zero by the constraint $w=\tau_1\cdots\tau_{k}\in\cF^{\le m}$.	This concludes the proof.
	\end{proof}
	Choosing $\X=\X_{st}$, $z_1=Z_s$ and $z_2=Z_t$, 
	Equation \eqref{dZN+1} and Theorem \ref{Thm: Taylor a priori} for $m=0,\ldots,N-1$ specialise to
	\begin{equation}\label{eq:boubou}
		B^{[m+1]}_{st}(\tau)
		=\sum_{k=0}^{m}\sum_{w=\tau_1\cdots\tau_k\in\cF^{\le m}}\int_{[0,1]}\nabla^{k+1}\Up_\tau(Z_s+rZ^{[1]}_{st})\cdot  Z^{[m+1-|w|]}_{st}\,\frac{[\Upsilon\X]_{st}^{w}}{\pi(w)}\, (1-r)^{k}\d r\,.
	\end{equation}
	This is a first exact formula for the remainders $B^{[m+1]}_{st}(\tau)$ that appear in \eqref{dZN+1}. 
	Under suitable boundedness assumptions on the elementary differentials $\Upsilon_\tau$ and their derivatives, the explicit form of the remainders in 
	\eqref{eq:boubou} allows to bound them in terms of the H\"older norm of the errors $Z^{[i]}$: see 
	Proposition \ref{lem:bounded} below.
	
	In the next section we address the (much more difficult) problem of a different exact formula for the remainders $B^{[m+1]}_{st}(\tau)$ that allows to
	bound them assuming only Lipschitz continuity and not boundedness of $\Upsilon_\tau$.
	
	\begin{remark}
The results of \cite[Proposition 3.11]{bonnefoi2022priori} and \cite[Proposition A.1]{hairer2014theory} are similar in spirit to \eqref{Eq: Taylor}. The key difference is that, despite being less general, our formula is more precise since it includes also the intermediate solution remainders $Z^{[i]}$ for $i=2,\ldots,N$. Notice that all these formulae are well suited for the study of global a priori estimates on Davie's solutions when all relevant derivatives of $\Up$ are bounded. 
	\end{remark}
	
	\begin{remark}
		A natural question is whether the generalised Taylor formulae of Theorem \ref{Thm: Taylor a priori} can also be derived within the framework of (weakly) geometric rough paths. Although we obtained an analogous formula, its derivation involved additional, and somewhat unexpected, difficulties. At first sight, one might expect that the richer structure of geometric rough paths, most notably the availability of an integration-by-parts formula, would simplify the analysis. However, the corresponding Ansatz for the solution expansion is no longer indexed by trees, resulting in the loss of the convenient algebraic structures associated with them, such as the grafting operation and the morphism property of elementary differentials.
		
		More importantly, in the geometric setting, the multiplicativity of characters is formulated with respect to the shuffle product. As a consequence, every decomposition of the rough path appearing in the preceding calculations gives rise to additional combinatorial terms that must be tracked carefully. Developing this approach would require a substantial detour from the main objectives of the present work, and we therefore leave it for future investigation.
	\end{remark}

	\section{Taylor expansions for Lipschitz elementary differentials}\label{Sec: Lipschitz elementary differentials}
	
	As showed in \eqref{eq:boubou}, the expansion of the remainders involved in the calculation of $\delta Z^{[N+1]}$ as derived in Theorem \ref{Thm: Taylor a priori} can be controlled by lower order solution remainders when sufficiently many derivatives of the elementary differentials are bounded. 
	Using again the general framework introduced in Section \ref{Sec: remainders}, in particular the definitions \eqref{eq:zk12}-\eqref{eq:B12} of 
	$z^{[m+1]}$ and $B^{[m+1]}_{z_1z_2}$, we can interpret \eqref{Eq: Taylor} as follows:
	\[
	B^{[m+1]}_{z_1z_2}(\tau)=\sum_{w\in\cF^{\leq m}}
	H_{z_1z_2}(w;\tau)\cdot z^{[m+1-|w|]} \, \X(w)
	\]
	where, for $w=\tau_1\cdots \tau_k\in\cF^{\le m}$ and $z^{[1]}=z^2-z^1$
	\begin{equation}\label{Eq: identity remainder H1}
	H_{z_1z_2}(w;\tau):= 
	\int_{[0,1]}\nabla^{k+1}\Up_\tau(z_1+r z^{[1]})\cdot \Upsilon_w(z_1)\,\frac{(1-r)^{k}}{k!}\d r\,.
	\end{equation}
	This clearly highlights how this formula is effective only under boundedness assumptions on 
	$\Upsilon_\tau$ and its derivatives.
On the other hand, if we only assume that $\Upsilon_\tau$ is globally Lipschitz continuous for all $\tau\in\cT^{\leq N}$, it would be necessary to derive alternative formulae that only involve gradients of the elementary differentials. An additional requirement when studying a priori estimates, which will become transparent in the proof of Theorem \ref{th: a priori branched}, is that the Taylor formulae we are looking for must be linear in the solution remainders $z^{[i]}$. 
	
	The main result of this section is an alternative, explicit expression for the coefficients $H_{z_1z_2}(w;\tau_0)$, given in Theorem \ref{Thm: Taylor Lipschitz} below, which naturally allows for a bound of the form
	\[
	|H_{z_1z_2}(w;\tau_0)| \lesssim \prod_{\tau\in\cT^{\le |w|+|\tau_0|}}\|\nabla\Upsilon_{\tau}\|_\infty\,.
	\]	

	To highlight the key ideas underlying our method, we shall elaborate on the remainder in the left-hand side of \eqref{eq:B12} at low orders. 
	For $z_1,z_2\in\R^n$ we introduce the notation 
	\begin{equation*}
		u_i:=z_1+t_i z^{[1]}\,,\qquad i\in \N,\ t_i\in[0,1]\,.
	\end{equation*}
	  where $z^{[1]}=z^2-z^1$. We also restrict to the case of scalar equations as this allows to drop the tree decorations, even if everything we are presenting works exactly the same in higher dimensions. 
	
	 Recalling the notation \eqref{eq:zk12}-\eqref{eq:B12} for the abstract remainders $z^{[m+1]}$ and 
	$B^{[m+1]}_{z_1z_2}(\tau)$, the first relevant case $m+1=1$ is simple and yields
	\begin{align*}
		B_{z_1z_2}^{[1]}(\tau)= \Up_{\tau}(z_2)-\Up_{\tau}(z_1)=\int_0^1\nabla\Up_\tau(u_0) \, z^{[1]} \d t_0\,,
	\end{align*}
	which indeed satisfies our requirements. Moving to the case $m+1=2$, the only forest $w\in\cF^{=1}$ is the single vertex itself, entailing 
	\begin{align*}
		B_{z_1z_2}^{[2]}(\tau)=	B_{z_1z_2}^{[1]}(\tau)-\Up_{\bullet\graf\tau}(z_1) \, \X(\bullet)\,,
	\end{align*}
	which is a special instance of the relation
	\begin{align}\label{Eq: inductive B}
		B^{[m+2]}_{z_1z_2}(\tau)=B^{[m+1]}_{z_1z_2}(\tau) - \sum_{w\in\CF^{= m+1}}\frac{\Up_{w\curvearrowright\tau}(z_1)}{\pi(w)}  \, \mathbb{X}(w)\,.
	\end{align}
	Analogously we have for $z^{[m+2]}$ by \eqref{eq:zk12}
	\begin{align}\label{Eq: inductive z}
		z^{[m+1]}=z^{[m+2]} + \sum_{\tau\in\cT^{= m+1}} \Up_{\tau}(z_1)  \, \mathbb{X}(\tau)\,.
	\end{align}
	Our strategy now is to increase the order of the remainder $z^{[1]}$ in $B_{z_1z_2}^{[1]}(\tau)$ in order to absorb the term $\Up_{\bullet\graf\tau}(z_1) \, \X(\bullet)$, which is not yet in gradient form. Indeed we can rewrite $B_{z_1z_2}^{[2]}(\tau)$ as
	\begin{align*}
		B_{z_1z_2}^{[2]}(\tau)&=\int_0^1\nabla\Up_\tau(u_0) \, z^{[1]} \d t_0-\Up_{\bullet\graf\tau}(z_1) \, \X(\bullet)
		\\&=\int_0^1\nabla\Up_\tau(u_0) \, z^{[2]} \d t_0 +\left[{\color{red} \int_0^1\nabla\Up_\tau(u_0) \Up_{\bullet}(z_1) \d t_0-\Up_{\bullet\graf\tau}(z_1)}\right] \, \X(\bullet)\,,
	\end{align*}
	where we have used \eqref{Eq: inductive z} in the second line.
	We can massage the red terms to a nicer form by summing and subtracting a term that would help us forming new gradients:
	\begin{align*}
		&\int_0^1\left[{\color{red}\nabla\Up_\tau(u_0) \Up_{\bullet}(z_1)} -\nabla\Up_\tau(u_0) \Up_{\bullet}(u_0)\right]\d t_0 +	\int_0^1
		\left[\nabla\Up_\tau(u_0) \Up_{\bullet}(u_0){\color{red}-\Up_{\bullet\graf\tau}(z_1)}\right] \d t_0
		 \\
		&= -\int_0^1\int_0^{t_0}\nabla\Up_\tau(u_0) \, \nabla\Up_{\bullet}(u_1) \, z^{[1]}\d t_0\d t_1+\int_0^1(\Up_{\bullet\graf\tau}(u_0)-\Up_{\bullet\graf\tau}(z_1)) \d t_0\\
		&=\int_0^1\int_0^{t_0}\left(\nabla \Up_{\bullet\graf\tau}(u_1)-\nabla\Up_\tau(u_0) \, \nabla\Up_{\bullet}(u_1)\right) \, z^{[1]} \d t_1\d t_0\,.
	\end{align*}
	In the second line we used the morphism property of $\Upsilon$ with respect to the grafting product. As a result
	\begin{align}\label{Eq: B^2}
	B_{z_1z_2}^{[2]}(\tau)&=\int_0^1\nabla\Up_\tau(u_0) \, z^{[2]} \d t_0 +\nonumber
	\\ & \quad +\int_0^1\int_0^{t_0}{\color{red}\left(\nabla \Up_{\bullet\graf\tau}(u_1)-\nabla\Up_\tau(u_0) \, \nabla\Up_{\bullet}(u_1)\right)}\,z^{[1]} \, 
	\X(\bullet)\d t_1\d t_0\,.
	\end{align}
	Observe that this expression satisfies our requirements since it only contains gradients of $\Upsilon$. Most importantly, the procedure adopted to create new gradients does not depend on the specific trees considered and will be the cornerstone of our method.

	Unfortunately, this procedure gets rapidly more complicated. The key issues already arise when our approach is carried over to $B_{z_1z_2}^{[3]}(\tau)$. Recalling \eqref{Eq: inductive B} we express the remainder as
	\begin{align*}
&B_{z_1z_2}^{[3]}(\tau)=B^{[2]}_{z_1z_2}(\tau)-\Upsilon_{\<1>\,\graf\tau}(z_1)\,\X(\,\<1>\,)-\frac{1}{2}\Upsilon_{\bullet\bullet\graf\tau}(z_1)\,\X(\bullet\bullet)
		\\&=\int_0^1\nabla\Upsilon_{\tau}(u_0) \, z^{[2]} \d t_0 
		+\int_0^1\int_0^{t_0}\left(\nabla \Upsilon_{\bullet\graf\tau}(u_1)-\nabla\Upsilon_{\tau}(u_0) \, \nabla\Upsilon_{\bullet}(u_1)\right)\,z^{[1]}\,\X(\bullet)\d t_1\d t_0\\
		&\quad-\Upsilon_{\<1>\,\graf\tau}(z_1)\,\X(\,\<1>\,)-\frac{1}{2}\Upsilon_{(\bullet\bullet)\graf\tau}(z_1)\,\X(\bullet\bullet)\,.
	\end{align*}
	First we raise the order of all the errors $z^{[i]}$ using \eqref{Eq: inductive z}, obtaining
	\begin{align}
	B_{z_1z_2}^{[3]}(\tau)=&\int_0^1\nabla\Upsilon_{\tau}(u_0) \, z^{[3]} \d t_0 +\int_0^1{\color{blue}\nabla\Upsilon_{\tau}(u_0)\,\Upsilon_{\<1>}(z_1)}\,\X(\,\<1>\,)\d t_0\nonumber
	\\ & + \int_0^1\int_0^{t_0}\left(\nabla \Upsilon_{\bullet\graf\tau}(u_1)-\nabla\Upsilon_{\tau}(u_0) \, \nabla\Upsilon_{\bullet}(u_1)\right)\,z^{[2]}\,\X(\bullet)\d t_1\d t_0\nonumber\\
		&+\int_0^1\int_0^{t_0}{\color{red}\left(\nabla \Upsilon_{\bullet\graf\tau}(u_1)-\nabla\Upsilon_{\tau}(u_0) \, \nabla\Upsilon_{\bullet}(u_1)\right)\Upsilon_{\bullet}(z_1)}\,\X(\bullet)\,\X(\bullet)\d t_1\d t_0\nonumber\\
		&-\int_0^1{\color{blue}\Upsilon_{\<1>\,\graf\tau}(z_1)}\,\X(\,\<1>\,)\d t_0-\int_0^1\int_0^{t_0}{\color{red}\Upsilon_{(\bullet\bullet)\graf\tau}(z_1)}\,\X(\bullet\bullet)\d t_1\d t_0\,.\label{Eq: B^3 intermedia}
	\end{align}
	In the last line we converted the combinatorial prefactor $\pi(w)$ into an integral over a simplex. 
	Since the first and third terms in the right-hand side already have the desired form, we focus our attention on the remaining ones. An argument identical to the one performed to obtain \eqref{Eq: B^2} allows to group the blue terms as follows 
	\[
	\begin{split}
		\int_{[0,1]_\geq^2}{\color{blue}\left[ \nabla\Up_{\<1> \curvearrowright\tau}(u_1)- \nabla\Up_{\tau}(u_0)\, \nabla\Up_{\<1>}(u_1)\right]} \, z^{[1]} \, \X(\,\<1>\,) \d t_0
		\d t_1\,.
	\end{split}
	\]
	
	To deal with the red terms, more care is required. We show the main steps of the argument, leaving to the interested reader the task of checking the intermediate calculations. 	First, in order to merge the red term in the last line of \eqref{Eq: B^3 intermedia} with any of the other contributions, we must explicitly write down the terms of the forest grafting, see \eqref{Eq: GU definition}. The result reads (ignoring the common factor $\X(\bullet\bullet)$)
	\begin{equation*}
	-\int_0^1\int_0^{t_0}{\color{red}\Upsilon_{(\bullet\bullet)\graf\tau}(z_1)} \d t_1\d t_0=-	\int_0^1\int_0^{t_0}{\color{red}\left(\Upsilon_{\bullet\graf(\bullet\graf\tau)}(z_1)-\Upsilon_{(\bullet\graf\bullet)\graf\tau}(z_1)\right)} \d t_1\d t_0\,.
	\end{equation*}
	The second step consists of adding and subtracting the right expressions to turn differences of elementary differentials into their gradient. After iterating this procedure multiple times the red terms take the form
	\begin{align*}
		&\int_0^1\int_0^{t_0}{\color{red}\left(\nabla \Upsilon_{\bullet\graf\tau}(u_1)-\nabla\Upsilon_{\tau}(u_0) \, \nabla\Upsilon_{\bullet}(u_1)\right)\Upsilon_{\bullet}(z_1)} \d t_1\d t_0
		-\int_0^1\int_0^{t_0}{\color{red}\Upsilon_{(\bullet\bullet)\graf\tau}(z_1)} \d t_1\d t_0
		\\&=-\int_{[0,1]_\geq^3}{\color{violet}\left(\nabla \Upsilon_{\bullet\graf\tau}(u_1)-\nabla\Upsilon_{\tau}(u_0) \, \nabla\Upsilon_{\bullet}(u_1)\right)\, \nabla\Upsilon_{\bullet}(u_2)} \,z^{[1]}\d t_2\d t_1\d t_0\\
		&+\int_{[0,1]_\geq^3}{\color{magenta}\nabla\Upsilon_{\bullet\graf(\bullet\graf\tau)}(u_2)} \,z^{[1]}\d t_2\d t_1\d t_0\\
		&+\int_0^1\left[\int_0^{t_0}\int_{t_1}^{t_0}{\color{orange}\nabla\Upsilon_{\tau}(u_0) \, \nabla\Upsilon_{\bullet\graf\bullet}(u_2)}
		\d t_2 \d t_1
		+\int_0^1\int_0^{t_0}\int_0^{t_0}{\color{orange}\nabla\Upsilon_{(\bullet\graf\bullet)\graf\tau}(u_2)} \d t_2\d t_1\right]z^{[1]} \d t_0.
	\end{align*}
	This expression is almost in the desired form, but it displays a new issue: we encounter integrals that are not over simplexes anymore, e.g. in the orange contributions. This difficulty can be overcome by a manipulation of the nested integrals: in the first case, one exchanges the
	variables $t_1$ and $t_2$, yielding $\color{brown}\nabla\Upsilon_{\tau}(u_0) \, \nabla\Upsilon_{\bullet\graf\bullet}(u_1)$; in the second case, one breaks the integration domain into the two sets $\{t_1<t_2\}\cup\{t_2<t_1\}$ and on $\{t_1<t_2\}$ one
	 exchanges the two variables; the net result is that the second term appears \emph{twice}: 
	 $\color{ForestGreen}\nabla\Upsilon_{(\bullet\graf\bullet)\graf\tau}(u_1)+\nabla\Upsilon_{(\bullet\graf\bullet)\graf\tau}(u_2)$.
	Collecting all contributions, we obtain the desired formula for $B_{z_1z_2}^{[3]}(\tau)$:
	\begin{align}
	&B_{z_1z_2}^{[3]}(\tau)=\int_0^1\nabla\Upsilon_{\tau}(u_0) \, z^{[3]} \d t_0 +\nonumber
	\\ & + \int_{[0,1]_\geq^2}\left(\nabla \Upsilon_{\bullet\graf\tau}(u_1)-\nabla\Upsilon_{\tau}(u_0) \, \nabla\Upsilon_{\bullet}(u_1)\right)\,z^{[2]}\, \X(\bullet)\d t_1\d t_0\nonumber
	\\&+\int_{[0,1]_\geq^2}{\color{blue}\left( \nabla\Up_{\<1> \curvearrowright\tau}(u_1)- \nabla\Up_{\tau}(u_0)\, \nabla\Up_{\<1>}(u_1)\right)} \, z^{[1]} \, \X(\,\<1>\,)\d t_0\d t_1\nonumber
	\\&+\int_{[0,1]_\geq^3}\left({\color{magenta}\nabla\Upsilon_{\bullet\graf(\bullet\graf\tau)}(u_2)}
	-{\color{ForestGreen} \nabla\Upsilon_{(\bullet\graf\bullet)\graf\tau}(u_1)}-{\color{ForestGreen} \nabla\Upsilon_{(\bullet\graf\bullet)\graf\tau}(u_2)}{\color{violet}-\nabla\Upsilon_{\bullet\graf\tau}(u_1)\, \nabla\Upsilon_{\bullet}(u_2) }\right.\nonumber\\
	&\quad+\left.{\color{brown}\nabla\Upsilon_{\tau}(u_0) \, \nabla\Upsilon_{\bullet\graf\bullet}(u_1)}
	{\color{violet}+\nabla\Upsilon_{\tau}(u_0) \, \nabla\Upsilon_{\bullet}(u_1)\, \nabla\Upsilon_{\bullet}(u_2)} \right)\,\X(\bullet\bullet)\,z^{[1]}\d t_2\d t_1\d t_0\,.\label{Eq: B^3}
	\end{align}
	\begin{remark}\label{rem:problems}
	This example showcases the problems that one faces when trying to give a systematic formula account for this construction:
	\begin{enumerate}
		\item an explicit description of all terms in the Guin-Oudom expression $(\tau_1\cdots\tau_k)\graf\tau$ is needed,
		\item the green terms in \eqref{Eq: B^3} have the same form in the Guin-Oudom sum, 
		yet they are evaluated at different points ($u_1$ and $u_2$ in this case); finding the 
		correct evaluation point for each derivative in expressions like \eqref{Eq: B^3} is far from trivial.
	\end{enumerate}
	A solution to both problems is proposed in the forthcoming sections \ref{Sec: Binary trees and Guin-Oudom grafting} and
	\ref{Sec: evaluation points}. 
	\end{remark}
	
	\subsection{Binary trees and Guin-Oudom grafting}\label{Sec: Binary trees and Guin-Oudom grafting}
	A crucial step in the construction of the generalized Taylor expansions is to derive a suitable and explicit representation of the contributions arising from the Guin–Oudom extension of the grafting operation to forests of rooted trees. The latter is defined inductively and hides the actual shape of each term, hence preventing us from making important identifications. Thus, in this section, we provide an alternative representation of the grafting of forests that, to the best of our knowledge, is not present in the literature and has its own relevance. 
	
	We recall that the grafting operation $\graf$ introduced in Definition \ref{Def: grafting} can be extended to forests according to the aforementioned Guin-Oudom prescription, see \cite{oudom2008lie}, as per \eqref{Eq: GU definition}.
	As an example, consider the grafting of the forest $w=\tau_1 \tau_2\tau_3$. An iterative application of \eqref{Eq GU Leibniz} and\eqref{Eq: GU definition} leads to
	\begin{align*}
		(\tau_1\tau_2\tau_3)\graf\tau=&\tau_1\graf(\tau_2\graf(\tau_3\graf \tau))-\tau_1\graf((\tau_2\graf\tau_3)\graf\tau)\\
		&-(\tau_1\graf\tau_2)\graf(\tau_3\graf\tau)+((\tau_1\graf\tau_2)\graf\tau_3)\graf\tau\\
		&-\tau_2\graf((\tau_1\graf\tau_3)\graf\tau)+(\tau_2\graf(\tau_1\graf\tau_3))\graf\tau\,.
	\end{align*}
	In what follows, we need to encode each term on the right-hand side of this expression.
	This problem is well known in the literature on enumerative combinatorics for non-commutative operations and is usually addressed by adopting a representation in term of binary trees \cite{stanley1999enumerative,chapoton2007operads}. However, as an additional complication, the order of trees within the forest is not preserved, while only specific permutations of the latter are present. 
	
	Inspired by the extensive literature cited above, we adopt an analogous point of view. We introduce \emph{planar binary trees} with a decoration on 
	leaves, in such a way that for example we can encode
	\[
	\tau_1\graf\tau_2 \longleftrightarrow \begin{tikzpicture}[scale=0.2,baseline=0.1cm]
			\node at (0,0) [dot] (root2) {};
			\node at (-3,3) [empty] (centerl) {$1$};
			\node at (3,3) [empty] (centerr) {$2$};
			\draw[kernel] (centerl) to
			node [sloped,below] {\small }     (root2);
			\draw[kernel] (centerr) to
			node [sloped,below] {\small }     (root2);
		\end{tikzpicture}, \qquad 
		(\tau_1\graf\tau_2)\graf\tau_3 \longleftrightarrow 
				\begin{tikzpicture}[scale=0.2,baseline=0.1cm]
			\node at (0,0) [dot] (root) {};
			\node at (-3,3) [dot] (root1) {};
			\node at (-6,6) [empty] (topl) {$1$};
			\node at (0,6) [empty] (topr) {$2$};
			\node at (3,3) [empty] (root2) {$3$};
			\draw[kernel] (topl) to
			node [sloped,below] {\small }     (root1);
			\draw[kernel] (topr) to
			node [sloped,below] {\small }     (root1);
			\draw[kernel] (root1) to
			node [sloped,below] {\small }     (root);
			\draw[kernel] (root2) to
			node [sloped,below] {\small }     (root);
		\end{tikzpicture}\,,
	\]
	\[
	(\tau_1\graf\tau_2)\graf(\tau_3\graf\tau_0) \longleftrightarrow \!\!
\begin{tikzpicture}[scale=0.2,baseline=0.1cm]
				\node at (0,0) [dot] (root) {};
				\node at (-4,3) [dot] (root1) {};
				\node at (-7,6) [empty] (topl) {$1$};
				\node at (-1,6) [empty] (topr) {$2$};
				\node at (4,3) [dot] (root2) {};
				\node at (1,6) [empty] (centerl) {$3$};
				\node at (7,6) [empty] (centerr) {$0$};
				\draw[kernel] (topl) to
				node [sloped,below] {\small }     (root1);
				\draw[kernel] (topr) to
				node [sloped,below] {\small }     (root1);
				\draw[kernel] (centerl) to
				node [sloped,below] {\small }     (root2);
				\draw[kernel] (centerr) to
				node [sloped,below] {\small }     (root2);
				\draw[kernel] (root1) to
				node [sloped,below] {\small }     (root);
				\draw[kernel] (root2) to
				node [sloped,below] {\small }     (root);
			\end{tikzpicture}\,, \quad
	(\tau_3\graf(\tau_1\graf\tau_2))\graf\tau_0 \longleftrightarrow \!\!
		\begin{tikzpicture}[scale=0.2,baseline=0.1cm]
		\node at (-3,9) [empty] (3) {$1$};
		\node at (3, 9) [empty] (2) {$2$};
		\node at (0,0) [dot] (root) {};
		\node at (-3,3) [dot] (root1) {};
		\node at (-6,6) [empty] (topl) {$3$};
		\node at (0,6) [dot] (topr) {};
		\node at (3,3) [empty] (root2) {$0$};
		\draw[kernel] (topl) to
		node [sloped,below] {\small }     (root1);
		\draw[kernel] (topr) to
		node [sloped,below] {\small }     (root1);
		\draw[kernel] (root1) to
		node [sloped,below] {\small }     (root);
		\draw[kernel] (root2) to
		node [sloped,below] {\small }     (root);
		\draw[kernel] (3) to
		node [sloped,below] {\small }     (topr);
		\draw[kernel] (2) to
		node [sloped,below] {\small }     (topr);
	\end{tikzpicture}\,.
	\]
	\begin{definition}[Planar binary trees] \label{def:planar}
	A \emph{planar binary} rooted tree $(V,E,\mathfrak{v})$ is a rooted tree such that
	every vertex has either two or zero children and endowed with
	\begin{itemize}
	\item a left-to-right total order between children of every vertex
	\item a decoration $\mathfrak{v}:L\to\N$, where $L\subset V$ are the leaves of the tree.
	\end{itemize}
	We denote by $\cT_{b}$ the set of \emph{planar binary} rooted trees. 
		If $k\in\N$ and $I\in(\N\setminus\{0\})^k$ is a string of $k$ positive integers, then we denote by $\cT_b^{I}$ the set of $\rho\in\cT_{b}$ with 
		$k+1$ leaves and such that 
		\begin{itemize}
		\item the rightmost leaf has $\mathfrak{v}$-decoration $0$,
		\item the $i$-th leaf from the {\it right} receives as $\mathfrak{v}$-decoration the $i$-th element of $I$.
\end{itemize}		
 When $I=\emptyset$, $\cT_b^I=\{\bullet_0\}$.		We also adopt the notation
		\[
		I_k:=(1,\ldots,k)\in\N^k
		\]
		and, for $\Sigma_k$ the set of permutations of $\{1,\ldots,k\}$ and for $p\in\Sigma_k$, $p(I_k):=(p(1),\ldots,p(k))$. 
		
		Finally, in analogy with the grafting of rooted trees on a new common root,  we denote by $[\cdot,\cdot]:\cT_b\times\cT_b\to\cT_b$ the operation of grafting an ordered pair of planary binary trees on a new root, with the first tree on the
	left and the second on the right. 
	\end{definition}
	
	When we draw a tree in $\cT_b$ on the plane, we respect the left-to-right order of its vertices. For example, unlike what happens in 
$\cT$, we have $\<301>\ne\<103>$ in $\cT_b$.
	If $k=3$ and $I=(2,1,3)$, an example of planar binary tree lying in $\cT_b^{I}$ is
	\[
	\begin{tikzpicture}[scale=0.2,baseline=0.1cm]
		\node at (-3,9) [empty] (3) {$1$};
		\node at (3, 9) [empty] (2) {$2$};
		\node at (0,0) [dot] (root) {};
		\node at (-3,3) [dot] (root1) {};
		\node at (-6,6) [empty] (topl) {$3$};
		\node at (0,6) [dot] (topr) {};
		\node at (3,3) [empty] (root2) {$0$};
		\draw[kernel] (topl) to
		node [sloped,below] {\small }     (root1);
		\draw[kernel] (topr) to
		node [sloped,below] {\small }     (root1);
		\draw[kernel] (root1) to
		node [sloped,below] {\small }     (root);
		\draw[kernel] (root2) to
		node [sloped,below] {\small }     (root);
		\draw[kernel] (3) to
		node [sloped,below] {\small }     (topr);
		\draw[kernel] (2) to
		node [sloped,below] {\small }     (topr);
	\end{tikzpicture} =[ [\bullet_3,[\bullet_1,\bullet_2]],\bullet_0]\,.
	\]
	The notion of rightmost and leftmost leaf must be understood in the sense of the latter representation: in this example, the leafs with decoration $0$ and $3$, respectively.
	
	Our interest in planar binary trees resides in the possibility of using them to represent the composition of noncommutative and nonassociative operations, such as the grafting of forests. This link is spelled out in the following definition. 
	\begin{definition}\label{Def: link binary to rooted}
		For $(\tau_k)_{k\in\N}$,
		we define a map $G:\cT_b\to\langle \cT\rangle $ that to any planar binary tree in $\cT_b$ associates the corresponding grafting of rooted trees according to the following rule: 
		\begin{equation}
			G(\bullet_{i})=\tau_i\,,\quad\forall i\in\N\,, \qquad
			G([\rho_1,\rho_2]):=G(\rho_1)\graf G(\rho_2)\,.\label{Eq: definition G}
		\end{equation}
		When we are only interested in trees lying in $\cT_b^I$, then only $(\tau_k)_{k\in I\sqcup\{0\}}$ need to be specified.
	\end{definition}
	On account of Definition \ref{Def: link binary to rooted},
	when two leaves of a planar binary tree share a common root, they are placeholders of a grafting $(\tau_i\graf\tau_j)$ within the system of parenthesis representing a generic contribution to $(\tau_1\cdots\tau_k)\graf\tau$. As these objects play a special role in the sequel, we assign them a specific name.
	\begin{definition}\label{Def: eff leaves}
		The \emph{effective cherries} 
		of a decorated binary tree $\rho\in\cT_b$ are the pairs of edges that share a common parent such that
		\begin{enumerate}
			\item either both edges end with a leaf whose decoration is not $0$
			\item or the left edge stems from the root and ends with a leaf.
		\end{enumerate}
	\end{definition}
		\begin{example}\label{ex:3.7}
		In the following tree
		\[
		\begin{tikzpicture}[scale=0.2,baseline=0.1cm]
				\node at (0,0) [dot] (root) {};
				\node at (-3,3) [empty] (root1) {$2$};
				\node at (3,3) [dot] (root2) {};
				\node at (0,6) [dot] (centerl) {};
				\node at (-3,9) [empty] (topl) {$3$};
				\node at (3,9) [empty] (topr) {$1$};
				\node at (6,6) [empty] (centerr) {$0$};
				\draw[kernel,red] (topl) to
				node [sloped,below] {\small }     (centerl);
				\draw[kernel,red] (topr) to
				node [sloped,below] {\small }     (centerl);
				\draw[kernel] (centerl) to
				node [sloped,below] {\small }     (root2);
				\draw[kernel] (centerr) to
				node [sloped,below] {\small }     (root2);
				\draw[kernel,green] (root1) to
				node [sloped,below] {\small }     (root);
				\draw[kernel,green] (root2) to
				node [sloped,below] {\small }     (root);
			\end{tikzpicture}
\]
the red pair of edges is an effective cherry by rule 1 of Definition \ref{Def: eff leaves}; the green pair is an effective cherry by rule 2;
the black pair is not an effective cherry because the right edge ends with a leaf decorated by $0$ and the left edge does not end with a leaf, and therefore it violates twice rule 1.
		\end{example}

	\begin{definition}\label{Def: operations binary trees1}
		Let $\rho\in\cT_b$. We define the family of binary trees $\cR^-(\rho)\subset\cT_b$ obtained from $\rho$ by the following algorithm. For any effective cherry of $\rho$:
		\begin{enumerate}
			\item we contract the effective cherry to its parent vertex;
			\item if the effective cherry contains two leaves, after contraction it becomes a new leaf which inherits the rightmost decoration of the pair;
			\item if, instead, the effective cherry contains only one leaf grafted from the left on the root, after contraction the new vertex is not a leaf and does not require any decoration. 
		\end{enumerate}
	\end{definition}
	
	\begin{example}
		We consider the same planar binary tree as in Example \ref{ex:3.7}. If we contract the red effective cherry, then by rule 2 of Definition 
		\ref{Def: operations binary trees1} we obtain the tree
		\[
			\begin{tikzpicture}[scale=0.2,baseline=0.1cm]
				\node at (0,0) [dot] (root) {};
				\node at (-3,3) [empty] (root1) {$2$};
				\node at (3,3) [dot] (root2) {};
				\node at (0,6) [empty] (centerl) {$1$};
				\node at (6,6) [empty] (centerr) {$0$};
				\draw[kernel] (centerl) to
				node [sloped,below] {\small }     (root2);
				\draw[kernel] (centerr) to
				node [sloped,below] {\small }     (root2);
				\draw[kernel,green] (root1) to
				node [sloped,below] {\small }     (root);
				\draw[kernel,green] (root2) to
				node [sloped,below] {\small }     (root);
			\end{tikzpicture}.
		\]
		If instead we contract the green effective cherry, then by rule 3 we obtain the tree
		\[
				\begin{tikzpicture}[scale=0.2,baseline=0.1cm]
			\node at (0,0) [dot] (root) {};
			\node at (-3,3) [dot] (root1) {};
			\node at (-6,6) [empty] (topl) {$3$};
			\node at (0,6) [empty] (topr) {$1$};
			\node at (3,3) [empty] (root2) {$0$};
			\draw[kernel,red] (topl) to
			node [sloped,below] {\small }     (root1);
			\draw[kernel,red] (topr) to
			node [sloped,below] {\small }     (root1);
			\draw[kernel] (root1) to
			node [sloped,below] {\small }     (root);
			\draw[kernel] (root2) to
			node [sloped,below] {\small }     (root);
		\end{tikzpicture}\,.
		\]
Hence $\cR^-(\rho)$ only contains these two trees.
	\end{example}
	\begin{definition}\label{Def: operations binary trees2}
		Let $k\in\N$, $i_1,\ldots,i_k$ be distinct positive integers and set $I:=(i_1,\ldots,i_k)$. Consider $\rho\in\cT_b^I$ and $v$ a leaf of $\rho$ with $\mathfrak{v}(v)\ne 0$. For $j\in\N$, we introduce the tree $R^+_{j,\mathfrak{v}(v)}(\rho)\in\cT_b$ 
		obtained from $\rho$ by replacing $v$ with an effective cherry decorated by $(\,j,\mathfrak{v}(v))$. 
		
		We also define the set $\cR_j^+(\rho)\subset\cT_b$ containing all planar binary trees $R^+_{j,\mathfrak{v}(v)}(\rho)$, $v\in L(\rho)$ such that
		$\mathfrak{v}(v)\ne 0$, plus the binary tree $[\bullet_{j},\rho]$, see \eqref{eq: binary blocks}.
	\end{definition}
	
	\begin{example}
		We consider the binary tree 
				\begin{align*}
			\rho=	\begin{tikzpicture}[scale=0.2,baseline=0.1cm]
				\node at (0,0) [dot] (root) {};
				\node at (-3,3) [empty] (root1) {$2$};
				\node at (3,3) [dot] (root2) {};
				\node at (0,6) [empty] (centerl) {$1$};
				\node at (6,6) [empty] (centerr) {$0$};
				\draw[kernel] (centerl) to
				node [sloped,below] {\small }     (root2);
				\draw[kernel] (centerr) to
				node [sloped,below] {\small }     (root2);
				\draw[kernel] (root1) to
				node [sloped,below] {\small }     (root);
				\draw[kernel] (root2) to
				node [sloped,below] {\small }     (root);
			\end{tikzpicture}.
		\end{align*}
			For $j=3$ we have 
		\begin{align*}
			[\bullet_3,\rho]=		\begin{tikzpicture}[scale=0.2,baseline=0.1cm]
				\node at (0,0) [dot] (root) {};
				\node at (-3,3) [empty] (root1) {$3$};
				\node at (3,3) [dot] (root2) {};
				\node at (0,6) [empty] (centerl) {$2$};
				\node at (6,6) [dot] (centerr) {};
				\node at (3,9) [empty] (topl) {$1$};
				\node at (9,9) [empty] (topr) {$0$};
				\draw[kernel] (topr) to
				node [sloped,below] {\small }     (centerr);
				\draw[kernel] (topl) to
				node [sloped,below] {\small }     (centerr);
				\draw[kernel] (centerl) to
				node [sloped,below] {\small }     (root2);
				\draw[kernel] (centerr) to
				node [sloped,below] {\small }     (root2);
				\draw[kernel] (root1) to
				node [sloped,below] {\small }     (root);
				\draw[kernel] (root2) to
				node [sloped,below] {\small }     (root);
			\end{tikzpicture}\,, \quad
			R^+_{3,2}(\rho)=		\begin{tikzpicture}[scale=0.2,baseline=0.1cm]
				\node at (0,0) [dot] (root) {};
				\node at (-4,3) [dot] (root1) {};
				\node at (-7,6) [empty] (topl) {$3$};
				\node at (-1,6) [empty] (topr) {$2$};
				\node at (4,3) [dot] (root2) {};
				\node at (1,6) [empty] (centerl) {$1$};
				\node at (7,6) [empty] (centerr) {$0$};
				\draw[kernel] (topl) to
				node [sloped,below] {\small }     (root1);
				\draw[kernel] (topr) to
				node [sloped,below] {\small }     (root1);
				\draw[kernel] (centerl) to
				node [sloped,below] {\small }     (root2);
				\draw[kernel] (centerr) to
				node [sloped,below] {\small }     (root2);
				\draw[kernel] (root1) to
				node [sloped,below] {\small }     (root);
				\draw[kernel] (root2) to
				node [sloped,below] {\small }     (root);
			\end{tikzpicture}\,, \quad
			R^+_{3,1}(\rho)	=	\begin{tikzpicture}[scale=0.2,baseline=0.1cm]
				\node at (0,0) [dot] (root) {};
				\node at (-3,3) [empty] (root1) {$2$};
				\node at (3,3) [dot] (root2) {};
				\node at (0,6) [dot] (centerl) {};
				\node at (-3,9) [empty] (topl) {$3$};
				\node at (3,9) [empty] (topr) {$1$};
				\node at (6,6) [empty] (centerr) {$0$};
				\draw[kernel] (topl) to
				node [sloped,below] {\small }     (centerl);
				\draw[kernel] (topr) to
				node [sloped,below] {\small }     (centerl);
				\draw[kernel] (centerl) to
				node [sloped,below] {\small }     (root2);
				\draw[kernel] (centerr) to
				node [sloped,below] {\small }     (root2);
				\draw[kernel] (root1) to
				node [sloped,below] {\small }     (root);
				\draw[kernel] (root2) to
				node [sloped,below] {\small }     (root);
			\end{tikzpicture}.
		\end{align*}
		Moreover $\cR_3^+(\rho)$ is the set that contains the trees in the latter display.
		The sets $\cR^-([\bullet_3,\rho])$ and $\cR^-(R^+_{3,2}(\rho))$ contain only $\rho$, while $\cR^-(R^+_{3,1}(\rho))$ contains $\rho$ and
		\[
		\begin{tikzpicture}[scale=0.2,baseline=0.1cm]
			\node at (0,0) [dot] (root) {};
			\node at (-3,3) [dot] (root1) {};
			\node at (-6,6) [empty] (topl) {$3$};
			\node at (0,6) [empty] (topr) {$1$};
			\node at (3,3) [empty] (root2) {$0$};
			\draw[kernel] (topl) to
			node [sloped,below] {\small }     (root1);
			\draw[kernel] (topr) to
			node [sloped,below] {\small }     (root1);
			\draw[kernel] (root1) to
			node [sloped,below] {\small }     (root);
			\draw[kernel] (root2) to
			node [sloped,below] {\small }     (root);
		\end{tikzpicture}\,.
		\]
	\end{example}

	The operation that characterises $\mathcal{R}^-(\cdot)$ allows to introduce a key combinatorial coefficient associated to binary trees. 
	\begin{definition}\label{redchain}
		We inductively define the map $C:\cT_b\to \N$ by setting $C(\bullet)=1$ and
		\begin{align}\label{Eq: C}
			C(\rho)=\sum_{\rho'\in\cR^-(\rho)}C(\rho')\,,
		\end{align}
		where the set of binary trees $\cR^-(\rho)$ has been introduced in Definition \ref{Def: operations binary trees2}.
	\end{definition}
	
	
	Here is the main result of this section, which concerns an explicit description of the contributions to the grafting of forests on trees as prescribed by the inductive Guin-Oudom construction. To the best of our knowledge, this is the first time a formula of this kind has been derived in this setting.  
	\begin{lemma}\label{lem: GUBinary}
		For all $\tau_0,\ldots, \tau_k\in\cT$:
		\begin{equation}\label{Eq: GU binary}
			(\tau_k\cdots \tau_1)\curvearrowright\tau_0= \frac1{k!} \sum_{p\in\Sigma_k} \sum_{\rho\in\cT_b^{p(I_k)}} C(\rho)
			\, (-1)^{k-m_R(\rho)} \, G(\rho)\,,
		\end{equation}
		where $G$ is the mapping from planar binary trees to rooted trees of Definition \ref{Def: link binary to rooted}, $m_R(\rho)\in\N$ denotes the distance of the rightmost leaf from the root and $C(\rho)$ is the combinatorial factor defined in \eqref{Eq: C}.
	\end{lemma}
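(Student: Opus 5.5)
We argue by induction on $k$, using the Guin--Oudom recursion \eqref{Eq: GU definition} in the form
\[
(\tau_{k+1}\tau_k\cdots\tau_1)\graf\tau_0=\tau_{k+1}\graf\big((\tau_k\cdots\tau_1)\graf\tau_0\big)-\big(\tau_{k+1}\graf(\tau_k\cdots\tau_1)\big)\graf\tau_0\,.
\]
The base case $k=0$ is $\cT_b^{\emptyset}=\{\bullet_0\}$ with $C=1$, $m_R=0$. For $k=1$ the only tree is $[\bullet_1,\bullet_0]$, with $m_R=1$ and $C=1$ (its only effective cherry is of type 2, and contracting it returns $\bullet_0$), which matches $\tau_1\graf\tau_0$. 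The guiding idea is that the two terms on the right of the recursion correspond exactly to the two kinds of elements of $\cR^+_{k+1}(\rho)$ in Definition \ref{Def: operations binary trees2}.

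\textbf{First term.} Grafting $\tau_{k+1}$ on a term $G(\rho)$ gives $G([\bullet_{k+1},\rho])$. This increases $m_R$ by one, and $k$ also increases by one, so the sign $(-1)^{k-m_R}$ is preserved.

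\textbf{Second term.} Using the Leibniz rule \eqref{Eq GU Leibniz}, we write $\tau_{k+1}\graf(\tau_k\cdots\tau_1)=\sum_i \tau_k\cdots(\tau_{k+1}\graf\tau_i)\cdots\tau_1$. This is a forest with $k$ trees, one of which is the composite tree $\tau_{k+1}\graf\tau_i$. Applying the induction hypothesis with $\tau_{k+1}\graf\tau_i$ treated as a single letter, and then substituting that letter by the cherry $[\bullet_{k+1},\bullet_i]$, produces the trees $R^+_{k+1,i}(\rho)$. Here $m_R$ is unchanged while $k$ increases by one, which accounts for the minus sign.

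\textbf{Symmetrisation.} The formula \eqref{Eq GU binary} averages over all orderings $p$, and the left-hand side is symmetric in $\tau_1,\ldots,\tau_k$, since the Guin--Oudom extension is well defined on the commutative forest product. We may therefore apply the recursion with each $\tau_j$ in turn playing the role of the extracted tree, and average over the $k+1$ choices. This converts the sum over $p\in\Sigma_k$ together with the choice of extracted index into a sum over $\Sigma_{k+1}$, with normalisation $\frac{1}{(k+1)!}$. The index bookkeeping is the following. In the second term the leaf decorations of $\rho$ run over a string of length $k$ in which one letter is the composite; after replacement, the resulting string of length $k+1$ lists the decorations from the right, with $i$ immediately followed by $k+1$. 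One has to check that every $p'\in\Sigma_{k+1}$ arises in this way with the correct multiplicity.

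\textbf{Coefficient identity (main obstacle).} After this reorganisation, each $\rho'\in\cT_b^{p'(I_{k+1})}$ appears with total coefficient
\[
\sum_{\rho:\ \rho'\in\cR^+_{j}(\rho)} C(\rho)\,,
\]
summed over all ways of obtaining $\rho'$ by a single insertion. We must show that this sum equals $C(\rho')$, up to the averaging factor. Since $\cR^-$ is precisely the set of contractions of effective cherries, which are exactly the reversals of the insertions in $\cR^+$, the identity amounts to showing that the map ``$\rho'\mapsto$ contractions'' is in bijection with the pairs ($\rho$, insertion) produced by the recursion, once the label $j$ is allowed to range over all extracted indices.

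I expect this bijection and multiplicity count to be the hardest step. Two points need care.
\begin{enumerate}
\item A contraction of type 2 keeps the rightmost decoration, whereas an insertion requires $j$ to be the \emph{new} (left) label. Consequently the averaging over $p$ is what lets every leaf pair serve as a last insertion, and the $\frac{1}{k!}$ normalisation must be tracked accordingly.
\item Trees that are not reachable, for example those with a cherry containing a $0$-leaf on the right of a non-root vertex, must receive coefficient $0$, consistently with Definition \ref{Def: eff leaves}.
\end{enumerate}
I would first verify the claim by hand for $k=2,3$ against the explicit six-term expansion displayed above. I would then prove the coefficient identity by induction on the number of internal vertices, possibly working with a labelled, unsymmetrised version of the statement (no $\frac{1}{k!}$, summing over increasing labellings) and symmetrising only at the end.
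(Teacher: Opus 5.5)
Your skeleton is the same as the paper's proof. You run an induction through the Guin--Oudom recursion. The first term produces $[\bullet_k,\rho]$ with $m_R$ raised by one, and the second produces $R^+_{k,i}(\rho)$ with $m_R$ unchanged. You then symmetrise, so that the extracted label ranges over $\{1,\ldots,k\}$ and the sum becomes $\frac1{k!}\sum_{p\in\Sigma_k}\sum_{\rho\in\cT_b^{p(I_{k-1})}}\sum_{\rho'\in\cR^+_{p(k)}(\rho)}(-1)^{k-m_R(\rho')}C(\rho)\,G(\rho')$ (paper's indexing, step $k-1\to k$).

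The gap is that the step you flag as the main obstacle is left unproved, and it is the entire combinatorial content of the lemma. Your plan for it (a further induction on internal vertices, an unsymmetrised variant, a correction ``up to the averaging factor'') is not what is needed.

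What closes the argument is an explicit bijection between triples $(p,\rho,\rho')$ as above and triples $(p',\rho',\rho)$ with $\rho'\in\cT_b^{p'(I_k)}$ and $\rho\in\cR^-(\rho')$.
\begin{itemize}
\item Forward: $p'$ is the right-to-left reading of the labels of $\rho'$. So $p'=p$ if $\rho'=[\bullet_{p(k)},\rho]$, and $p'=(p(1),\ldots,p(i),p(k),p(i+1),\ldots,p(k-1))$ if $\rho'=R^+_{p(k),p(i)}(\rho)$.
\item Backward: contract an effective cherry of $\rho'$ whose left leaf carries $p'(i)$, delete $p'(i)$ from $p'$, and append it at the end.
\end{itemize}
These maps are well defined and mutually inverse because of three facts.
\begin{itemize}
\item[(i)] $[\bullet_j,\rho]$ creates a cherry that is effective by rule 2 of Definition \ref{Def: eff leaves}, and $R^+_{j,i}(\rho)$ creates one that is effective by rule 1. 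Contracting that cherry gives back $\rho$.
\item[(ii)] A rule-1 cherry never sits at the root, since the right subtree of the root contains the $0$-leaf. Hence every effective cherry of $\rho'$ is of one of these two kinds, and its contraction $\rho$ satisfies $\rho'\in\cR^+_a(\rho)$, where $a$ is its left label.
\item[(iii)] Labels are pairwise distinct, so distinct cherries, respectively distinct insertions, give distinct trees and no multiplicities arise.
\end{itemize}
The sign $(-1)^{k-m_R(\rho')}$ and $G(\rho')$ depend only on $\rho'$. So the coefficient of $G(\rho')$ is $\sum_{\rho\in\cR^-(\rho')}C(\rho)$, which is $C(\rho')$ by the very definition \eqref{Eq: C}. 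The prefactor $\frac1{k!}$ is the same on both sides, so no further induction is required.

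Your two ``points of care'' are not obstacles.
\begin{itemize}
\item Point 1 describes a consistency, not a conflict. Contracting a two-leaf cherry deletes its left label, which is exactly the label $R^+$ adds. The only consequence is the one you draw: the deleted label must play the role of the extracted tree, and symmetrisation provides this.
\item Point 2 is false. Every $\rho\in\cT_b^{p(I_k)}$ with $k\ge1$ has an effective cherry: the root pair if the root's left child is a leaf, and otherwise the two-leaf cherry at a deepest internal vertex of the root's left subtree, whose labels are all nonzero. By induction $C(\rho)\ge1$ for all such $\rho$, so there are no unreachable trees.
\end{itemize}
The trees you single out are in fact reachable. For example, $[\bullet_1,[\bullet_2,\bullet_0]]$ has the $0$-leaf in a non-root cherry, yet $C=1$, and $\tau_1\graf(\tau_2\graf\tau_0)$ genuinely occurs in $(\tau_2\tau_1)\graf\tau_0$. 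An attempt to show that such coefficients vanish would fail, but the bijection argument never needs to know which trees are reachable.

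Incidentally, your bookkeeping (``$i$ immediately followed by'' the new label) is the correct one. The paper's displayed forward formula for $p'$ places $p(k)$ one slot too early, although its inverse map is stated correctly.
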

	\begin{proof}
		We proceed by induction on the number of trees composing the forest. For $k=1$, \eqref{Eq: GU definition} is trivially satisfied since the only planar binary tree in $\cT_b^{I_1}$ is $[\bullet_1,\bullet_0]$, for which $G([\bullet_1,\bullet_0])=\tau_1\graf\tau$, $C([\bullet_1,\bullet_0])=1$ and $m_R([\bullet_1,\bullet_0])=1$. Then we assume that \eqref{Eq: GU binary} holds true for forest of at most $k-1$ rooted trees. Let $\tau_1,\ldots, \tau_k\in\cT$. By definition
		\begin{align*}
			(\tau_k\cdots \tau_1)\curvearrowright\tau_0 &=\tau_k\curvearrowright((\tau_{k-1}\cdots \tau_1)\curvearrowright\tau_0)\nonumber
			-\sum_{i=1}^{k-1}(\tau_{k-1}\cdots(\tau_k\curvearrowright \tau_{i})\cdots \tau_1)
			\curvearrowright\tau_0 \,.\label{Eq: GU}
		\end{align*}
		Using the inductive hypothesis on the first term and the linearity of $\graf$ we write
		\begin{align*}
			\tau_k\curvearrowright((\tau_{k-1}\cdots \tau_1)\curvearrowright\tau_0)
			&= \frac1{(k-1)!} \sum_{p\in\Sigma_{k-1}} \sum_{\rho\in\cT_b^{p(I_{k-1})}} C(\rho)(-1)^{k-1-m_R(\rho)} \, \tau_k\curvearrowright G(\rho)\\
			&=  \frac1{(k-1)!} \sum_{p\in\Sigma_{k-1}} \sum_{\rho\in\cT_b^{p(I_{k-1})}} C(\rho) \, (-1)^{k-m_R([\bullet_k,\rho])} \, G([\bullet_k,\rho])\, .
		\end{align*}
		The identification $\tau_k\curvearrowright G(\rho)=G([\bullet_k,\rho])$ follows from \eqref{Eq: definition G}. 
		We can now use once again the inductive hypothesis on the second term: for all $i=1,\ldots,k-1$ and recalling the action of $R^+_{k,i}$, see Definition \ref{Def: operations binary trees2}, 
		\begin{align*}
			& (\tau_{k-1}\cdots(\tau_k\curvearrowright \tau_{i})\cdots \tau_1)
			\curvearrowright\tau_0 \\
			&\hspace{2cm}= \frac1{(k-1)!} \sum_{p\in\Sigma_{k-1}} \sum_{\rho'\in\cT_b^{p(I_{k-1})}} C(\rho) \, (-1)^{k-1-m_R(\rho)} \, G(R^+_{k,i}(\rho))\,.
		\end{align*}
		As a result
		\begin{align*}
			(\tau_k\cdots \tau_1)\curvearrowright\tau_0&= \frac1{(k-1)!} \sum_{p\in\Sigma_{k-1}} \sum_{\rho\in\cT_b^{p(I_{k-1})}} 
			C(\rho) \, (-1)^{k-m_R([\bullet_k,\rho])} \, G([\bullet_k,\rho]) \\
			&\qquad-\frac1{(k-1)!}\sum_{i=1}^{k-1} \sum_{p\in\Sigma_{k-1}} \sum_{\rho\in\cT_b^{p(I_{k-1})}}  C(\rho) \, (-1)^{k-1-m_R(\rho)} \, G\left(R^+_{k,i}(\rho)\right)\\
			&= \frac1{(k-1)!} \sum_{p\in\Sigma_{k-1}} \sum_{\rho\in\cT_b^{p(I_{k-1})}} \sum_{\rho'\in\cR^+_k(\rho)} (-1)^{k-m_R(\rho')} \, C(\rho) \, G(\rho')\,.
		\end{align*}
		In the last line we used that $[\bullet_k,\rho]$ and $R^+_{k,i}(\rho)$ span all the contributions to $\cR^+_k(\rho)$ according to Definition \ref{Def: operations binary trees2}. Observe that $R^+_{k,i}(\rho)$ does not affect the depth of $\tau_0$ in $\rho$, so that $m_R(\rho)=m_R(R^+_{k,i}(\rho))$. 	
		Now we note that the symmetric nature of the forest product entails
		\[
		(\tau_k\cdots \tau_1)\curvearrowright\tau_0
		= \frac1{k!}\sum_{q\in\Sigma_k} (\tau_{q(k)}\cdots \tau_{q(1)})\curvearrowright\tau_0\,.
		\]
		Hence, iterating the above argument for every $q\in\Sigma_k$ we obtain
		\begin{align*}
			&(\tau_{q(k)}\cdots \tau_{q(1)})\curvearrowright\tau_0 = 
			\frac1{(k-1)!} \sum_{p\in\Sigma_{k-1}} \sum_{\rho\in\cT_b^{q\circ p(I_{k-1})}}\sum_{\rho'\in\cR^+_{q(k)}(\rho)} (-1)^{k-m_R(\rho')} \, C(\rho) \, G(\rho')\,,
		\end{align*}
		and therefore 
		\begin{align*}
			(\tau_k\cdots \tau_1)\curvearrowright\tau_0 &= \frac1{k!}\sum_{q\in\Sigma_k} 
			\frac1{(k-1)!} \sum_{\substack{\bar p\in\Sigma_k\\ \bar p(k)=k}} \sum_{\rho\in\cT_b^{q\circ \bar p(I_{k-1})}}\sum_{\rho'\in\cR^+_{q\circ \bar p(k)}(\rho)} (-1)^{k-m_R(\rho')} \, C(\rho) \, G(\rho')\,
			\\ &= \frac1{k!}\sum_{p\in\Sigma_k} 
			\sum_{\rho\in\cT_b^{p(I_{k-1})}}\sum_{\rho'\in\cR^+_{p(k)}(\rho)} (-1)^{k-m_R(\rho')} \, C(\rho) \, G(\rho')\,.
		\end{align*}
		In order to obtain a sum over $\rho'\in\cT_b^{I_k}$ we remark that there is a bijection between $(p,\rho,\rho')\in \Sigma_k\times
		\cT_b^{p(I_{k-1})}\times\cR^+_{p(k)}(\rho)$ and $(p',\rho',\rho)\in \Sigma_k\times\cT_b^{p'(I_k)}\times \cR^{-}(\rho')$, given as follows:
		\begin{itemize}
			\item given $(p,\rho,\rho')\in \Sigma_k\times\cT_b^{p(I_{k-1})}\times\cR^+_{p(k)}(\rho)$, we know that $\rho\in\cR^{-}(\rho')$ and
			we must define $p'\in \Sigma_k$. Let $v$ be the vertex in $\rho$ to which a new leaf with decoration $p(k)$ is grafted in order to obtain $\rho'$ from $\rho$. If $v$ is the
			root, then we set $p'=p$. Else, $v$ is a leaf with decoration $p(i)\ne 0$; in this case we define $p'(\,j)=p(\,j)$ for $j<i$; $p'(i)=p(k)$; $p'(\,j)=p(\,j-1)$ for $j\in\{i+1,\ldots,k\}$.
			\item given $(p',\rho',\rho)\in \Sigma_k\times\cT_b^{p'(I_k)}\times \cR^{-}(\rho')$, we must define $p$. If $\rho$ is obtained from $\rho'$ by contracting
			an effective cherry at the root, then $p'=p$; else, let $p'(i)$ be the decoration at the left leaf of such effective cherry; then we set 
			$p(\,j)=p'(\,j)$ for $j<i$, $p(k)=p'(i)$ and $p(\,j)=p'(\,j+1)$ for $j\in\{i,\ldots,k-1\}$.
		\end{itemize}
		Then we conclude that
		\begin{align*}
			(\tau_k\cdots \tau_1)\curvearrowright\tau_0 & = \frac1{k!}\sum_{p'\in\Sigma_k} 
			\sum_{\rho'\in\cT_b^{p'(I_k)}} (-1)^{k-m_R(\rho')} \left(\sum_{\rho\in\cR^{-}(\rho')} C(\rho) \right) G(\rho')  
			\\ & = \frac1{k!}\sum_{p'\in\Sigma_k} 
			\sum_{\rho'\in\cT_b^{p'(I_k)}} (-1)^{k-m_R(\rho')} \, C(\rho') \, G(\rho') \, .
		\end{align*}
		The proof is complete. 
	\end{proof}

	\subsection{Combinatorial operations on planar binary trees}

		Every planar binary tree $\rho\in\cT_b$ can be uniquely represented as 
	\begin{equation}\label{eq: binary blocks}
		\rho=[[[\ldots[\bullet_i,\rho_{m_L(\rho)}]\ldots],\rho_2],\rho_1]\,,
	\end{equation}
	where $i\in \N$ is the decoration of the leftmost leaf and $\rho_1,\ldots,\rho_{m_L(\rho)}$ are again binary trees, possibly consisting of a single vertex. 
	The integer $m_L(\rho)\in\N$ is the distance of the leftmost leaf from the root in $\rho$, while $m_R(\rho)\in\N$ denotes the distance of the rightmost leaf from the root.
	\begin{example}
	Consider the planar binary tree
		\begin{align*}
			\rho=\begin{tikzpicture}[scale=0.2,baseline=0.1cm]
				\node at (-3,9) [empty] (3) {$c$};
				\node at (3, 9) [empty] (2) {$b$};
				\node at (0,0) [dot] (root) {};
				\node at (-3,3) [dot] (root1) {};
				\node at (-6,6) [empty] (topl) {$d$};
				\node at (0,6) [dot] (topr) {};
				\node at (3,3) [empty] (root2) {$a$};
				\draw[kernel] (topl) to
				node [sloped,below] {\small }     (root1);
				\draw[kernel] (topr) to
				node [sloped,below] {\small }     (root1);
				\draw[kernel] (root1) to
				node [sloped,below] {\small }     (root);
				\draw[kernel] (root2) to
				node [sloped,below] {\small }     (root);
				\draw[kernel] (3) to
				node [sloped,below] {\small }     (topr);
				\draw[kernel] (2) to
				node [sloped,below] {\small }     (topr);
			\end{tikzpicture}\,,
		\end{align*}
		for which $m_L(\rho)=2$, $m_R(\rho)=1$. The unique decomposition of the form \eqref{eq: binary blocks} reads
		\[
		\rho_1 = \bullet_a\,, \qquad \rho_2 = \begin{tikzpicture}[scale=0.2,baseline=0.1cm]
			\node at (0,0) [dot] (root2) {};
			\node at (-3,3) [empty] (centerl) {$c$};
			\node at (3,3) [empty] (centerr) {$b$};
			\draw[kernel] (centerl) to
			node [sloped,below] {\small }     (root2);
			\draw[kernel] (centerr) to
			node [sloped,below] {\small }     (root2);
		\end{tikzpicture},
		\qquad \rho = [[\bullet_d,\rho_2],\rho_1]\,.\,
		\]
		Analogously, if
		\begin{align*}
			\rho=\begin{tikzpicture}[scale=0.2,baseline=0.1cm]
				\node at (-3,9) [empty] (3) {$c$};
				\node at (-9, 9) [empty] (4) {$d$};
				\node at (0,0) [dot] (root) {};
				\node at (-3,3) [dot] (root1) {};
				\node at (-6,6) [dot] (topl) {};
				\node at (0,6) [empty] (topr) {$b$};
				\node at (3,3) [empty] (root2) {$a$};
				\draw[kernel] (topl) to
				node [sloped,below] {\small }     (root1);
				\draw[kernel] (topr) to
				node [sloped,below] {\small }     (root1);
				\draw[kernel] (root1) to
				node [sloped,below] {\small }     (root);
				\draw[kernel] (root2) to
				node [sloped,below] {\small }     (root);
				\draw[kernel] (4) to
				node [sloped,below] {\small }     (topl);
				\draw[kernel] (3) to
				node [sloped,below] {\small }     (topl);
			\end{tikzpicture},
		\end{align*}
		then $m_L(\rho)=3$, $m_R(\rho)=1$ and
		\[
		\rho_1 = \bullet_a\,, \qquad \rho_2 =\bullet_b\,, \qquad \rho_3=\bullet_c\,, 
		\qquad \rho = [[[\bullet_d,\rho_3],\rho_2],\rho_1]\,.
		\]
		Finally if
		\[
		\rho=\begin{tikzpicture}[scale=0.2,baseline=0.1cm]
				\node at (0,0) [dot] (root) {};
				\node at (-4,3) [dot] (root1) {};
				\node at (-7,6) [empty] (topl) {$d$};
				\node at (-1,6) [empty] (topr) {$c$};
				\node at (4,3) [dot] (root2) {};
				\node at (1,6) [empty] (centerl) {$b$};
				\node at (7,6) [empty] (centerr) {$a$};
				\draw[kernel] (topl) to
				node [sloped,below] {\small }     (root1);
				\draw[kernel] (topr) to
				node [sloped,below] {\small }     (root1);
				\draw[kernel] (centerl) to
				node [sloped,below] {\small }     (root2);
				\draw[kernel] (centerr) to
				node [sloped,below] {\small }     (root2);
				\draw[kernel] (root1) to
				node [sloped,below] {\small }     (root);
				\draw[kernel] (root2) to
				node [sloped,below] {\small }     (root);
			\end{tikzpicture}
			\]
			then 
			\[
			\rho_1 = \begin{tikzpicture}[scale=0.2,baseline=0.1cm]
				\node at (0,0) [dot] (root) {};
				\node at (-3,3) [empty] (root1) {$b$};
				\node at (3,3) [empty] (root2) {$a$};
				\draw[kernel] (root1) to
				node [sloped,below] {\small }     (root);
				\draw[kernel] (root2) to
				node [sloped,below] {\small }     (root);
			\end{tikzpicture}, \qquad \rho_2 = \bullet_c, \qquad \rho = [[\bullet_d,\rho_2],\rho_1].
			\]
	\end{example}
	
	Let $\rho\in\cT_b$ be represented as in \eqref{eq: binary blocks}: $\rho=[[[\ldots[\bullet_i,\rho_{m_L(\rho)}]\ldots],\rho_2],\rho_1]$.
	If $m_L(\rho)\ge 1$, set 
	\begin{equation}\label{V-}
		V^-(\rho):=[\ldots[\rho_{m_L(\rho)},\rho_{m_L(\rho)-1}],\ldots,\rho_1]\,.
	\end{equation}
	\begin{example} 
		Here are two examples that clarify the action of $V^-$:
		\begin{align*}
			\rho=\begin{tikzpicture}[scale=0.2,baseline=0.1cm]
				\node at (0,0) [dot] (root) {};
				\node at (-3,3) [dot] (root1) {};
				\node at (-6,6) [empty] (topl) {$c$};
				\node at (0,6) [empty] (topr) {$b$};
				\node at (3,3) [empty] (root2) {$a$};
				\draw[kernel] (topl) to
				node [sloped,below] {\small }     (root1);
				\draw[kernel] (topr) to
				node [sloped,below] {\small }     (root1);
				\draw[kernel] (root1) to
				node [sloped,below] {\small }     (root);
				\draw[kernel] (root2) to
				node [sloped,below] {\small }     (root);
			\end{tikzpicture}\,,\hspace{1.5cm}
			V^-(\rho)=\begin{tikzpicture}[scale=0.2,baseline=0.1cm]
				\node at (0,0) [dot] (root) {};
				\node at (-3,3) [empty] (root1) {$b$};
				\node at (3,3) [empty] (root2) {$a$};
				\draw[kernel] (root1) to
				node [sloped,below] {\small }     (root);
				\draw[kernel] (root2) to
				node [sloped,below] {\small }     (root);
			\end{tikzpicture}
		\end{align*}
		\begin{align*}
			\rho'=\begin{tikzpicture}[scale=0.2,baseline=0.1cm]
				\node at (-3,9) [empty] (3) {$c$};
				\node at (3, 9) [empty] (2) {$b$};
				\node at (0,0) [dot] (root) {};
				\node at (-3,3) [dot] (root1) {};
				\node at (-6,6) [empty] (topl) {$d$};
				\node at (0,6) [dot] (topr) {};
				\node at (3,3) [empty] (root2) {$a$};
				\draw[kernel] (topl) to
				node [sloped,below] {\small }     (root1);
				\draw[kernel] (topr) to
				node [sloped,below] {\small }     (root1);
				\draw[kernel] (root1) to
				node [sloped,below] {\small }     (root);
				\draw[kernel] (root2) to
				node [sloped,below] {\small }     (root);
				\draw[kernel] (3) to
				node [sloped,below] {\small }     (topr);
				\draw[kernel] (2) to
				node [sloped,below] {\small }     (topr);
			\end{tikzpicture}\,,\hspace{1.5cm}
			V^-(\rho')=\begin{tikzpicture}[scale=0.2,baseline=0.1cm]
				\node at (0,0) [dot] (root) {};
				\node at (-3,3) [dot] (root1) {};
				\node at (-6,6) [empty] (topl) {$c$};
				\node at (0,6) [empty] (topr) {$b$};
				\node at (3,3) [empty] (root2) {$a$};
				\draw[kernel] (topl) to
				node [sloped,below] {\small }     (root1);
				\draw[kernel] (topr) to
				node [sloped,below] {\small }     (root1);
				\draw[kernel] (root1) to
				node [sloped,below] {\small }     (root);
				\draw[kernel] (root2) to
				node [sloped,below] {\small }     (root);
			\end{tikzpicture}\,.
		\end{align*}
		In words, $V^-$ removes the cherry containing the leftmost leaf.
	\end{example}

	\begin{definition}\label{Def: trunk}
	For $\rho=[[[\ldots[\bullet_i,\rho_{m_L(\rho)}]\ldots],\rho_2],\rho_1]\in\cT_b$ as in \eqref{eq: binary blocks},
we set
		\[
		P_{\rho,\ell}:= \left\{ \begin{array}{ll} [\ldots[\bullet_i,\rho_{m_L(\rho)}],\ldots, \rho_\ell] \qquad &\text{ if} \quad \ell\in\{1,\ldots, m_L(\rho)\}\,,
			\\ \bullet_i \qquad \qquad &\text{ if} \quad \ell= m_L(\rho)+1\,.
		\end{array}\right.
		\]
	\end{definition}
		Note that $P_{\rho,\ell}$ is a binary tree obtained as follows: on the path from the root to the leftmost leaf, one finds the only node at distance
		$\ell-1$ from the root; then $P_{\rho,\ell}$ is the largest subtree of $\rho$ that has this node as its root.
	\begin{example}\label{ex:4.14}
		Consider the binary tree
		\begin{align*}
			\rho=\begin{tikzpicture}[scale=0.2,baseline=0.1cm]
				\node at (-3,9) [empty] (3) {$c$};
				\node at (3, 9) [empty] (2) {$b$};
				\node at (0,0) [dot] (root) {};
				\node at (-3,3) [dot] (root1) {};
				\node at (-6,6) [empty] (topl) {$d$};
				\node at (0,6) [dot] (topr) {};
				\node at (3,3) [empty] (root2) {$a$};
				\draw[kernel] (topl) to
				node [sloped,below] {\small }     (root1);
				\draw[kernel] (topr) to
				node [sloped,below] {\small }     (root1);
				\draw[kernel] (root1) to
				node [sloped,below] {\small }     (root);
				\draw[kernel] (root2) to
				node [sloped,below] {\small }     (root);
				\draw[kernel] (3) to
				node [sloped,below] {\small }     (topr);
				\draw[kernel] (2) to
				node [sloped,below] {\small }     (topr);
			\end{tikzpicture}\,.
		\end{align*}
		Here $m_L(\rho)=2$, so that we have for $\ell\in\{2,3\}$
		\[
		P_{\rho,2}=
		\begin{tikzpicture}[scale=0.2,baseline=0.1cm]
			\node at (0,0) [dot] (root) {};
			\node at (-3,3) [empty] (root1) {$d$};
			\node at (3,3) [dot] (root2) {};
			\node at (0,6) [empty] (centerl) {$c$};
			\node at (6,6) [empty] (centerr) {$b$};
			\draw[kernel] (centerl) to
			node [sloped,below] {\small }     (root2);
			\draw[kernel] (centerr) to
			node [sloped,below] {\small }     (root2);
			\draw[kernel] (root1) to
			node [sloped,below] {\small }     (root);
			\draw[kernel] (root2) to
			node [sloped,below] {\small }     (root);
		\end{tikzpicture},
		\qquad	P_{\rho,3}= \bullet_d\,.
		\]
	\end{example}

	We define an additional operation on binary trees that consists of inserting a new leaf in a specific position of the binary tree, whose usefulness will be clarified below. 
	\begin{definition}[Insertion]\label{Def: insertion}
		For a given $j\in\N$, $\rho\in\cT_b$ and $\ell\in\{1,\ldots, m_L(\rho)+1\}$, we define $\bullet_j\to_\ell\rho\in\cT_b$ as the planar binary tree obtained by inserting $\bullet_j$ in $\rho$ as follows: for $\rho=[[[\ldots[\bullet_i,\rho_{m_L(\rho)}]\ldots],\rho_2],\rho_1]$ as in \eqref{eq: binary blocks},
		\[
		\bullet_j\to_\ell\rho:=[\ldots[[\bullet_j,P_{\rho,\ell}],\rho_{\ell-1}],\ldots, \rho_1]\,, 
		\]
		where $P_{\rho,\ell}$ is defined in Definition \ref{Def: trunk}.
	\end{definition}
	With this definition, $\bullet_j\to_\ell\rho$ is the binary tree obtained from $\rho$ by grafting from the left a new leaf with decoration $j$; the leaf is grafted on a vertex which lies on the
	shortest path from the root to the leftmost leaf, at distance $\ell-1$ from the root.
	Notice that for $\ell=1$ we have $P_{\rho,1}=\rho$ and $\bullet_j\to_1\rho=[\bullet_j,\rho]$.
	
	\begin{example}\label{ex:Prho}
		To make the reader more familiar with the insertion operation introduced in Definition \ref{Def: insertion} we consider the following binary tree
		\begin{align*}
			\rho=\begin{tikzpicture}[scale=0.2,baseline=0.1cm]
				\node at (0,0) [dot] (root) {};
				\node at (-3,3) [dot] (root1) {};
				\node at (-6,6) [empty] (topl) {$c$};
				\node at (0,6) [empty] (topr) {$b$};
				\node at (3,3) [empty] (root2) {$a$};
				\draw[kernel] (topl) to
				node [sloped,below] {\small }     (root1);
				\draw[kernel] (topr) to
				node [sloped,below] {\small }     (root1);
				\draw[kernel] (root1) to
				node [sloped,below] {\small }     (root);
				\draw[kernel] (root2) to
				node [sloped,below] {\small }     (root);
			\end{tikzpicture}\,.
		\end{align*}
		Since $\rho=[[\bullet_{c},\rho_2],\rho_1]$ for $\rho_1=\bullet_a$ and $\rho_2=\bullet_b$, then $m_L(\rho)=2$ and we have three spots for the insertion of a new leaf, indexed by $\ell=1,2,3$. Hence 
		\begin{align*}
			\bullet_j\to_1\rho=\begin{tikzpicture}[scale=0.2,baseline=0.1cm]
				\node at (0,0) [dot] (root) {};
				\node at (-3,3) [empty] (root1) {$j$};
				\node at (3,3) [dot] (root2) {};
				\node at (0,6) [dot] (centerl) {};
				\node at (-3,9) [empty] (topl) {$c$};
				\node at (3,9) [empty] (topr) {$b$};
				\node at (6,6) [empty] (centerr) {$a$};
				\draw[kernel] (topl) to
				node [sloped,below] {\small }     (centerl);
				\draw[kernel] (topr) to
				node [sloped,below] {\small }     (centerl);
				\draw[kernel] (centerl) to
				node [sloped,below] {\small }     (root2);
				\draw[kernel] (centerr) to
				node [sloped,below] {\small }     (root2);
				\draw[kernel] (root1) to
				node [sloped,below] {\small }     (root);
				\draw[kernel] (root2) to
				node [sloped,below] {\small }     (root);
			\end{tikzpicture},
			\qquad
			\bullet_j\to_2\rho=\begin{tikzpicture}[scale=0.2,baseline=0.1cm]
				\node at (-3,9) [empty] (3) {$c$};
				\node at (3, 9) [empty] (2) {$b$};
				\node at (0,0) [dot] (root) {};
				\node at (-3,3) [dot] (root1) {};
				\node at (-6,6) [empty] (topl) {$j$};
				\node at (0,6) [dot] (topr) {};
				\node at (3,3) [empty] (root2) {$a$};
				\draw[kernel] (topl) to
				node [sloped,below] {\small }     (root1);
				\draw[kernel] (topr) to
				node [sloped,below] {\small }     (root1);
				\draw[kernel] (root1) to
				node [sloped,below] {\small }     (root);
				\draw[kernel] (root2) to
				node [sloped,below] {\small }     (root);
				\draw[kernel] (3) to
				node [sloped,below] {\small }     (topr);
				\draw[kernel] (2) to
				node [sloped,below] {\small }     (topr);
			\end{tikzpicture}\,,
			\qquad
			\bullet_j\to_3\rho=\begin{tikzpicture}[scale=0.2,baseline=0.1cm]
				\node at (-3,9) [empty] (3) {$c$};
				\node at (-9, 9) [empty] (4) {$j$};
				\node at (0,0) [dot] (root) {};
				\node at (-3,3) [dot] (root1) {};
				\node at (-6,6) [dot] (topl) {};
				\node at (0,6) [empty] (topr) {$b$};
				\node at (3,3) [empty] (root2) {$a$};
				\draw[kernel] (topl) to
				node [sloped,below] {\small }     (root1);
				\draw[kernel] (topr) to
				node [sloped,below] {\small }     (root1);
				\draw[kernel] (root1) to
				node [sloped,below] {\small }     (root);
				\draw[kernel] (root2) to
				node [sloped,below] {\small }     (root);
				\draw[kernel] (4) to
				node [sloped,below] {\small }     (topl);
				\draw[kernel] (3) to
				node [sloped,below] {\small }     (topl);
			\end{tikzpicture}.
		\end{align*}
		At the same time, we have
		\[
		P_{\rho,1}=\rho, \qquad P_{\rho,2} = \begin{tikzpicture}[scale=0.2,baseline=0.1cm]
			\node at (0,0) [dot] (root) {};
			\node at (-3,3) [empty] (root1) {$c$};
			\node at (3,3) [empty] (root2) {$b$};
			\draw[kernel] (root1) to
			node [sloped,below] {\small }     (root);
			\draw[kernel] (root2) to
			node [sloped,below] {\small }     (root);
		\end{tikzpicture}, \qquad P_{\rho,3}=\bullet_c.
		\]
	\end{example}
	
		The operation of removing an effective cherry of $\rho\in\cT_b^{I_k}$ in all possible ways, that lies at the heart of the combinatorial coefficient $C(\rho)$, induces a reduction operation that stops when the tree is reduced to a single vertex. 
	\begin{definition}[Reduction chains]\label{Def: reduction chain}
		For any $k\in\N$ and $\rho\in\cT _b$ with $k+1$ leaves we define a \emph{reduction chain} as a collection of $k$ binary trees $(\rho^0,\rho^1,\ldots \rho^k)$ such that 
		\begin{equation*}
			\rho^0=\rho\,,\qquad\rho^i\in\cR^-(\rho^{i-1})\,,\qquad i=1,\ldots, k\,,
		\end{equation*}
		see Definition \ref{Def: operations binary trees1} for $\cR^-$.
	\end{definition}
	In a reduction chain $(\rho^0,\rho^1,\ldots \rho^k)$, every $\rho^{i}$ is obtained by removing an effective cherry from $\rho^{i-1}$.
	\begin{example}\label{ex:reducc}
	If we consider
	\[
	\rho=\begin{tikzpicture}[scale=0.2,baseline=0.1cm]
				\node at (0,0) [dot] (root) {};
				\node at (-3,3) [empty] (root1) {$3$};
				\node at (3,3) [dot] (root2) {};
				\node at (0,6) [dot] (centerl) {};
				\node at (-3,9) [empty] (topl) {$2$};
				\node at (3,9) [empty] (topr) {$1$};
				\node at (6,6) [empty] (centerr) {$0$};
				\draw[kernel] (topl) to
				node [sloped,below] {\small }     (centerl);
				\draw[kernel] (topr) to
				node [sloped,below] {\small }     (centerl);
				\draw[kernel] (centerl) to
				node [sloped,below] {\small }     (root2);
				\draw[kernel] (centerr) to
				node [sloped,below] {\small }     (root2);
				\draw[kernel] (root1) to
				node [sloped,below] {\small }     (root);
				\draw[kernel] (root2) to
				node [sloped,below] {\small }     (root);
			\end{tikzpicture}\, ,
\]
then we have two possible reduction chains:
\[
\begin{tikzpicture}[scale=0.2,baseline=0.1cm]
				\node at (0,0) [dot] (root) {};
				\node at (-3,3) [empty] (root1) {$3$};
				\node at (3,3) [dot] (root2) {};
				\node at (0,6) [dot] (centerl) {};
				\node at (-3,9) [empty] (topl) {$2$};
				\node at (3,9) [empty] (topr) {$1$};
				\node at (6,6) [empty] (centerr) {$0$};
				\draw[kernel] (topl) to
				node [sloped,below] {\small }     (centerl);
				\draw[kernel] (topr) to
				node [sloped,below] {\small }     (centerl);
				\draw[kernel] (centerl) to
				node [sloped,below] {\small }     (root2);
				\draw[kernel] (centerr) to
				node [sloped,below] {\small }     (root2);
				\draw[kernel] (root1) to
				node [sloped,below] {\small }     (root);
				\draw[kernel] (root2) to
				node [sloped,below] {\small }     (root);
			\end{tikzpicture} \quad 
			\begin{tikzpicture}[scale=0.2,baseline=0.1cm]
				\node at (0,0) [dot] (root) {};
				\node at (-3,3) [dot] (root1) {};
				\node at (-6,6) [empty] (topl) {$2$};
				\node at (0,6) [empty] (topr) {$1$};
				\node at (3,3) [empty] (root2) {$0$};
				\draw[kernel] (topl) to
				node [sloped,below] {\small }     (root1);
				\draw[kernel] (topr) to
				node [sloped,below] {\small }     (root1);
				\draw[kernel] (root1) to
				node [sloped,below] {\small }     (root);
				\draw[kernel] (root2) to
				node [sloped,below] {\small }     (root);
			\end{tikzpicture} \quad
			 \begin{tikzpicture}[scale=0.2,baseline=0.1cm]
			\node at (0,0) [dot] (root2) {};
			\node at (-3,3) [empty] (centerl) {$1$};
			\node at (3,3) [empty] (centerr) {$0$};
			\draw[kernel] (centerl) to
			node [sloped,below] {\small }     (root2);
			\draw[kernel] (centerr) to
			node [sloped,below] {\small }     (root2);
		\end{tikzpicture} \qquad \bullet_0,
\]
\[
\begin{tikzpicture}[scale=0.2,baseline=0.1cm]
				\node at (0,0) [dot] (root) {};
				\node at (-3,3) [empty] (root1) {$3$};
				\node at (3,3) [dot] (root2) {};
				\node at (0,6) [dot] (centerl) {};
				\node at (-3,9) [empty] (topl) {$2$};
				\node at (3,9) [empty] (topr) {$1$};
				\node at (6,6) [empty] (centerr) {$0$};
				\draw[kernel] (topl) to
				node [sloped,below] {\small }     (centerl);
				\draw[kernel] (topr) to
				node [sloped,below] {\small }     (centerl);
				\draw[kernel] (centerl) to
				node [sloped,below] {\small }     (root2);
				\draw[kernel] (centerr) to
				node [sloped,below] {\small }     (root2);
				\draw[kernel] (root1) to
				node [sloped,below] {\small }     (root);
				\draw[kernel] (root2) to
				node [sloped,below] {\small }     (root);
			\end{tikzpicture} \quad
				\begin{tikzpicture}[scale=0.2,baseline=0.1cm]
				\node at (0,0) [dot] (root) {};
				\node at (-3,3) [empty] (root1) {$3$};
				\node at (3,3) [dot] (root2) {};
				\node at (0,6) [empty] (centerl) {$1$};
				\node at (6,6) [empty] (centerr) {$0$};
				\draw[kernel] (centerl) to
				node [sloped,below] {\small }     (root2);
				\draw[kernel] (centerr) to
				node [sloped,below] {\small }     (root2);
				\draw[kernel] (root1) to
				node [sloped,below] {\small }     (root);
				\draw[kernel] (root2) to
				node [sloped,below] {\small }     (root);
			\end{tikzpicture} \quad 
			\begin{tikzpicture}[scale=0.2,baseline=0.1cm]
			\node at (0,0) [dot] (root2) {};
			\node at (-3,3) [empty] (centerl) {$1$};
			\node at (3,3) [empty] (centerr) {$0$};
			\draw[kernel] (centerl) to
			node [sloped,below] {\small }     (root2);
			\draw[kernel] (centerr) to
			node [sloped,below] {\small }     (root2);
		\end{tikzpicture} \qquad \bullet_0.
\]

	\end{example}
	
	\begin{lemma}\label{reduch}
		The set of reduction chains of $\rho\in\cT _b$ has cardinality $C(\rho)$, see \eqref{Eq: C}.
	\end{lemma}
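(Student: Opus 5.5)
The plan is strong induction on the number of leaves of $\rho$, comparing the recursion defining $C$ in \eqref{Eq: C} with the recursion satisfied by the number of reduction chains. Write $\mathrm{RC}(\rho)$ for the set of reduction chains of $\rho$.

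\emph{Base case.} If $\rho$ has a single leaf ($k=0$), then $\rho=\bullet$. The only reduction chain is the trivial one $(\rho^0)=(\rho)$, so $|\mathrm{RC}(\rho)|=1=C(\bullet)$.

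\emph{Inductive step.} Assume $\rho$ has $k+1\ge 2$ leaves. The first step is to check that every $\rho'\in\cR^-(\rho)$ has exactly $k$ leaves.
\begin{itemize}
\item If the effective cherry contracted has two leaves (rule 2 of Definition \ref{Def: operations binary trees1}), two leaves are replaced by one.
\item If it is a root cherry with a single left leaf (rule 3), the new root is the former right child. The left leaf is removed and no other leaf is created.
\end{itemize}
In both cases the leaf count drops by exactly one. Consequently a reduction chain of length $k$ ends at a tree with one leaf, namely $\bullet_0$, consistent with Example \ref{ex:reducc}.

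The second step is the decomposition of reduction chains according to their first step. The map
\[
(\rho^0,\rho^1,\ldots,\rho^k)\mapsto\bigl(\rho^1,(\rho^1,\ldots,\rho^k)\bigr)
\]
is a bijection between $\mathrm{RC}(\rho)$ and $\bigsqcup_{\rho'\in\cR^-(\rho)}\mathrm{RC}(\rho')$. Its inverse prepends $\rho$. Disjointness is automatic because $\rho^1$ is recorded. Hence
\[
|\mathrm{RC}(\rho)|=\sum_{\rho'\in\cR^-(\rho)}|\mathrm{RC}(\rho')|=\sum_{\rho'\in\cR^-(\rho)}C(\rho')=C(\rho),
\]
where the middle equality is the inductive hypothesis and the last one is \eqref{Eq: C}.

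\emph{Expected obstacle: the multiset convention in $\cR^-$.} Two distinct effective cherries could in principle yield the same contracted tree. Then the recursion \eqref{Eq: C} must be read as a sum over effective cherries, i.e.\ over $\cR^-(\rho)$ as a multiset, which is also how it arises in the proof of Lemma \ref{lem: GUBinary}. Reduction chains must be counted the same way, i.e.\ as sequences of cherry removals.

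I would first try to show that no such collision occurs. The leaf decorations of a tree in $\cT_b^{I}$ are distinct, and a contraction removes exactly one label: the left label of the cherry under rule 2, or the leftmost label under rule 3. So two different effective cherries remove different labels and produce distinct trees.

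Should this argument fail, the fallback is to define chains as sequences of cherry choices. The bijection above then goes through verbatim.

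Finally, one should note that a tree with $k+1\ge2$ leaves always has an effective cherry, so the recursion never stalls. Since the root is then not a leaf, its left child is either a leaf, giving a rule-2 root cherry, or an internal vertex, whose leftmost deepest cherry consists of two leaves neither of which is the rightmost $0$-leaf. This is not strictly needed, however: if $\cR^-(\rho)$ were empty, both counts would vanish and agree anyway.
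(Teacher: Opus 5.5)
Your argument is correct and is essentially the paper's: the paper unfolds the recursion \eqref{Eq: C} into $C(\rho)=\sum_{\rho^0,\ldots,\rho^k}\prod_{i=1}^k \un{(\rho^i\in\cR^-(\rho^{i-1}))}$, which is your first-step decomposition iterated, and like you it reads $\cR^-(\rho)$ as a set. The paper leaves your additional checks implicit: that each contraction removes exactly one leaf, so every chain ends at a one-leaf tree where $C=1$, and that distinct effective cherries give distinct trees when leaf labels are distinct, as they are for the trees in $\cT_b^{p(I_k)}$ where the lemma is used.
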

	\begin{proof}
		By recurrence in \eqref{Eq: C}, we can write
		\[
		C(\rho) = \sum_{\rho^0,\ldots,\rho^k\in\cT_b} \prod_{i=1}^k \un{(\rho^i\in\cR^-(\rho^{i-1}))}\,.
		\]
		This implies the claim.
	\end{proof}
We define
	\begin{equation}\label{eq:<-}
	\{0,1,\ldots,k\}^\ell_<:=\{(i_1,\ldots,i_\ell)\in\N^\ell: 0\le i_1 < \cdots <i_\ell\le k\}.
	\end{equation}
	\begin{definition}\label{Def: reduction steps}
		Given $\rho\in\cT_b^{I_k}$ we fix a reduction chain $(\rho^0,\rho^1,\ldots,\rho^k)$ of $\rho$ as in Definition \ref{Def: reduction chain}.
		Then we set 
		\begin{itemize}
			\item $r^i$ as the maximum decoration of the leaves of $\rho^i$
			(note that $k=r^0\ge r^1\ge\cdots\ge r^k=0$),
			\item $q^{m_L(\rho)+1}:=0$,
			\item For $a\in\{1,\ldots,m_L(\rho)\}$, we define $q^a:=\min\{i>q^{a+1}: r^{i}<r^{i-1}\}$, 
			namely the smallest $i>q^{a+1}$ such that the maximum decoration of $\rho^i$ is strictly less than the maximum decoration of $\rho^{i-1}$,
			\item for $a\in\{1,\ldots,m_L(\rho)\}$, $p^a:=k-q^a$,
		\end{itemize}
		Finally, for $\boldsymbol{j}_{m_L(\rho)}\in\{0,1,\ldots,k-1\}^{m_L(\rho)}_<$, 
		define $H_{\rho}(\,\bj_{m_L(\rho)})$ as the number of 
	reduction chains $(\rho,\rho^1,\ldots,\rho^{k})$ of $\rho\in\cT_b^{I_k}$ such that
	$\bj_{m_L(\rho)} = (p^1,\ldots,p^{m_L(\rho)})$. 
	\end{definition}
	
	In this definition it is important that the decoration of the leaves are ordered from right to left (since $\rho\in\cT_b^{I_k}$),
	and therefore the maximal decoration is always on the leftmost leaf. Given a reduction chain $(\rho^0,\rho^1,\ldots,\rho^k)$, 
	$1\le q^{m_L(\rho)}<\cdots<q^1\le k$
	is the sequence of the first $m_L(\rho)$ steps when the maximal decoration strictly decreases, and $p^a=k-q^a$.
		
	\begin{example} In the first reduction chain of Example \ref{ex:reducc}, the sequence of maximum decorations is 
		$r^0=3$, $r^1=2$, $r^2=1$, $r^3=0$; $m_L(\rho)=1$; then $q^1=1$, and $p^1=2$.
	 In the second reduction chain, we have $r^0=r^1=3$, $r^2=1$, $r^3=0$; then $q^1=2$, and $p^1=1$. We conclude that
	 $H_\rho(1)=H_\rho(2)=1$. See Example \ref{ex:redduc2} for more on reduction chains.
	\end{example}

	\subsection{Evaluation points}\label{Sec: evaluation points}
	In Section \ref{Sec: Binary trees and Guin-Oudom grafting} we found a description of the Guin-Oudom expression $(\tau_1\cdots\tau_k)\graf\tau$,
	which solves the first problem stated in Remark \ref{rem:problems}. Now we need to address the second problem highlighted there:
	how to predict the evaluation points for each derivative in expressions like \eqref{Eq: B^3} for $B^{[m+1]}(\tau)$.
	
	In Definition \ref{Def: J} we introduce multi-indices $J_{\rho,\ell}:\{0,1,\ldots,k\}^\ell_<\to\N$ over the ordered sets of integers defined in \eqref{eq:<-}.
	Recalling the notation introduced in the two examples \eqref{Eq: B^2}-\eqref{Eq: B^3} at the beginning of Section 
	\ref{Sec: Lipschitz elementary differentials}, the desired expression for the remainders $B^{[m+1]}$, see \eqref{eq:e(A)}, will involve 
	$\ell$ gradients evaluated on $(u_{i_1},\ldots,u_{i_\ell})$ and then multiplied

	by the coefficient $J_{\rho,\ell}(i_1,\ldots,i_\ell)$ as $\ell$ varies. The precise construction is again based on planar binary trees.
		\begin{definition}\label{Def: J}
		For any $\rho\in\cT_b$ with $k+1$ leaves and $\ell\in\{1,\ldots,m_L(\rho)+1\}$ we define recursively a multi-index  
		$J_{\rho,\ell}:\{0,1,\ldots,k\}^\ell_<\to\N$ as follows: 
		\begin{itemize} 
		\item $J_{\bullet_i,1}=\un{(0)}$ and, given 
		$J_{V^-(\rho),m_L(\rho)}: \{0,\ldots,k-1\}^{m_L(\rho)}_<\to\N$, recall \eqref{V-},
			\item for $j\in\{0,\ldots,k\}$
			\[
			J_{\rho,1}(\,j) := \sum_{i_1<\cdots <i_{m_L(\rho)}<k} 
			\un{(i_1<j )}\, J_{V^-(\rho),m_L(\rho)}(i_1,\ldots,i_{m_L(\rho)})\,,
			\]
			\item for $\ell\in\{2,\ldots,m_L(\rho)\}$, $\bj_\ell=(\,j_1,\ldots,j_{\ell})\in\{0,\ldots,k\}^\ell_<$:
			\begin{equation}\label{Eq: second case multi}
			J_{\rho,\ell}(\,\bj_\ell) := \sum_{i_{\ell}<\cdots <i_{m_L(\rho)}<k} 
			\un{(\, j_\ell \le i_\ell)}\, J_{V^-(\rho),m_L(\rho)}(\,j_1,\ldots,j_{\ell-1},i_{\ell},\ldots ,i_{m_L(\rho)})\,,
			\end{equation}
			\item for $\bj_{m_L(\rho)+1}=(\,j_1,\ldots,j_{m_L(\rho)+1})\in\{0,\ldots, k\}^{m_L(\rho)+1}_<$:
			\begin{equation}\label{Eq: third case multi}
				J_{\rho,m_L(\rho)+1}(\,\bj_{m_L(\rho)+1}) := 
				J_{V^-(\rho),m_L(\rho)}(\,j_1,\ldots,j_{m_L(\rho)})\,.
			\end{equation}

		\end{itemize}
	\end{definition}

	\begin{example}\label{Example: J def}
		To give the rather abstract Definition \ref{Def: J} a more intuitive interpretation, we first consider the case
		\begin{equation}\label{eq:exrho}
			\rho = \begin{tikzpicture}[scale=0.2,baseline=0.1cm]
				\node at (0,0) [dot] (root) {};
				\node at (-3,3) [empty] (root1) {$1$};
				\node at (3,3) [empty] (root2) {$0$};
				\draw[kernel] (root1) to
				node [sloped,below] {\small }     (root);
				\draw[kernel] (root2) to
				node [sloped,below] {\small }     (root);
			\end{tikzpicture}=[\bullet_1,\bullet_0]\,.
		\end{equation}
		Here $m_L(\rho)=1$ and $k=1$, so that the only relevant multi-index components are $J_{\rho,1}(0)$, $J_{\rho,1}(1)$ and $J_{\rho,2}(0,1)$. 
		Moreover $V^-(\rho)=\bullet_0$, so that $J_{V^-(\rho),1}(0)=1$ by definition. Recall that
		\begin{align*}
			&J_{\rho,1}(\,j) = \sum_{i_1<\cdots <i_{m_L(\rho)}<k} 
			\un{(i_1<j )}\, J_{V^-(\rho),m_L(\rho)}(i_1,\ldots,i_{m_L(\rho)})=\un{(0<j)}J_{\bullet_0,1}(0)\,.
		\end{align*}
		Since the indicator function differs from zero only for $j=1$, we deduce that  $J_{\rho,1} (0)= 0, J_{\rho,1} (1)= 1$. Finally, recalling \eqref{Eq: third case multi}
		\[
		J_{\rho,2}(0,1)=
		J_{\bullet_0,1}(0)=1\,.
		\]
		Now consider the forest $w=\bullet\bullet=\tau_1\tau_2\in\cF$ and $\tau_0\in\cT$. Drawing a parallel with the computation of $B^{[3]}_{z_1z_2}(\tau)$, see \eqref{Eq: B^3}, we expect contributions composed of one, two and three gradients, respectively. According to Definition \ref{Def: link binary to rooted}, the grafting 
		\begin{equation*}
			(\tau_1\tau_2)\graf\tau_0=\tau_2\graf(\tau_1\graf\tau_0)-(\tau_2\graf\tau_1)\graf\tau_0
		\end{equation*}
		is equal to $G(\rho)-G(\rho')$, where
		\begin{equation*}\label{eq:exrho2}
			\rho=	\begin{tikzpicture}[scale=0.2,baseline=0.1cm]
				\node at (0,0) [dot] (root) {};
				\node at (-3,3) [empty] (root1) {$2$};
				\node at (3,3) [dot] (root2) {};
				\node at (0,6) [empty] (centerl) {$1$};
				\node at (6,6) [empty] (centerr) {$0$};
				\draw[kernel] (centerl) to
				node [sloped,below] {\small }     (root2);
				\draw[kernel] (centerr) to
				node [sloped,below] {\small }     (root2);
				\draw[kernel] (root1) to
				node [sloped,below] {\small }     (root);
				\draw[kernel] (root2) to
				node [sloped,below] {\small }     (root);
			\end{tikzpicture}=[\bullet_{2},[\bullet_1,\bullet_0]]\,,\qquad
			\rho'=\begin{tikzpicture}[scale=0.2,baseline=0.1cm]
				\node at (0,0) [dot] (root) {};
				\node at (-3,3) [dot] (root1) {};
				\node at (-6,6) [empty] (topl) {$2$};
				\node at (0,6) [empty] (topr) {$1$};
				\node at (3,3) [empty] (root2) {$0$};
				\draw[kernel] (topl) to
				node [sloped,below] {\small }     (root1);
				\draw[kernel] (topr) to
				node [sloped,below] {\small }     (root1);
				\draw[kernel] (root1) to
				node [sloped,below] {\small }     (root);
				\draw[kernel] (root2) to
				node [sloped,below] {\small }     (root);
			\end{tikzpicture}=[[\bullet_{2},\bullet_1],\bullet_0]\,.
		\end{equation*}
		We take our moves from $\rho$, for which $m_L(\rho)=1$ and $k=2$. By definition, the only non-vanishing value for one element is
		\begin{align*}
			J_{\rho,1}(2)=J_{[\bullet_1,\bullet_0],1}(1)=1\,.
		\end{align*}
		This means that, upon identifying $\tau_1=\tau_2=\bullet$, in the calculation of $B^{[3]}_{z_1z_2}(\tau_0)$ the term $\nabla\Up_{\bullet\graf(\bullet\graf\tau_0)}$ will be evaluated at $u_2$, as it is the case. For $\ell=2$ we have
		\begin{align*}
			J_{\rho,2}(1,2)=J_{[\bullet_1,\bullet_0],1}(1)=1\,.
		\end{align*}
		This exactly corresponds to the evaluation of $\nabla\Up_{\<1>}(u_1) \, \nabla\Up_{\tau_2}(u_2)$ in $B^{[3]}_{z_1z_2}(\tau_0)$. 
		
		Considering $\rho'$, for which $m_L(\rho')=2$ and $k=2$, there are more possibilities.
		Note that $V^-(\rho')$ is equal to the tree in \eqref{eq:exrho}, for which the only non-vanishing multi-indices are $J_{[\bullet_1,\bullet_0],1}(1)$ and $J_{[\bullet_1,\bullet_0],2}(0,1)$. 
		\[
		J_{\rho',1}(\,j) = \sum_{i_1<\cdots <i_{m_L(\rho)}<k} 
		\un{(i_1<j )}\, J_{[\bullet_1,\bullet_0],2}(i_1,i_2),
		\]
		which is non-vanishing only for $(i_1,i_2)=(0,1)$ and $j=1,2$, yielding $J_{\rho',1}(1)=J_{\rho',1}(2)=1$. 	This reflects the fact that the contribution $\nabla\Up_{(\tau_2\graf\tau_1)\graf\tau_0}$ in \eqref{Eq: B^3} is evaluated both at $u_1$ and $u_2$.
		
		For $\ell=2$ we apply the definition:
		\[
		J_{\rho',2}(\,j_1,j_2) = \sum_{i_{2}<2} 
		\un{(\, j_2 \le i_2)}\, J_{[\bullet_1,\bullet_0],2}(\,j_1,i_{2})
		\]
		which is non-vanishing only for $(\,j_1,j_2)=(0,1)$, yielding $J_{\rho',2}(0,1)=1 $. This corresponds to the evaluation points of 
		$\nabla\Up_{\tau_0}(u_0) \, \nabla\Up_{\tau_2\graf\tau_1}(u_1)$. 
		
		Lastly we consider the multi-index $J_{\rho',3}$, whose definition is adapted to the present case as
		\[
		J_{\rho',3}(\,j_1,j_2,j_{3}) = 
		J_{[\bullet_1,\bullet_0],2}(\,j_1,j_2)=\un{(\,j_1=0,j_2=1)}\,.
		\]
		It entails $J_{\rho',3}(0,1,2)=1$, while all the other contributions vanish, matching the evaluation of 
		$\nabla\Up_{\tau_0}(u_0) \, \nabla\Up_{\tau_1}(u_1) \, \nabla\Up_{\tau_2}(u_2)$.
	\end{example}

	We show now how the multi-indices $J$ of Definition \ref{Def: J} are connected to the insertion operation and the reduction chains of Definitions \ref{Def: insertion},
	respectively \ref{Def: reduction steps}.
	\begin{lemma}\label{lem: tuple correspondence}
If $\rho\in\cT_b^{I_k}$, then $J_{\rho,\ell}(\,\bj_\ell)$ is equal to $H_{\bullet_{k+1}\to_\ell \rho}(\,\bj_\ell)$ \, for all $\ell\in\{1,\ldots, m_L(\rho)+1\}$.
	\end{lemma}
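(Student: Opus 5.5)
Set $\rho':=\bullet_{k+1}\to_\ell\rho$. Since the leftmost leaf of $\rho'$ carries the maximal decoration $k+1$, we have $\rho'\in\cT_b^{I_{k+1}}$ and $m_L(\rho')=\ell$. So $H_{\rho'}$ is a function on $\{0,\ldots,k\}^\ell_<$, matching the domain of $J_{\rho,\ell}$.

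The proof will be by induction on $k$, unfolding the three cases of Definition \ref{Def: J} and comparing them with the structure of reduction chains of $\rho'$. The base case $k=0$, $\rho=\bullet_0$, $\ell=1$ is immediate. Here $\rho'=[\bullet_1,\bullet_0]$ has the unique reduction chain $(\rho',\bullet_0)$, with $q^1=1$ and $p^1=0$. Hence $H_{\rho'}(0)=1=J_{\bullet_0,1}(0)$.

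\textbf{Key structural observation.} First I would record which cherries of $\rho'$ may be contracted. By the right-to-left ordering of decorations, a cherry containing the leaf $k+1$ is effective either because it is the cherry $[\bullet_{k+1},P_{\rho,\ell}]$ with $\ell=1$ (rule 2), or because $P_{\rho,\ell}=\bullet_i$ is a leaf, which happens for $\ell=m_L(\rho)+1$ (rule 1). Otherwise the leaf $k+1$ survives until its sibling subtree $P_{\rho,\ell}$ has been reduced to a leaf (or, at the root, to the root cherry). The maximal decoration therefore drops for the first time exactly at the step where $k+1$ is absorbed, so $q^\ell$ equals that step. Before that step, every step of a chain of $\rho'$ is a step of a chain of $\rho$ acting inside $\rho$. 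I will state this as a bijection:
\[
\{\text{reduction chains of }\rho'\}\;\longleftrightarrow\;\{(\text{chain of }\rho,\ \text{time } t \text{ at which } \bullet_{k+1} \text{ is absorbed})\},
\]
with $t$ admissible iff, at time $t$, the subtree $P_{\rho,\ell}$ has become a leaf. After absorption the remaining chain coincides with a tail of the chain of $\rho$. The decorations of later drops are shifted accordingly, so the remaining times $q^{\ell-1},\ldots,q^1$ of $\rho'$ are read off from the chain of $\rho$.

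\textbf{Matching the three cases.}
\begin{itemize}
\item For $\ell=m_L(\rho)+1$, the leaf $k+1$ sits in a cherry with the leftmost leaf of $\rho$. Contracting it and relabelling, the chains are in bijection with chains of $V^-(\rho)$ with one extra leading step. This reproduces \eqref{Eq: third case multi}, using the induction hypothesis on $V^-(\rho)$, which has $k$ leaves, together with $m_L(V^-(\rho))=m_L(\rho)-1$. Since $\rho=\bullet_{k}\to_{m_L(\rho)}V^-(\rho)$ after relabelling, the induction hypothesis reads $J_{V^-(\rho),m_L(\rho)}=H_\rho$.
\item For $2\le\ell\le m_L(\rho)$, absorbing $k+1$ at some time $j_\ell\le i_\ell$ relative to the corresponding drop time of $\rho$ produces the indicator $\un{(j_\ell\le i_\ell)}$ in \eqref{Eq: second case multi}, summed over the free later indices.
\item For $\ell=1$, the root cherry can be contracted at any step after the first drop of $\rho$. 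This gives $\un{(i_1<j)}$.
\end{itemize}
Throughout, I convert $q$'s into $p=k+1-q$ and use $H_\rho=J_{V^-(\rho),m_L(\rho)}$ from the induction.

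\textbf{Main obstacle.} I expect the hardest part to be the index bookkeeping: checking that the strict versus non-strict inequalities and the shift $p=k+1-q$ versus $k-q$ align exactly with the definition. I would first verify this against Example \ref{Example: J def}, namely $\rho=[\bullet_1,\bullet_0]$ with $\ell=1,2$, and fix the conventions before writing the general bijection.
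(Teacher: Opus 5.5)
Your architecture matches the paper's. You argue by induction on $k$ and show that $H_{\bullet_{k+1}\to_\ell\rho}$ obeys the recursion of Definition~\ref{Def: J} with $H_\rho$ in place of $J_{V^-(\rho),m_L(\rho)}$, via a bijection between reduction chains of $\rho'=\bullet_{k+1}\to_\ell\rho$ and pairs (chain of $\rho$, step $q$ at which $\bullet_{k+1}$ is absorbed). But the whole content of the lemma is the admissible range of $q$, and there the proposal is wrong.

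You only impose a lower bound: $P_{\rho,\ell}$ must have collapsed to a leaf. There is also an upper bound, and it is the essential one. Once $P_{\rho,\ell}$ has collapsed, the leftmost leaf of the $\rho$-part is the sibling of $\bullet_{k+1}$; for $\ell=1$, the root of $\rho$ is simply no longer the root. Hence the next drop of the $\rho$-chain is not an effective contraction while $\bullet_{k+1}$ is present. Write $\bar q^{m_L(\rho)}<\cdots<\bar q^1$ for the drop times of the chain of $\rho$, with $\bar q^{m_L(\rho)+1}:=0$. The correct window is $\bar q^{\ell}<q\le\bar q^{\ell-1}$ for $\ell\ge 2$, and $1\le q\le \bar q^1$ for $\ell=1$. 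In terms of $p$ this reads $p^a=\bar p^a$ for $a<\ell$ and $\bar p^{\ell-1}<p^\ell\le\bar p^\ell$ (with $\bar p^{m_L(\rho)+1}:=k$), resp.\ $p^1>\bar p^1$.

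So for $\ell=1$ the root cherry of $\rho'$ must be contracted \emph{no later than} the slot of $\rho$'s own root contraction, not ``after the first drop of $\rho$''. For $\rho=[\bullet_1,\bullet_0]$, the unique chain of $[\bullet_2,[\bullet_1,\bullet_0]]$ starts at the root, because $[\bullet_1,\bullet_0]$ is not effective inside it. This upper bound is exactly what produces $\un{(i_1<j)}$. Without it, your admissibility criterion also accepts absorption times that yield sequences which are not reduction chains.

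Your case $\ell=m_L(\rho)+1$ is also wrong as described. Contracting $[\bullet_{k+1},\bullet_k]$ returns (the current state of) $\rho$, not $V^-(\rho)$, since the new leaf inherits decoration $k$. Nor must it be the first step: it can occur at any $q\le\bar q^{m_L(\rho)}$. For example, $\rho=[\bullet_3,[[\bullet_2,\bullet_1],\bullet_0]]$ gives $\rho'=[[\bullet_4,\bullet_3],[[\bullet_2,\bullet_1],\bullet_0]]$, which has three reduction chains, while $V^-(\rho)=[[\bullet_2,\bullet_1],\bullet_0]$ has one; even the chain lengths disagree. A forced leading step would also give $p^{m_L(\rho)+1}=k$ always. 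That is incompatible with \eqref{Eq: third case multi}, whose right-hand side does not depend on $j_{m_L(\rho)+1}$.

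What you need is $H_{\rho'}(\bj_{m_L(\rho)+1})=H_\rho(\bj_{m_L(\rho)})$; the constraint $\bar p^{m_L(\rho)}<j_{m_L(\rho)+1}$ is automatic. Then apply the induction hypothesis $H_\rho=J_{V^-(\rho),m_L(\rho)}$. It applies because $\rho=\bullet_k\to_{m_L(\rho)}V^-(\rho)$ with $V^-(\rho)\in\cT_b^{I_{k-1}}$ (no relabelling is needed) and $m_L(\rho)\le m_L(V^-(\rho))+1$. Your claim $m_L(V^-(\rho))=m_L(\rho)-1$ is false whenever $\rho_{m_L(\rho)}$ is not a leaf, as in the example above.

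Once the two-sided window is in place, the middle case $2\le\ell\le m_L(\rho)$ goes through as you indicate. The lower bound gives $\un{(j_\ell\le i_\ell)}$, and the upper bound reduces to the automatic $j_{\ell-1}<j_\ell$.
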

	
	\begin{proof}
		We want to show that $H_{\bullet_{k+1}\to_\ell \rho}$ satisfies the same recursive characterisation as $J_{\rho,\ell}$ for all $\ell\in\{1,\ldots, m_L(\rho)+1\}$.
		We consider first \eqref{Eq: third case multi}, i.e. the case $\ell=m_L(\rho)+1$. 	
		We can write a reduction chain of $\bullet_{k+1}\to_\ell \rho$ for which $\bj_\ell=(p^1,\ldots, p^\ell)$ as
		\begin{equation}\label{Eq: specific reduction chain0}
			(\bullet_{k+1}\to_\ell\rho,\bullet_{k+1}\to_\ell\bar\rho^1,\ldots, \bullet_{k+1}\to_\ell\bar\rho^b,\bar\rho^b,\bar\rho^{b+1}\ldots,\bar\rho^k),
		\end{equation}
		where by Definition \ref{Def: reduction steps} $b=q^\ell-1$, since $b+1=q^\ell$ is the step in the reduction chain of $\bullet_{k+1}\to_\ell \rho$ in which the decoration $k+1$ is removed. 
		Then we define uniquely a reduction chain of $\rho$:
		\begin{equation*}
			(\rho,\bar\rho^{1},\ldots,\bar\rho^b,\ldots\bar\rho^k).
		\end{equation*}
		Then the sequence $(\,j_1,\ldots,j_{\ell-1})$ is equal to the sequence $(p^1,\ldots,p^{\ell-1})$ associated to this reduction chain of $\rho$.
		Viceversa, given a reduction chain of $\rho$ such that $(p^1,\ldots,p^{\ell-1})=(\,j_1,\ldots,j_{\ell-1})$, for any $j\in\{\,j_\ell+1,\ldots,k\}$ we 
		can find uniquely a reduction chain as in \eqref{Eq: specific reduction chain0} such that $k-j=b+1$. These applications are inverse to each other.
		As a consequence, for any $\bj_{m_L(\rho)+1}$ we have
		\begin{equation*}\label{Eq: reduction chain}
			H_{\bullet_{k+1}\to_{m_L(\rho)+1} \rho}(\,\bj_{m_L(\rho)+1}) = 
			H_{\rho}(\,\bj_{m_L(\rho)})\,.
		\end{equation*}
		
		Now we consider \eqref{Eq: second case multi}, namely $\ell=2,\ldots, m_L(\rho)$ and we fix $\bj_{\ell}=(\,j_1,\ldots, j_\ell)=(p^1,\ldots, p^\ell)$ as the indices of Definition \ref{Def: reduction steps} for $\bullet_{k+1}\to_\ell \rho$.  Henceforth we consider a generic reduction chain of $\rho\in\cT_b^{I_k}$:
		\begin{align*}
			(\rho, \bar\rho^1,\ldots, \bar\rho^{\bar q^{m_L(\rho)}},\ldots, \bar\rho^{\bar q^\ell},\ldots, \bar\rho^{\bar q^1},\ldots, \bar\rho^k)\,,
		\end{align*}
		where $(\bar q^{m_L(\rho)},\ldots, \bar q^1)$ are the steps from Definition \ref{Def: reduction steps} for $\rho$, while $(\bar p^1,\ldots, \bar p^{m_L(\rho)})$ are defined accordingly.
		To each reduction chain of this form we can associate multiple reduction chains of $\bullet_{k+1}\to_\ell \rho$ that satisfy the constraint $\bj_\ell=(p^1,\ldots, p^\ell)$ by observing that $\bullet_{k+1}$ can be extracted at all steps strictly after $\bar q^{\ell}$ until $\bar q^{\ell-1}$. This amounts to considering $\bar q^{\ell}<q^{\ell}\leq \bar q^{\ell-1}$.
		As a motivation, recall that $\bar q^\ell$ is the step in which the inserted leaf in $\bullet_{k+1}\to_\ell \rho$ becomes part of an effective cherry, while $\bar q^{\ell-1}$ represents the step in which the leftmost leaf of $\bar\rho^{\bar q^{\ell}}$ is removed. This can be done only after $\bullet_{k+1}$ is removed in the inserted tree. This condition translates to $k+1-\bar q^{\ell-1}\leq k+1-q^{\ell}< k+1-\bar q^{\ell}$, which implies $p^{\ell-1}<p^{\ell}\leq\bar p^{\ell}$ or equivalently $\bar p^{\ell-1}<j_{\ell}\leq\bar p^{\ell}$.
		
		In addition, all subsequent steps of the reduction chain of $\bullet_{k+1}\to_\ell\rho$ are chosen as the ones of the reduction chain of $\rho$, which translates to $j_r=\bar p^r$ for all $r\in\{1,\ldots,\ell-1\}$. This observation amounts to the identification 
		\[
		H_{\bullet_{k+1}\to_\ell\rho,\ell}(\,\bj_\ell) = \sum_{i_{\ell}<\cdots <i_{m_L(\rho)}<k} 
		\un{(\, j_\ell \leq i_\ell)}\, H_{\rho,m_L(\rho)}(\,j_1,\ldots,j_{\ell-1},i_{\ell},\ldots ,i_{m_L(\rho)})\,.
		\]
		
		Finally we consider the remaining case $\ell=1$. To each reduction chain of $\rho$ we can associate multiple reduction chains of $\bullet_{k+1}\to_1\rho$ by observing that, this time, $\bullet_{k+1}$ can be extracted in all possible ways before the step $\bar q^{1}$. This translates to the constraint $j_1=p^1>\bar p^1$, which entails the formula
		\[
		H_{\bullet_{k+1}\to_1\rho,1}(\,j_1) = \sum_{i_1<\cdots <i_{m_L(\rho)}<k} 
		\un{(i_1<j_1 )}\, H_{\rho,m_L(\rho)}(i_1,\ldots,i_{m_L(\rho)})\,.
		\]
		This concludes the proof.
	\end{proof}
	
\begin{example}\label{ex:redduc2}
	To illustrate Lemma \ref{lem: tuple correspondence}, consider the binary tree
	\begin{align*}
		\rho=\begin{tikzpicture}[scale=0.2,baseline=0.1cm]
			\node at (0,0) [dot] (root) {};
			\node at (-4,3) [dot] (root1) {};
			\node at (-7,6) [empty] (topl) {$3$};
			\node at (-1,6) [empty] (topr) {$2$};
			\node at (4,3) [dot] (root2) {};
			\node at (1,6) [empty] (centerl) {$1$};
			\node at (7,6) [empty] (centerr) {$0$};
			\draw[kernel] (topl) to
			node [sloped,below] {\small }     (root1);
			\draw[kernel] (topr) to
			node [sloped,below] {\small }     (root1);
			\draw[kernel] (centerl) to
			node [sloped,below] {\small }     (root2);
			\draw[kernel] (centerr) to
			node [sloped,below] {\small }     (root2);
			\draw[kernel] (root1) to
			node [sloped,below] {\small }     (root);
			\draw[kernel] (root2) to
			node [sloped,below] {\small }     (root);
		\end{tikzpicture}
		\in\cT_b^{I_3}\,.
	\end{align*}
	Since $m_L(\rho)=2$ and $|\rho|=k+1=4$, by the definition of $J_{\rho,\ell}$ for $\ell\in\{1,\ldots,m_L(\rho)+1\}$, see Definition \ref{Def: J}, one can verify that
	\begin{align*}
		&J_{\rho,1}(2)=J_{\rho,1}(3)= J_{\rho,2}(1,2)= J_{\rho,3}(1,2,3)=1\,,
	\end{align*}
	while all the other components of the multi-indices vanish. To exemplify the construction of $J_{\rho,\ell}$, $\ell=1,2,3$  according to Lemma \ref{lem: tuple correspondence} we consider all reduction chains of the binary trees 
	\begin{align}\label{Eq: drawing trees}
		\bullet_4\to_1\rho=	\hspace{-0.3cm}\begin{tikzpicture}[scale=0.2,baseline=0.1cm]
			\node at (0,3) [dot] (root) {};
			\node at (-3,0) [dot] (trueroot) {};
			\node at (-4,7) [dot] (root1) {};
			\node at (-7,10) [empty] (topl) {$3$};
			\node at (-1,10) [empty] (topr) {$2$};
			\node at (4,7) [dot] (root2) {};
			\node at (1,10) [empty] (centerl) {$1$};
			\node at (7,10) [empty] (tau) {$0$};
			\node at (-6,3) [empty] (rootr) {$4$};
			\draw[kernel] (topl) to
			node [sloped,below] {\small }     (root1);
			\draw[kernel] (topr) to
			node [sloped,below] {\small }     (root1);
			\draw[kernel] (centerl) to
			node [sloped,below] {\small }     (root2);
			\draw[kernel] (tau) to
			node [sloped,below] {\small }     (root2);
			\draw[kernel] (root1) to
			node [sloped,below] {\small }     (root);
			\draw[kernel] (root2) to
			node [sloped,below] {\small }     (root);
			\draw[kernel,red] (rootr) to
			node [sloped,below] {\small }     (trueroot);
			\draw[kernel] (root) to
			node [sloped,below] {\small }     (trueroot);			
		\end{tikzpicture}
		\bullet_4\to_2\rho=	\hspace{-0.4cm}\begin{tikzpicture}[scale=0.2,baseline=0.1cm]
			\node at (-4,10) [empty] (3) {$3$};
			\node at (2, 10) [empty] (2) {$2$};
			\node at (0,0) [dot] (root) {};
			\node at (-4,4) [dot] (root1) {};
			\node at (-7,7) [empty] (topl) {$4$};
			\node at (-1,7) [dot] (topr) {};
			\node at (3,3) [dot] (root2) {};
			\node at (0.5,6) [empty] (centerl) {$1$};
			\node at (6,6) [empty] (centerr) {$0$};
			\draw[kernel,blue] (topl) to
			node [sloped,below] {\small }     (root1);
			\draw[kernel] (topr) to
			node [sloped,below] {\small }     (root1);
			\draw[kernel] (centerl) to
			node [sloped,below] {\small }     (root2);
			\draw[kernel] (centerr) to
			node [sloped,below] {\small }     (root2);
			\draw[kernel,red] (root1) to
			node [sloped,below] {\small }     (root);
			\draw[kernel] (root2) to
			node [sloped,below] {\small }     (root);
			\draw[kernel] (3) to
			node [sloped,below] {\small }     (topr);
			\draw[kernel] (2) to
			node [sloped,below] {\small }     (topr);
		\end{tikzpicture} \ \
		\bullet_4\to_3\rho=\hspace{-0.8cm}\begin{tikzpicture}[scale=0.2,baseline=0.1cm]
			\node at (-4,10) [empty] (3) {$3$};
			\node at (-10, 10) [empty] (4) {$4$};
			\node at (0,0) [dot] (root) {};
			\node at (-4,4) [dot] (root1) {};
			\node at (-7,7) [dot] (topl) {};
			\node at (-1,7) [empty] (topr) {$2$};
			\node at (4,4) [dot] (root2) {};
			\node at (1,7) [empty] (centerl) {$1$};
			\node at (7,7) [empty] (centerr) {$0$};
			\draw[kernel,blue] (topl) to
			node [sloped,below] {\small }     (root1);
			\draw[kernel] (topr) to
			node [sloped,below] {\small }     (root1);
			\draw[kernel] (centerl) to
			node [sloped,below] {\small }     (root2);
			\draw[kernel] (centerr) to
			node [sloped,below] {\small }     (root2);
			\draw[kernel,red] (root1) to
			node [sloped,below] {\small }     (root);
			\draw[kernel] (root2) to
			node [sloped,below] {\small }     (root);
			\draw[kernel,green] (4) to
			node [sloped,below] {\small }     (topl);
			\draw[kernel] (3) to
			node [sloped,below] {\small }     (topl);
		\end{tikzpicture}
	\end{align}
	\begin{enumerate}
		\item We take our moves from $\bullet_4\to_1\rho$. According to Definition \ref{Def: reduction chain}, a reduction chain is obtained by contracting one effective cherry at each step. We want to record the step at which the edge that connects the node labelled by $4$ to the root is removed, corresponding to $q^1$ in Definition \ref{Def: reduction steps}.
		
		To this end we explicitly represent the two possible reduction chains of $\bullet_4\to_1\rho$. First we consider 
		\begin{align*}
			\begin{tikzpicture}[scale=0.2,baseline=0.1cm]
				\node at (0,3) [dot] (root) {};
				\node at (-3,0) [dot] (trueroot) {};
				\node at (-4,7) [dot] (root1) {};
				\node at (-7,10) [empty] (topl) {$3$};
				\node at (-1,10) [empty] (topr) {$2$};
				\node at (4,7) [dot] (root2) {};
				\node at (1,10) [empty] (centerl) {$1$};
				\node at (7,10) [empty] (tau) {$0$};
				\node at (-6,3) [empty] (rootr) {$4$};
				\draw[kernel] (topl) to
				node [sloped,below] {\small }     (root1);
				\draw[kernel] (topr) to
				node [sloped,below] {\small }     (root1);
				\draw[kernel] (centerl) to
				node [sloped,below] {\small }     (root2);
				\draw[kernel] (tau) to
				node [sloped,below] {\small }     (root2);
				\draw[kernel] (root1) to
				node [sloped,below] {\small }     (root);
				\draw[kernel] (root2) to
				node [sloped,below] {\small }     (root);
				\draw[kernel,red] (rootr) to
				node [sloped,below] {\small }     (trueroot);
				\draw[kernel] (root) to
				node [sloped,below] {\small }     (trueroot);			
			\end{tikzpicture}
			\quad
			\begin{tikzpicture}[scale=0.2,baseline=0.1cm]
				\node at (0,0) [dot] (root) {};
				\node at (-4,4) [dot] (root1) {};
				\node at (-7,7) [empty] (topl) {$3$};
				\node at (-1,7) [empty] (topr) {$2$};
				\node at (4,4) [dot] (root2) {};
				\node at (1,7) [empty] (centerl) {$1$};
				\node at (7,7) [empty] (centerr) {$0$};
				\draw[kernel] (topl) to
				node [sloped,below] {\small }     (root1);
				\draw[kernel] (topr) to
				node [sloped,below] {\small }     (root1);
				\draw[kernel] (centerl) to
				node [sloped,below] {\small }     (root2);
				\draw[kernel] (centerr) to
				node [sloped,below] {\small }     (root2);
				\draw[kernel] (root1) to
				node [sloped,below] {\small }     (root);
				\draw[kernel] (root2) to
				node [sloped,below] {\small }     (root);
			\end{tikzpicture}
			\quad
			\begin{tikzpicture}[scale=0.2,baseline=0.1cm]
				\node at (0,0) [dot] (root) {};
				\node at (-3,3) [empty] (root1) {$2$};
				\node at (3,3) [dot] (root2) {};
				\node at (0,6) [empty] (centerl) {$1$};
				\node at (6,6) [empty] (tau) {$0$};
				\draw[kernel] (centerl) to
				node [sloped,below] {\small }     (root2);
				\draw[kernel] (tau) to
				node [sloped,below] {\small }     (root2);
				\draw[kernel] (root1) to
				node [sloped,below] {\small }     (root);
				\draw[kernel] (root2) to
				node [sloped,below] {\small }     (root);
			\end{tikzpicture}
			\quad
			\begin{tikzpicture}[scale=0.2,baseline=0.1cm]
				\node at (0,0) [dot] (root1) {};
				\node at (-3,3) [empty] (topl) {$1$};
				\node at (3,3) [empty] (topr) {$0$};
				\draw[kernel] (topl) to
				node [sloped,below] {\small }     (root1);
				\draw[kernel] (topr) to
				node [sloped,below] {\small }     (root1);
			\end{tikzpicture}
			\qquad
			\bullet_0
		\end{align*}
		In the first case $q^1=1$, hence $p^1=3$. The second possible reduction chain is
		\begin{align*}
			\begin{tikzpicture}[scale=0.2,baseline=0.1cm]
				\node at (0,3) [dot] (root) {};
				\node at (-3,0) [dot] (trueroot) {};
				\node at (-4,7) [dot] (root1) {};
				\node at (-7,10) [empty] (topl) {$3$};
				\node at (-1,10) [empty] (topr) {$2$};
				\node at (4,7) [dot] (root2) {};
				\node at (1,10) [empty] (centerl) {$1$};
				\node at (7,10) [empty] (tau) {$0$};
				\node at (-6,3) [empty] (rootr) {$4$};
				\draw[kernel] (topl) to
				node [sloped,below] {\small }     (root1);
				\draw[kernel] (topr) to
				node [sloped,below] {\small }     (root1);
				\draw[kernel] (centerl) to
				node [sloped,below] {\small }     (root2);
				\draw[kernel] (tau) to
				node [sloped,below] {\small }     (root2);
				\draw[kernel] (root1) to
				node [sloped,below] {\small }     (root);
				\draw[kernel] (root2) to
				node [sloped,below] {\small }     (root);
				\draw[kernel,red] (rootr) to
				node [sloped,below] {\small }     (trueroot);
				\draw[kernel] (root) to
				node [sloped,below] {\small }     (trueroot);			
			\end{tikzpicture}
			\quad
			\begin{tikzpicture}[scale=0.2,baseline=0.1cm]
				\node at (0,3) [dot] (root) {};
				\node at (-3,0) [dot] (trueroot) {};
				\node at (-3,6) [empty] (root1) {$2$};
				\node at (3,6) [dot] (root2) {};
				\node at (0,9) [empty] (centerl) {$1$};
				\node at (6,9) [empty] (tau) {$0$};
				\node at (-6,3) [empty] (rootr) {$4$};
				\draw[kernel] (centerl) to
				node [sloped,below] {\small }     (root2);
				\draw[kernel] (tau) to
				node [sloped,below] {\small }     (root2);
				\draw[kernel] (root1) to
				node [sloped,below] {\small }     (root);
				\draw[kernel] (root2) to
				node [sloped,below] {\small }     (root);
				\draw[kernel,red] (rootr) to
				node [sloped,below] {\small }     (trueroot);
				\draw[kernel] (root) to
				node [sloped,below] {\small }     (trueroot);
			\end{tikzpicture}
			\quad
			\begin{tikzpicture}[scale=0.2,baseline=0.1cm]
				\node at (0,0) [dot] (root) {};
				\node at (-3,3) [empty] (root1) {$2$};
				\node at (3,3) [dot] (root2) {};
				\node at (0,6) [empty] (centerl) {$1$};
				\node at (6,6) [empty] (tau) {$0$};
				\draw[kernel] (centerl) to
				node [sloped,below] {\small }     (root2);
				\draw[kernel] (tau) to
				node [sloped,below] {\small }     (root2);
				\draw[kernel] (root1) to
				node [sloped,below] {\small }     (root);
				\draw[kernel] (root2) to
				node [sloped,below] {\small }     (root);
			\end{tikzpicture}
			\quad
			\begin{tikzpicture}[scale=0.2,baseline=0.1cm]
				\node at (0,0) [dot] (root1) {};
				\node at (-3,3) [empty] (topl) {$1$};
				\node at (3,3) [empty] (topr) {$0$};
				\draw[kernel] (topl) to
				node [sloped,below] {\small }     (root1);
				\draw[kernel] (topr) to
				node [sloped,below] {\small }     (root1);
			\end{tikzpicture}
			\qquad
			\bullet_0
		\end{align*}
		This time $p^1=4-q^1=2$. Recall that Lemma \ref{lem: tuple correspondence} states that the number $H_{\bullet_{4}\to_1 \rho}(\,j_1)$ of reduction chains such that $p^1=j_1$ coincides with $J_{\rho,1}(\,j_1)$. This example corroborates the result, since $H_{\bullet_{4}\to_1 \rho}(3)=J_{\rho,1}(3)=1$ and $H_{\bullet_{4}\to_1 \rho}(2)=J_{\rho,1}(2)=1$.
		
		\item For the choice $\ell=2$, which corresponds to $\bullet_4\to_2\rho$ in \eqref{Eq: drawing trees}, we need to record the steps at which the blue, respectively the red edges are removed, which correspond to $q^2$, resp. $q^1$. This time we can only build a single reduction chain
		\begin{align*}
			\begin{tikzpicture}[scale=0.2,baseline=0.1cm]
				\node at (-4,10) [empty] (3) {$3$};
				\node at (2, 10) [empty] (2) {$2$};
				\node at (0,0) [dot] (root) {};
				\node at (-4,4) [dot] (root1) {};
				\node at (-7,7) [empty] (topl) {$4$};
				\node at (-1,7) [dot] (topr) {};
				\node at (3,3) [dot] (root2) {};
				\node at (0.5,6) [empty] (centerl) {$1$};
				\node at (6,6) [empty] (centerr) {$0$};
				\draw[kernel,blue] (topl) to
				node [sloped,below] {\small }     (root1);
				\draw[kernel] (topr) to
				node [sloped,below] {\small }     (root1);
				\draw[kernel] (centerl) to
				node [sloped,below] {\small }     (root2);
				\draw[kernel] (centerr) to
				node [sloped,below] {\small }     (root2);
				\draw[kernel,red] (root1) to
				node [sloped,below] {\small }     (root);
				\draw[kernel] (root2) to
				node [sloped,below] {\small }     (root);
				\draw[kernel] (3) to
				node [sloped,below] {\small }     (topr);
				\draw[kernel] (2) to
				node [sloped,below] {\small }     (topr);
			\end{tikzpicture}
			\quad
			\begin{tikzpicture}[scale=0.2,baseline=0.1cm]
				\node at (0,0) [dot] (root) {};
				\node at (-5,5) [dot] (root1) {};
				\node at (-8,8) [empty] (topl) {$4$};
				\node at (-2,8) [empty] (topr) {$2$};
				\node at (3,3) [dot] (root2) {};
				\node at (0.5,6) [empty] (centerl) {$1$};
				\node at (6,6) [empty] (centerr) {$0$};
				\draw[kernel,blue] (topl) to
				node [sloped,below] {\small }     (root1);
				\draw[kernel] (topr) to
				node [sloped,below] {\small }     (root1);
				\draw[kernel] (centerl) to
				node [sloped,below] {\small }     (root2);
				\draw[kernel] (centerr) to
				node [sloped,below] {\small }     (root2);
				\draw[kernel,red] (root1) to
				node [sloped,below] {\small }     (root);
				\draw[kernel] (root2) to
				node [sloped,below] {\small }     (root);
			\end{tikzpicture}
			\quad
			\begin{tikzpicture}[scale=0.2,baseline=0.1cm]
				\node at (0,0) [dot] (root) {};
				\node at (-3,3) [empty] (root1) {$2$};
				\node at (3,3) [dot] (root2) {};
				\node at (0,6) [empty] (centerl) {$1$};
				\node at (6,6) [empty] (tau) {$0$};
				\draw[kernel] (centerl) to
				node [sloped,below] {\small }     (root2);
				\draw[kernel] (tau) to
				node [sloped,below] {\small }     (root2);
				\draw[kernel,red] (root1) to
				node [sloped,below] {\small }     (root);
				\draw[kernel] (root2) to
				node [sloped,below] {\small }     (root);
			\end{tikzpicture}
			\quad
			\begin{tikzpicture}[scale=0.2,baseline=0.1cm]
				\node at (0,0) [dot] (root1) {};
				\node at (-3,3) [empty] (topl) {$1$};
				\node at (3,3) [empty] (topr) {$0$};
				\draw[kernel] (topl) to
				node [sloped,below] {\small }     (root1);
				\draw[kernel] (topr) to
				node [sloped,below] {\small }     (root1);
			\end{tikzpicture}
			\qquad
			\bullet_0
		\end{align*}
		and we have $q^2=2$, $q^1=3$, so that $(p^1,p^2)=(1,2)$. This entails that the only non-vanishing component is $J_{\rho,2}(1,2)$ and it coincides with $H_{\bullet_4\to_2\rho}(1,2)=1$.

		\item We iterate the procedure for $\bullet_4\to_3\rho$, this time having a third colored edge which corresponds to the third entry of the multi-index. Once again we have a single reduction chain:
		\begin{align*}
			\begin{tikzpicture}[scale=0.2,baseline=0.1cm]
				\node at (-4,10) [empty] (3) {$3$};
				\node at (-10, 10) [empty] (4) {$4$};
				\node at (0,0) [dot] (root) {};
				\node at (-4,4) [dot] (root1) {};
				\node at (-7,7) [dot] (topl) {};
				\node at (-1,7) [empty] (topr) {$2$};
				\node at (4,4) [dot] (root2) {};
				\node at (1,7) [empty] (centerl) {$1$};
				\node at (7,7) [empty] (centerr) {$0$};
				\draw[kernel,blue] (topl) to
				node [sloped,below] {\small }     (root1);
				\draw[kernel] (topr) to
				node [sloped,below] {\small }     (root1);
				\draw[kernel] (centerl) to
				node [sloped,below] {\small }     (root2);
				\draw[kernel] (centerr) to
				node [sloped,below] {\small }     (root2);
				\draw[kernel,red] (root1) to
				node [sloped,below] {\small }     (root);
				\draw[kernel] (root2) to
				node [sloped,below] {\small }     (root);
				\draw[kernel,green] (4) to
				node [sloped,below] {\small }     (topl);
				\draw[kernel] (3) to
				node [sloped,below] {\small }     (topl);
			\end{tikzpicture}
			\quad
			\begin{tikzpicture}[scale=0.2,baseline=0.1cm]
				\node at (0,0) [dot] (root) {};
				\node at (-5,5) [dot] (root1) {};
				\node at (-8,8) [empty] (topl) {$3$};
				\node at (-2,8) [empty] (topr) {$2$};
				\node at (3,3) [dot] (root2) {};
				\node at (0.5,6) [empty] (centerl) {$1$};
				\node at (6,6) [empty] (centerr) {$0$};
				\draw[kernel,blue] (topl) to
				node [sloped,below] {\small }     (root1);
				\draw[kernel] (topr) to
				node [sloped,below] {\small }     (root1);
				\draw[kernel] (centerl) to
				node [sloped,below] {\small }     (root2);
				\draw[kernel] (centerr) to
				node [sloped,below] {\small }     (root2);
				\draw[kernel,red] (root1) to
				node [sloped,below] {\small }     (root);
				\draw[kernel] (root2) to
				node [sloped,below] {\small }     (root);
			\end{tikzpicture}
			\quad
			\begin{tikzpicture}[scale=0.2,baseline=0.1cm]
				\node at (0,0) [dot] (root) {};
				\node at (-3,3) [empty] (root1) {$2$};
				\node at (3,3) [dot] (root2) {};
				\node at (0,6) [empty] (centerl) {$1$};
				\node at (6,6) [empty] (tau) {$0$};
				\draw[kernel] (centerl) to
				node [sloped,below] {\small }     (root2);
				\draw[kernel] (tau) to
				node [sloped,below] {\small }     (root2);
				\draw[kernel,red] (root1) to
				node [sloped,below] {\small }     (root);
				\draw[kernel] (root2) to
				node [sloped,below] {\small }     (root);
			\end{tikzpicture}
			\quad
			\begin{tikzpicture}[scale=0.2,baseline=0.1cm]
				\node at (0,0) [dot] (root1) {};
				\node at (-3,3) [empty] (topl) {$1$};
				\node at (3,3) [empty] (topr) {$0$};
				\draw[kernel] (topl) to
				node [sloped,below] {\small }     (root1);
				\draw[kernel] (topr) to
				node [sloped,below] {\small }     (root1);
			\end{tikzpicture}
			\qquad
			\bullet_0
		\end{align*}
		Here $p^3=4-1=3$, $p^2=4-2=2$ and $p^1=4-3=1$. Hence $H_{\bullet_4\to_3\rho}(1,2,3)=1$, which coincides as expected with the only non-vanishing component $J_{\rho,3}(1,2,3)$.
	\end{enumerate}
\end{example}
	
	For later use, see the proof of Lemma \ref{lem: Upsilon descent}, we prove three useful identities involving the multi-indices $J$ for binary trees of the form $\bullet_k\to_\ell\rho$ with $\rho\in\cT_b^{I_{k-1}}$.
	\begin{lemma}\label{lem: J sum} 
		Fix $\rho\in\cT_b^{I_{k-1}}$ and any $\ell\in\{1,\ldots, m_L(\rho)+1\}$. Then 
		\begin{itemize}
			\item For $f:\{0,1,\ldots, k\}\to\R^n$,
			\begin{equation}\label{Eq: 1}
				\sum_{j\le k} J_{\bullet_k\to_\ell\rho,1}(\,j) \, f(\,j)= \sum_{\boldsymbol{i}_\ell\in\{0,\ldots,k-1\}^\ell_<} 
				J_{\rho,\ell}(\boldsymbol{i}_\ell) \sum_{j=i_1+1}^k f(\,j)\,.	
			\end{equation}
			
			\item	For $m\in\{2,\ldots,\ell\}$, and $f:\{0,1,\ldots, k\}^{m}_<\to\R^n$,
			\begin{align}
				&\sum_{\boldsymbol{j}_{m}\in\{0,\ldots,k\}^{m}_<} J_{\bullet_k\to_\ell\rho,m}(\,\boldsymbol{j}_{m}) \, f(\,\boldsymbol{j}_{m})= 
				\sum_{\boldsymbol{i}_\ell\in\{0,\ldots,k-1\}^\ell_<}\,\sum_{i=i_{m-1}+1}^{i_{m}}
				J_{\rho,\ell}(\boldsymbol{i}_\ell)  \, f(\,\boldsymbol{i}_{m-1}, i)\,.\label{Eq: 2}
			\end{align}
			\item Finally for $f:\{0,1,\ldots, k\}^{\ell+1}_<\to\R^n$,
			\begin{equation}
				\sum_{\boldsymbol{j}_{\ell+1}\in\{0,\ldots,k\}^{\ell+1}_<} J_{\bullet_k\to_\ell\rho,\ell+1}(\,\boldsymbol{j}_{\ell+1})\, f(\,\boldsymbol{j}_{\ell+1})= \sum_{\boldsymbol{j}_{\ell}\in\{0,\ldots,k\}^{\ell}_<} J_{\rho,\ell}(\\,\boldsymbol{j}_{\ell}) \sum_{i=j_\ell+1}^k f(\,\boldsymbol{j}_{\ell},i)\,.\label{Eq: 3}
\end{equation}
		\end{itemize}
	\end{lemma}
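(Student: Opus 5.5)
The plan is to apply Definition \ref{Def: J} directly to the tree $\sigma:=\bullet_k\to_\ell\rho$ and then exchange the order of summation. No induction should be needed. The point is to identify the two data that enter the recursion defining $J_{\sigma,\cdot}$, namely $V^-(\sigma)$ and $m_L(\sigma)$.

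First, the structural facts. Write $\rho=[[[\ldots[\bullet_i,\rho_{m_L(\rho)}]\ldots],\rho_2],\rho_1]$ as in \eqref{eq: binary blocks}. By Definition \ref{Def: insertion},
\[
\sigma=[\ldots[[\bullet_k,P_{\rho,\ell}],\rho_{\ell-1}],\ldots,\rho_1].
\]
This is already in the form \eqref{eq: binary blocks}, with leftmost leaf $\bullet_k$ and blocks $P_{\rho,\ell},\rho_{\ell-1},\ldots,\rho_1$, so $m_L(\sigma)=\ell$. By \eqref{V-},
\[
V^-(\sigma)=[\ldots[P_{\rho,\ell},\rho_{\ell-1}],\ldots,\rho_1]=\rho,
\]
where the last equality uses Definition \ref{Def: trunk}. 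For $\ell=m_L(\rho)+1$ we have $P_{\rho,\ell}=\bullet_i$, and the identity still holds. Finally, $\rho\in\cT_b^{I_{k-1}}$ has $k$ leaves, so $\sigma$ has $k+1$ leaves. Hence $J_{\sigma,\cdot}$ is defined on $\{0,\ldots,k\}_<^{\cdot}$ and is built from $J_{\rho,\ell}$ on $\{0,\ldots,k-1\}^\ell_<$.

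Each identity now follows from the corresponding clause of Definition \ref{Def: J}.

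For \eqref{Eq: 1}, the first clause gives $J_{\sigma,1}(j)=\sum_{\boldsymbol{i}_\ell}\un{(i_1<j)}J_{\rho,\ell}(\boldsymbol{i}_\ell)$. Multiplying by $f(j)$, summing over $j\le k$ and swapping the sums turns the indicator into the range $j=i_1+1,\ldots,k$.

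For \eqref{Eq: 2}, with $2\le m\le\ell$, clause \eqref{Eq: second case multi} gives
\[
J_{\sigma,m}(\boldsymbol{j}_m)=\sum_{i_m<\cdots<i_\ell<k}\un{(j_m\le i_m)}\,J_{\rho,\ell}(j_1,\ldots,j_{m-1},i_m,\ldots,i_\ell).
\]
Multiply by $f(\boldsymbol{j}_m)$ and sum over $\boldsymbol{j}_m\in\{0,\ldots,k\}^m_<$. Then rename $(j_1,\ldots,j_{m-1})$ as $(i_1,\ldots,i_{m-1})$ and $j_m$ as $i$. The constraints $j_{m-1}<j_m\le i_m$ become exactly the inner range $i=i_{m-1}+1,\ldots,i_m$. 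The remaining indices run over $\boldsymbol{i}_\ell\in\{0,\ldots,k-1\}^\ell_<$.

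For \eqref{Eq: 3}, clause \eqref{Eq: third case multi} gives $J_{\sigma,\ell+1}(\boldsymbol{j}_{\ell+1})=J_{\rho,\ell}(\boldsymbol{j}_\ell)$, independently of $j_{\ell+1}$. The last index is then only constrained by $j_\ell<j_{\ell+1}\le k$, which yields the inner sum $\sum_{i=j_\ell+1}^k$.

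The only step needing real care, and the expected obstacle, is the bookkeeping of index ranges. In \eqref{Eq: 2} and \eqref{Eq: 3} the outer sum over $\boldsymbol{j}$ runs up to $k$, while $J_{\rho,\ell}$ is a function on $\{0,\ldots,k-1\}^\ell_<$ (extended by zero beyond it). One must therefore check that no boundary terms are lost or gained when the roles of $i_m$ and $j_m$ are exchanged. It must also be checked that the strict versus non-strict inequalities ($i_1<j$, $j_m\le i_m$) match the stated ranges. The degenerate case $\ell=1$, where the second bullet is empty, is consistent as well.
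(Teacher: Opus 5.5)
Your proposal is correct and follows the paper's own argument. The paper likewise observes that $m_L(\bullet_k\to_\ell\rho)=\ell$ and $V^-(\bullet_k\to_\ell\rho)=\rho$, reads \eqref{Eq: 1} and \eqref{Eq: 3} directly off the first and third clauses of Definition~\ref{Def: J}, and obtains \eqref{Eq: 2} from \eqref{Eq: second case multi} by renaming $j_m$ as $i$ and $(j_1,\ldots,j_{m-1})$ as $(i_1,\ldots,i_{m-1})$, exactly as you do. Your explicit check of the index ranges (the chain $j_{m-1}<j_m\le i_m$, and the empty inner sums at $j_\ell=k$) only spells out what the paper leaves implicit.
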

	\begin{proof}
		Since $V^-(\bullet_k\to_\ell\rho)=\rho$ and $m_L(\bullet_k\to_\ell\rho)=\ell$, the identities in \eqref{Eq: 1} and \eqref{Eq: 3} follow easily from the recursive definitions of $J_{\rho,\cdot}$. To derive \eqref{Eq: 2} we observe that by \eqref{Eq: second case multi}
		\[
		J_{\bullet_k\to_\ell\rho,m}(\,\bj_{m}) = \sum_{i_{m}<\cdots <i_\ell<k} 
		\un{(\, j_{m} \le i_{m})}\, J_{\rho,\ell}(\,j_1,\ldots,j_{m-1},i_{m},\ldots ,i_\ell)\,,
		\]
		so that, for $f:\{0,1,\ldots, k\}^{m}_<\to\R^n$, \eqref{Eq: 2} follows by
		renaming first $i$ as $j_{m}$ and then $i_{m},\ldots, i_\ell$ as $j_{m},\ldots, j_\ell$.
	\end{proof}
	
	\begin{definition}
		For a given $\rho\in\cT_b^{I_k}$ and the associated multi-index $J_{\rho,\ell}$ as per Definition \ref{Def: J} we define its cardinality $|J_{\rho,\ell}|\in\N$ by
		\[
		|J_{\rho,\ell}|:=\sum_{\bj_\ell\in\{0,\ldots,k\}^\ell_<}J_{\rho,\ell}(\,\bj_\ell)\,.
		\]
	\end{definition}
	\begin{remark}\label{Rem: cardinality}	
		The coefficient $C(\rho)$ in \eqref{Eq: C} coincides with the number of reduction chains of $\rho$ by Lemma \ref{reduch}. Then Lemma \ref{lem: tuple correspondence} tells us something more, namely that 
		\begin{equation*}
			|J_{\rho,\ell}|=C(\bullet_{k+1}\to_\ell\rho)\,,
		\end{equation*}
		for all $\ell\in\{1,\ldots, m_L(\rho)+1\}$.
	\end{remark}
	Going back to the explicit characterisation of the Guin-Oudom grafting of forests proved in \ref{lem: GUBinary} , we explore now the possibility of writing the right-hand side of \eqref{Eq: GU binary} in terms of the multi-indices $J_{\rho,\cdot}$. First, we show that it can be written solely in terms of binary trees of the form $\bullet\to_\ell\rho$. 
	
	\begin{lemma}\label{Lem: construction binary trees}
		For any fixed $k\in\N_\ast$ let us consider $I_k:=\{1,\ldots,k\}$ and $F:\cT_b^{I_k}\to\R$. Then 
		\begin{align*}
			\sum_{\rho'\in\cT_b^{I_k}} F(\rho')&=\sum_{\rho\in\cT_b^{I_{k-1}}}\sum_{\ell=1}^{m_L(\rho)+1} F( \bullet_k\to_\ell\rho),
		\end{align*}
		where $\bullet_k\to_\ell$ is the insertion operator of Definition \ref{Def: insertion}.
	\end{lemma}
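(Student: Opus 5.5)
The identity will follow once we exhibit a bijection
\[
\Phi:\{(\rho,\ell)\,:\,\rho\in\cT_b^{I_{k-1}},\ \ell\in\{1,\ldots,m_L(\rho)+1\}\}\longrightarrow\cT_b^{I_k},\qquad \Phi(\rho,\ell):=\bullet_k\to_\ell\rho ;
\]
summing $F$ over both sides then gives the claim. The proof therefore has three steps: show that $\Phi$ lands in $\cT_b^{I_k}$, construct an inverse, and check that the two maps are mutually inverse.

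\emph{Well-definedness.} By Definition \ref{Def: insertion}, $\bullet_k\to_\ell\rho$ is obtained from $\rho$ by attaching a new leaf $\bullet_k$ from the left at a vertex on the leftmost path. The new leaf is strictly to the left of every leaf of $\rho$, so it becomes the leftmost leaf. The relative order of the old leaves is unchanged, and so are their decorations. Reading leaves from the right, the decorations are therefore $0,1,\ldots,k-1$ followed by $k$, which means $\bullet_k\to_\ell\rho\in\cT_b^{I_k}$.

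\emph{Inverse.} Take $\rho'\in\cT_b^{I_k}$ and write it as in \eqref{eq: binary blocks}: $\rho'=[[\ldots[\bullet_i,\rho'_{m}]\ldots],\rho'_1]$ with $m=m_L(\rho')\ge1$ (here $k\ge1$, so $\rho'\neq\bullet_0$). Since decorations increase from right to left, the leftmost leaf carries $i=k$. We delete this leaf and contract its parent, i.e. we replace the innermost block $[\bullet_k,\rho'_m]$ by $\rho'_m$. This yields
\[
\rho:=[\ldots[\rho'_m,\rho'_{m-1}]\ldots,\rho'_1]=V^-(\rho')\in\cT_b^{I_{k-1}},
\]
and we set $\ell:=m$. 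Write $\rho'_m=[\ldots[\bullet_{i'},\sigma_r]\ldots,\sigma_1]$ in the form \eqref{eq: binary blocks}. The leftmost path of $\rho$ then consists of the $m-1$ vertices coming from $\rho'$ followed by the $r$ vertices of $\rho'_m$, so $m_L(\rho)=(m-1)+r\ge m-1$, and the admissibility condition $\ell\le m_L(\rho)+1$ holds.

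\emph{Mutual inverses.} First check $\bullet_k\to_m V^-(\rho')=\rho'$. By Definition \ref{Def: trunk}, $P_{V^-(\rho'),m}$ is the subtree of $V^-(\rho')$ rooted at depth $m-1$ on the leftmost path, which is exactly $\rho'_m$; the outer blocks of $V^-(\rho')$ are $\rho'_{m-1},\ldots,\rho'_1$. Definition \ref{Def: insertion} then gives $[\ldots[[\bullet_k,\rho'_m],\rho'_{m-1}]\ldots,\rho'_1]=\rho'$. Conversely, for $\rho'=\bullet_k\to_\ell\rho$ the new leaf $\bullet_k$ sits at depth $\ell$, so $m_L(\rho')=\ell$ and $\rho'_\ell=P_{\rho,\ell}$. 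Removing it gives back $V^-(\rho')=\rho$, so the pair $(\rho,\ell)$ is recovered.

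The main obstacle is the bookkeeping in the block decomposition: one has to verify that $P_{\rho,\ell}$ coincides with the $\ell$-th block of the insertion, with care at the boundary case $\ell=m_L(\rho)+1$, where $P_{\rho,\ell}=\bullet_{i}$ is a single leaf and the new cherry is $[\bullet_k,\bullet_i]$. This is where the convention $m_L(\bullet_k\to_\ell\rho)=\ell$ must be checked directly from Definition \ref{Def: trunk}.
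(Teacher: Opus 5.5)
Your proposal is correct and follows the paper's proof. The paper uses the same bijection $(\rho,\ell)\mapsto\bullet_k\to_\ell\rho$ with inverse $\rho'\mapsto(V^-(\rho'),m_L(\rho'))$, but only asserts that the two maps are mutually inverse; your checks of well-definedness, of the constraint $\ell\le m_L(\rho)+1$, and of $P_{V^-(\rho'),m}=\rho'_m$ (including the boundary case $\ell=m_L(\rho)+1$) supply exactly the details it leaves implicit.
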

	\begin{proof}
	For $\rho\in\cT_b^{I_{k-1}}$ and $\ell\in\{1,\ldots,m_L(\rho)+1\}$, we set $\rho':=\bullet_k\to_\ell\rho\in\cT_b^{I_k}$.
	For $\rho'\in\cT_b^{I_k}$, we set $\rho=V^-(\rho')$ and $\ell:=m_L(\rho')$. The two maps are the inverse of each other and give a bijection between 
	the two sets.
	\end{proof}
		
	The link between the cardinality of the reduction chains and the combinatorial coefficient $C$ spelled out in Lemma \ref{reduch} and Remark \ref{Rem: cardinality} leads to a useful corollary of Lemma \ref{lem: GUBinary}.
	\begin{corollary}\label{Cor: C and J}
		For all $\tau_0,\ldots, \tau_k\in\cT$:
		\begin{equation*}
			(\tau_k\cdots \tau_1)\curvearrowright\tau_0=\frac{1}{k!}\sum_{p\in\Sigma_k} \sum_{\rho\in\cT_b^{p(I_{k-1})}}\sum_{\ell=1}^{m_L(\rho)+1} \sum_{\bj_\ell\in\{0,\ldots,k-1\}^\ell_<}(-1)^{k-m_R(\bullet_k\to_\ell\rho)} J_{\rho,\ell}(\,\bj_\ell) \, G\left( \bullet_k\to_\ell\rho\right)\,,
		\end{equation*}
		where $G$ is the mapping from planar binary trees to rooted trees of Definition \ref{Def: link binary to rooted} and 
		$m_R(\rho)\in\N$ denotes the distance of the rightmost leaf from the root.
	\end{corollary}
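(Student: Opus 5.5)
The plan is to start from Lemma \ref{lem: GUBinary} and rewrite its right-hand side in three moves: reindex the sum over $\cT_b^{p(I_k)}$ by insertion, replace $C(\bullet_k\to_\ell\rho)$ by $|J_{\rho,\ell}|$, and then expand $|J_{\rho,\ell}|$ as a sum over multi-indices. Lemma \ref{lem: GUBinary} gives
\[
(\tau_k\cdots\tau_1)\graf\tau_0=\frac1{k!}\sum_{p\in\Sigma_k}\sum_{\rho'\in\cT_b^{p(I_k)}}C(\rho')\,(-1)^{k-m_R(\rho')}\,G(\rho').
\]

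\textbf{Step 1: reindexing by insertion.} For each fixed $p$, apply Lemma \ref{Lem: construction binary trees}, relabelled through $p$, with
\[
F(\rho'):=C(\rho')(-1)^{k-m_R(\rho')}G(\rho').
\]
The bijection $\rho'\mapsto (V^-(\rho'),m_L(\rho'))$ is purely shape-theoretic, so it does not depend on the decoration values. Applying it to trees decorated by $p(I_k)$ therefore yields a sum over $\rho\in\cT_b^{p(I_{k-1})}$, $\ell\in\{1,\ldots,m_L(\rho)+1\}$, of $F(\bullet_{p(k)}\to_\ell\rho)$. To line this up with the notation of the statement (insertion of $\bullet_k$), I would exploit the symmetrisation over $\Sigma_k$. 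Since $(\tau_k\cdots\tau_1)\graf\tau_0$ is symmetric in $\tau_1,\ldots,\tau_k$, averaging over $p$ allows relabelling so that the inserted leaf carries label $k$, with the remaining labels running over the permutations of $I_{k-1}$. This is the same relabelling already used at the end of the proof of Lemma \ref{lem: GUBinary}, and I will follow the paper's convention there in reading $p(I_{k-1})$ and $\bullet_k$.

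\textbf{Step 2: replacing $C$ by $|J|$.} Use Remark \ref{Rem: cardinality}, which combines Lemma \ref{reduch} with Lemma \ref{lem: tuple correspondence}, to write
\[
C(\bullet_k\to_\ell\rho)=|J_{\rho,\ell}|=\sum_{\bj_\ell}J_{\rho,\ell}(\bj_\ell).
\]
Here $\rho$ has $k$ leaves, so $\bj_\ell$ ranges over $\{0,\ldots,k-1\}^\ell_<$. The remark is stated for $\rho\in\cT_b^{I_{k-1}}$ with inserted label $k$. I would note that both $C$ and $J$ depend only on the relative order of the decorations: reduction chains and the indices $q^a$ are determined by which leaf carries the maximal label. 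So the identity transfers to decorations $p(I_{k-1})$ after relabelling by the order-preserving map.

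\textbf{Step 3: collecting terms, and the main obstacle.} Substituting Step 2 into Step 1 and keeping the sign $(-1)^{k-m_R(\bullet_k\to_\ell\rho)}$ and the factor $G(\bullet_k\to_\ell\rho)$ unchanged gives exactly the claimed formula. The main obstacle is the bookkeeping of Step 1. One must check that the maximal-label convention behind $J$ and $H$ (which requires the leftmost leaf to carry the largest decoration) is compatible with the permuted decorations. The cleanest route I would try first is to assume, by the symmetry argument, that $p$ acts as the identity on the inserted leaf, so that the decorations fit the hypothesis $\rho\in\cT_b^{I_{k-1}}$ used in the remark.
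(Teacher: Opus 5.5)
\textbf{Overall.} Your skeleton is the paper's own proof. It starts from Lemma \ref{lem: GUBinary}, applies Lemma \ref{Lem: construction binary trees} for each fixed $p$ with $F(\rho')=(-1)^{k-m_R(\rho')}C(\rho')G(\rho')$, and then uses $C=|J|$ from Remark \ref{Rem: cardinality}. Your Step~1 correctly arrives at the sum of $F(\bullet_{p(k)}\to_\ell\rho)$ over $\rho\in\cT_b^{p(I_{k-1})}$ and $\ell$.

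\textbf{The step that fails.} The failing step is the extra symmetrisation by which you make the inserted leaf carry the label $k$, i.e.\ ``assume $p(k)=k$''. The symmetry of $(\tau_k\cdots\tau_1)\graf\tau_0$ in the $\tau_i$ does not pass to the individual $p$-summands of Lemma \ref{lem: GUBinary}. So neither restricting to $p(k)=k$ nor relabelling term by term is legitimate.

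Take $k=3$ and $p=\mathrm{id}$. Here $C=2$ on $[\bullet_3,[[\bullet_2,\bullet_1],\bullet_0]]$ and $C=1$ on the other four shapes. The image under $\Upsilon$ of this $p$-summand is
\[
\nabla^3\Upsilon_{\tau_0}\cdot[\Upsilon_{\tau_1}\Upsilon_{\tau_2}\Upsilon_{\tau_3}]
+\nabla^2\Upsilon_{\tau_0}\cdot[(\nabla\Upsilon_{\tau_1}\cdot\Upsilon_{\tau_3})\,\Upsilon_{\tau_2}]
-\nabla^2\Upsilon_{\tau_0}\cdot[(\nabla\Upsilon_{\tau_1}\cdot\Upsilon_{\tau_2})\,\Upsilon_{\tau_3}]\,.
\]
The correction term is antisymmetric in the labels of the two leftmost leaves. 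It cancels in the average over all of $\Sigma_3$, but not in the average over the two permutations fixing the label $3$ of the leftmost (inserted) leaf. For instance, take $n=2$ and fields $(x_1x_2,0)$, $(x_1,0)$, $(1,0)$, $(0,1)$ for $\Upsilon_{\tau_0},\ldots,\Upsilon_{\tau_3}$.

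So any formula in which the inserted leaf is always $\bullet_k$ is false. The literal reading, with $p\in\Sigma_k$ and $\bullet_k$, is also wrong: for $p(k)\neq k$ the tree $\bullet_k\to_\ell\rho$ carries the label $k$ twice, so $G$ contains $\tau_k$ twice and no $\tau_{p(k)}$.

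\textbf{The fix.} Read $\bullet_k$ in the statement as $\bullet_{p(k)}$; this is how the corollary is actually used in the proof of Proposition \ref{Prop: recursive form remainder}. With that reading no relabelling is needed, and your Step~1 already gives the right sum.

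Your justification in Step~2 should also be corrected. $C$ and $J$ do not depend on the relative order of the labels. If they did, redecorating would be forbidden, and an order-preserving relabelling of $p(I_{k-1})$ does not land in $\cT_b^{I_{k-1}}$ in any case. The correct observations are:
\begin{itemize}
\item $J_{\rho,\ell}$ is defined by a recursion that involves only $V^-$, $m_L$ and the number of leaves.
\item Effective cherries see the decorations only through which leaf carries $0$, and contractions keep this on the rightmost leaf.
\end{itemize}
Hence $C$, $J_{\rho,\ell}$ and $m_R$ depend on the shape alone. Remark \ref{Rem: cardinality} then gives $C(\bullet_{p(k)}\to_\ell\rho)=|J_{\rho,\ell}|$ for every $\rho\in\cT_b^{p(I_{k-1})}$ by simply redecorating. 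The maximal-label convention enters only through the auxiliary counts $H_\rho$ in the proof of Lemma \ref{lem: tuple correspondence}.
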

	\begin{proof}
		The result follows from Lemma \ref{lem: GUBinary} by a direct application of Lemma \ref{Lem: construction binary trees} for $F(\rho)=(-1)^{k-m_R(\rho)}C(\rho)G(\rho)$ in conjunction with Lemma \ref{lem: tuple correspondence}, see Remark \ref{Rem: cardinality}.
	\end{proof}
	
	\subsection{Algebraic formula for the remainder}\label{Sec: Algebraic formula for the remainder}
We are now going to state and prove our generalised Taylor formulae for the remainders $B^{[m+1]}(\cdot)$, using
the combinatorial machinery introduced in the previous section. 
\begin{definition}	
	For any binary tree $\rho\in\cT_b$ and for $\ell\in\{1,\ldots, m_L(\rho)+1\}$, we define
	\begin{equation}\label{eq:rhoelli}
	\rho^{(\ell)}_i:=\rho_i\,, \quad i=1,\ldots,\ell-1, \qquad \rho^{(\ell)}_\ell:=P_{\rho,\ell}\,,
	\end{equation}
see Definition \ref{Def: trunk}. Then we introduce the linear operation $\mathcal{A}:\cT_b\to\bigoplus_{i\ge 1} \cT_b^{\otimes i}$:
	\begin{align}\label{Eq: operation A}
		&\mathcal{A}(\rho):=\sum_{\ell=1}^{m_L(\rho)+1}(-1)^{\un{(\ell>1)}}\bigotimes_{i=1}^{\ell}\rho^{(\ell)}_i\,.
	\end{align}
\end{definition}

	\begin{example}
		Consider the same binary tree $\rho$ as in Example \ref{ex:4.14}.
		The action of $\cA$ defined in \eqref{Eq: operation A} consists of sequentially detaching the trees $\rho_1,\rho_2$ from $\rho=[[\bullet_d,\rho_2],\rho_1]$. In particular
		$\cA(\rho)=\rho-\rho_1^{(2)}\otimes\rho_{2}^{(2)}-\rho_1^{(3)}\otimes\rho_2^{(3)}\otimes\rho_3^{(3)}$
		for
		\begin{align*}
			&\rho=
			\begin{tikzpicture}[scale=0.2,baseline=0.1cm]
				\node at (-3,9) [empty] (3) {$c$};
				\node at (3, 9) [empty] (2) {$b$};
				\node at (0,0) [dot] (root) {};
				\node at (-3,3) [dot] (root1) {};
				\node at (-6,6) [empty] (topl) {$d$};
				\node at (0,6) [dot] (topr) {};
				\node at (3,3) [empty] (root2) {$a$};
				\draw[kernel] (topl) to
				node [sloped,below] {\small }     (root1);
				\draw[kernel] (topr) to
				node [sloped,below] {\small }     (root1);
				\draw[kernel] (root1) to
				node [sloped,below] {\small }     (root);
				\draw[kernel] (root2) to
				node [sloped,below] {\small }     (root);
				\draw[kernel] (3) to
				node [sloped,below] {\small }     (topr);
				\draw[kernel] (2) to
				node [sloped,below] {\small }     (topr);
			\end{tikzpicture}\,,
			\hspace{1.5cm}\rho_1^{(2)}=\bullet_a\,,\hspace{1.5cm}\rho_2^{(2)}=\begin{tikzpicture}[scale=0.2,baseline=0.1cm]
				\node at (0,0) [dot] (root) {};
				\node at (-3,3) [empty] (root1) {$d$};
				\node at (3,3) [dot] (root2) {};
				\node at (0,6) [empty] (centerl) {$c$};
				\node at (6,6) [empty] (centerr) {$b$};
				\draw[kernel] (centerl) to
				node [sloped,below] {\small }     (root2);
				\draw[kernel] (centerr) to
				node [sloped,below] {\small }     (root2);
				\draw[kernel] (root1) to
				node [sloped,below] {\small }     (root);
				\draw[kernel] (root2) to
				node [sloped,below] {\small }     (root);
			\end{tikzpicture}\,,
		\\&\rho_1^{(3)}=\bullet_a\,,\hspace{1.5cm}\rho_2^{(3)}=\begin{tikzpicture}[scale=0.2,baseline=0.1cm]
			\node at (0,0) [dot] (root2) {};
			\node at (-3,3) [empty] (centerl) {$c$};
			\node at (3,3) [empty] (centerr) {$b$};
			\draw[kernel] (centerl) to
			node [sloped,below] {\small }     (root2);
			\draw[kernel] (centerr) to
			node [sloped,below] {\small }     (root2);
		\end{tikzpicture}
		\,,\hspace{1.5cm}\rho_3^{(3)}=\bullet_d\,.
		\end{align*}
	\end{example}
	\vspace{0.2cm}
	
	The multi-indices $J_{\rho,\cdot}$ of Definition \ref{Def: J} were introduced precisely to cope with the problem of understanding the evaluation points of the integrand of the desired formula for the remainders. Hence we introduce an evaluation map that involves such objects. We fix two points $z_1, z_2\in\R^n$ and we recall the notation for the linear interpolation
	\begin{equation*}
		u_i:=z_1+t_iz^{[1]}\,,\qquad i\in \N\,,\qquad t_i\in[0,1]\,, \qquad  z^{[1]}:=z_2-z_1.
	\end{equation*}
	We define a map
	\[
	e: \bigoplus_{i=1}^{+\infty}\cT_b^{\otimes i}\to \bigoplus_{k=0}^{+\infty}C([0,T]_{\geq}^{k+1},\R^d),
	\] 
	such that, for $\rho_1,\ldots,\rho_r\in\cT_b$ and for  $\gamma:=[[\rho_r,\rho_{r-1}],\ldots\rho_1]$ having $k+1$ leaves, 
	\begin{align}
	&e(\rho_1\otimes\cdots\otimes\rho_r)\in C([0,T]_{\geq}^{k+1},\R^d), \nonumber \\
		&e(\rho_1\otimes\cdots\otimes\rho_r)(\mathbf{t}_{k}):=\sum_{\bj_r\in\{0,\ldots,k\}^r_<}J_{\rho,r}(\,\bj_r)\prod_{i=1}^r\nabla\Up_{G(\rho_i)}(u_{j_i})\,,\label{eq: e}
	\end{align}
	where the factors in the product of gradients of the elementary differential components are contracted with each other, namely
	\begin{equation*}
		[\nabla\Upsilon_{G(\rho_i)}(u_{j_i})\nabla\Upsilon_{G(\rho_{i+1})}(u_{j_{i+1}})]^a_b=\sum_{c=1}^n \partial_c[\Upsilon_{G(\rho_i)}]^a(u_{j_i})\,\partial_b[\Upsilon_{G(\rho_{i+1})}]^c(u_{j_{i+1}})\,.
	\end{equation*}  
	 In addition we recall the notation introduced in Section \ref{Sec: notation}
\[
[0, T]^{k+1}_{\geq} :=
		\left\{ (t_0, \ldots, t_k) \in[0,T]^{k+1} :  0 \leq t_k \leq \ldots \leq
		t_0 \leq T \right\}\,, \qquad \d \mathbf{t}_k:=\d t_k\ldots\d t_0\,,
		\]
	Then, recalling \eqref{eq:rhoelli}, the evaluation map $e$ and the operation $\mathcal{A}$ of \eqref{Eq: operation A} interact as follows: 
	\begin{align}
		&e(\mathcal{A}(\rho))(\mathbf{t}_{k})= 
		\sum_{\ell=1}^{m_L(\rho)+1}(-1)^{\un{(\ell>1)}}\sum_{\bj_\ell\in\{0,\ldots,k\}^\ell_<}J_{\rho,\ell}(\,\bj_\ell) \, \prod_{i=1}^{\ell}\nabla \Up_{G\left(\rho^{(\ell)}_i\right)}(u_{j_i}) .
		\label{eq:e(A)}
	\end{align}
		\begin{definition}\label{Def: sym H}
		Let $I$ be a finite ordered set and $\bt\in\cT_b^I$ an ordered set of trees. We define $\cP(I,\bt)$ as the set of permutations of $I$ such that $p(i)\neq j$ if $\tau_i=\tau_j$ and $i\neq j$, for all $i,j\in I$.
\end{definition}\label{Def: H}
	Everything is in place to introduce an analytic expression of paramount importance to rewrite the remainders $B^{[m+1]}(\cdot)$ solely in terms of gradients of the elementary differentials.
	\begin{definition}\label{Def:H}
		We define $H_{z_1z_2} :\cF\times \cT \to \R^n\otimes(\R^n)^\ast$ as
		\begin{equation*}\label{Eq: def H(one)}
			H_{z_1z_2}(\one;\tau_0) :=\int_0^1 \nabla\Up_{\tau_0}(u_0)\d t_0\, ,
		\end{equation*}
		and, for $w=\tau_1\cdots\tau_k\in\cF\setminus\{\one\}$, $\tau_0,\tau_1,\ldots,\tau_k\in\cT$, denoting $I_k=\{1,\ldots,k\}$ and $\bt=(\tau_1,\ldots,\tau_k)\in\cT_b^{I_k}$,
		\begin{align}
			&H_{z_1z_2}(w;\tau_0) \label{Eq: def H}
			:=\sum_{p\in\cP(I_k,\bt)}\sum_{\rho\in\cT_b^{p(I_k)}}\int_{[0,1]_\ge^{k+1}}(-1)^{k-m_R(\rho)}\,e(\mathcal{A}(\rho))(\mathbf{t}_k)\d\mathbf{t}_k\,.
		\end{align}
	\end{definition}
	
		We are finally in the position to state the main result of this section, namely a generalised Taylor formula for the remainder of the finite difference solution to the rough equation that only involves gradients of the elementary differentials $\Upsilon_\tau$. This feature will be crucial when deriving a priori estimates on the solution only assuming that $\Upsilon_\tau$ is globally Lipschitz continuous for all $\tau\in\cT^{\leq N}$. 
	\begin{theorem}\label{Thm: Taylor Lipschitz}
		For $m\ge 0$ and $\tau_0\in\cT$, let $H_{z_1z_2}$ be the expression defined in \eqref{Eq: def H}. In addition let $\X:\cF^{\le m}\to\R$ be a multiplicative functional. Then
		\begin{align}
			&\Up_{\tau_0}(z_2)-\sum_{w\in\cF^{\leq m}}\frac{\Up_{w\curvearrowright\tau_0}(z_1)}{\pi(w)} \, \X(w)=\sum_{w\in\cF^{\leq m}}H_{z_1z_2}(w;\tau_0) \cdot z^{[m+1-|w|]}\, \X(w),\label{Eq: identity remainder H}
		\end{align}
		where $\pi(w)$ is defined in \eqref{Eq: permutation factor} and $z^{[i]}$ in \eqref{eq:zk12}.
	\end{theorem}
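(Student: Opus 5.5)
Proof plan. We argue by induction on $m$, in the same way as the low-order computations \eqref{Eq: B^2}--\eqref{Eq: B^3}. For $m=0$ the only forest is $\one$, and \eqref{Eq: identity remainder H} reduces to $\Up_{\tau_0}(z_2)-\Up_{\tau_0}(z_1)=\int_0^1\nabla\Up_{\tau_0}(u_0)\d t_0\cdot z^{[1]}$, which is the fundamental theorem of calculus with $H_{z_1z_2}(\one;\tau_0)$ as in Definition \ref{Def:H}.

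For the inductive step, assume \eqref{Eq: identity remainder H} at order $m$ and use \eqref{Eq: inductive B}. In the inductive formula, substitute \eqref{Eq: inductive z}:
\[
z^{[m+1-|w|]}=z^{[m+2-|w|]}+\sum_{\gamma\in\cT^{=m+1-|w|}}\Up_\gamma(z_1)\X(\gamma).
\]
The first piece already gives the desired sum with $z^{[m+2-|w|]}$ for $|w|\le m$. What remains is to show an identity at fixed total degree $m+1$, which is purely algebraic after using multiplicativity of $\X$ and comparing coefficients of $\X(w)$ for each $w=\tau_1\cdots\tau_k\in\cF^{=m+1}$:
\begin{equation*}
\sum_{i}\frac{\pi(\tau_i;w)^{-1}}{\pi(w\setminus\tau_i)}\,\pi(w\setminus\tau_i)\,H_{z_1z_2}(w\setminus\tau_i;\tau_0)\,\Up_{\tau_i}(z_1)-\frac{\Up_{w\graf\tau_0}(z_1)}{\pi(w)}=H_{z_1z_2}(w;\tau_0)\,z^{[1]}.
\end{equation*}
This is the analogue of the degeneracy bookkeeping in the proof of Theorem \ref{Thm: Taylor a priori}. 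The role of the restricted permutations $\cP(I_k,\bt)$ in Definition \ref{Def:H} is to absorb the factors $\pi$: summing over $p\in\cP$ rather than over $\Sigma_k$ exactly cancels the $1/\pi(w)$ and the multiplicity $\pi(\tau_i;w)$. I would prove this identity first for distinct labels $\tau_1,\dots,\tau_k$ and then specialise.

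To prove the degree-$(m+1)$ identity, expand $\Up_{w\graf\tau_0}(z_1)$ using Corollary \ref{Cor: C and J}. This writes it as a sum over $\rho\in\cT_b^{p(I_{k-1})}$, $\ell$, and $\bj_\ell$ of $J_{\rho,\ell}(\bj_\ell)\,\Up_{G(\bullet_k\to_\ell\rho)}(z_1)$. On the other side, the inductive $H(w\setminus\tau_k;\tau_0)$ gives terms $e(\cA(\rho))$, i.e. products $\prod_{i\le\ell}\nabla\Up_{G(\rho^{(\ell)}_i)}(u_{j_i})$, applied to $\Up_{\tau_k}(z_1)$. Then, for each such product, apply the telescoping trick of \eqref{Eq: B^2} to the last factor:
\[
\nabla\Up_{G(\rho^{(\ell)}_\ell)}(u_{j_\ell})\Up_{\tau_k}(z_1)=\nabla\Up_{G(\rho^{(\ell)}_\ell)}(u_{j_\ell})\Up_{\tau_k}(u_{j_\ell})-\int\nabla\Up_{G(\rho^{(\ell)}_\ell)}(u_{j_\ell})\nabla\Up_{\tau_k}(u_{i})z^{[1]}\d t_i .
\]
By Lemma \ref{lem: morphism upsilon}, $\nabla\Up_\sigma\cdot\Up_{\tau_k}=\Up_{\tau_k\graf\sigma}=\Up_{G([\bullet_k,\sigma])}$, so the first term becomes $\Up_{G(\bullet_k\to_\ell\rho)}$-type factors evaluated at $u_{j_\ell}$. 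These are differenced again against $z_1$, producing new gradients, and the process descends along the left spine of $\rho$.

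The new time variable $t_i$ is inserted into the simplex $[0,1]^{k+1}_\ge$. Reordering the integration variables, as done for the orange and green terms in \eqref{Eq: B^3}, produces exactly the index sums on the right-hand sides of \eqref{Eq: 1}, \eqref{Eq: 2} and \eqref{Eq: 3}. Lemma \ref{lem: J sum} then identifies the result with $J_{\bullet_k\to_\ell\rho,\cdot}$. The three cases correspond respectively to: the new gradient landing at the root level ($\ell'=1$); an intermediate position ($2\le m'\le\ell$); and the new factor $\nabla\Up_{\tau_k}$ appended at the end ($\ell+1$). The signs $(-1)^{\un{(\ell>1)}}$ in $\cA$ and $(-1)^{k-m_R}$ should match, because $m_R(\bullet_k\to_\ell\rho)=m_R(\rho)+\un{(\ell=1)}$. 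Finally, Lemma \ref{Lem: construction binary trees} reassembles the sum over $(\rho,\ell)$ into a sum over $\rho'\in\cT_b^{p(I_k)}$. This yields $H(w;\tau_0)z^{[1]}$ with $e(\cA(\rho'))$.

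The main obstacle is this last matching step. One must check that every boundary term $\Up_{G(\cdot)}(z_1)$ produced by the telescoping cancels exactly against the Guin--Oudom terms of Corollary \ref{Cor: C and J}, including the $C(\rho)$ multiplicities encoded by $|J|$ (Remark \ref{Rem: cardinality}). One must also check that the evaluation points after symmetrising the simplex are exactly those recorded by $J$. I would organise this as a secondary induction (a descent lemma) on $\ell$ along the left spine, with Lemma \ref{lem: J sum} serving as the key identity at each step.
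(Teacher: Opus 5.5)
Your proposal is correct and follows the paper's proof. The induction on $m$ via \eqref{Eq: inductive B}--\eqref{Eq: inductive z} reduces everything to the fixed-degree identity of Proposition \ref{Prop: recursive form remainder}, and the paper proves that identity exactly as you sketch: expand $\Up_{w\graf\tau_0}$ with Corollary \ref{Cor: C and J}, telescope along the left spine (Lemmas \ref{lem:difffun}, \ref{Cor: Upsilon multiplication}, \ref{lem: Upsilon descent}), exchange integrals with Lemma \ref{Lem: exchange integrals}, match evaluation points with Lemma \ref{lem: J sum} and the sign relation $m_R(\bullet_k\to_\ell\rho)=m_R(\rho)+\un{(\ell=1)}$, and reassemble with Lemma \ref{Lem: construction binary trees}. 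The remaining differences are cosmetic: the paper handles symmetry factors via $|\cP(I_k,\bt)|=k!/\pi(w)$ and Lemma \ref{lem: transposition} rather than distinct labels, and it differences each spine factor against $u_{j_{\ell-1}}$, recentring $u_{j_1}\to z_1$ only at the end, which gives the same integrals as your repeated differencing against $z_1$.
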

	
	\begin{example}
		To convince the reader that Theorem \ref{Thm: Taylor Lipschitz} encodes the generalised Taylor remainders conjectured at the beginning of Section \ref{Sec: Lipschitz elementary differentials}, we explicitly compute the contributions to $B^{[3]}_{z_1z_2}(\tau)$ as per \eqref{Eq: identity remainder H}. 
		\begin{align}
			B^{[3]}_{z_1z_2}(\tau)=&H_{z_1z_2}(\one;\tau)\cdot z^{[3]}+H_{z_1z_2}(\bullet;\tau)\cdot z^{[2]}\,\X(\bullet)\nonumber
			\\&+ H_{z_1z_2}(\bullet\bullet;\tau)\cdot z^{[1]}\,\X(\bullet\bullet)+ H_{z_1z_2}(\,\<1>\,;\tau)\cdot z^{[1]}\,\X(\,\<1>\,)\,.\label{Eq: B^3 ex}
		\end{align}
		Definition \ref{Def: H} entails
		\begin{align*}
			&H_{z_1z_2}(\one;\tau)=\int_0^1 \nabla\Up_{\tau_0}(u_0)\d t_0\,,
			\\&H_{z_1z_2}(\bullet;\tau)=\int_{[0,1]_\ge^{2}}e(\mathcal{A}([\bullet_1,\bullet_0]))(\mathbf{t}_2)\d \mathbf{t}_2
			=\int_{[0,1]_\ge^{2}}\left(\nabla\Up_{\bullet\graf\tau}(u_0)-\nabla\Up_{\tau}(u_0)\nabla\Up_{\bullet}(u_1)\right)\d \mathbf{t}_2\,,
			\end{align*}
			We refer the reader to  Example \ref{Example: J def} for a detailed calculation of the components of $J_{\rho,\ell}$ for the trees of interest in this case. 
			The identification $w=\tau_1=\<1>$ implies
			\begin{align*}
			 &H_{z_1z_2}(\,\<1>\,;\tau)=\int_{[0,1]_\ge^{3}}e(\mathcal{A}([\bullet_1,\bullet_0]))(\mathbf{t}_2)\d \mathbf{t}_2=\int_{[0,1]_\ge^{3}}\left(\nabla\Up_{\<1>\,\graf\tau}(u_0)-\nabla\Up_\tau(u_0)\nabla\Up_{\<1>}(u_1)\right)\d \mathbf{t}_2\,,
			 \end{align*}
			 while the identification $w=\tau_1\tau_2$ for $\tau_1=\tau_2=\bullet$ yields
			 \begin{align*}
			&H_{z_1z_2}(\bullet\bullet;\tau)=-\int_{[0,1]_\ge^{3}}e(\mathcal{A}([[\bullet_2,\bullet_1],\bullet_0]))(\mathbf{t}_2)\d \mathbf{t}_2+\int_{[0,1]_\ge^{3}}e(\mathcal{A}([\bullet_2,[\bullet_1,\bullet_0]))(\mathbf{t}_2)\d \mathbf{t}_2
			\\&=-\int_{[0,1]_\ge^{3}}\bigl(\textcolor{red}{\nabla\Up_{(\bullet\graf\bullet)\graf\tau}(u_1)+\nabla\Up_{(\bullet\graf\bullet)\graf\tau}(u_2)}
			\\&\hspace{3cm}-\nabla\Up_\tau(u_0)\nabla\Up_{\bullet\graf\bullet}(u_1)-\nabla\Up_{\tau}(u_0)\nabla\Up_{\bullet}(u_1)\nabla\Up_{\bullet}(u_2)\bigr)\d \mathbf{t}_2
			\\&+\int_{[0,1]_\ge^{3}}\left(\nabla\Up_{\bullet\graf(\bullet\graf\tau)}(u_2)-\nabla\Up_{\bullet\graf\tau}(u_1)\nabla\Up_{\bullet}(u_2)\right)\d \mathbf{t}_3\,.
		\end{align*}
		Observe that $J_{[[\bullet_2,\bullet_1],\bullet_0],1}(1)=J_{[[\bullet_2,\bullet_1],\bullet_0],1}(2)=1$, while all other components of the multi-index vanish, see Example \ref{Example: J def}. This justifies the presence of the identical red contributions at different evaluation points. A 
		direct comparison shows that \eqref{Eq: B^3 ex} coincides with \eqref{Eq: B^3}.
	\end{example}
	
	At the heart of the proof of Theorem \ref{Thm: Taylor Lipschitz} is the following Proposition, which shows that $H$ is characterised by a recursive expression tightly linked to the Guin-Oudom product.
	\begin{proposition}\label{Prop: recursive form remainder}
		For any forest $w=\tau_1\cdots\tau_k\in\cF$ and any rooted tree $\tau_0\in\cT$ we have
		\begin{equation}\label{eq:recurema}
			\sum_{i=1}^k\frac{1}{\pi(\tau_i; w)}H_{z_1z_2}(w\setminus\tau_i;\tau_0) \, \Up_{\tau_i}(z_1)-\frac{1}{\pi(w)}\Up_{w\curvearrowright\tau_0}(z_1)=H_{z_1z_2}(w;\tau_0)\cdot z^{[1]}\,,
\end{equation}
		where $\pi(\tau_i;w)$ is the number of times the tree $\tau_i$ appears in the forest $w$ and $w\setminus\tau_i$ is the forest obtained by removing an instance of $\tau_i$ in $w$, see Definition \ref{Def: symmetry factor}.
	\end{proposition}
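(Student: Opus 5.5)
We argue by induction on $k=|w|$ in number of trees, working at the level of the integrands over the simplices $[0,1]^{k+1}_\ge$. The case $k=0$ is empty. For $k=1$, $w=\tau_1$, the left-hand side is
\[
\int_0^1\nabla\Up_{\tau_0}(u_0)\,\Up_{\tau_1}(z_1)\d t_0-\Up_{\tau_1\graf\tau_0}(z_1).
\]
The computation leading to \eqref{Eq: B^2} shows that this equals $\int_{[0,1]^2_\ge}(\nabla\Up_{\tau_1\graf\tau_0}(u_1)-\nabla\Up_{\tau_0}(u_0)\nabla\Up_{\tau_1}(u_1))\,z^{[1]}\d\mathbf t_1$. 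It only uses the fundamental theorem of calculus along $u_1=z_1+t_1z^{[1]}$ and Lemma \ref{lem: morphism upsilon}. This is exactly $H_{z_1z_2}(\tau_1;\tau_0)\cdot z^{[1]}$ by Definition \ref{Def:H} and Example \ref{Example: J def}.

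For general $k$, the basic mechanism is the identity
\[
F(u_{j})\,\Up_{\tau}(z_1)=F(u_j)\,\Up_\tau(u_{j'})-\int_{0}^{t_{j'}}F(u_j)\nabla\Up_\tau(u_{k})\,z^{[1]}\d t_k ,
\]
applied with a new last variable $t_k\le t_{k-1}$ (after ordering). The term $F(u_j)\Up_\tau(u_{j'})$, with $F=\nabla\Up_{\sigma}$, is recombined via Lemma \ref{lem: morphism upsilon} into $\Up_{\tau\graf\sigma}$-type quantities, and these are then differentiated again. Concretely, I would expand each $H_{z_1z_2}(w\setminus\tau_i;\tau_0)$ through \eqref{Eq: def H} and \eqref{eq:e(A)} as a signed sum over binary trees $\rho\in\cT_b^{p(I_{k-1})}$, multi-indices $J_{\rho,\ell}$ and products $\prod_{i\le \ell}\nabla\Up_{G(\rho^{(\ell)}_i)}(u_{j_i})$. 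I would then right-multiply by $\Up_{\tau_i}(z_1)$, relabel $\tau_i$ as the new leaf $\bullet_k$, and perform three operations. First, on the last factor, replace $\Up_{\tau_k}(z_1)$ by $\Up_{\tau_k}(u_{j_\ell})$ minus an integral of $\nabla\Up_{\tau_k}$. This produces the terms $\ell+1$ of \eqref{Eq: 3}, namely the insertion $\bullet_k\to_{\ell}$ with a new gradient factor. Second, merge $\nabla\Up_{G(P_{\rho,\ell})}(u_{j_\ell})\Up_{\tau_k}(u_{j_\ell})$ into $\Up_{\tau_k\graf G(P_{\rho,\ell})}(u_{j_\ell})$, or rather into the contribution where $\tau_k$ is grafted along the left spine. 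Third, differentiate the resulting difference, which spreads the evaluation point over the intervals $(i_{m-1},i_m]$ and $(i_1,k]$ described by \eqref{Eq: 2} and \eqref{Eq: 1}. Lemma \ref{lem: J sum} is tailored precisely to identify the resulting sums with $J_{\bullet_k\to_\ell\rho,m}$ for $m=1,\ldots,\ell+1$.

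Summing over $\ell$ and $\rho$, Lemma \ref{Lem: construction binary trees} converts the sum over $(\rho,\ell)$ into a sum over $\rho'=\bullet_k\to_\ell\rho\in\cT_b^{p(I_k)}$. The signs $(-1)^{k-m_R(\rho')}$ and $(-1)^{\un{(\ell>1)}}$ then match \eqref{Eq: operation A}. The boundary terms that are not in gradient form are the values $\Up_{G(\rho')}(z_1)$ at the lowest point. By Corollary \ref{Cor: C and J} these sum to $\Up_{w\graf\tau_0}(z_1)$, and they cancel against $-\frac1{\pi(w)}\Up_{w\graf\tau_0}(z_1)$. For this cancellation I would symmetrise using $\sum_i\frac1{\pi(\tau_i;w)}$ together with the restricted permutation set $\cP(I_k,\bt)$. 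The identity $\pi(w\setminus\tau_i)\pi(\tau_i;w)=\pi(w)$, as used in the proof of Theorem \ref{Thm: Taylor a priori}, shows that each ordering of distinct trees is counted once, which turns $\frac1{k!}\sum_{p\in\Sigma_k}$ into $\frac1{\pi(w)}\sum_{p\in\cP}$.

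The main obstacle is the bookkeeping of evaluation points. After differentiating, some integrals run over non-simplex regions, such as $t_1<t_2$ in the orange terms before \eqref{Eq: B^3}. These must be reordered by exchanging variables, and one has to check that the multiplicities generated this way are exactly those recorded by $J_{\bullet_k\to_\ell\rho,\cdot}$, via Lemma \ref{lem: tuple correspondence} and the reduction-chain interpretation. I would first verify the case $k=2$ against \eqref{Eq: B^3}, and then organise the general step so that each application of Lemma \ref{lem: J sum} corresponds to one of the three operations above.
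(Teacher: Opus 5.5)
Your proposal is essentially the paper's own argument: expand $H_{z_1z_2}(w\setminus\tau_i;\tau_0)$ via Definition~\ref{Def:H}, and symmetrise so that the extra tree becomes the new leaf $\bullet_{p(k)}$ (Lemma~\ref{lem: transposition}). Then telescope $\Up_{\tau_{p(k)}}(z_1)$ up the left spine using the fundamental theorem of calculus and the morphism property (Lemmas~\ref{lem:difffun}--\ref{lem: Upsilon descent}), and recentre $\Up_{G(\bullet_{p(k)}\to_\ell\rho)}$ from $z_1$ to $u_{j_1}$ to get the $J_{\bullet_{p(k)}\to_\ell\rho,1}$ term. Finally, reassemble with Lemmas~\ref{lem: J sum}, \ref{Lem: construction binary trees} and Corollary~\ref{Cor: C and J}; the signs match through $m_R(\bullet_{p(k)}\to_\ell\rho)=m_R(\rho)+\un{(\ell=1)}$.

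Two small remarks. First, the ``induction on $k$'' is only nominal: your general step never uses the statement for smaller forests, so the argument is direct, as in the paper. Second, the correct normalisation is $\frac{1}{\pi(w)}\sum_{p\in\Sigma_k}=\sum_{p\in\cP(I_k,\bt)}$, with $\frac{1}{k!}$ absorbed as the volume of $[0,1]^k_\geq$, not $\frac{1}{k!}\sum_{\Sigma_k}\mapsto\frac{1}{\pi(w)}\sum_{\cP}$.
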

	Before proving this result, we need a collection of technical lemmas that tackle the problem of manipulating the expression  $H_{z_1z_2}(w\setminus\tau_i;\tau_0) \, \Up_{\tau_i}(z_1)$.
	\begin{lemma}\label{lem:difffun}
		Let $f_1,\ldots,f_{\ell+1}:\R^n\to\R^n$ differentiable functions and $0\le j_1<\cdots<j_\ell<k$, $j_{\ell+1}:= k$. Define, for $i=1, \ldots,\ell+1$, the maps $F_i:\R^n\to\R^n$ as
		\begin{equation}\label{eq:fF}
		F_{\ell+1}:=f_{\ell+1}, \qquad F_{i}:=(\nabla f_{i})\cdot F_{i+1}=\sum_{j=1}^n \frac{\partial f_i}{\partial y_j}\, F^j_{i+1}, \quad i=1,\ldots, \ell\,.
		\end{equation}
		Then we have
		\begin{align}
			&\int_{[0,1]_\geq^k}\left[\left(\prod_{i=1}^\ell\nabla f_i(u_{j_i})\right) F_{\ell+1}(z_1)- F_1(u_{j_1})\right]\d \mathbf{t}_{k-1}\nonumber
			\\ &=-\sum_{m=1}^\ell\sum_{j=j_{m}+1}^{j_{m+1}}\int_{[0,1]_\geq^{k+1}}\left(\prod_{i=1}^{m}
			\nabla f_i(u_{j_i})\right)\, \nabla F_{m+1}(u_j) \cdot z^{[1]}\d \mathbf{t}_{k}\,.
			\label{eq:replace}
		\end{align}
	\end{lemma}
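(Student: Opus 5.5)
The identity \eqref{eq:replace} is a telescoping statement, so the plan is to write its left-hand side as a sum of differences, each replacing one evaluation point by the next. The bookkeeping convention is that the simplex variables are $t_0\ge t_1\ge\dots\ge t_k$, with $u_i=z_1+t_iz^{[1]}$. Integrating in an extra variable $t_k\in[0,t_{k-1}]$ turns a difference $g(u_a)-g(z_1)$ into $\int\nabla g(u_{\cdot})\cdot z^{[1]}$, because $z_1=u$ at $t=0$.

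\textbf{Telescoping decomposition.} For $m=0,\dots,\ell$ set
\[
T_m:=\Big(\prod_{i=1}^{m}\nabla f_i(u_{j_i})\Big)\,F_{m+1}(u_{j_{m+1}}),
\]
with $j_{\ell+1}=k$. When $m=\ell$ we interpret the last evaluation point as $z_1$, that is,
\[
T_\ell^{\ast}:=\Big(\prod_{i=1}^\ell\nabla f_i(u_{j_i})\Big)F_{\ell+1}(z_1).
\]
Since $\nabla f_i\cdot F_{i+1}=F_i$, we have $T_0=F_1(u_{j_1})$. The integrand on the left of \eqref{eq:replace} is then
\[
T_\ell^\ast-T_0=\sum_{m=1}^{\ell}\big(T_m-T_{m-1}\big)+\big(T_\ell^\ast-T_\ell\big),
\]
where for $m<\ell$ the quantity $T_m$ evaluates $F_{m+1}$ at $u_{j_{m+1}}$. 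More usefully, in
\[
T_m-T_{m-1}=\Big(\prod_{i=1}^{m}\nabla f_i(u_{j_i})\Big)\big[F_{m+1}(u_{j_{m+1}})-F_{m+1}(u_{j_m})\big]
\]
I use again $\nabla f_m(u_{j_m})F_{m+1}(u_{j_m})=\dots$. I must be careful here: $T_{m-1}$ has $F_m(u_{j_m})=\nabla f_m(u_{j_m})F_{m+1}(u_{j_m})$, so indeed the difference factorises as above. Reindexing so that each bracket reads $F_{m+1}(\text{next point})-F_{m+1}(u_{j_m})$ gives, up to sign, differences $F_{m+1}(u_{j_{m}})-F_{m+1}(u_{j_{m+1}})$ for $m=1,\dots,\ell$, with $u_{j_{\ell+1}}$ replaced by $z_1$ in the last one. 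I will organise the sum so that the indices match the right-hand side: the bracket involving $F_{m+1}$ goes from $u_{j_m}$ to $u_{j_{m+1}}$.

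\textbf{Each difference as an integral.} I do not write $F(u_{j_m})-F(u_{j_{m+1}})$ as a single segment integral. Instead I telescope it further through the intermediate points $u_{j_m}\to u_{j_m+1}\to\dots\to u_{j_{m+1}}$:
\[
F(u_{a})-F(u_{a+1})=\int_{t_{a+1}}^{t_a}\nabla F(z_1+sz^{[1]})\cdot z^{[1]}\,ds .
\]
The last piece, ending at $z_1$ (that is, $s=0$), is an integral over $[0,t_{k-1}]$. The key step, and the main obstacle, is the variable relabelling. After inserting the new variable $s\in[t_{a+1},t_a]$ into the ordered simplex $[0,1]^k_\ge$, I rename variables so that $s$ becomes $t_{a+1}$ and the old $t_{a+1},\dots,t_{k-1}$ shift to $t_{a+2},\dots,t_k$. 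This produces the integral over $[0,1]^{k+1}_\ge$ with $\nabla F_{m+1}$ evaluated at $u_j$, $j=a+1$. The relabelling does not disturb the points $u_{j_i}$ with $i\le m$, because $j_i\le j_m\le a$. It also leaves $F_{m+1}$'s dependence unchanged, since $F_{m+1}$ no longer appears except through $\nabla F_{m+1}(u_j)$, and the later factors were already absorbed into $F_{m+1}$. This is exactly why the intermediate points must not appear elsewhere in the term.

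\textbf{Assembling the sum.} Summing over $a=j_m,\dots,j_{m+1}-1$ gives $j=j_m+1,\dots,j_{m+1}$. Collecting the terms over $m$ and tracking the sign gives \eqref{eq:replace}. For $m=\ell$ the last endpoint $j_{\ell+1}=k$ corresponds to the piece reaching $z_1$.
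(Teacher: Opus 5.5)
Your proposal is correct and is essentially the paper's argument. Your one-shot telescoping $\sum_{m=1}^{\ell}(T_m-T_{m-1})$ is the unrolled form of the paper's recurrence on $\ell$, which peels off $F_{\ell+1}(z_1)=F_{\ell+1}(u_{j_\ell})-\int_0^{t_{j_\ell}}\nabla F_{\ell+1}\cdot z^{[1]}$. Your insertion-and-relabelling of the new variable, with the convention $t_k:=0$ (so that $u_k=z_1$ and your correction term $T_\ell^\ast-T_\ell$ simply vanishes), is exactly the exchange identity \eqref{Eq: int exchange 1} of Lemma \ref{Lem: exchange integrals}. The direct telescoping is, if anything, slightly cleaner, because it never needs the intermediate version of the statement with endpoint $u_{j_\ell}$ in place of $z_1$ that the paper's recurrence implicitly relies on.
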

	\begin{proof}
		For $\ell=1$ and $t_1\in[0,T]$ we have
		\[
		\begin{split}
			\nabla f_1(u_{j_1})\, f_2(z_1)- (\nabla f_1 \cdot f_2)(u_{j_1})&= -\nabla f_1(u_{j_1}) (f_2(u_{j_1})-f_2(z_1))
			\\ &=-\int_0^{t_1} \nabla f_1(u_{j_1})\,\nabla  f_2(u') \cdot z^{[1]}\d t', \qquad u':=z_1+t' z^{[1]}\,.
		\end{split}
		\]
		Then, resorting to \eqref{Eq: int exchange 1} for $F_2:=f_2$, $F_1:=\nabla f_1 \cdot f_2$,
		\[
		\begin{split}
			& \int_{[0,1]_\geq^k}\left[\nabla f_1(u_{j_1})\, F_2(z_1) -F_1(u_{j_1})\right] \d \mathbf{t}_{k-1}=-\sum_{j=j_1+1}^{k}\int_{[0,1]_\geq^{k+1}}	\nabla f_1(u_{j_1})\,\nabla  F_2(u_{j}) \cdot z^{[1]}\d \mathbf{t}_{k}\,,	\end{split}
		\]
		which coincides with \eqref{eq:replace}. Now we argue by induction on $\ell$. We write
		\[
		F_{\ell+1}(z_1) = F_{\ell+1}(u_{j_\ell}) - \int_0^{t_{j_\ell}} \nabla F_{\ell+1}(u')\cdot z^{[1]} \d t' \, , \qquad u':=z_1+t' z^{[1]}\,,\qquad t'\in[0,1]\,.
		\]
		Substituting this expression in the left-hand side of \eqref{eq:replace} we obtain
		\begin{align*}
			\text{LHS of \eqref{eq:replace}}&=
			\int_{[0,1]_\geq^k}\left[\left(\prod_{i=1}^\ell\nabla f_i(u_{j_i})\right) F_{\ell+1}(u_{j_\ell})- F_1(u_{j_1})\right]\d \mathbf{t}_{k-1}
			\\ & \quad- \int_{[0,1]_\geq^k}\left(\prod_{i=1}^\ell\nabla f_i(u_{j_i})\right) \int_0^{t_{j_\ell}} \nabla F_{\ell+1}(u')\cdot z^{[1]} \d t' \, \d \mathbf{t}_{k-1}
			\\ &= \int_{[0,1]_\geq^k}\left[\left(\prod_{i=1}^{\ell-1}\nabla f_i(u_{j_i})\right) F_\ell(u_{j_\ell})- F_1(u_{j_1})\right]\d \mathbf{t}_{k-1}
			\\ & \quad- \sum_{j=j_\ell+1}^{k}\int_{[0,1]_\geq^{k+1}}\left(\prod_{i=1}^\ell\nabla f_i(u_{j_i})\right) \, \nabla F_{\ell+1}(u_j) \, \cdot z^{[1]}\d \mathbf{t}_{k},
		\end{align*}
		where in the last step we used \eqref{Eq: int exchange 1}. Now by recurrence on $\ell$
		\begin{align*}
			&\int_{[0,1]_\geq^k}\left[\left(\prod_{i=1}^\ell\nabla f_i(u_{j_i})\right) f_{\ell+1}(z_1)- F_1(u_{j_1})\right]\d \mathbf{t}_{k-1}
			\\ & =-\sum_{m=1}^{\ell-1}\sum_{j=j_{m}+1}^{j_{m+1}}\int_{[0,1]_\geq^{k+1}}\left(\prod_{i=1}^{m}
			\nabla f_i(u_{j_i})\right)\, \nabla F_{m+1}(u_j) \cdot z^{[1]}\d \mathbf{t}_{k}
			\\ & \quad- \sum_{j=j_\ell+1}^{k}\int_{[0,1]_\geq^{k+1}}\left(\prod_{i=1}^\ell\nabla f_i(u_{j_i})\right) \, \nabla F_{\ell+1}(u_j) \, \cdot z^{[1]}\d \mathbf{t}_{k}
			\\ & = -\sum_{m=1}^\ell\sum_{j=j_{m}+1}^{j_{m+1}}\int_{[0,1]_\geq^{k+1}}\left(\prod_{i=1}^{m}
			\nabla f_i(u_{j_i})\right)\, \nabla F_{m+1}(u_j) \cdot z^{[1]}\d \mathbf{t}_{k}\,.
		\end{align*}
		The proof is complete.
	\end{proof}
	
	The next key result links Lemma \ref{lem:difffun} to our specific setting. 	
	\begin{lemma}\label{Cor: Upsilon multiplication}
		Let $\tau_0,\ldots,\tau_k\in\cT$ and $\rho=[\ldots[\bullet_{k-1},\rho_{m_L(\rho)}],\ldots\rho_1]\in\cT_b^{I_{k-1}}$. 
		For $\ell\in\{1,\ldots, m_L(\rho)+1\}$ and $0\le j_1<\cdots<j_\ell<k$, $j_{\ell+1}:= k$, we have 
		\begin{align}	\nonumber
			& \int_{[0,1]_\geq^k}\left[\left(\prod_{i=1}^{\ell}\nabla \Up_{G\left(\rho^{(\ell)}_i\right)}(u_{j_i})\right) \Up_{\tau_k}(z_1)- \Up_{G\left(\bullet_{k}\to_\ell\rho\right)}(u_{j_1})\right]\d\mathbf{t}_{k-1}
			\\ & =-\sum_{m=1}^{\ell}\sum_{j=j_{m}+1}^{j_{m+1}}\int_{[0,1]_\geq^{k+1}}\left(\prod_{i=1}^{m+1}
			\nabla \Up_{G\left((\bullet_{k}\to_\ell\rho)^{(m+1)}_i\right)}(u_{j_i'})\right) \cdot z^{[1]}\d \mathbf{t}_{k}\, ,
			\label{eq: Cor B}
		\end{align}
		recall Definition \ref{Def: link binary to rooted} and \eqref{eq:rhoelli}, where $j_i'=j_i$ if $i\le m$ and $j_{m+1}'=j$. 
	\end{lemma}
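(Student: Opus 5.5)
The plan is to apply Lemma \ref{lem:difffun} with a suitable choice of the functions $f_1,\ldots,f_{\ell+1}$, and then to identify the resulting maps $F_i$ and $\nabla F_{m+1}$ with elementary differentials of the subtrees $(\bullet_k\to_\ell\rho)^{(m+1)}_i$. Concretely, I will set
\[
f_i:=\Up_{G(\rho^{(\ell)}_i)}\,,\quad i=1,\ldots,\ell\,,\qquad f_{\ell+1}:=\Up_{\tau_k}\,,
\]
and use the convention $G(\bullet_k)=\tau_k$. With this choice the first term in the integrand of the left-hand side of \eqref{eq: Cor B} is exactly $\bigl(\prod_{i=1}^\ell\nabla f_i(u_{j_i})\bigr)F_{\ell+1}(z_1)$, which is the first term in \eqref{eq:replace}.

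The key algebraic step is the identity
\[
F_i=\Up_{G(P_{\bullet_k\to_\ell\rho,\,i})}\,,\qquad i=1,\ldots,\ell+1\,,
\]
which I will prove by downward induction on $i$. The base case is $i=\ell+1$. By Definition \ref{Def: insertion} the leftmost leaf of $\bullet_k\to_\ell\rho$ sits at depth $\ell$, so $P_{\bullet_k\to_\ell\rho,\ell+1}=\bullet_k$ and $F_{\ell+1}=\Up_{\tau_k}$.

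For the inductive step, Definition \ref{Def: insertion} gives
\[
P_{\bullet_k\to_\ell\rho,\,\ell}=[\bullet_k,P_{\rho,\ell}]\,,\qquad P_{\bullet_k\to_\ell\rho,\,i}=[P_{\bullet_k\to_\ell\rho,\,i+1},\rho_i]\quad\text{for } i<\ell\,.
\]
Applying $G$, each of these is a single grafting $G(a)\graf G(b)$ of two trees. Lemma \ref{lem: morphism upsilon} with $k=1$ gives $\Up_{a\graf b}=\nabla\Up_b\cdot\Up_a$. This matches the recursion $F_i=\nabla f_i\cdot F_{i+1}$ with $f_\ell=\Up_{G(P_{\rho,\ell})}=\Up_{G(\rho^{(\ell)}_\ell)}$ and $f_i=\Up_{G(\rho_i)}=\Up_{G(\rho^{(\ell)}_i)}$ for $i<\ell$. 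In particular
\[
F_1=\Up_{G(P_{\bullet_k\to_\ell\rho,1})}=\Up_{G(\bullet_k\to_\ell\rho)}\,,
\]
so the second term of the left-hand side of \eqref{eq: Cor B} also matches \eqref{eq:replace}.

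It remains to match the right-hand sides term by term. For fixed $m\in\{1,\ldots,\ell\}$, the $m$-th summand of \eqref{eq:replace} contains $\prod_{i=1}^m\nabla f_i(u_{j_i})\,\nabla F_{m+1}(u_j)$. I need to check that
\[
(\bullet_k\to_\ell\rho)^{(m+1)}_i=\rho^{(\ell)}_i \quad (i\le m)\,,\qquad (\bullet_k\to_\ell\rho)^{(m+1)}_{m+1}=P_{\bullet_k\to_\ell\rho,\,m+1}\,.
\]
The second identity is immediate from \eqref{eq:rhoelli}. For the first, note that the block decomposition \eqref{eq: binary blocks} of $\bullet_k\to_\ell\rho$ has blocks $\rho_1,\ldots,\rho_{\ell-1},P_{\rho,\ell}$, so its $i$-th block equals $\rho^{(\ell)}_i$ for $i\le\ell$. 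Together with the previous paragraph this gives $\nabla F_{m+1}=\nabla\Up_{G((\bullet_k\to_\ell\rho)^{(m+1)}_{m+1})}$, and setting $j'_{m+1}=j$ yields the integrand of \eqref{eq: Cor B}.

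The main obstacle is bookkeeping rather than analysis: keeping the indexing of blocks, $P_{\rho,\ell}$ and the insertion straight, especially at the edge cases. When $\ell=1$, the insertion gives $[\bullet_k,\rho]$ and $\rho^{(1)}_1=P_{\rho,1}=\rho$. When $\ell=m_L(\rho)+1$, we have $P_{\rho,\ell}=\bullet_{k-1}$. I will verify these two cases explicitly before writing the general induction.
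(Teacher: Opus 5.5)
Your proof is correct and follows the paper's argument. Both apply Lemma \ref{lem:difffun} with $f_i=\Up_{G(\rho^{(\ell)}_i)}$ and $f_{\ell+1}=\Up_{\tau_k}$, identify $F_i=\Up_{G([\ldots[[\bullet_k,P_{\rho,\ell}],\rho_{\ell-1}],\ldots,\rho_i])}$ via Lemma \ref{lem: morphism upsilon}, and read off the blocks of $\bullet_k\to_\ell\rho$ as $\rho_1,\ldots,\rho_{\ell-1},P_{\rho,\ell}$. Your downward induction only spells out what the paper asserts in one line; one small correction is that $G(a),G(b)$ lie in $\langle\cT\rangle$ rather than in $\cT$, so the morphism property is used in its bilinear extension.
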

	\begin{proof}
			Setting $f_i:=\Up_{G\left(\rho^{(\ell)}_i\right)}$, $i=1,\ldots,\ell$, and $F_{\ell+1}:=f_{\ell+1}:=\Up_{\tau_k}$,
		  by \eqref{eq:rhoelli} and by the morphism property of elementary differentials in Lemma \ref{lem: morphism upsilon} the functions
		\begin{align*}
			F_i=\Up_{G([\ldots[[\bullet_k,P_{\rho,\ell}],\rho_{\ell-1}],\ldots,\rho_i])}\,,\qquad \forall i\in\{1,\ldots, \ell\}\,,
		\end{align*}
		satisfy \eqref{eq:fF}. 		By Definitions \ref{Def: trunk} and \ref{Def: insertion},  and by \eqref{eq:rhoelli}, since
	$$\bullet_{k}\to_\ell\rho=[\ldots[[\bullet_k,P_{\rho,\ell}],\rho_{\ell-1}],\ldots, \rho_1],$$ we also have for $1\le i\le m\le \ell$
	\begin{align*}
	&\left(\bullet_{k}\to_\ell\rho\right)^{(\ell+1)}_i = \rho_i, \quad i=1,\ldots,\ell-1,\qquad  
	\\ & \left(\bullet_{k}\to_\ell\rho\right)^{(\ell+1)}_\ell= P_{\rho,\ell},
	\qquad  
	\\ & \left(\bullet_{k}\to_\ell\rho\right)^{(\ell+1)}_{\ell+1}=\bullet_k,
	\\ 
	&\left(\bullet_{k}\to_\ell\rho\right)^{(m+1)}_i = \rho_i=\rho^{(\ell)}_i, \quad i\le m< \ell,
	\\ & 
\left(\bullet_{k}\to_\ell\rho\right)^{(m+1)}_{m+1}= [\ldots[[\bullet_k,P_{\rho,\ell}],\rho_{\ell-1}],\ldots,\rho_{m+1}], 	\qquad  m<\ell.
	\end{align*}
	In particular $f_i=\Up_{G\left((\bullet_{k}\to_\ell\rho)^{(m+1)}_i\right)}$ for
	$i\le m\le \ell$ and $F_{m+1}=\Up_{G\left((\bullet_{k}\to_\ell\rho)^{(m+1)}_{m+1}\right)}$ for $m\le\ell$.
Then \eqref{eq: Cor B} follows from  \eqref{eq:replace}.
	\end{proof}
	We introduce a reduced version of the map $\cA$ from \eqref{Eq: operation A}. For $\rho\in\cT_b$:
	\begin{align*}
		\mathcal{A}'(\rho):=-\sum_{\ell=2}^{m_L(\rho)+1}\bigotimes_{i=1}^{\ell}\rho^{(\ell)}_i \,. 
	\end{align*}
	Observe that, if $\rho$ has $k$ leaves, then by the definition \eqref{eq: e} of $e$
	\begin{align}
		&e(\mathcal{A}'(\rho))(\mathbf{t}_{k-1}) =-\sum_{\ell=2}^{m_L(\rho)+1}\sum_{\bj_\ell\in\{0,\ldots,k-1\}^\ell_<}J_{\rho,\ell}(\,\bj_\ell)\left(\prod_{i=1}^{\ell}\nabla \Up_{G\left(\rho^{(\ell)}_i\right)}(u_{j_i})\right)\,.\label{Eq: A'}
			\end{align}
	\begin{lemma}\label{lem: Upsilon descent}
		For a binary tree $\rho\in\cT_{b}^{I_{k-1}}$, a tree $\tau_k\in\cT$ and $e\circ\CA$, $e\circ\cA'$ as in \eqref{eq:e(A)}, \eqref{Eq: A'} respectively, we have
		\begin{align}
			&\int_{[0,1]_\geq^k}\left[ e(\mathcal{A}(\rho))(\mathbf{t}_{k-1}) \, \Up_{\tau_{k}}(z_1)-
			\sum_{\ell=1}^{m_L(\rho)+1} (-1)^{\un{(\ell>1)}}\sum_{\bj_\ell\in\{0,\ldots,k-1\}^\ell_<}J_{\rho,\ell}(\,\bj_\ell)\, \Up_{G\left(\bullet_{k}\to_\ell\rho\right)}(u_{j_1})\right]\d\mathbf{t}_{k-1}\nonumber
			\\& = \sum_{\ell=1}^{m_L(\rho)+1} (-1)^{\un{(\ell>1)}} \int_{[0,1]_\geq^{k+1}}e(\mathcal{A}'(\bullet_{k}\to_\ell\rho))(\mathbf{t}_k)\cdot z^{[1]}\d \mathbf{t}_k\,.
			\label{eq:eAA'}
		\end{align}
	\end{lemma}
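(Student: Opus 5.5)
The plan is to expand the left-hand side of \eqref{eq:eAA'} term by term in $\ell$ and apply Lemma \ref{Cor: Upsilon multiplication} to each pair formed by a product of gradients and the matching elementary differential. By \eqref{eq:e(A)}, for fixed $\ell\in\{1,\ldots,m_L(\rho)+1\}$ and $\bj_\ell\in\{0,\ldots,k-1\}^\ell_<$, the integrand contains
\[
(-1)^{\un{(\ell>1)}} J_{\rho,\ell}(\bj_\ell)\left[\Big(\prod_{i=1}^{\ell}\nabla\Up_{G(\rho^{(\ell)}_i)}(u_{j_i})\Big)\Up_{\tau_k}(z_1)-\Up_{G(\bullet_k\to_\ell\rho)}(u_{j_1})\right].
\]
This is exactly the bracket in \eqref{eq: Cor B}, with $j_{\ell+1}=k$. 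Lemma \ref{Cor: Upsilon multiplication} therefore rewrites the left-hand side of \eqref{eq:eAA'} as
\[
-\sum_{\ell}(-1)^{\un{(\ell>1)}}\sum_{\bj_\ell}J_{\rho,\ell}(\bj_\ell)\sum_{m=1}^{\ell}\sum_{j=j_m+1}^{j_{m+1}}\int_{[0,1]^{k+1}_\ge}\prod_{i=1}^{m+1}\nabla\Up_{G((\bullet_k\to_\ell\rho)^{(m+1)}_i)}(u_{j'_i})\cdot z^{[1]}\,\d\mathbf{t}_k .
\]

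Next I fix $\ell$ and match this against $(-1)^{\un{(\ell>1)}}\int e(\cA'(\bullet_k\to_\ell\rho))\cdot z^{[1]}$. Since $m_L(\bullet_k\to_\ell\rho)=\ell$, formula \eqref{Eq: A'} gives
\[
e(\cA'(\bullet_k\to_\ell\rho))=-\sum_{m'=2}^{\ell+1}\sum_{\bj_{m'}\in\{0,\ldots,k\}^{m'}_<}J_{\bullet_k\to_\ell\rho,m'}(\bj_{m'})\prod_{i=1}^{m'}\nabla\Up_{G((\bullet_k\to_\ell\rho)^{(m')}_i)}(u_{j_i}).
\]
Setting $m'=m+1$, the terms split into two cases, and in each I use Lemma \ref{lem: J sum}.

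\emph{Case $m'=\ell+1$.} Identity \eqref{Eq: 3} turns $\sum J_{\bullet_k\to_\ell\rho,\ell+1}f$ into $\sum_{\bj_\ell}J_{\rho,\ell}(\bj_\ell)\sum_{j=j_\ell+1}^{k}f(\bj_\ell,j)$. This matches the $m=\ell$ summand above, with $j_{\ell+1}=k$.

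\emph{Case $2\le m'\le\ell$.} Identity \eqref{Eq: 2} with $m=m'$ gives $\sum_{\boldsymbol{i}_\ell}J_{\rho,\ell}(\boldsymbol{i}_\ell)\sum_{i=i_{m'-1}+1}^{i_{m'}}f(\boldsymbol{i}_{m'-1},i)$. This is precisely the $m=m'-1$ summand, with range $j_m+1\le j\le j_{m+1}$.

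Summing over $m$ recovers the full inner sum, and the overall minus signs agree.

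One bookkeeping check remains. The indices $\bj_\ell$ in \eqref{eq:e(A)} for $\rho$ range over $\{0,\ldots,k-1\}^\ell_<$, while Lemma \ref{Cor: Upsilon multiplication} requires $j_\ell<k$; these are consistent. The $m'=1$ term of $\cA(\bullet_k\to_\ell\rho)$ is absent from $\cA'$, and it is also absent on the left after applying Lemma \ref{Cor: Upsilon multiplication}, because the $\Up_{G(\bullet_k\to_\ell\rho)}(u_{j_1})$ contributions have already been subtracted out. I expect this index-range matching in Lemma \ref{lem: J sum}, in particular the boundary conventions $j_{\ell+1}=k$ and the strictness of inequalities, to be the only delicate point. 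Everything else is linear rearrangement, summed over $\ell$ with the sign $(-1)^{\un{(\ell>1)}}$ carried through unchanged.
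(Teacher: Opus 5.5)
Your proposal is correct and is essentially the paper's own proof: you sum \eqref{eq: Cor B} against $(-1)^{\un{(\ell>1)}}J_{\rho,\ell}(\bj_\ell)$, use \eqref{Eq: 2} for the terms with $m<\ell$ and \eqref{Eq: 3} for $m=\ell$, and reindex $m+1\mapsto a$ to recognise $e(\cA'(\bullet_k\to_\ell\rho))$ via $m_L(\bullet_k\to_\ell\rho)=\ell$. The boundary bookkeeping you flag ($j_{\ell+1}=k$, and the single-gradient term being absent on both sides) works out exactly as you describe.
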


	\begin{proof}
	The left-hand side of \eqref{eq:eAA'} is the sum over $(\ell,\,\bj_\ell)$ of $(-1)^{\un{(\ell>1)}}$ times the LHS of \eqref{eq: Cor B}, namely
	\[
\begin{split}
& \text{LHS of \eqref{eq:eAA'}} = \sum_{\ell=1}^{m_L(\rho)+1} (-1)^{\un{(\ell>1)}}\sum_{\bj_\ell\in\{0,\ldots,k-1\}^\ell_<}J_{\rho,\ell}(\,\bj_\ell)\, \left\{
\text{LHS of \eqref{eq: Cor B}}\right\} .
\end{split}
\]
Then if we perform the same sum on the RHS of \eqref{eq: Cor B} we obtain
	\[
\begin{split}
 \sum_{\ell=1}^{m_L(\rho)+1} &(-1)^{\un{(\ell>1)}}\sum_{\bj_\ell\in\{0,\ldots,k-1\}^\ell_<}J_{\rho,\ell}(\,\bj_\ell)\, \left\{
\text{RHS of \eqref{eq: Cor B}}\right\}
\\ & = -\sum_{\ell=1}^{m_L(\rho)+1} (-1)^{\un{(\ell>1)}}\sum_{m=1}^{\ell}\sum_{\bj_\ell\in\{0,\ldots,k-1\}^\ell_<}J_{\rho,\ell}(\,\bj_\ell)\sum_{j=j_{m}+1}^{j_{m+1}}
\\  & \qquad \qquad \qquad \cdot \int_{[0,1]_\geq^{k+1}}\left(\prod_{i=1}^{m+1}
			\nabla \Up_{G\left((\bullet_{k}\to_\ell\rho)^{(m+1)}_i\right)}(u_{j_i'})\right) \cdot z^{[1]}\d \mathbf{t}_{k}
\end{split}
\]
where $j_{m+1}=k$ and $j_i'=j_i$ if $i\le m$ and $j_{m+1}'=j$. Now for $m\in\{1,\ldots,\ell-1\}$ by \eqref{Eq: 2}
\[
\begin{split}
& \sum_{\bj_{m+1}\in\{0,\ldots,k\}^{m+1}_<} J_{\bullet_{k}\to_\ell\rho,m+1}(\,\bj_{m+1})\, F(\,\bj_{m+1})
 = \sum_{\bj_\ell\in\{0,\ldots,k-1\}^\ell_<}\sum_{j=j_{m}+1}^{j_{m+1}}J_{\rho,\ell}(\,\bj_\ell) \,F(\,j_1,\ldots,j_m,j)
\end{split}
\]
and for $m=\ell$ by \eqref{Eq: 3}
\[
\begin{split}
\sum_{\bj_{\ell+1}\in\{0,\ldots,k\}^{\ell+1}_<} J_{\bullet_{k}\to_\ell\rho,\ell+1}(\,\bj_{\ell+1})\, F(\,\bj_{m+1})
 = \sum_{\bj_\ell\in\{0,\ldots,k-1\}^\ell_<}\sum_{j=j_{\ell}+1}^{k}J_{\rho,\ell}(\,\bj_\ell) \,F(\,j_1,\ldots,j_\ell,j)\,,
\end{split}
\]
so that
	\[
\begin{split}
\sum_{\ell=1}^{m_L(\rho)+1}& (-1)^{\un{(\ell>1)}}\sum_{\bj_\ell\in\{0,\ldots,k-1\}^\ell_<}J_{\rho,\ell}(\,\bj_\ell)\, \left\{
\text{RHS of \eqref{eq: Cor B}}\right\}
\\ & = -\sum_{\ell=1}^{m_L(\rho)+1} (-1)^{\un{(\ell>1)}}\sum_{m=1}^{\ell}\sum_{\bj_{m+1}\in\{0,\ldots,k\}^{m+1}_<} 
J_{\bullet_{k}\to_\ell\rho,m+1}(\,\bj_{m+1}) 
\\  & \qquad \qquad \qquad \cdot \int_{[0,1]_\geq^{k+1}}\left(\prod_{i=1}^{m+1}
			\nabla \Up_{G\left((\bullet_{k}\to_\ell\rho)^{(m+1)}_i\right)}(u_{j_i})\right) \cdot z^{[1]}\d \mathbf{t}_{k}\,.
\end{split}
\]
Now since 
\[	
	e(\mathcal{A}'(\bullet_{k}\to_\ell\rho))(\mathbf{t}_{k})= -
		\sum_{a=2}^{\ell+1}\sum_{\bj_a\in\{0,\ldots,k\}^a_<}J_{\bullet_{k}\to_\ell\rho,a}(\,\bj_a) \, \prod_{i=1}^{a}\nabla \Up_{G\left((\bullet_{k}\to_\ell\rho)^{(a)}_i\right)}(u_{j_i}) \, ,
		\]
with the change of variable $a=m+1$ we obtain
	\[
\begin{split}
& \sum_{\ell=1}^{m_L(\rho)+1} (-1)^{\un{(\ell>1)}}\sum_{\bj_\ell\in\{0,\ldots,k-1\}^\ell_<}J_{\rho,\ell}(\,\bj_\ell)\, \left\{
\text{RHS of \eqref{eq: Cor B}}\right\}
\\ & = \sum_{\ell=1}^{m_L(\rho)+1} (-1)^{\un{(\ell>1)}}\int_{[0,1]_\geq^{k+1}}
e(\mathcal{A}'(\bullet_{k}\to_\ell\rho))(\mathbf{t}_{k}) \cdot z^{[1]}\d \mathbf{t}_{k}
= \text{RHS of \eqref{eq:eAA'}} \,.
\end{split}
\]
The proof is complete
\end{proof}
	
	We are now ready for the:
	\begin{proof}[of Proposition \ref{Prop: recursive form remainder}]
		Fix $\tau_0,\ldots,\tau_k\in\cT$ and $w=\tau_1\cdots\tau_k$.
		Recalling the explicit expression for the grafting of forests derived in Corollary \ref{Cor: C and J} we have
		\begin{align*}
			&\frac{1}{\pi(w)}\Upsilon_{w\graf\tau_0}(z_1)=
			\\&=\frac{1}{\pi(w)}\sum_{p\in\Sigma_k} \sum_{\rho\in\cT_b^{p(I_{k-1})}}\!\!\sum_{\ell=1}^{m_L(\rho)+1} \!\!\sum_{\bj_\ell\in\{0,\ldots,k-1\}^\ell_<}\!\!(-1)^{k-m_R(\bullet_{p(k)}\to_\ell\rho)} J_{\rho,\ell}(\,\bj_\ell)\!\int_{[0,1]_\geq^{k}}\!\!\Up_{G( \bullet_{p(k)}\to_\ell\rho)}(z_1)\,\d\mathbf{t}_{k-1}.
		\end{align*}
		Notice that exchanging identical trees in $\bt=(\tau_1,\ldots,\tau_k)$ amounts to changing the decorations of $\rho$, while leaving the realisation $G(\rho)$ unchanged, recall Definition \ref{Def: link binary to rooted}. 
	
		Observing that $|\cP(I_k,\bt)|=\frac{k!}{\pi(w)}=\frac{|\Sigma_k|}{\pi(w)}$, recall \eqref{Eq: permutation factor}, we conclude, recalling Definition \ref{Def: sym H},
		\begin{align*}
			&\frac{1}{\pi(w)}\Upsilon_{w\graf\tau_0}(z_1)=
		\\&=\sum_{p\in\cP(I_k,\bt)}\sum_{\rho\in\cT_b^{p(I_{k-1})}}\!\!\sum_{\ell=1}^{m_L(\rho)+1}\!\!\sum_{\bj_\ell\in\{0,\ldots,k-1\}^\ell_<}(-1)^{k-m_R(\bullet_{p(k)}\to_\ell\rho)}J_{\rho,\ell}(\,\bj_\ell)\int_{[0,1]_\geq^{k}} \Up_{G(\bullet_{p(k)}\to_\ell\rho)}(z_1)\d\mathbf{t}_{k-1}.
		\end{align*}

		We adopt the notations $I^i_{k}:=\{1,\ldots,i-1,i+1,\ldots,k\}$ and $\bt^i:=(\tau_1,\ldots,\tau_{i-1},
		\tau_{i+1},\ldots,\tau_k)$. Resorting to  \eqref{Eq: def H} we can write
		\begin{align*}
			&H_{z_1z_2}(w\setminus\tau_i;\tau_0) \, \Up_{\tau_i}(z_1)=\sum_{q\in\cP(I^i_k,\bt^i)}\sum_{\rho\in\cT_b^{q(I^i_{k})}}(-1)^{k-1-m_R(\rho)}\int_{[0,1]_\geq^k}e(\mathcal{A}(\rho))(\mathbf{t}_{k-1}) \, \Up_{\tau_{i}}(z_1)\d\mathbf{t}_{k-1}\,.
		\end{align*}
		By Lemma \ref{lem: transposition} we obtain 
		\begin{align*}
			&\sum_{i=1}^k\frac{1}{\pi(\tau_i; w)}H_{z_1z_2}(w\setminus\tau_i;\tau_0) \, \Up_{\tau_i}(z_1)\\
			&=\sum_{p\in\cP(I_k,\bt)}	\sum_{\rho\in\cT_b^{p(I_{k-1})}}(-1)^{k-1-m_R(\rho)}\int_{[0,1]_\geq^k}e(\mathcal{A}(\rho))(\mathbf{t}_{k-1}) \, \Up_{\tau_{p(k)}}(z_1)\d\mathbf{t}_{k-1}\,.
		\end{align*}
		Observe that 
		\[
		m_R(\bullet_i\to_1\rho)=m_R(\rho)+\un{(\ell=1)}\,,
		\]
		which implies $\un{(\ell>1)}+k-1-m_R(\rho)=k-m_R(\bullet_i\to_\ell\rho)$, for all $\ell>0$. As a result
		\begin{align*}
			\text{LHS of \eqref{eq:recurema}} &= 
			\sum_{p\in\cP(I_k,\bt)}	\sum_{\rho\in\cT_b^{p(I_{k-1})}}(-1)^{k-1-m_R(\rho)}\left[\int_{[0,1]_\geq^k}\,e(\mathcal{A}(\rho))(\mathbf{t}_{k-1}) \, \Up_{\tau_{p(k)}}(z_1)\d\mathbf{t}_{k-1} \right.
			\\&-\left.\sum_{\ell=1}^{m_L(\rho)+1}\sum_{\bj_\ell\in\{0,\ldots,k-1\}^\ell_<}(-1)^{\un{(\ell>1)}}\int_{[0,1]_\geq^{k}}J_{\rho,\ell}(\,\bj_\ell)\, \Up_{G(\bullet_{p(k)}\to_\ell\rho)}(z_1)\d \mathbf{t}_{k-1}\right].
		\end{align*}
		Since $\Up_{G(\bullet_{p(k)}\to_\ell\rho)}$ in the last expression is computed at $z_1$ instead of $u_{j_1}$, we can not 
		apply Lemma \ref{lem: Upsilon descent} yet, but we need an additional recentering of the second term:
		\begin{align}\label{Eq: integration step}
			& \text{LHS of \eqref{eq:recurema}} 
			=\sum_{p\in\cP(I_k,\bt)}	\sum_{\rho\in\cT_b^{p(I_{k-1})}}(-1)^{k-1-m_R(\rho)}\left[\int_{[0,1]_\geq^k}\,e(\mathcal{A}(\rho))(\mathbf{t}_{k-1}) \, \Up_{\tau_{p(k)}}(z_1)\d\mathbf{t}_{k-1}\right.\nonumber
			\\&\quad\left.-\sum_{\ell=1}^{m_L(\rho)+1} (-1)^{\un{(\ell>1)}} \sum_{\bj_\ell\in\{0,\ldots,k-1\}^\ell_<} J_{\rho,\ell}(\,\bj_\ell) \int_{[0,1]_\geq^{k}}\Up_{G(\bullet_{p(k)}\to_\ell\rho)}(u_{j_1})\d \mathbf{t}_{k-1}\right.
			\\&\quad\left.+\sum_{\ell=1}^{m_L(\rho)+1} (-1)^{\un{(\ell>1)}} \sum_{\bj_\ell\in\{0,\ldots,k-1\}^\ell_<} J_{\rho,\ell}(\,\bj_\ell) \int_{[0,1]_\geq^{k}}\int_0^{t_{j_1}}
			\nabla\Up_{G(\bullet_{p(k)}\to_\ell\rho)}(u_{k})\cdot z^{[1]} \d t_k\d \mathbf{t}_{k-1}\right].\nonumber
		\end{align}
		By Lemma \ref{lem: Upsilon descent}, the first two lines in the right-hand side of \eqref{Eq: integration step} coincide with the RHS of \eqref{eq:eAA'}, namely
		\begin{align*}
			&\sum_{\ell=1}^{m_L(\rho)+1} (-1)^{\un{(\ell>1)}}
			\int_{[0,1]_\geq^{k+1}}e\left(\mathcal{A}'(\bullet_{p(k)}\to_\ell\rho)\right)(\mathbf{t}_k)\cdot z^{[1]} \d \mathbf{t}_k\,. 
		\end{align*}
		Invoking \eqref{Eq: int exchange 1}, the third line in the right-hand side of \eqref{Eq: integration step} is equal to
		\begin{align}
			&\sum_{\ell=1}^{m_L(\rho)+1} (-1)^{\un{(\ell>1)}} \sum_{\bj_\ell\in\{0,\ldots,k-1\}^\ell_<}J_{\rho,\ell}(\,\bj_\ell)\sum_{i=j_1+1}^k\int_{[0,1]_\geq^{k+1}}\nabla\Up_{G(\bullet_{p(k)}\to_\ell\rho)}(u_{i})\cdot z^{[1]} \d \mathbf{t}_k \nonumber
			\\&\overset{\eqref{Eq: 1}}{=}\sum_{\ell=1}^{m_L(\rho)+1}(-1)^{\un{(\ell>1)}}\sum_{j_1\leq k}J_{\bullet_{p(k)}\to_\ell\rho,1}(j_1) \int_{[0,1]_\geq^{k+1}}\nabla\Up_{G(\bullet_{p(k)}\to_\ell\rho)}(u_{j_1})\cdot z^{[1]} \d \mathbf{t}_k\,. \label{eq: temp holder taylor1}
		\end{align} 
		We conclude by \eqref{eq:e(A)}-\eqref{Eq: A'} that
		\begin{align*}
						& \text{LHS of \eqref{eq:recurema}} = 
			\sum_{p\in\cP(I_k,\bt)}	\sum_{\rho\in\cT_b^{p(I_{k-1})}}(-1)^{k-1-m_R(\rho)}\nonumber
			\\&\hspace{1cm}\left[\sum_{\ell=1}^{m_L(\rho)+1}(-1)^{\un{(\ell>1)}}\sum_{j_1\leq k} J_{\bullet_{p(k)}\to_\ell\rho,1}(j_1) 
			\int_{[0,1]_\geq^{k+1}} \nabla\Up_{G(\bullet_{p(k)}\to_\ell\rho)}(u_{j_1})\cdot z^{[1]} \d \mathbf{t}_k \right.\nonumber
			\\&\qquad\hspace{1cm}\left.+\sum_{\ell=1}^{m_L(\rho)+1} (-1)^{\un{(\ell>1)}}
			\int_{[0,1]_\geq^{k+1}}e\left(\mathcal{A}'(\bullet_{p(k)}\to_\ell\rho)\right)(\mathbf{t}_k)\cdot z^{[1]} \d \mathbf{t}_k
			\right]\nonumber
			\\&=\sum_{p\in\cP(I_k,\bt)}	\sum_{\rho\in\cT_b^{p(I_{k-1})}}\sum_{\ell=1}^{m_L(\rho)+1}(-1)^{k-m_R(\bullet_{p(k)}\to_\ell\rho)}\int_{[0,T]^{k+1}_\geq}e\left(\cA(\bullet_{p(k)}\to_\ell\rho)\right)(\mathbf{t}_k)\cdot z^{[1]}\d\mathbf{t}_k\,. 
		\end{align*}
		Now, an application of Lemma \ref{Lem: construction binary trees} with the choice $F(\rho)=(-1)^{k-m_R(\rho)}e(\cA(\rho))$
		guarantees that all possible insertions of $\bullet_{p(k)}$ over all trees in $\cT_b^{p(I_{k-1})}$ span $\cT_b^{p(I_k)}$, so that in particular
		\begin{equation}\label{eq: temp holder taylor3}
			\sum_{\rho\in\cT_b^{p(I_{k-1})}}\sum_{\ell=1}^{m_L(\rho)+1}(-1)^{k-m_R(\bullet_{p(k)}\to_\ell\rho)}e\left(\cA(\bullet_{p(k)}\to_\ell\rho)\right)(\mathbf{t}_k)
			=	\sum_{\rho\in\cT_b^{p(I_{k})}}(-1)^{k-m_R(\rho)} e(\cA(\rho))(\mathbf{t}_k)\,.
		\end{equation}
		As a result the LHS of \eqref{eq:recurema} equals
		\begin{align*}
			\sum_{p\in\cP(I_k,\bt)}	\sum_{\rho\in\cT_b^{p(I_{k})}}(-1)^{k-m_R(\rho)}\int_{[0,T]^{k+1}_\geq}e(\cA(\rho))(\mathbf{t}_k)\cdot z^{[1]}\d\mathbf{t}_k
		=H_{z_1z_2}(w,\tau_0)\cdot z^{[1]}\,,
		\end{align*}
		by the definition \eqref{Eq: def H} of $H_{z_1z_2}(w,\tau_0)$.
	\end{proof}

	We can finally prove Theorem \ref{Thm: Taylor Lipschitz}.
	\begin{proof}[ of Theorem \ref{Thm: Taylor Lipschitz}]
		We proceed by induction on $m\in\N$. For $m=0$ the left-hand side of \eqref{Eq: identity remainder H} reads $\Up_{\tau_0}(z_2)-\Up_{\tau_0}(z_1)$ while, on account of the definition of $H_{z_1z_2}(\one; \tau_0)$, the right-hand side is $\int_s^t\nabla G(\bullet_0)(u_1) \cdot z^{[1]}\d t_1$. These evidently coincide, hence proving the first step of the induction. Now suppose that \eqref{Eq: identity remainder H} holds true for an $m\in\N$.
		By \eqref{Eq: inductive B} we have
			\begin{align*}
			B^{[m+2]}_{z_1z_2}(\tau) 
			=\underbrace{\Up_{\tau_0}(z_2)-\sum_{w\in\cF^{\leq m}}\frac{\Up_{w\curvearrowright\tau_0}(z_1)}{\pi(w)} \, \X(w)}_{B^{[m+1]}_{z_1z_2}(\tau) }-
			\underbrace{\sum_{w\in\cF^{= m+1}}\frac{\Up_{w\curvearrowright\tau_0}(z_1)}{\pi(w)} \, \X(w)}_{C}\,.
		\end{align*}
		By induction we can write
		\begin{align*}
			B^{[m+2]}_{z_1z_2}(\tau)=\underbrace{\sum_{w\in\cF^{\leq m}}H_{z_1z_2}(w;\tau_0)\cdot z^{[m+1-|w|]} \, \X(w)}_{D}-C\,.
		\end{align*}
		Recalling that by \eqref{Eq: inductive z}
		\begin{equation*}
			z^{[i]}=z^{[i+1]}+\sum_{\gamma\in\cT^{=i}}\Up_{\gamma}(z_1) \, \X(\gamma)\,,
		\end{equation*}
		we can raise the level of the remainders $z^{[i]}$ from $i$ to $i+1$, for $i=1,\ldots, m+1$, so that 
		\begin{align*}
			D&=\underbrace{\sum_{w\in\cF^{\leq m}}H_{z_1z_2}(w;\tau_0)\cdot z^{[m+2-|w|]} \, \X(w)}_{E}
		+\underbrace{\sum_{w\in\cF^{\leq m}}H_{z_1z_2}(w;\tau_0) \, \X(w)\!\!\sum_{\gamma\in\cT^{= m+1-|w|}}\Up_{\gamma}(z_1) \, \X(\gamma)}_{F}.
		\end{align*}
		As a result, $B^{[m+2]}_{z_1z_2}(\tau) =E+F-C$. What is missing from the sum in $E$ is the case $|w|=m+1$.
		Therefore it remains to prove that 
		\[
		F-C = \sum_{w\in\cF^{= m+1}}H_{z_1z_2}(w;\tau_0)\cdot z^{[m+2-|w|]} \, \X(w)\,.
		\]
		Now we have
		\begin{align*}
			F&=\sum_{w\in\cF^{\leq m}}\sum_{\gamma\in\cT^{= m+1-|w|}}H_{z_1z_2}(w;\tau_0) \, \Up_{\gamma}(z_1) \, \X(w\gamma)
\\
			&=\sum_{k=1}^{m+1}\sum_{w=\tau_1\cdots\tau_k\in\cF^{= m+1}}\sum_{i=1}^k\frac{1}{\pi(\tau_i; w)}H_{z_1z_2}(w\setminus \tau_i;\tau_0) \, \Up_{\tau_i}(z_1) \, \X(w)\,.
		\end{align*}
		Hence, by the recursive property of $H$ stated in Proposition \ref{Prop: recursive form remainder}, $F-C$ takes the form
		\begin{align*}
			&\sum_{k=1}^{m+1}\sum_{w=\tau_1\cdots\tau_k\in\cF^{= m+1}}\left[\sum_{i=1}^k\frac{1}{\pi(\tau_i; w)}H_{z_1z_2}(w\setminus \tau_i;\tau_0) \, \Up_{\tau_i}(z_1)-\frac{1}{\pi(w)} \Up_{w\curvearrowright\tau_0}(z_1) \right] \X(w)
			\\ & = \sum_{w\in\cF^{= m+1}} H_{z_1z_2}(w;\tau_0)\cdot z^{[1]} \, \X(w)\, ,
		\end{align*}
		which is exactly the term corresponding to $|w|=m+1$ missing from the sum in $E$ above.
		We conclude that
		\begin{align*}
			B^{[m+2]}_{z_1z_2}(\tau) &=E+F-C
			=\sum_{w\in\cF^{\leq m+1}}H_{z_1z_2}(w;\tau_0)\cdot z^{[m+2-|w|]} \, \X(w)\,,
		\end{align*}
		which proves our claim.
	\end{proof}
	The compact formula in \eqref{Eq: identity remainder H} allows to obtain a priori estimates for solution remainders of Davie solutions when all $\Upsilon_\tau$ are assumed to be Lipschitz continuous for $\tau\in\cT^{\le N}$. However, we will show that one can weaken such assumptions by imposing suitable Hölder-continuity of $\Upsilon_\tau$ when $\tau\in\cT^{= N}$, see Assumption \ref{As: regularity assumption} below. In order to cover this more general case, we must rewrite \eqref{Eq: identity remainder H} keeping the highest order term non-integrated. 
	\begin{corollary}\label{Cor: Taylor Lipschitz}
		Under the hypothesis of Theorem \ref{Thm: Taylor Lipschitz} we have
		\begin{align*}
			&\Up_{\tau_0}(z_2)-\sum_{w\in\cF^{\leq m}}\frac{\Up_{w\curvearrowright\tau_0}(z_1)}{\pi(w)} \, \X(w)\nonumber\\
			&=\sum_{w\in\cF^{\leq m-1}}H_{z_1z_2}(w;\tau_0)\cdot z^{[m+1-|w|]} \, \X(w)+\sum_{k=1}^{m}\sum_{w=\tau_1\cdots\tau_k\in\cF^{= m}}\sum_{p\in\cP(I_k,\bt)}\left[\sum_{\rho\in\cT_b^{p(I_{k-1})}}\right.
		\\&\left.\sum_{\ell=1}^{m_L(\rho)+1}\!\!\!\!\sum_{\bj_\ell\in\{0,\ldots,k-1\}^\ell_<}\!\!\!\!J_{\rho,\ell}(\,\bj_\ell)\,(-1)^{k-m_R(\bullet_{p(k)}\to_\ell\rho)}\int_{[0,1]_\geq^k} \!\!\left(\Up_{G(\bullet_{p(k)}\to_\ell\rho)}(u_{j_1})-\Up_{G(\bullet_{p(k)}\to_\ell\rho)}(z_1)\right)\d\mathbf{t}_{k-1}\right.
			\\&\qquad \left.+\sum_{\rho\in\cT_b^{p(I_{k})}}\int_{[0,1]_\geq^{k+1}}(-1)^{k-m_R(\rho)}\,e(\mathcal{A}'(\rho))(\mathbf{t}_k)\cdot z^{[1]}\d\mathbf{t}_k\right]\X(w)\,.	
		\end{align*}
	\end{corollary}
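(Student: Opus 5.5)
The plan is to start from Theorem \ref{Thm: Taylor Lipschitz} applied at level $m$ and to keep untouched every term with $|w|\le m-1$. For these $w$ the factor $z^{[m+1-|w|]}$ already has order at least $2$, so they appear verbatim as the first sum on the right-hand side. All the work therefore concerns the top layer $|w|=m$, where the factor is $z^{[1]}$. For such $w=\tau_1\cdots\tau_k\in\cF^{=m}$ I will not use the integrated form $H_{z_1z_2}(w;\tau_0)\cdot z^{[1]}$ from Definition \ref{Def:H}. Instead I will rewrite it back through the intermediate identity obtained inside the proof of Proposition \ref{Prop: recursive form remainder}, stopping one step before the last recentering.

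The steps are as follows.

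\textbf{Split $H$.} Write
\[
H_{z_1z_2}(w;\tau_0)\cdot z^{[1]}=\sum_{p\in\cP(I_k,\bt)}\sum_{\rho\in\cT_b^{p(I_k)}}(-1)^{k-m_R(\rho)}\int e(\cA(\rho))\cdot z^{[1]} .
\]
Then decompose $e(\cA(\rho))=e(\cA'(\rho))+(\text{the }\ell=1\text{ term})$. The $\ell=1$ term is $\sum_{j}J_{\rho,1}(j)\,\nabla\Up_{G(\rho)}(u_j)$. The $\cA'$ part is kept as is and gives the last line of the claimed formula.

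\textbf{Reorganise the $\ell=1$ part.} Use Lemma \ref{Lem: construction binary trees} to write $\rho=\bullet_{p(k)}\to_\ell\rho'$ with $\rho'\in\cT_b^{p(I_{k-1})}$. Then use \eqref{Eq: 1} of Lemma \ref{lem: J sum} to convert
\[
\sum_j J_{\rho,1}(j)\,\nabla\Up_{G(\rho)}(u_j)\quad\text{into}\quad \sum_{\bj_\ell}J_{\rho',\ell}(\bj_\ell)\sum_{i=j_1+1}^{k}\nabla\Up_{G(\bullet_{p(k)}\to_\ell\rho')}(u_i).
\]
Next apply the integral exchange \eqref{Eq: int exchange 1} in reverse: for fixed $t_{j_1}$, the integral over the added variable of $\nabla\Up(u_{k})\cdot z^{[1]}$ on $[0,t_{j_1}]$ equals $\Up(u_{j_1})-\Up(z_1)$. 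This is exactly the third line of \eqref{Eq: integration step}, read backwards, and it produces the middle bracket of the corollary, with the sign $(-1)^{k-m_R(\bullet_{p(k)}\to_\ell\rho')}$.

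I expect the main obstacle to be the sign bookkeeping. In Definition \ref{Def:H} the sign $(-1)^{k-m_R(\rho)}$ multiplies $(-1)^{\un{(\ell>1)}}$ inside $\cA$. For the $\ell=1$ contribution this product must reduce to $(-1)^{k-m_R(\bullet_{p(k)}\to_\ell\rho')}$ with no extra factor. The recentering identity also carries a sign, and its orientation must be checked so that $\Up(u_{j_1})-\Up(z_1)$ appears with a plus. I will verify this against the explicit computation of $H_{z_1z_2}(\bullet;\tau)$ and \eqref{Eq: B^2} in the case $m=1$. A further point to check is that the ranges of summation (for instance $j_1\le k$, and $i$ from $j_1+1$ to $k$) match the simplex $[0,1]^{k+1}_\ge$ after the exchange, which is straightforward bookkeeping.
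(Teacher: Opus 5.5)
Your proposal is correct and is essentially the paper's own (``alternative'') proof. The paper splits off the layer $|w|=m$ from Theorem \ref{Thm: Taylor Lipschitz}, writes $e(\cA(\rho))$ as its $\ell=1$ term plus $e(\cA'(\rho))$, and then runs the steps \eqref{eq: temp holder taylor1}--\eqref{eq: temp holder taylor3} backwards via Lemma \ref{Lem: construction binary trees}, \eqref{Eq: 1} and \eqref{Eq: int exchange 1}, exactly as you do. The sign issue you anticipate does not arise: the $\ell=1$ term of $\cA$ has coefficient $+1$, so $(-1)^{k-m_R(\rho)}$ becomes $(-1)^{k-m_R(\bullet_{p(k)}\to_\ell\rho')}$ directly under the bijection.
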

	\begin{proof}
		The result follows by a procedure analogous to the proof of Theorem \ref{Thm: Taylor Lipschitz}, the only difference being that the term involving trees of the highest order should be left not integrated. Alternatively, we have by \eqref{Eq: identity remainder H}
			\begin{align*}
			&\Up_{\tau_0}(z_2)-\sum_{w\in\cF^{\leq m}}\frac{\Up_{w\curvearrowright\tau_0}(z_1)}{\pi(w)} \, \X(w)
			\\ & =\sum_{w\in\cF^{\leq m-1}}H_{z_1z_2}(w;\tau_0) \cdot z^{[m+1-|w|]}\, \X(w)
			+\sum_{w\in\cF^{=m}}H_{z_1z_2}(w;\tau_0) \cdot z^{[1]}\, \X(w)\,.
		\end{align*}
		We exploit \eqref{Eq: def H} and \eqref{Eq: A'} to write
		\begin{align*}
		&\sum_{w\in\cF^{=m}}H_{z_1z_2}(w;\tau_0) \cdot z^{[1]}\, \X(w)\\
		&=\sum_{k=1}^m\sum_{w=\tau_1\cdots\tau_k\in\cF^{=m}}\sum_{p\in\cP(I_k,\bt)}\sum_{\rho\in\cT_b^{p(I_{k})}}\int_{[0,1]_\ge^{k+1}}(-1)^{k-m_R(\rho)}\,e(\mathcal{A}(\rho))(\mathbf{t}_k)\d\mathbf{t}_k\cdot z^{[1]}\, \X(w)
		\\&=\sum_{k=1}^m\sum_{w=\tau_1\cdots\tau_k\in\cF^{=m}}\sum_{p\in\cP(I_k,\bt)}\sum_{\rho\in\cT_b^{p(I_{k})}}\left[\sum_{j_1\leq k}J_{\rho,1}(j_1)\int_{[0,1]_\ge^{k+1}}(-1)^{k-m_R(\rho)}\,\nabla\Up_{G(\rho)}(u_{j_1})\d\mathbf{t}_k\cdot z^{[1]}\,\right.
		\\&\hspace{2cm}\left.+\int_{[0,1]_\ge^{k+1}}(-1)^{k-m_R(\rho)}\,e(\mathcal{A}'(\rho))(\mathbf{t}_k)\d\mathbf{t}_k\cdot z^{[1]}\,\right] \X(w)\,.
		\end{align*}
		We can then repeat steps from \eqref{eq: temp holder taylor1} to \eqref{eq: temp holder taylor3} in reverse to obtain
		\begin{align*}
			&\sum_{\rho\in\cT_b^{p(I_{k})}}\sum_{j_1\leq k}J_{\rho,1}(j_1)\int_{[0,1]_\ge^{k+1}}(-1)^{k-m_R(\rho)}\,\nabla\Up_{G(\rho)}(u_{j_1})\d\mathbf{t}_k\cdot z^{[1]}\,
			\\&=\sum_{\rho\in\cT_b^{p(I_{k-1})}}\!\!\!\!\sum_{\ell=1}^{m_L(\rho)+1}\!\!\!\!\sum_{\bj_\ell\in\{0,\ldots,k-1\}^\ell_< }\!\!\!\!J_{\rho,\ell}(\,\bj_\ell)(-1)^{k-m_R(\bullet_{p(k)}\to_\ell\rho)}\int_{[0,1]_\ge^k}\int_0^{t_{j_1}}\,\nabla\Up_{G(\bullet_{p(k)}\to_\ell\rho)}(u_k)\d\mathbf{t}_k\cdot z^{[1]}
			\end{align*}
			\vspace{-0.5cm}
			\begin{align*}
			&=\sum_{\rho\in\cT_b^{p(I_{k-1})}}\sum_{\ell=1}^{m_L(\rho)+1}\!\!\!\!\sum_{\bj_\ell\in\{0,\ldots,k-1\}^\ell_<}\!\!\!\!J_{\rho,\ell}(\,\bj_\ell)\,(-1)^{k-m_R(\bullet_{p(k)}\to_\ell\rho)}\hspace{1cm}
			\\&\hspace{5cm}\int_{[0,1]_\geq^k} \!\!\left(\Up_{G(\bullet_{p(k)}\to_\ell\rho)}(u_{j_1})-\Up_{G(\bullet_{p(k)}\to_\ell\rho)}(z_1)\right)\d\mathbf{t}_{k-1},
		\end{align*}
		which leads to the sought result.
	\end{proof}

	\section{A priori estimates}\label{Sec: well-posedness}
	The first part of the paper was entirely dedicated to the derivation of explicit expressions for the elementary differential remainders $B^{[m+1]}(\tau_0)$ for $\tau_0\in\cT^{\leq N}$ and $m\leq N-|\tau_0|$. In this final section we apply these representation formulae to the derivation of a priori estimates on the Davie solution under the following minimal assumptions on the coefficients $\Upsilon_\tau$.
\begin{assumption}\label{As: regularity assumption}
	For $\alpha\in(0,1]$, $N:=\lfloor\frac{1}{\alpha}\rfloor$  and $\beta\in(\frac{1}{\alpha}-N,1]$,
	assume that
	\vspace{0.2cm}
	\begin{itemize}
		\item $\Up_\tau$ is globally Lipschitz continuous for all $\tau\in\cT^{\le N-1}$,
		\item $\Up_\tau\in\mathcal{C}^\beta$ for all $\tau\in\cT^{=N}$.
	\end{itemize}
\end{assumption}

	First, we introduce suitable analytic tools and preliminary results which will be needed throughout this section. 
	We consider a modified version of the H\"older-like norm $\| \cdot \|_{\beta}$ by introducing a one-parameter weight that helps to tame the growth of the norm over large volumes. This tool will be useful when trying to turn local results into global ones. As we will detail below, the advantage resides in the possibility of considering arbitrarily large intervals $[0, T]$, at the expense of fixing a sufficiently small weight parameter $\mu > 0$.
	
	\begin{definition}[Weighted Norms]\label{Def: weighted norms}
		Given $F \in C ([0, T]^2_{\leq}, \mathbb{R}^n)$, $G \in C ([0,
		T]^3_{\leq}, \mathbb{R}^n)$ and $\beta,\mu \in (0, \infty)$ we define
		\begin{align*}
			&\| F \|_{\beta, \mu}:=\sup_{0 \leq s \leq t \leq
				T} \mathbbm{1}_{\{ 0 < t - s \leq \mu \}} e^{- \frac{t}{\mu}}
			\frac{| F_{s t} |}{(t - s)^{\beta}}\,,  \\
			&\| G \|_{\beta, \mu} := \sup_{0 \leq s \leq u \leq
				t \leq T} \mathbbm{1}_{\{ 0 < t - s \leq \mu \}} e^{-
				\frac{t}{\mu}} \frac{| G_{s u t} |}{(t - s)^{\beta}}\,.  
		\end{align*}
	\end{definition}
	Notice that the standard norm is restored by sending $\mu$ to infinity:
	\[ \lim_{\mu \rightarrow \infty} \| \cdot \|_{\beta, \mu} = \| \cdot
	\|_{\beta} . \]
	
	\begin{remark}\label{Rem: equivalence weighted norm}
		We stress that $\| \cdot \|_{\beta, \mu}$ is, in general, just a seminorm, since the argument may have a support disjoint from the region identified by the indicator function. In addition, when $F = \delta f$ for $f\in\mathcal{C}^\beta$, the seminorm $\| \cdot \|_{\beta,\mu}$ is equivalent to $\| \cdot \|_{\beta}$. Indeed, for all $\beta\in(0,1)$ and $\mu>0$, a direct calculation entails 
		\begin{equation*}
			\|\delta f\|_{\beta,\mu}\leq \|\delta f\|_{\beta}\leq \left(1+\frac{T}{\mu}\right)e^{\frac{T}{\mu}}\|\delta f\|_{\beta,\mu}\,.
		\end{equation*}
	\end{remark}
	
		Now we are in the position to state the desired a priori estimates of the solution remainders. 
	\begin{theorem}[A priori estimate]\label{th: a priori branched}
		Under Assumption \ref{As: regularity assumption}, for any solution $Z$ of \eqref{eq: sol Davie branched} we have
		\begin{equation*}
			\|Z^{[N+1]}\|_{(N+\beta)\alpha,\mu} \lesssim_{\X,\sigma,T} 
			(1\vee\| Z^{[1]}\|_{\alpha,\mu})+\sum_{\ell=2}^{m+1}\|Z^{[\ell]}\|_{\ell\alpha,\mu}\, ,
		\end{equation*}
		and, if either $T$ or $\mu$ are sufficiently small, 
		\begin{align}\nonumber
			&(1\vee \| Z^{[1]} \|_{\alpha, \mu})+\sum_{i=2}^N \| Z^{[i]} \|_{i \alpha, \mu}  \le
			\\ & \le 2\left[ (1\vee (|\Up_{\bullet}(Z_0)| \, \|\mathbb{X}(\bullet)\|_{\alpha}))+ \sum_{\tau\in\cT^{\le N}\setminus\{\bullet\}} |\tau| \,|\Up_{\tau}(Z_0)| \, \|\mathbb{X}(\tau)\|_{|\tau|\alpha}\right].\label{eq: a priori branched}
		\end{align}
		If moreover $\beta=1$, namely if $\Up_\tau$ is globally Lipschitz continuous for all $\tau\in\cT^{\le N}$, 
		then for any solution $Z$ of \eqref{eq: sol Davie branched},
		if either $T$ or $\mu$ are small enough, we have
		\begin{equation}\label{eq: a priori branched2}
			\sum_{i=1}^N \| Z^{[i]} \|_{i \alpha, \mu} \le 2\sum_{\tau\in\cT^{\le N}} |\tau| \,|\Up_{\tau}(Z_0)| \, \|\mathbb{X}(\tau)\|_{|\tau|\alpha}\,.
		\end{equation}
	\end{theorem}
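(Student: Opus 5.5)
The plan is to combine the Sewing bound with Proposition \ref{Lem: delta Z} and the Lipschitz Taylor formulae of Section \ref{Sec: Lipschitz elementary differentials}, all read in the weighted seminorms $\|\cdot\|_{\cdot,\mu}$. I will then close a triangular system of inequalities among the remainders $Z^{[i]}$ by absorbing the terms that carry small powers of $\mu$ (or $T$).

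\textbf{Step 1 (top remainder).} Since $Z^{[N+1]}_{st}=o(t-s)$ and $(N+\beta)\alpha>1$, I would apply the Sewing bound. First I need a localized, weighted version of it. The dyadic argument only uses subintervals of $[s,t]$, and along them $e^{-t'/\mu}\ge e^{-t/\mu}$. So for $t-s\le\mu$ one gets $\|Z^{[N+1]}\|_{(N+\beta)\alpha,\mu}\le K\|\delta Z^{[N+1]}\|_{(N+\beta)\alpha,\mu}$; I expect this is one of the lemmas of Appendix \ref{App: analytic tools}.

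Next, by \eqref{dZN+1}, $\delta Z^{[N+1]}_{sut}=\sum_\tau B^{[N-|\tau|+1]}_{su}(\tau)\X_{ut}(\tau)$, and I bound each $B$:
\begin{itemize}
\item For $|\tau|<N$, and for $w$ with $|w|<N-|\tau|$, I would use Corollary \ref{Cor: Taylor Lipschitz} with $m=N-|\tau|$. The $H$-coefficients there involve only $\nabla\Up_{\tau'}$ with $|\tau'|\le N-1$, hence are bounded. Each term $H\cdot Z^{[m+1-|w|]}_{su}\X_{su}(w)$ is then of order $(u-s)^{(N-|\tau|+1)\alpha}\|Z^{[i]}\|_{i\alpha,\mu}$.
\item For $|\tau|<N$ and the top forests $|w|=m$, the explicit bracket in Corollary \ref{Cor: Taylor Lipschitz} has two parts. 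The differences $\Up_{G}(u_{j_1})-\Up_G(z_1)$, with $|G|=N$, are bounded by $[\Up_G]_\beta|Z^{[1]}_{su}|^\beta$. The $\cA'$-part contains only gradients of trees of size $<N$.
\item For $|\tau|=N$ only $B^{[1]}=\Up_\tau(Z_u)-\Up_\tau(Z_s)$ appears, which is of order $|Z^{[1]}_{su}|^\beta$.
\end{itemize}
The H\"older terms give $\|Z^{[1]}\|_{\alpha,\mu}^\beta\le 1\vee\|Z^{[1]}\|_{\alpha,\mu}$, which explains the $1\vee$ in the statement. Tracking the exponents gives total order $(N+\beta)\alpha$ whenever $t-s\le\mu$. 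The weights combine since $e^{-t/\mu}\le e^{-u/\mu}$. This yields the first inequality.

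\textbf{Step 2 (lower remainders).} I would use the telescoping identity $Z^{[i]}_{st}=Z^{[i+1]}_{st}+\sum_{|\tau|=i}\Up_\tau(Z_s)\X_{st}(\tau)$. It gives
\[
\|Z^{[i]}\|_{i\alpha,\mu}\le \mu^{\alpha}\|Z^{[i+1]}\|_{(i+1)\alpha,\mu}+\sum_{|\tau|=i}\sup_s e^{-s/\mu}|\Up_\tau(Z_s)|\,\|\X(\tau)\|_{i\alpha},
\]
using $(t-s)^\alpha\le\mu^\alpha$ on the support of the weight. To control the coefficients I would use the Lipschitz property, $|\Up_\tau(Z_s)|\le|\Up_\tau(Z_0)|+L|Z_s-Z_0|$. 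The growth of $|Z_s-Z_0|$ is bounded by chaining steps of size $\mu$, which gives $e^{-s/\mu}|Z_s-Z_0|\lesssim \mu^\alpha\|Z^{[1]}\|_{\alpha,\mu}$ times a summable geometric factor; the exponential weight is designed exactly for this. For the level-$N$ trees I would use the same chaining with $\beta$-H\"older continuity.

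Summing over $i$ and inserting Step 1 for $Z^{[N+1]}$, every occurrence of a $Z$-norm on the right-hand side carries a factor $\mu^{\alpha}$, $\mu^{\beta\alpha}$, or $T^{\cdot}$. Choosing $\varepsilon=T\wedge\mu$ small makes these coefficients at most $1/2$, so they can be absorbed into the left-hand side. This gives \eqref{eq: a priori branched}, with the factor $2$ coming from the absorption. The weights $|\tau|$ arise from how often $\Up_\tau(Z_0)\X(\tau)$ is counted across the telescoping levels $i\le|\tau|$. For $\beta=1$ no $1\vee$ is needed because every estimate is linear, which gives \eqref{eq: a priori branched2}.

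\textbf{Main obstacle.} The delicate point is the weighted Sewing bound together with the chaining estimate for $\sup_s e^{-s/\mu}|Z_s-Z_0|$. The seminorm only sees $t-s\le\mu$, so the dyadic and chaining arguments must stay inside windows of length $\mu$, and the constants must not blow up as $T/\mu\to\infty$. Tracking the exact constant $2$ and the multiplicities $|\tau|$ is routine bookkeeping by comparison.
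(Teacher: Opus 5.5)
Your proposal is correct and is essentially the paper's proof: the weighted Sewing bound combined with \eqref{dZN+1}, bounds on $B^{[m+1]}(\tau)$ read off from Corollary \ref{Cor: Taylor Lipschitz} (this is Lemma \ref{lem: H estimate}), then control of the lower remainders via \eqref{supholdweighted}, and absorption for small $\varepsilon=(\mu\wedge T)^\alpha$. Two small points: the absorption step needs the norms to be finite beforehand, which the paper obtains from Proposition \ref{Prop: regularity Z}; and the multiplicity $|\tau|$ comes from the paper's full expansion \eqref{eq:ZZZ} of $Z^{[i]}$ down to $Z^{[N+1]}$, whereas your one-step telescoping followed by absorption gives each $|\Up_\tau(Z_0)|\,\|\X(\tau)\|_{|\tau|\alpha}$ with coefficient $1$ instead of $|\tau|$, which is only stronger.
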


	We will also need a weighted version of the supremum norm.
	\begin{definition}[Weighted supremum norm]
		For $f \in C ([0, T], \mathbb{R}^n)$ we define
		\[ \| f \|_{\infty, \mu} :=\sup_{0 \leq t \leq T} e^{-
			\frac{t}{\mu}} | f_t | . \]
	\end{definition}
	For a number of useful results on weighted norms, see Appendix \ref{App: analytic tools}

	\subsection{The Sewing bound}\label{Sec: sewing bound}
	
	The main analytical tool in this work is the celebrated Sewing Lemma \cite{gubinelli2004controlling, FeDe06}. More precisely, to establish a priori estimates we do not need the 
	full statement, but only the so-called Sewing bound, see Theorem \ref{Thm: sewing bound}, a terminology borrowed from \cite{CGZ25}. First we give a technical lemma on convergence of generalised Riemann sums.  
	\begin{lemma}
		Consider $R \in C ([0, T]^2_{\leq}, \mathbb{R}^n)$ such that
		$R_{{st}} = o (t - s)$. For any $0 \leq a < b
		\leq T$ let us take any sequence of partitions $(\mathsf{P}_n)_{n
			\geqslant 0}$ of $[a, b]$ with vanishing mesh, i.e. $\lim_{n \rightarrow
			\infty} | \mathsf{P}_n | = 0$. If we define $I_{\mathsf{P}_n} (R) :=
		\sum_{i = 1}^{\#\mathsf{P}} R_{t_{i - 1} t_i}$ we have
		\[ \lim_{n \rightarrow \infty} I_{\mathsf{P}_n} (R) = 0\,. \]
		Moreover, if $\mathsf{P}_0 = \{ a, b \}$, than
		\begin{equation}
			R_{a b} = \sum_{n = 0}^{\infty} (I_{\mathsf{P}_n} (R) - I_{\mathsf{P}_{n
					+ 1}} (R))\,, \qquad 0 \leq a < b \leq T\,. \label{Rab1}
		\end{equation}
	\end{lemma}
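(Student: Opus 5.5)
The claim has two parts: the generalised Riemann sums $I_{\mathsf{P}_n}(R)$ vanish along any sequence of partitions with vanishing mesh, and $R_{ab}$ equals the telescoping series \eqref{Rab1}. I will prove the first part from $R_{st}=o(t-s)$ alone and then obtain the second as a direct consequence.

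\textbf{Vanishing of the Riemann sums.} Fix $\varepsilon>0$. Since $R_{st}=o(t-s)$ uniformly on the simplex, there is $\eta>0$ such that $|R_{st}|\le \varepsilon (t-s)$ whenever $0\le t-s\le \eta$. Choose $n_0$ with $|\mathsf{P}_n|\le\eta$ for all $n\ge n_0$. For such $n$, every interval $[t_{i-1},t_i]$ of $\mathsf{P}_n$ has length at most $\eta$. Hence
\[
|I_{\mathsf{P}_n}(R)|\le \sum_i |R_{t_{i-1}t_i}|\le \varepsilon\sum_i (t_i-t_{i-1})=\varepsilon(b-a)\,,
\]
so $I_{\mathsf{P}_n}(R)\to0$. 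The only point to check is that the $o(t-s)$ condition is uniform in $(s,t)$. This is exactly how Definition \ref{def:aladavie} is formulated ("uniformly for $0\le s\le t\le T$"), and I read the hypothesis of the lemma in the same sense. If only pointwise smallness were assumed, continuity of $R$ on the compact simplex together with $R_{tt}=0$ would not suffice on its own, so uniformity is genuinely needed here.

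\textbf{Telescoping identity.} With $\mathsf{P}_0=\{a,b\}$ we have $I_{\mathsf{P}_0}(R)=R_{ab}$. The partial sums telescope:
\[
\sum_{n=0}^{M-1}\bigl(I_{\mathsf{P}_n}(R)-I_{\mathsf{P}_{n+1}}(R)\bigr)=R_{ab}-I_{\mathsf{P}_M}(R)\,.
\]
By the first part, the right-hand side converges to $R_{ab}$ as $M\to\infty$. This proves \eqref{Rab1}, with the series converging as a limit of partial sums.

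No nesting of the partitions is needed for this argument. Nesting only becomes relevant later, presumably when each difference $I_{\mathsf{P}_n}-I_{\mathsf{P}_{n+1}}$ is rewritten as a sum of $\delta R_{sut}$ terms to derive the Sewing bound. There is no real obstacle in this lemma beyond the uniformity remark above.
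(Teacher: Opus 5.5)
Your proof is correct and follows the same route as the paper. The paper also bounds $|I_{\mathsf{P}_n}(R)|$ by $\max_j |R_{t^n_{j-1}t^n_j}|/(t^n_j-t^n_{j-1})$ times $(b-a)$, which tends to $0$ only because $o(t-s)$ is read uniformly, as you make explicit, and it then obtains the series identity by telescoping from $I_{\mathsf{P}_0}(R)=R_{ab}$; your remark that no nesting of the partitions is needed for this lemma is also accurate.
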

	\begin{proof}
		For $\mathsf{P}_n = \{ a = t_0^n < \ldots < t^n_{\#\mathsf{P}_n} = b
		\}$, by definition we have
		\[ | I_{\mathsf{P}_n} (R) | \leq \sum_{i = 1}^{\#\mathsf{P}_n} |
		R_{t_{i - 1}^n t_i^n} | \leq \left\{ \max_{j = 1, \ldots,
			\#\mathsf{P}_n} \frac{| R_{t_{i - 1}^n t_i^n} |}{(t_j^n - t_{j -
				1}^n)} \right\} \sum_{j = 1}^{\#\mathsf{P}_n} (t_j^n - t_{j - 1}^n)\,,
		\]
		which implies $| I_{\mathsf{P}_n} (R) | \rightarrow 0$ as $n \rightarrow
		\infty$, since the sum is finite, $R_{s t} = o (t - s)$ and $\max_{j = 1,
			\ldots, \#\mathsf{P}_n} (t_j^n - t_{j - 1}^n) = | \mathsf{P}_n |$ tends
		to $0$.
		Finally, since $\lim_{m \rightarrow \infty} I_{\mathsf{P}_m} (R) = 0$ and
		$I_{\mathsf{P}_0} (R) = R_{a b}$ for $\mathsf{P}_0 = \{ a, b \}$, we
		have
		\begin{align*}
			R_{a b} & =  \lim_{m \rightarrow \infty} (I_{\mathsf{P}_0} (R) -
			I_{\mathsf{P}_m} (R)) = \lim_{m \rightarrow \infty} \sum_{i = 0}^{m - 1}
			(I_{\mathsf{P}_i} (R) - I_{\mathsf{P}_{i+1}} (R)) = \sum_{i = 0}^{\infty} (I_{\mathsf{P}_i} (R) - I_{\mathsf{P}_{i+1}} (R))\,,
		\end{align*}
		which proves \eqref{Rab1}.
	\end{proof}
	
	\begin{theorem}[Sewing bound]\label{Thm: sewing bound}
		Consider $R \in C ([0, T]^2_{\leq}, \mathbb{R}^n)$ such that
		$R_{s t} = o (t - s)$. Then, for all $\eta \in (1, \infty)$,
		\begin{equation}
			\| R \|_{\eta} \leq K_{\eta} \| \delta R \|_{\eta}\,, \hspace{1.5cm} K_{\eta} :=(1 - 2^{1 - \eta})^{- 1} .
			\label{SewingBound}
		\end{equation}
	\end{theorem}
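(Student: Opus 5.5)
We prove the Sewing bound by dyadic refinement, using the preceding lemma on generalised Riemann sums. Fix $0\le a<b\le T$. We may assume $\|\delta R\|_\eta<\infty$, since otherwise there is nothing to prove. Let $\mathsf{P}_n$ be the dyadic partition of $[a,b]$ into $2^n$ intervals of length $2^{-n}(b-a)$, so that $\mathsf{P}_0=\{a,b\}$ and $|\mathsf{P}_n|\to0$. Since $R_{st}=o(t-s)$, identity \eqref{Rab1} applies and gives $R_{ab}=\sum_{n\ge0}(I_{\mathsf{P}_n}(R)-I_{\mathsf{P}_{n+1}}(R))$.

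The key step is to express each difference through $\delta R$. Every interval $[t^n_{i-1},t^n_i]$ of $\mathsf{P}_n$ is split by its midpoint $m^n_i$ in $\mathsf{P}_{n+1}$. Hence
\[
I_{\mathsf{P}_n}(R)-I_{\mathsf{P}_{n+1}}(R)=\sum_{i=1}^{2^n}\bigl(R_{t^n_{i-1}t^n_i}-R_{t^n_{i-1}m^n_i}-R_{m^n_it^n_i}\bigr)=\sum_{i=1}^{2^n}\delta R_{t^n_{i-1}m^n_it^n_i}.
\]
Each term is bounded by $\|\delta R\|_\eta\,(2^{-n}(b-a))^\eta$, so the $n$-th difference is at most $2^n\cdot 2^{-n\eta}(b-a)^\eta\|\delta R\|_\eta=2^{n(1-\eta)}(b-a)^\eta\|\delta R\|_\eta$.

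Summing the geometric series, which converges because $\eta>1$, gives
\[
|R_{ab}|\le \sum_{n\ge0}2^{n(1-\eta)}(b-a)^\eta\|\delta R\|_\eta=(1-2^{1-\eta})^{-1}(b-a)^\eta\|\delta R\|_\eta .
\]
Dividing by $(b-a)^\eta$ and taking the supremum over $a<b$ yields \eqref{SewingBound}.

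There is no serious obstacle here. The only delicate point is the use of $o(t-s)$ to guarantee that $I_{\mathsf{P}_n}(R)\to0$, which makes the telescoping in \eqref{Rab1} legitimate. This is exactly what the preceding lemma provides, so the argument reduces to the midpoint computation and the geometric sum.
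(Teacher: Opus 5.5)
Your proposal is correct and follows the paper's proof essentially step by step. You use dyadic partitions of $[a,b]$ and the telescoping identity \eqref{Rab1} from the preceding lemma, write each difference $I_{\mathsf{P}_n}(R)-I_{\mathsf{P}_{n+1}}(R)$ as a sum of $2^n$ midpoint increments $\delta R$, and sum the resulting geometric series to obtain $K_\eta=(1-2^{1-\eta})^{-1}$.
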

	\begin{proof}
		Assume $\| \delta R \|_{\eta} < \infty$ for a given $\eta > 1$, otherwise the statement trivially holds true.	Fix $0 \leq a < b \leq T$. For $n \geqslant 0$ we call
		$\mathsf{P}_n :=\left\{ t_i^n :=a + \frac{i}{2^n} (b - a) : 0
		\leq i \leq 2^n \right\}$ the dyadic partitions of $[a, b]$.
		Note that $\mathsf{P}_0 $ is the trivial partition. By \eqref{Rab1} we
		can write
		\begin{equation}
			| R_{a b} | \leq \sum_{n = 0}^{\infty} | I_{\mathsf{P}_n} (R) -
			I_{\mathsf{P}_{n+ 1}} (R) |\, . \label{Rab}
		\end{equation}
		Now note that if we remove a single point $t_i$ from a partition
		$\mathsf{P}= \{ t_0 < t_1 < \ldots < t_q \}$ we obtain a new partition
		$\mathsf{P}^{'}$ and we can write
		\begin{equation}
			I_{\mathsf{P}^{'}} (R) - I_{\mathsf{P}} (R) = \delta R_{t_{i - 1} t_i
				t_{i + 1}} . \label{increments}
		\end{equation}
		Observe that removing all the points $t_{2 j + 1}^{n+1}$, with $0
		\leq j \leq 2^n - 1$, from $\mathsf{P}_{n+1}$ we obtain
		$\mathsf{P}_n$, so that by \eqref{increments} we have
		\begin{align*}
			| I_{\mathsf{P}_{n+1} } (R) - I_{\mathsf{P}_n} (R) | &\leq 
			\sum_{j = 0}^{2^n - 1} | \delta R_{t^{n+1}_{2 j} t_{2 j + 1}^{n+1}
				t_{2 j + 2}^{n+1}} |
				\\ & \leq 2^n \| \delta R \|_{\eta} \left( \frac{2 (b - a)}{2^{n+
					1}} \right)^{\eta} =  2^{- (\eta - 1) n} \| \delta R \|_{\eta} (b - a)^{\eta} \,. 
		\end{align*}
		Substituting in \eqref{Rab} we get, for $0 \leq a < b \leq T$,
		\begin{align*}
			| R_{a b} | & \leq \sum_{n = 0}^{\infty} 2^{- (\eta - 1) n} \|
			\delta R \|_{\eta} (b - a)^{\eta} = (1 - 2^{1 - \eta})^{- 1} \| \delta R \|_{\eta} (b - a)^{\eta} = K_{\eta} \| \delta R \|_{\eta} (b - a)^{\eta},
		\end{align*}
		which proves \eqref{SewingBound}.
	\end{proof}
	
	\begin{remark}
		An extension of the sewing bound to the weighted case follows suit from recalling that the weighted seminorm is always controlled by the non-weighted one. Indeed, for  $0 \leq s \leq t \leq T$, the proof of \eqref{SewingBound} entails that $| R_{a b} |
		\leq K_{\eta} \| \delta R \|_{\eta, [s, t]} (t - s)^{\eta}$. Hence
		\[ e^{- \frac{t}{\mu}} \frac{| R_{s t} |}{(t - s)^{\eta}} \leq e^{-
			\frac{t}{\mu}} K_{\eta} \| \delta R \|_{\eta, [s, t]} \leq
		K_{\eta} \| \delta R \|_{\eta, \mu}\,, \]
		which, by taking the supremum over $0
		\leq s \leq t \leq T$, $t - s \leq \mu$ and under the hypothesis of Theorem \ref{Thm: sewing bound}, leads to
		\begin{align}
			\| R \|_{\eta, \mu} \leq K_{\eta} \| \delta R \|_{\eta, \mu}\,, \quad \quad  K_{\eta} :=(1 - 2^{1 - \eta})^{- 1} \,.
			\label{weighted Sewing bound}
		\end{align}
	\end{remark}
	
	\subsection{Derivation of the a priori bound}
	
	In this section we establish a priori estimates for the Davie solution of the rough equation \eqref{eq: sol Davie branched} driven by an $\alpha$-branched rough path for generic $\alpha\in(0,1]$ as stated in Theorem \ref{th: a priori branched0} under Assumption \ref{As: regularity assumption}, exploiting the generalised Taylor expansions derived in Section \ref{Sec: Algebraic formula for the remainder}. Below we will highlight how its expression is crucial for our proof. 
	
	First we derive the control around the diagonal for the errors $Z^{[i]}$ defined in \eqref{Eq: Z remainders}.
	\begin{proposition}\label{Prop: regularity Z}
		Let $Z$ be a solution of \eqref{eq: sol Davie branched} and $\sigma\in C^{N-1}$. Then, for all $i\in\{1,\ldots,N\}$, $\|Z^{[i]}\|_{i \alpha}<+\infty$.
	\end{proposition}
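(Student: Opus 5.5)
We argue by downward induction on $i$, starting from $i=N+1$ and descending to $i=1$. The basic tool is the identity \eqref{Eq: inductive z} specialised to $z_1=Z_s$, $z_2=Z_t$, $\X=\X_{st}$:
\[
Z^{[i]}_{st}=Z^{[i+1]}_{st}+\sum_{\tau\in\cT^{=i}}\Up_\tau(Z_s)\,\X_{st}(\tau)\,,\qquad i=1,\ldots,N.
\]
This reduces the claim to two facts: a bound on the top remainder and a bound on the coefficients $\Up_\tau(Z_s)$.

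First, we control $Z^{[N+1]}$. By Definition \ref{def:aladavie}, $Z^{[N+1]}_{st}=o(t-s)$ uniformly on $[0,T]^2_\le$. In particular, there is $\delta>0$ such that $|Z^{[N+1]}_{st}|\le (t-s)$ whenever $t-s\le\delta$. Since $N\alpha\le 1$, this yields $|Z^{[N+1]}_{st}|\le (t-s)^{N\alpha}\,T^{1-N\alpha}$ on $\{t-s\le\delta\}$.

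Second, we need a bound away from the diagonal. For $t-s>\delta$, we use continuity of $Z$. Continuity follows from \eqref{eq: sol Davie branched}, since each $\X_{st}(\tau)$ is Hölder and $\Up_\tau$ is continuous; this requires $\sigma\in C^{N-1}$, which makes $\Up_\tau$ continuous for $|\tau|\le N$. Consequently $Z$ is bounded on $[0,T]$, and therefore $\sup_s|\Up_\tau(Z_s)|<\infty$ for all $\tau\in\cT^{\le N}$, by continuity of $\Up_\tau$ on the compact set $Z([0,T])$. The quantities $|\X_{st}(\tau)|$ are bounded as well. Hence $Z^{[N+1]}_{st}$ is bounded, and dividing by $(t-s)^{N\alpha}\ge\delta^{N\alpha}$ gives a finite bound. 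Together with the first step, $\|Z^{[N+1]}\|_{N\alpha}<\infty$.

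Now we run the induction. Assume $\|Z^{[i+1]}\|_{i\alpha}<\infty$; for $i=N$ this is the bound just obtained. The identity above then gives
\[
\|Z^{[i]}\|_{i\alpha}\le \|Z^{[i+1]}\|_{i\alpha}+\sum_{\tau\in\cT^{=i}}\|\Up_\tau(Z)\|_\infty\,\|\X(\tau)\|_{i\alpha}<\infty,
\]
using $|\X_{st}(\tau)|\lesssim (t-s)^{|\tau|\alpha}$ from Definition \ref{Def: branched rough paths}. To continue the descent, we need the weaker statement $\|Z^{[i]}\|_{(i-1)\alpha}<\infty$ for the next step. This follows from $\|F\|_{(i-1)\alpha}\le T^{\alpha}\|F\|_{i\alpha}$, so the induction proceeds down to $i=1$.

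The only delicate point is the first step. The $o(t-s)$ condition gives local control only, so we must also justify the global boundedness of $Z$, and hence of $\Up_\tau(Z)$, via continuity and compactness rather than through any growth assumption on the coefficients.
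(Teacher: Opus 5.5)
Your overall architecture matches the paper's. You write $Z^{[i]}_{st}=\sum_{i\le|\tau|\le N}\Up_\tau(Z_s)\,\X_{st}(\tau)+Z^{[N+1]}_{st}$; your downward induction is just this identity telescoped. You bound the $\X$-terms via $|\X_{st}(\tau)|\lesssim(t-s)^{|\tau|\alpha}$ once $\sup_s|\Up_\tau(Z_s)|<\infty$. You control $Z^{[N+1]}$ by $o(t-s)$ near the diagonal (using $N\alpha\le 1$) and by boundedness away from it, a split you spell out more carefully than the paper does.

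The gap is in the step everything rests on: boundedness of $Z$. You derive it from continuity of $Z$, and you justify continuity by saying it ``follows from \eqref{eq: sol Davie branched}'' because $\X$ is H\"older and $\Up_\tau$ is continuous. That argument only gives right-continuity: for fixed $s$, $\Up_\tau(Z_s)$ is a fixed vector and $\X_{st}(\tau)\to0$ as $t\downarrow s$. For left-continuity at $t$ you need $\Up_\tau(Z_s)\,\X_{st}(\tau)\to 0$ as $s\uparrow t$, with the coefficient point moving. That requires $\sup_{s\in[t-\epsilon,t]}|\Up_\tau(Z_s)|<\infty$, i.e.\ local boundedness of $Z$, which is exactly what you then deduce from continuity. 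Right-continuity alone does not imply boundedness: take $f(t)=(t_0-t)^{-1}$ for $t<t_0$ and $f(t)=0$ for $t\ge t_0$. So, as written, the argument is circular.

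The fix is the paper's direct argument, which needs no continuity at all. Choose $\bar\delta>0$ with $|Z^{[N+1]}_{st}|\le 1$ whenever $t-s\le\bar\delta$. Cover $[0,T]$ by finitely many intervals $[\bar s,\bar t]$ of length at most $\bar\delta$, and on each one expand from the \emph{left endpoint}:
\[
\sup_{t\in[\bar s,\bar t]}|Z_t|\le |Z_{\bar s}|+\sum_{\tau\in\cT^{\le N}}|\Up_\tau(Z_{\bar s})|\,\sup_{t\in[\bar s,\bar t]}|\X_{\bar s t}(\tau)|+1 .
\]
Here the coefficients are frozen at the single point $\bar s$, so there is no circularity.

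Alternatively, you can prove left-continuity honestly. Expand both $Z_s$ and $Z_t$ from a fixed earlier point $r$, and use Chen's relation \eqref{Eq: Chen's relation} to see that $\X_{rs}(\tau)\to\X_{rt}(\tau)$ as $s\uparrow t$. This gives $\limsup_{s\uparrow t}|Z_t-Z_s|\le 2\sup_{0<v-u\le t-r}|Z^{[N+1]}_{uv}|$, and letting $r\uparrow t$ finishes. The direct bound is simpler, though.

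With boundedness of $Z$ established either way, the rest of your proof goes through unchanged.
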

	\begin{proof}
		By the definition of a Davie solution, see \eqref{eq: sol Davie branched},
		\[ | \delta Z_{s t} | \leq \sup_{0 \leq s' \leq T} \sup_{\tau'\in\cT^{\leq N}}| \Up_{\tau'}(Z_{s'})|  \sum_{\tau\in\cT^{\leq N}} | \mathbb{X}_{s t}(\tau)
		| + |Z^{[N+1]}_{st}|, \qquad |Z^{[N+1]}_{st}|=o (t - s). \]
		Then it suffices to show that the constant $C := \sup_{0 \leq s' \leq T} \sup_{\tau'\in\cT^{\leq N}}| \Up_{\tau'}(Z_{s'})|$ is finite. Since  $\Up_\tau$ displays at most the $(N-1)$-st derivative of $\sigma$, it is continuous for every $\tau\in\cT^{\leq N}$. Hence it remains to prove that $Z$ is bounded.
		Let us fix $\bar{\delta} > 0$ such that $| Z^{[N+1]}_{st}| \leq 1$ for
		all $0 \leq s \leq t \leq T$ with $| t - s | \leq
		\bar{\delta}$. Since we can write $[0, T]$ as a finite union of intervals
		$[\bar{s}, \bar{t}]$ such that $\bar{t} - \bar{s} \leq \bar{\delta}$,
		we will focus only on one interval of this kind. By the definition of
		solution we can write
		\[ \sup_{t \in [\bar{s}, \bar{t}]} | Z_t | \leq | Z_{\bar{s}} | +
		\sum_{\tau\in\cT^{\leq N}} | \Upsilon_{\tau}(Z_{\bar s})| \sup_{t \in [\bar{s},
			\bar{t}]} | \mathbb{X}_{s t}(\tau) | + 1 < \infty \,. \]
		We proved that $Z$ is bounded, therefore $C < \infty$. Observe that we
		can write for $i=2,\ldots,N$
		\[ 
		| Z^{[i]}_{s t} | \leq C \sum_{\substack{\tau\in\cT\\ i\leq|\tau|\leq N}} | \mathbb{X}_{s t}(\tau) | + 
		|Z^{[N+1]}_{st}|\, . 
		\]
		Now, since $\|\mathbb{X}(\tau)\|_{|\tau| \alpha}<+\infty$, we conclude that $\|Z^{[i]}\|_{i \alpha}<+\infty$.
	\end{proof}
	
	Our strategy for the derivation of an a priori estimate on the remainder $Z^{[N+1]}$ relies on the Sewing bound, which involves the increments $\delta Z^{[N+1]}$. Given the convenient form of $\delta Z^{[N+1]}$ derived in \eqref{dZN+1}, we first deduce a control on the norm of the elementary differential remainders $B^{[m+1]}$ in terms of lower order solution remainders. 
	
	\begin{lemma}\label{lem: H estimate}
		Under Assumption \ref{As: regularity assumption}, for $B^{[m+1]}$ defined in \eqref{eq:Bn} and $m=0,\ldots,N-1$, we have
		\begin{equation}\label{eq:5.8.1}
			\|B^{[m+1]}(\tau)\|_{(m+\beta)\alpha,\mu}\lesssim_{\X,\sigma} \sum_{\ell=2}^{m}\|Z^{[\ell]}\|_{\ell\alpha,\mu}+(\mu\wedge T)^{1-\beta}\left(1\vee\| Z^{[1]}\|_{\alpha,\mu}\right).
		\end{equation}
		If $\beta=1$, namely if $\Up_\tau$ is globally Lipschitz continuous for all $\tau\in\cT^{\leq N}$, then
		\begin{equation}\label{eq:5.8.2}
			\|B^{[m+1]}(\tau)\|_{(m+\beta)\alpha,\mu}\lesssim_{\X,\sigma} \sum_{\ell=1}^{m}\|Z^{[\ell]}\|_{\ell\alpha,\mu}.
		\end{equation}
	\end{lemma}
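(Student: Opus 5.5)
We will apply Theorem \ref{Thm: Taylor Lipschitz} and Corollary \ref{Cor: Taylor Lipschitz} with $z_1=Z_s$, $z_2=Z_t$, $\X=\X_{st}$, so that $z^{[i]}=Z^{[i]}_{st}$ and $B^{[m+1]}_{z_1z_2}(\tau)=B^{[m+1]}_{st}(\tau)$. Each term is then bounded separately, with the weight $e^{-t/\mu}$ attached to the factor $Z^{[\cdot]}_{st}$. The sum runs over $w\in\cF^{\le m}$ with $m\le N-|\tau|$, so $|w|+|\tau|\le N$. Every gradient appearing in $H_{z_1z_2}(w;\tau)$ is then of the form $\nabla\Up_{G(\rho_i)}$ with $|G(\rho_i)|\le N$.

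\textbf{Case $\beta=1$.} Here $H_{st}(w;\tau)$ is a finite combination of integrals over simplices of products of gradients $\nabla\Up_{\tau'}$ with $\tau'\in\cT^{\le N}$. This gives $|H_{st}(w;\tau)|\lesssim_\sigma 1$, with the constant depending only on the Lipschitz constants and on the combinatorial numbers $|\cP|$, $|\cT_b^{p(I_k)}|$, $|J_{\rho,\ell}|$. Hence, for $0<t-s\le\mu$,
\[
e^{-t/\mu}|B^{[m+1]}_{st}(\tau)|\lesssim \sum_{w\in\cF^{\le m}}\|Z^{[m+1-|w|]}\|_{(m+1-|w|)\alpha,\mu}\,(t-s)^{(m+1-|w|)\alpha}\,|\X_{st}(w)|.
\]
We then use $|\X_{st}(w)|\le \prod\|\X(\tau_i)\|_{|\tau_i|\alpha}(t-s)^{|w|\alpha}$ by multiplicativity, and reindex with $\ell=m+1-|w|\in\{1,\dots,m+1\}$. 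This gives \eqref{eq:5.8.2}; note that the sum there should run up to $m+1$, since the $w=\one$ term contributes $Z^{[m+1]}$.

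\textbf{General $\beta$.} We use Corollary \ref{Cor: Taylor Lipschitz}. The terms with $|w|\le m-1$ involve only gradients of $\Up_{\tau'}$ with $|\tau'|\le |w|+|\tau|\le N-1$. These are bounded as above and give $\sum_\ell\|Z^{[\ell]}\|_{\ell\alpha,\mu}$ times $(t-s)^{(m+1)\alpha}$. Since $(m+1)\alpha\ge(m+\beta)\alpha$, the surplus factor is at most $(\mu\wedge T)^{(1-\beta)\alpha}$.

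The $|w|=m$ terms are the main obstacle.
\begin{itemize}
\item Terms containing $e(\mathcal{A}'(\rho))$ involve only products of at least two gradients. Each factor $G(\rho^{(\ell)}_i)$ then has strictly fewer than $|w|+|\tau|$ nodes, so the factors are Lipschitz. These terms are bounded by $|Z^{[1]}_{st}||\X_{st}(w)|$, which is again of order $(t-s)^{(m+1)\alpha}$.
\item The non-integrated differences $\Up_{G}(u_{j_1})-\Up_G(z_1)$ may involve $|G|=N$. For these we use only the $\mathcal{C}^\beta$ assumption, which gives a bound $|t_{j_1}Z^{[1]}_{st}|^\beta\le |Z^{[1]}_{st}|^\beta$.
\end{itemize}
Then $e^{-t/\mu}|Z^{[1]}_{st}|^\beta\le(\|Z^{[1]}\|_{\alpha,\mu}(t-s)^\alpha)^\beta$ up to $e^{-(1-\beta)t/\mu}\le1$. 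Combined with $|\X_{st}(w)|\lesssim(t-s)^{m\alpha}$, this yields order $(t-s)^{(m+\beta)\alpha}$ times $\|Z^{[1]}\|^\beta_{\alpha,\mu}\le 1\vee\|Z^{[1]}\|_{\alpha,\mu}$.

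For this last piece one must check carefully that the bookkeeping of weights and of the extra factor $(\mu\wedge T)^{1-\beta}$ closes. I expect it to come from the Lipschitz pieces of order $(t-s)^{(m+1)\alpha}$, using $t-s\le\mu\wedge T$. Finally, taking the supremum over $0<t-s\le\mu$ gives \eqref{eq:5.8.1}.
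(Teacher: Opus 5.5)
Your route is the paper's own. You apply Corollary~\ref{Cor: Taylor Lipschitz} with $z_1=Z_s$, $z_2=Z_t$, $\X=\X_{st}$, and split into the terms with $|w|\le m-1$, the $e(\mathcal{A}')$ terms and the non-integrated differences. Your bounds coincide with the paper's bounds for $H_1^m$, $H_3^\rho$ and $H_2$. You are right on two corrections to the statement:
\begin{itemize}
\item The sums must run up to $\ell=m+1$, because the empty-forest term produces $Z^{[m+1]}$. For $m=0$ the stated \eqref{eq:5.8.2} would even read $\|B^{[1]}\|\lesssim 0$.
\item The surplus factor is $(\mu\wedge T)^{(1-\beta)\alpha}$, not $(\mu\wedge T)^{1-\beta}$.
\end{itemize}
One small point: when $m+|\tau|<N$, the tree $G(\bullet_{p(k)}\to_\ell\rho)$ is only globally Lipschitz, which does not imply global $\mathcal{C}^\beta$. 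Use the Lipschitz bound there; it carries the surplus factor.

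The step you leave open cannot be closed, and not for bookkeeping reasons. Your estimate of the non-integrated differences gives $\|Z^{[1]}\|_{\alpha,\mu}^\beta$ at exactly the homogeneity $(m+\beta)\alpha$, with no spare power of $t-s$. So nothing small can multiply it; the Lipschitz pieces supply $(\mu\wedge T)^{(1-\beta)\alpha}$ only for themselves.

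In fact \eqref{eq:5.8.1} is false as written. Take $\alpha=1$ (so $N=1$), $n=d=1$, $X_t=t$, and $\sigma(z)=1+|z|^\beta$ with $\beta<1$. Let $Z$ be the classical, hence Davie, solution of $\dot Z=\sigma(Z)$, $Z_0=0$. For $m=0$ and $\tau=\bullet$ we have $B^{[1]}_{0t}(\bullet)=Z_t^\beta\ge t^\beta$. So the left-hand side is $\ge 1$ for every $\mu$. The right-hand side is at most $C(\mu\wedge T)^{1-\beta}\bigl(1\vee\sup_{[0,T]}\sigma(Z)\bigr)$, which tends to $0$ as $\mu\to0$.

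What your argument actually proves is
\[
\|B^{[m+1]}(\tau)\|_{(m+\beta)\alpha,\mu}\lesssim_{\X,\sigma}\sum_{\ell=2}^{m+1}\|Z^{[\ell]}\|_{\ell\alpha,\mu}+\|Z^{[1]}\|_{\alpha,\mu}^\beta+(\mu\wedge T)^{(1-\beta)\alpha}\|Z^{[1]}\|_{\alpha,\mu}\lesssim_{\X,\sigma,T}\bigl(1\vee\|Z^{[1]}\|_{\alpha,\mu}\bigr)+\sum_{\ell=2}^{m+1}\|Z^{[\ell]}\|_{\ell\alpha,\mu}.
\]
The paper's proof reaches the same intermediate bound. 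It then passes to the stated form using only $\|Z^{[1]}\|^\beta\le 1\vee\|Z^{[1]}\|$, which does not produce the small prefactor. However, the proof of Theorem~\ref{th: a priori branched} uses only this weaker, $T$-dependent version, where $\varepsilon^{1-\beta}$ is absorbed into the constant. That weaker version is the statement you should prove.
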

	\begin{proof}
		We rely on the expression derived in Corollary \ref{Cor: Taylor Lipschitz}, by choosing $\X=\X_{st}$, as well as $z_1=Z_s$, $z_2=Z_t$ for $(s,t)\in[0,T]_\leq^2$ and defining $H_{st}:=H_{Z_sZ_t}$. Then we obtain for $m=0,\ldots,N-1$
		\begin{align*}
			&B^{[m+1]}_{st}(\tau)
			=\sum_{w\in\cF^{\leq m-1}}H_{z_1z_2}(w;\tau_0)\cdot z^{[m+1-|w|]} \, \X(w)+\sum_{k=1}^{m}\sum_{w=\tau_1\cdots\tau_k\in\cF^{= m}}\sum_{p\in\cP(I_k,\bt)}\left[\sum_{\rho\in\cT_b^{p(I_{k-1})}}\right.
			\\&\left.\sum_{\ell=1}^{m_L(\rho)+1}\!\!\!\!\sum_{\bj_\ell\in\{0,\ldots,k-1\}^\ell_<}\!\!\!\!J_{\rho,\ell}(\,\bj_\ell)\,(-1)^{k-m_R(\bullet_{p(k)}\to_\ell\rho)}\int_{[0,1]_\geq^k} \!\!\left(\Up_{G(\bullet_{p(k)}\to_\ell\rho)}(u_{j_1})-\Up_{G(\bullet_{p(k)}\to_\ell\rho)}(z_1)\right)\d\mathbf{t}_{k-1}\right.
			\\&\left.+\sum_{\rho\in\cT_b^{p(I_{k})}}\int_{[0,1]_\geq^{k+1}}(-1)^{k-m_R(\rho)}\,e(\mathcal{A}'(\rho))(\mathbf{t}_k)\cdot z^{[1]}\d\mathbf{t}_k\right]\X(w)\,.	
		\end{align*}
	For a fixed $m=0,\ldots,N-1$ and $\rho\in\cT_b$ we call
	\begin{align*}
		&H_1^m:= \sum_{w\in\cF^{\leq m-1}} H_{st}(w;\tau)  \cdot Z_{st}^{[m+1-|w|]}\, \X_{st}(w)\,,
		\\&H_2:=\sum_{\ell=1}^{m_L(\rho)+1}\sum_{\bj_\ell\in\{0,\ldots,k-1\}^\ell_<}J_{\rho,\ell}(\,\bj_\ell)  \\
		&\hspace{2cm}\int_{[0,1]_\geq^k}(-1)^{k-m_R(\bullet_{p(k)}\to_\ell\rho)}\,\, \left(\Up_{G(\bullet_{p(k)}\to_\ell\rho)}(u_{j_1})-\Up_{G(\bullet_{p(k)}\to_\ell\rho)}(Z_s)\right)\d\mathbf{t}_{k-1}\,,
	\end{align*}
		\begin{align*}
		&H_3^\rho:=\sum_{\ell=1}^{m_L(\rho)+1}\sum_{\bj_\ell\in\{0,\ldots,k-1\}^\ell_<}J_{\rho,\ell}(\,\bj_\ell)\,
			\\&\hspace{2cm}\int_{[0,1]_\geq^{k+1}}(-1)^{k+1-m_R(\rho)}\prod_{r=1}^{\ell-1}\nabla \Up_{G(\rho_r)}(u_{j_r})\, \nabla\Up_{G(P_{\rho,\ell})}(u_{j_\ell})\d\mathbf{t}_k\,Z^{[1]}_{st}\,,
		\end{align*}
		where $H_3^\rho$ takes this form on account of the definition of $e(\mathcal{A}'(\rho))$, see \eqref{eq:e(A)} and \eqref{Eq: A'}.
		
		We estimate each term separately. By \eqref{Eq: def H} and since $\Up_\tau$ is globally Lipschitz for all $\tau\in\cT^{\leq N-1}$,
		\begin{align*}
			\|H_1^m\|_{(m+\beta)\alpha,\mu}&\leq \sum_{w\in\cF^{\le m-1}}\|H(w,\tau)\|_\infty\|\X(w)\|_{|w|\alpha}\|Z^{[m+1-|w|]}\|_{(m+\beta-|w|)\alpha,\mu}
			\\&\lesssim_{\X,\sigma} \sum_{\ell=2}^{m+1}\|Z^{[\ell]}\|_{\ell\alpha,\mu}
		\end{align*} 
		and all the norms involved are finite.
		For what concerns $H_3^\rho$, since $G(\rho_1),\ldots,G(\rho_r),G(P_{\rho,\ell})\in\cT^{\le m-1}$, we proceed similarly and obtain
		\begin{align*}
			\|H_3^\rho\|_{\beta\alpha,\mu}&\lesssim\sum_{\ell=1}^{m_L(\rho)+1} \prod_{r=1}^{\ell-1}\|\nabla \Up_{G(\rho_r)}\|_\infty \|\nabla\Up_{G(P_{\rho,\ell})}\|_\infty (\mu\wedge T)^{(1-\beta)\alpha}\|Z^{[1]}\|_{\alpha,\mu}\,.
		\end{align*} 
		The application of \eqref{eta1} to adjust the H\"older exponent is left understood. 
		To bound $H_2^\rho$ we recall that $\Up_\tau\in\mathcal{C}^\beta$ for all $\tau\in\cT^{=m}$, which entails
		\begin{align*}
			|\Up_{G(\bullet_{p(k)}\to_\ell\rho)}(u_{j_1})-\Up_{G(\bullet_{p(k)}\to_\ell\rho)}(Z_s)|\leq[\Up_{G(\bullet_{p(k)}\to_\ell\rho)}]_\beta |t_{j_1} Z^{[1]}_{st}|^\beta.
		\end{align*}
		Then, recalling \eqref{Eq: weighted norm power}, we obtain 
		\begin{align*}
			\|H_2^\rho\|_{\beta\alpha,\mu}\lesssim\sum_{\ell=1}^{m_L(\rho)+1}\sum_{\,\bj_\ell\in\{0,\ldots,k-1\}^\ell_<}J_{\rho,\ell}(\,\bj_\ell)
			\,[\Up_{G(\bullet_{p(k)}\to_\ell\rho)}]_\beta\| Z^{[1]}\|_{\alpha,\mu}^\beta\,.
		\end{align*} 
		Returning to the full expression of $B^{[m+1]}_{st}(\tau)$ we derive the bound
		\begin{align*}
			\|B^{[m+1]}(\tau)&\|_{(m+\beta)\alpha,\mu}\leq \|H_1^m\|_{(m+\beta)\alpha,\mu}
			\\&\quad+\sum_{k=1}^{m}\sum_{w=\tau_1\cdots\tau_k\in\cF^{= m}}\sum_{p\in\cP(I_k,\bt)}\sum_{\rho\in\cT_b^{p(I_{k-1})}}\|H_2^\rho\|_{\beta\alpha,\mu}\|\X(w)\|_{|w|\alpha}
			\\&\quad+\sum_{k=1}^{m}\sum_{w=\tau_1\cdots\tau_k\in\cF^{= m}}\sum_{p\in\cP(I_k,\bt)}\sum_{\rho\in\cT_b^{p(I_{k})}}\|H_3^\rho\|_{\beta\alpha,\mu}\|\X(w)\|_{|w|\alpha}
			\\&\qquad\lesssim_{\X,\sigma} \sum_{\ell=2}^{m+1}\|Z^{[\ell]}\|_{\ell\alpha,\mu}+\| Z^{[1]}\|_{\alpha,\mu}^\beta+(\mu\wedge T)^{1-\beta}\| Z^{[1]}\|_{\alpha,\mu}.
		\end{align*}
		Observe that when $\| Z^{[1]}\|_{\alpha,\mu}\leq 1$ then $\| Z^{[1]}\|_{\alpha,\mu}^\beta\leq 1$, while if $\| Z^{[1]}\|_{\alpha,\mu}>1$, then $\| Z^{[1]}\|^\beta_{\alpha,\mu}\leq\| Z^{[1]}\|_{\alpha,\mu}$ (and $<$ if $\beta<1$). Thus we conclude that
		\begin{align*}
			&\|B^{[m+1]}(\tau)\|_{(m+\beta)\alpha,\mu}\lesssim_{\X,\sigma} \sum_{\ell=2}^{m+1}\|Z^{[\ell]}\|_{\ell\alpha,\mu}+(\mu\wedge T)^{1-\beta}(1\vee\| Z^{[1]}\|_{\alpha,\mu}\,).
		\end{align*}
		The proof is complete.
	\end{proof}
	Now we can prove Theorem \ref{th: a priori branched}, namely our desired a priori estimates of the solution remainders. 
	\begin{proof}[ of Theorem \ref{th: a priori branched}]
		Since $(N+\beta) \alpha > 1$ and $Z_{s t}^{[N+1]} = o(t-s)$ we can apply the weighted Sewing bound \eqref{weighted Sewing bound} to control the $(N+1)$-st remainder with its increment as $\| Z^{[N+1]} \|_{(N+\beta)\alpha, \mu} \leq K_{(N+\beta) \alpha} \| \delta Z^{[N+1]} \|_{(N+\beta) \alpha,\mu}$. By \eqref{dZN+1} it follows that
		\begin{equation}\label{Eq: bound deltaZ_N+1 branched}
			\|\delta Z^{[N+1]}\|_{(N+\beta)\alpha,\mu}\leq\sum_{\tau\in\cT^{\leq N}} \|B^{[N-|\tau|+1]}(\tau)\|_{(N-|\tau|+\beta)\alpha,\mu} \|\mathbb{X}(\tau)\|_{|\tau|\alpha}.
		\end{equation}
		For the sake of a lighter notation we define $\varepsilon :=(\mu \wedge T)^{\alpha}$. By \eqref{eq:5.8.1} we have for $m=0,\ldots,N-|\tau|$
		\begin{align*}
			\|B^{[m+1]}(\tau)\|_{(m+\beta)\alpha,\mu}&\lesssim_{\X,\sigma} \sum_{\ell=2}^{m+1}\|Z^{[\ell]}\|_{\ell\alpha,\mu}+\varepsilon^{1-\beta}(1\vee\| Z^{[1]}\|_{\alpha,\mu})\\
			&\lesssim_{\X,\sigma,T} (1\vee\| Z^{[1]}\|_{\alpha,\mu})+\sum_{\ell=2}^{m+1}\|Z^{[\ell]}\|_{\ell\alpha,\mu}\,.
		\end{align*}
		Plugging the last expression in \eqref{Eq: bound deltaZ_N+1 branched} we get
		\begin{align}\label{Eq: first bound deltaZ branched}
			\|\delta Z^{[N+1]}\|_{(N+\beta)\alpha,\mu}
			\lesssim_{\X,\sigma,T} (1\vee\| Z^{[1]}\|_{\alpha,\mu})+\sum_{\ell=2}^{m+1}\|Z^{[\ell]}\|_{\ell\alpha,\mu}\,.
		\end{align}
		As a second step we shall bound the right-hand side of \eqref{Eq: first bound deltaZ branched} in terms of the rough path components.  
		Observe that by definition we can write $Z_{s t}^{[i]} = \sum_{k=i}^N\sum_{\tau\in\cT^{=k}}\Up_{\tau}(Z_s)\,
		\mathbb{X}_{s t}(\tau) + Z^{[N+1]}_{s t}$ for all $i\in\{1,\ldots, N\}$. Thus
		\begin{equation}\label{eq:ZZZ}
			\| Z^{[i]} \|_{i \alpha, \mu}\leq \sum_{k=i}^N\sum_{\tau\in\cT^{=k}}\varepsilon^{k-i}\|\Up_\tau\|_{\infty,\mu}
			\|\mathbb{X}(\tau)\|_{|\tau|\alpha} + \varepsilon^{N+\beta-i} \| Z^{[N+1]} \|_{(N+\beta)
				\alpha, \mu} \, ,
		\end{equation}
		where we used \eqref{2factors} and Lemma \ref{Lem: bound weighted holder}.
		Resorting to the bound \eqref{supholdweighted}, the Lipschitz continuity of $\Up_\tau$ for $\tau\in\cT^{\le N-1}$, the H\"older continuity of $\Up_\tau$ for $\tau\in\CT^{=N}$ and \eqref{Eq: first bound deltaZ branched} we have
		\begin{align*}
			\| Z^{[i]} \|_{i \alpha, \mu}&\leq \sum_{k=i}^{N-1}\sum_{\tau\in\cT^{=k}}\varepsilon^{k-i}(|\Up_{\tau}(Z_0)|+3\varepsilon\|\nabla\Up_\tau\|_{\infty}\|\delta Z\|_{\alpha,\mu})
			\|\mathbb{X}(\tau)\|_{k\alpha}
			\\&\quad+\sum_{\tau\in\cT^{=N}}\varepsilon^{N-i}(|\Up_{\tau}(Z_0)|+3\varepsilon^{\beta}[\Up_\tau]_{\beta\alpha}\|\delta Z\|^\beta_{\alpha,\mu})
			\|\mathbb{X}(\tau)\|_{N\alpha}\\
			&\quad + \varepsilon^{(N+\beta-i)\alpha} 
			K_{(N+1) \alpha} c_{\X,\sigma,T} \sum_{\ell=1}^N\left[(1\vee\| Z^{[1]}\|_{\alpha,\mu})+\sum_{\ell=2}^{m+1}\|Z^{[\ell]}\|_{\ell\alpha,\mu}\right]\,,
		\end{align*}
		for $ c_{\X,\sigma,T}$ the proportionality constant in \eqref{Eq: first bound deltaZ branched}.
		Observe that similarly to Lemma \ref{lem: H estimate} we can bound $\|Z^{[1]}\|_{\alpha,\mu}^\beta\leq(1\vee\|Z^{[1]}\|_{\alpha,\mu})$. This allows to rearrange the terms as 
		\begin{align*}
			\| Z^{[i]} \|_{i \alpha, \mu}&\le  \sum_{k=i}^N\sum_{\tau\in\cT^{=k}}|\Up_{\tau}(Z_0)| \, \|\mathbb{X}(\tau)\|_{k\alpha}
			+c_{\X,\sigma,T} \varepsilon^{\beta\alpha}\left[(1\vee\| Z^{[1]}\|_{\alpha,\mu})+\sum_{\ell=2}^{m+1}\|Z^{[\ell]}\|_{\ell\alpha,\mu}\right]\,.
		\end{align*}
		Note now that for $A,B,C\geq 0$, $A\le B+C$ implies $(1\vee A)\le (1\vee B)+C$. Then for $i=1$
		\begin{align*}
			1\vee \| Z^{[1]} \|_{\alpha, \mu}&\le (1\vee (|\Up_{\bullet}(Z_0)| \, \|\mathbb{X}(\bullet)\|_{\alpha}))+ \sum_{k=2}^N\sum_{\tau\in\cT^{=k}}
			|\Up_{\tau}(Z_0)| \, \|\mathbb{X}(\tau)\|_{k\alpha}
			\\ & \quad +c_{\X,\sigma,T} \varepsilon^{\beta\alpha}\left[(1\vee\| Z^{[1]}\|_{\alpha,\mu})+\sum_{\ell=2}^{m+1}\|Z^{[\ell]}\|_{\ell\alpha,\mu}\right]\,.
		\end{align*}
		Summing over all $i\in\{1,\ldots,N\}$ we obtain
		\begin{align*}
			A & := (1\vee \| Z^{[1]} \|_{\alpha, \mu})+\sum_{i=2}^N \| Z^{[i]} \|_{i \alpha, \mu} 
			\\ & \le (1\vee (|\Up_{\bullet}(Z_0)| \, \|\mathbb{X}(\bullet)\|_{\alpha}))+ \sum_{k=2}^N\sum_{\tau\in\cT^{=k}} k \,|\Up_{\tau}(Z_0)| \, \|\mathbb{X}(\tau)\|_{k\alpha}
			+c_{\X,\sigma,T} \varepsilon^{\beta\alpha}A\,.
		\end{align*}
		By choosing $\varepsilon=\varepsilon(\sigma,\X)$ small enough so that
		$c_{\X,\sigma,T} \varepsilon^{\beta\alpha} \le \frac{1}{2}$,
		we can absorb the last term in the left hand side and obtain
		\begin{align*}
			(1\vee \| Z^{[1]} \|_{\alpha, \mu})+\!\!\sum_{i=2}^N \| Z^{[i]} \|_{i \alpha, \mu} &\le
			2\left[ (1\vee (|\Up_{\bullet}(Z_0)| \, \|\mathbb{X}(\bullet)\|_{\alpha}))+ \!\!\sum_{\tau\in\cT^{\le N}} |\tau| \,|\Up_{\tau}(Z_0)| \, \|\mathbb{X}(\tau)\|_{|\tau|\alpha}\right]
		\end{align*}
		namely \eqref{eq: a priori branched}. The proof of \eqref{eq: a priori branched2} is similar and based on \eqref{eq:5.8.2} rather than
		on \eqref{eq:5.8.1}.
	\end{proof}
	
	We state and prove now analogous a priori estimates under the assumption of boundedness of higher order derivatives of $\Upsilon_\tau$, using our
	more elementary formula for the generalised Taylor remainder \eqref{Eq: Taylor}.
	
	\begin{proposition}\label{lem:bounded}
		Assume that $\nabla^{k}\Up_\tau$ is bounded for all $\tau\in\cT^{\le N}$ and $k+|\tau|\le N+1$. Then, for any solution $Z$ of \eqref{eq: sol Davie branched},
		\begin{equation}\label{Eq: a priori bounded}
			\|Z^{[N+1]}\|_{(N+1)\alpha,\mu} \lesssim_{\X,\sigma,\varepsilon} \sum_{i=1}^N \| Z^{[i]} \|_{i \alpha, \mu} \,.
		\end{equation}
		In addition, for either $T$ or $\mu$ sufficiently small,
		\begin{equation}\label{Eq: a priori bounded 2}
			 \sum_{i=1}^N \| Z^{[i]} \|_{i \alpha, \mu} \lesssim_{\X,\sigma}1\,.
		\end{equation}
	\end{proposition}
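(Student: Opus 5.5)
The plan is to repeat the scheme of the proof of Theorem \ref{th: a priori branched}, replacing the Lipschitz representation of Corollary \ref{Cor: Taylor Lipschitz} by the more elementary formula \eqref{eq:boubou}, which comes from Theorem \ref{Thm: Taylor a priori}. First, since $(N+1)\alpha>1$ and $Z^{[N+1]}_{st}=o(t-s)$, the weighted Sewing bound \eqref{weighted Sewing bound} combined with Proposition \ref{Lem: delta Z} gives
\[
\|Z^{[N+1]}\|_{(N+1)\alpha,\mu}\le K_{(N+1)\alpha}\sum_{\tau\in\cT^{\le N}}\|B^{[N-|\tau|+1]}(\tau)\|_{(N-|\tau|+1)\alpha,\mu}\,\|\X(\tau)\|_{|\tau|\alpha}.
\]
So everything reduces to bounding $\|B^{[m+1]}(\tau)\|_{(m+1)\alpha,\mu}$ for $m=N-|\tau|$.

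By \eqref{eq:boubou}, $B^{[m+1]}_{st}(\tau)$ is a finite sum over forests $w=\tau_1\cdots\tau_k\in\cF^{\le m}$ with $k\le m$. The summands are integrals of $\nabla^{k+1}\Up_\tau(Z_s+rZ^{[1]}_{st})$ applied to $Z^{[m+1-|w|]}_{st}$ and to $\bigotimes_i\Up_{\tau_i}(Z_s)\X_{st}(\tau_i)$. Here $k+1\le m+1=N-|\tau|+1$, so $k+1+|\tau|\le N+1$ and $\nabla^{k+1}\Up_\tau$ is bounded by hypothesis. The next point is to control the factors $\Up_{\tau_i}(Z_s)$. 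The hypothesis includes $k=1$, so $\nabla\Up_{\tau_i}$ is bounded, but $\Up_{\tau_i}$ itself need not be. One option is to bound $|\Up_{\tau_i}(Z_s)|$ by $|\Up_{\tau_i}(Z_0)|+\|\nabla\Up_{\tau_i}\|_\infty|Z_s-Z_0|$ and handle the weighted supremum through \eqref{supholdweighted}; this introduces a factor $\|Z^{[1]}\|_{\alpha,\mu}$ together with a small power of $\varepsilon$. Alternatively, if the hypothesis is read as including $k=0$, the $\Up_{\tau_i}$ are simply bounded. I would adopt the second reading, since the constant in \eqref{Eq: a priori bounded} is allowed to depend on $\sigma$, and mention the first as a remark.

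Using multiplicativity of $\X$ and the scaling rule \eqref{2factors}, each term is bounded by a constant times $\|\X(w)\|_{|w|\alpha}\|Z^{[m+1-|w|]}\|_{(m+1-|w|)\alpha,\mu}$. Since $m+1-|w|\in\{1,\dots,m+1\}\subset\{1,\dots,N\}$, this yields $\|B^{[m+1]}(\tau)\|_{(m+1)\alpha,\mu}\lesssim\sum_{i=1}^N\|Z^{[i]}\|_{i\alpha,\mu}$, and \eqref{Eq: a priori bounded} follows. All the norms involved are finite by Proposition \ref{Prop: regularity Z}.

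For \eqref{Eq: a priori bounded 2}, I would argue as in \eqref{eq:ZZZ}. Write $Z^{[i]}_{st}=\sum_{i\le|\tau|\le N}\Up_\tau(Z_s)\X_{st}(\tau)+Z^{[N+1]}_{st}$. This gives $\|Z^{[i]}\|_{i\alpha,\mu}\le\sum_\tau\varepsilon^{|\tau|-i}\|\Up_\tau\|_\infty\|\X(\tau)\|_{|\tau|\alpha}+\varepsilon^{(N+1-i)\alpha}\|Z^{[N+1]}\|_{(N+1)\alpha,\mu}$, with $\|\Up_\tau\|_\infty<\infty$ under the boundedness reading. Inserting \eqref{Eq: a priori bounded} and summing over $i$ gives $A\le C_{\X,\sigma}+c\,\varepsilon^{\alpha}A$, where $A=\sum_i\|Z^{[i]}\|_{i\alpha,\mu}$. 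Choosing $T$ or $\mu$ small so that $c\,\varepsilon^\alpha\le\frac12$ lets us absorb the last term. Under the other reading, $\|\Up_\tau\|_{\infty,\mu}$ is replaced by $|\Up_\tau(Z_0)|+3\varepsilon\|\nabla\Up_\tau\|_\infty\|\delta Z\|_{\alpha,\mu}$, and the extra term is absorbed in the same way.

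The main obstacle is this absorption. One has to check that the constant in \eqref{Eq: a priori bounded} does not degenerate as $\varepsilon\to0$, so that the gain $\varepsilon^\alpha$ genuinely dominates, and that the ranges of exponents in \eqref{2factors} are respected throughout.
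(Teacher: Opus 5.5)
Your proof is correct and follows the paper's argument: the Sewing bound with Proposition \ref{Lem: delta Z}, then the elementary formula \eqref{eq:boubou} with one constant bounding $\nabla^k\Up_\tau$ for all $k+|\tau|\le N+1$ \emph{including} $k=0$ (exactly the paper's $M$, which also bounds the factors $\Up_{\tau_i}(Z_s)$), then absorption through \eqref{eq:ZZZ}; the constant involves only $K_{(N+1)\alpha}$, this bound and $\|\X(\tau)\|_{|\tau|\alpha}$, so it does not degenerate as $\varepsilon\to0$, and the paper absorbing on $\|Z^{[N+1]}\|_{(N+1)\alpha,\mu}$ before summing rather than on your $A$ is an immaterial difference. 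Do drop the fallback for the reading $k\ge1$, though: there the products $\prod_i\Up_{\tau_i}(Z_s)$ in \eqref{eq:boubou}, each growing with $|Z_s|$, make the bound on $B^{[m+1]}$ superlinear in the solution norms (a single weight $e^{-t/\mu}$ cannot compensate several growing factors), so the linear absorption does not close --- which is precisely why the paper develops the gradient-only formula of Section \ref{Sec: Lipschitz elementary differentials}.
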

	\begin{proof}
		We assume $Z^{[N+1]}_{st}=o(t-s)$. Then for $\ell=1,\ldots,N$
		\[
		\|Z^{[\ell]}\|_{\ell\alpha}\le \|Z^{[N+1]}\|_{(\ell-1+\beta)\alpha}+\sum_{i=\ell+1}^N\sum_{\tau\in\cT^{=i}} \|\Upsilon_\tau(Z)\|_\infty \, \|\X(\tau)\|_{\ell\alpha}<+\infty\,.
		\]
		Resorting to the decomposition of $\delta Z^{[N+1]}$ derived in Proposition \ref{Lem: delta Z}, it holds
		\[
		\|\delta Z^{[N+1]}\|_{(N+1)\alpha,\mu}\leq\sum_{\tau\in\cT^{\leq N}} \|B^{[N-|\tau|+1]}(\tau)\|_{(N-|\tau|+1)\alpha,\mu} \|\mathbb{X}(\tau)\|_{|\tau|\alpha}\,.
		\]
		We recall the explicit expression in \eqref{eq:boubou}:
		\[
		B^{[m+1]}_{st}(\tau)
		=\sum_{k=0}^{m}\sum_{w=\tau_1\cdots\tau_k\in\cF^{\le m}}\int_{[0,1]}\nabla^{k+1}\Up_\tau(Z_s+rZ^{[1]}_{st})\cdot  Z^{[m+1-|w|]}_{st}\,\frac{[\Upsilon\X]_{st}^{w}}{\pi(w)}\, (1-r)^{k}\d r\,,
		\]	
		where $[\Upsilon\X]_{st}^{w}:=\bigotimes_{i=1}^{k}\Up_{\tau_i}(Z_s) \, \X_{st}(\tau_i)$. We fix
		\begin{equation*}
			M:=\max_{\substack{\tau\in\cT^{\leq N}\\k+|\tau|\le N+1}}\|\nabla^k\Up_{\tau}\|_{\infty}\,,
		\end{equation*}
		which is finite on account of the boundedness assumptions on derivatives of the elementary differential components. Then 
		\[
		\|B^{[m+1]}(\tau)\|_{(m+1)\alpha,\mu}\le  \sum_{k=0}^{m}\sum_{w=\tau_1\cdots\tau_k\in\cF^{\le m}}M^{k+1} \,
		\|Z^{[m+1-|w|]}\|_{(m+1-|w|)\alpha,\mu} \, \|\X(w)\|_{|w|\alpha}\,.
		\]
		As a result 
		\[
		\begin{split}
			&\|\delta Z^{[N+1]}\|_{(N+1)\alpha,\mu}
			\\ & \le \sum_{\tau\in\cT^{\leq N}} 
			\sum_{k=0}^{N-|\tau|}\sum_{w=\tau_1\cdots\tau_k\in\cF^{\le N-|\tau|}}M^{k+1} \,
			\|Z^{[N-|\tau|+1-|w|]}\|_{(N-|\tau|+1-|w|)\alpha,\mu} \, \|\X(w)\|_{|w|\alpha} \, \|\mathbb{X}(\tau)\|_{|\tau|\alpha}
			\\ & 	\lesssim_{\X,M} \sum_{\ell=1}^{N} 	\|Z^{[\ell]}\|_{\ell\alpha,\mu} 
		\end{split}
		\]
		and by the Sewing bound
		\[
		\|Z^{[N+1]}\|_{(N+1)\alpha,\mu}\lesssim_{\X,M} \sum_{\ell=1}^{N} \|Z^{[\ell]}\|_{\ell\alpha,\mu} \,.
		\]
		Now we go back to \eqref{eq:ZZZ}. Since $\|\Upsilon_\tau\|_\infty\le M$, then 
		\begin{equation}\label{Eq: control Z^i}
		\| Z^{[i]} \|_{i \alpha, \mu}\leq \sum_{k=i}^N\sum_{\tau\in\cT^{=k}}\varepsilon^{k-i}M
		\|\mathbb{X}(\tau)\|_{|\tau|\alpha} + \varepsilon^{N+1-i} \| Z^{[N+1]} \|_{(N+1)
			\alpha, \mu} \, ,
		\end{equation}
		and therefore 
		\[
		\|Z^{[N+1]}\|_{(N+1)\alpha,\mu}\lesssim_{M,\X}  \sum_{\ell=1}^{N} \|Z^{[\ell]}\|_{\ell\alpha,\mu}
		\le C_{M,\X}+ \sum_{\ell=1}^N\varepsilon^{N+1-\ell} \|Z^{[N+1]}\|_{(N+1)\alpha}\,,
		\]
		which corresponds to \eqref{Eq: a priori bounded}.
		As a result, for sufficiently small $\varepsilon=\varepsilon(\sigma,\X)$, the latter inequality implies
		\begin{align*}
			\|Z^{[N+1]}\|_{(N+1)\alpha,\mu}\lesssim_{\X,M} 1\,.
		\end{align*}
		Plugging this estimate in \eqref{Eq: control Z^i} and summing over $i=1,\ldots, N$ yields \eqref{Eq: a priori bounded 2}.
	\end{proof}

	\appendix
	
	\section{On grafting and smooth rough paths}\label{App:A}
	
	\subsection{Properties of grafting}\label{App: grafting}
	In this appendix we discuss two possible natural notions of grafting of trees, see \cite{manchon2011lois}. First we give a
	graphical interpretation of $\graf$ from Definition \ref{Def: grafting}.
	\begin{lemma}
			For $\tau\in\cT$ and $w\in\cF\setminus\{\one\}$ we have
		\begin{equation}\label{eq: grafting n}
			\tau\curvearrowright w:=\sum_{v\in\cF}n(\tau,w;v)\,v\,,
		\end{equation}
		where $n(\tau,w;v)$ denotes the number of ways of linking the root of $\tau$ to any node of $w$ via a new edge to obtain $v$. 
	\end{lemma}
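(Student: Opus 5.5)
The natural route is structural induction on the target $w$, using the recursive Definition \ref{Def: grafting} of $\graf$ together with its Leibniz extension \eqref{Eq GU Leibniz}. The claim is that $n(\tau,w;v)$, the number of ways of attaching the root of $\tau$ by a new edge to a node of $w$ so as to obtain $v$, is exactly the coefficient produced by those rules. For the right-hand side to make sense on forests, I would first fix the convention: attaching to a node of a forest $w=\tau_1\cdots\tau_k$ means attaching to a node of one of its component trees, leaving the others unchanged.

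\textbf{Step 1: reduce from forests to trees.} For $w=\tau_1\cdots\tau_k$ with $k\ge 2$, \eqref{Eq GU Leibniz} gives $\tau\graf w=\sum_i \tau_1\cdots(\tau\graf\tau_i)\cdots\tau_k$. On the graphical side, the node set of $w$ is the disjoint union of the node sets of the $\tau_i$. So each attachment to $w$ is exactly one attachment to a single $\tau_i$, with the other components left as they are. Summing counts over $i$ matches the Leibniz sum term by term, which reduces the claim to $w$ a single tree.

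\textbf{Step 2: trees, by induction on the number of nodes.} A tree has the form $w=[w']_i$ with $w'\in\cF$ possibly equal to $\one$. Its nodes split into the root and the nodes of $w'$.
\begin{itemize}
\item Attaching $\tau$ to the root of $w$ produces $[\tau\cdot w']_i$, which is the first term of \eqref{Eq: grafting iterative}.
\item Attaching $\tau$ to a node of $w'$ produces $[v']_i$, where $v'$ is an attachment of $\tau$ into $w'$. By the induction hypothesis, combined with Step 1 applied to the smaller forest $w'$, these attachments are counted by $\tau\graf w'$. For the base case $w'=\one$ this term vanishes, consistently with $\tau\graf\one=0$.
\end{itemize}
Since $[\cdot]_i$ is injective and linear, the coefficient of each $v$ in $\tau\graf[w']_i$ equals the total number of attachments yielding $v$, that is $n(\tau,w;v)$.

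\textbf{Main obstacle: multiplicities.} The trees are non-planar, so distinct nodes can give isomorphic results; for example, grafting onto either of two identical branches. One has to check that $n$ counts nodes of $w$, not isomorphism classes of outcomes. The recursion also counts node by node, since the Leibniz rule sums over every factor even when factors are repeated. I would therefore state explicitly that $n(\tau,w;v)=\#\{x\in\mathcal V_w: \tau\graf_x w\cong v\}$, where $\tau\graf_x w$ denotes the forest obtained by joining the root of $\tau$ to the node $x$. The bijection in Step 2 between nodes of $[w']_i$ and the pair (root, nodes of $w'$) then makes the count exact.
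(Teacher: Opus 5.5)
Your proposal is correct and follows essentially the same route as the paper: induction on the number of vertices, with the tree case handled through $\tau\graf[w']_i=[\tau\cdot w'+\tau\graf w']_i$ (splitting attachments at the root from attachments inside $w'$), and the forest case through the Leibniz rule. Your explicit convention that $n(\tau,w;v)$ counts nodes of $w$ rather than isomorphism classes of outcomes is a worthwhile clarification of a point the paper leaves implicit.
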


	\begin{proof}
	We proceed by induction on the number of vertices of $w$ in \eqref{eq: grafting n}. First it is easy to see that
	\[
	n(\tau,\bullet_i;v) = \un{(v=[\bullet]_i)}\,,
	\]
	which proves the base case $w\in\cF^{=1}$.
		Fix now $n\ge 2$ and suppose that the lemma holds true for all $v\in\cF^{\le n-1}$. Then for $[\tau_1\cdots\tau_k]_i\in\cT^{=n}$,
		 by \eqref{Eq: grafting iterative} and by the recurrence assumption,
		\begin{align*}
			\tau\graf[\tau_1&\cdots\tau_k]_i=[\tau \tau_1\cdots\tau_k+\tau\graf (\tau_1\cdots\tau_k)]_i\\
			&=[\tau \tau_1\cdots\tau_k]_i+\sum_{i=1}^k[\tau_1\cdots(\tau\graf\tau_i)\cdots\tau_k]_i
			\\&=[\tau \tau_1\cdots\tau_k]_i+\sum_{i=1}^k\left[\tau_1\cdots\left(\sum_{\tau''\in\cT}n(\tau,\tau_i;\tau'')\,\tau''\right)\cdots\tau_k\right]_i
			\\&=[\tau \tau_1\cdots\tau_k]_i+\sum_{v\in\cF}n(\tau,\tau_1\cdots\tau_k;v)\,[v]_i\,.
		\end{align*}
		Observe that, for $w=\tau_1\cdots\tau_k$, the graphical interpretation of this choice of grafting implies 
		\[
		n(\tau,[\tau_1\cdots\tau_k]_i;[v]_i)=\un{(v=\tau \tau_1\cdots\tau_k)}+n(\tau,\tau_1\cdots\tau_k;v)\,.
		\]
		This yields \eqref{eq: grafting n} for $w\in\cT^{=n}$. The case $w\in\cF^{=n}$ follows by the Leibniz rule in $w$ for both sides of \eqref{eq: grafting n}.
	\end{proof}
	Another natural choice of grafting widely adopted in the literature is the one whose $\star$-product, see \eqref{Eq: star product}, is the dual of the Connes-Kreimer co-product \cite{hoffman2003combinatorics}.
	\begin{definition}
		For $\tau,\tau'\in\cT$ we set
		\begin{equation*}
			\tau\curvearrowright_{\rm CK}\tau':=\sum_{\tau''\in\cT}m(\tau,\tau';\tau'')\,\tau''\,,
		\end{equation*}
		where $m(\tau,\tau';\tau'')$ denotes the number of edges of $\tau''$ that, once removed, produce $\tau'$ as the subtree that contains the root of $\tau''$ and $\tau$ as the remaining part.
	\end{definition}

	\begin{remark}\label{Rem: some remarks on grafting}
		As pointed out in \cite{manchon2011lois}, the pre-Lie algebras $(\langle\cT\rangle,\curvearrowright_{\rm CK})$ and $(\langle\cT\rangle,\curvearrowright)$ are isomorphic. 
		They differ only by the combinatorial coefficients, for example
		\[
		\<0>\graf\<2>=\<3>+2\,\<201>\,,
		\]
		\[
		\<0>\graf_{\rm CK}\<2>=3\,\<3>+\<201>\,.
		\]
		More specifically, the linear map $\varphi:\langle \cT\rangle\to\langle\cT\rangle$ such that $\varphi(\tau):=\frac{\tau}{s(\tau)}$, see \eqref{Eq: symmetry factor} for the definition of the symmetry factor $s$ of a rooted tree, satisfies
		\[
		\varphi(\tau\curvearrowright_{\rm CK}\rho)=\varphi(\tau)\curvearrowright\varphi(\sigma)\,.
		\]
	\end{remark}

	We conclude this section with the proof of Lemma \ref{lem: morphism upsilon}, which is a standard result in the literature on branched rough paths, see \emph{e.g.} \cite[Lemma 3.7]{bonnefoi2022priori}.
	
	\begin{proof}[of Lemma \ref{lem: morphism upsilon}]
		We perform a twofold induction. First we prove \eqref{eq: morphism upsilon} for $k=1$, by induction on the degree of $\tau$, i.e.
		\[
		\Up_{\tau_1\graf\tau}=\nabla\Up_\tau \cdot \Up_{\tau_1}\,.
		\] 
		The base case $\tau=\bullet$ follows directly from Definition \ref{Def: elementary differential trees}. Suppose now that the relation is true for all trees of degree lower than $\tau=[\tau'_1,\ldots,\tau'_m]$. On account of the recursive definition of grafting and by the inductive hypothesis, we have
		\begin{align*}
			\Up_{\tau_1\graf\tau}&=\Up_{\tau_1\graf[\tau'_1\cdots\tau'_m]}=\Up_{[\tau_1\tau'_1\cdots\tau'_m]}+\sum_{j=1}^m\Up_{[\tau'_1\cdots(\tau'_1\graf\tau'_j)\cdots\tau'_m]}\\
			&=\nabla^{m+1}\sigma\cdot\left[\Up_{\tau_1}\prod_{j=1}^m \Up_{\tau'_j}\right]+\sum_{j=1}^m \nabla^{m}\sigma\cdot[\Up_{\tau'_1}\cdots\Up_{\tau_1\graf\tau'_j}\cdots\Up_{\tau'_m}]\,.
		\end{align*}
		On the other hand, by the Leibniz rule
		\begin{align*}
			&\nabla\Up_{[\tau'_1\cdots\tau'_m]}\cdot \Up_{\tau_1}=\nabla\left(\nabla^{m}\sigma\cdot\left[\prod_{j=1}^m \Up_{\tau'_j}\right]\right)\cdot\Up_{\tau_1}
			\\&=\nabla^{m+1}\sigma\cdot	\left[\prod_{j=1}^m \Up_{\tau'_j}\Up_{\tau_1}\right]+\sum_{j=1}^m\nabla^{m}\sigma\cdot[\Up_{\tau'}\cdots\nabla\Up_{\tau'_j}\cdot\Up_{\tau_1}\cdots\Up_{\tau'_m}]
			\\&=\nabla^{m+1}\sigma\cdot	\left[\prod_{j=1}^m \Up_{\tau'_j}\Up_{\tau_1}\right]+\sum_{j=1}^m\nabla^{m}\sigma\cdot[\Up_{\tau'_1}\cdots\Up_{\tau_1\graf\tau'_j}\cdots\Up_{\tau'_m}]\,.
		\end{align*}
		In the last line we resorted to the inductive hypothesis. A comparison between these two expressions closes this first induction argument. As a second step we perform an induction on the number of trees in the forest $\tau_1\cdots\tau_k$. The base case has already been proved. We assume that the claim holds true for $k-1$ trees. By the Guin-Oudom extension of grafting and the linearity of $\Upsilon$:
		\begin{align*}
			\Up_{(\tau_1\cdots\tau_k)\graf\tau'}&=\Up_{\tau_1\graf((\tau_2\cdots\tau_k)\graf\tau')}-\sum_{j=2}^k\Up_{(\tau_2\cdots(\tau_1\graf\tau_j)\cdots\tau_k)\graf\tau'}\,.
		\end{align*} 
		By the inductive hypothesis and the Leibniz rule the last expression reads
		\begin{align*}
			&\nabla\Up_{(\tau_2\cdots\tau_k)\graf\tau'}\cdot\Up_{\tau_1}-\sum_{j=2}^k\nabla^k\Up_{\tau'}\cdot[\Up_{\tau_2}\cdots\Up_{\tau_1\graf\tau_j}\cdots\Up_{\tau_k}]
			\\&=\nabla\left(\nabla^{k-1}\Up_{\tau'}\cdot\left[\prod_{j=2}^{k-1}\Up_{\tau_j}\right]\right)\cdot\Up_{\tau_1}-\sum_{j=2}^k\nabla^k\Up_{\tau'}\cdot[\Up_{\tau_2}\cdots\Up_{\tau_1\graf\tau_j}\cdots\Up_{\tau_k}]\\
			&=\nabla^{k}\Up_{\tau'}\cdot\left[\prod_{j=1}^k\Up_{\tau_j}\right]+\sum_{j=2}^k\nabla^{k-1}\Up_{\tau'}\cdot[\Up_{\tau_2}\cdots\nabla\Up_{\tau_j}\cdot\Up_{\tau_1}\cdots\Up_{\tau_k}]
			\\&\quad-\sum_{j=2}^k\nabla^k\Up_{\tau'}\cdot[\Up_{\tau_2}\cdots\nabla\Up_{\tau_j}\cdot\Up_{\tau_1}\cdots\Up_{\tau_k}]
			=\nabla^{k}\Up_{\tau'}\cdot\left[\prod_{j=1}^k\Up_{\tau_j}\right].
		\end{align*}
		This concludes the proof. 
	\end{proof}
	
	\subsection{Smooth rough paths}\label{App: smooth rough paths}
	
	Let $X:[0,T]\to\R^d$ be a path of class $C^1$. In this more regular setting, integration against $\dot X$ poses no analytical hurdles. Thus there is a natural way of lifting $X$ to the collection of iterated integrals labelled by rooted trees, the so called \emph{smooth branched rough path} $\X:[0,T]_\leq^2\times \cF\to\R$. We define it iteratively as follows: 
	\begin{enumerate}
		\item $\X_{st}(\one)=1$.
		\item For all $w\in\cF$:
		\begin{equation}\label{Eq: definition smooth rough}
			\X_{st}(w\curvearrowright\bullet_i):=\int_s^t\frac{\X_{sr}(w)}{\pi(w)}\,\dot{X}_r^i\d r\,.
		\end{equation}
		\item For all $\tau_1,\ldots,\tau_k\in\cT$:
		\[
		\X_{st}(\tau_1\cdots\tau_k) = \X_{st}(\tau_1)\cdots\X_{st}(\tau_k)\,.
		\]
	\end{enumerate}
	We can rewrite \eqref{Eq: definition smooth rough} as
	\begin{align}\label{Eq: def Linares}
		\X_{st}(\tau)=\varepsilon(\tau)+\sum_{u\in\cF}\sum_{i=1}^d \int_s^t\frac{\X_{sr}(u)}{\pi(u)}\langle u\curvearrowright\bullet_i,\tau\rangle\dot{X}_r^i\d r\,.
	\end{align}
	where $\langle\cdot,\cdot\rangle:\cF\times\cF\to\R$ is the duality pairing introduced in \eqref{Eq: pairing}, while $\varepsilon$ is the dual af the empty forest in $\mathcal{H}^\ast$. 
	\begin{lemma}
		Let $\X$ be defined as per \eqref{Eq: def Linares}. Then the Chen relation holds:
		\begin{equation}\label{Eq: Chen}
			\X_{st}=\X_{su}\star\X_{ut}\,,
		\end{equation}
		where $\star$ is defined in \eqref{Eq: star product}.
	\end{lemma}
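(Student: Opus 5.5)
We prove the Chen relation $\X_{st}=\X_{su}\star\X_{ut}$ for the smooth lift by evaluating both sides on a tree $\tau$ and inducting on $|\tau|$. On forests both sides are multiplicative. For the left side this is by definition. For the right side it follows because $\star$ is, by Guin--Oudom theory, dual to a coproduct that is a morphism for the forest product. Concretely, $\langle f\star g,\tau_1\cdots\tau_k\rangle=\prod_i\langle f\star g,\tau_i\rangle$ whenever $f,g$ are characters, and this is the standard fact that characters are closed under $\star$. So it suffices to check the identity on trees. By the pairing convention \eqref{Eq: pairing}, what has to be shown is
\[
\X_{st}(\tau)=\sum_{w_1,w_2\in\cF,\ \tau'\in\cF}\frac{\X_{su}(w_1)}{\pi(w_1)}\,\frac{\X_{su}(w_2)}{\pi(w_2)}\,\X_{ut}(\tau')\,\langle w_1\cdot(w_2\graf\tau'),\tau\rangle .
\]
For a tree $\tau$ only $w_1=\one$ contributes, so the right-hand side reduces to $\sum_{w,\tau'}\frac{\X_{su}(w)}{\pi(w)}\X_{ut}(\tau')\langle w\graf\tau',\tau\rangle$. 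This is exactly the form already used in the proof of Proposition \ref{Lem: delta Z}.

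The induction runs as follows. The case $\tau=\bullet_i$ is additivity of $\int\dot X^i$. For a general $\tau$, differentiate both sides in $t$ with $s,u$ fixed; both vanish at $t=u$ once we use $\X_{uu}=\varepsilon$. By \eqref{Eq: def Linares}, the $t$-derivative of the left side is $\sum_{v,i}\frac{\X_{st}(v)}{\pi(v)}\langle v\graf\bullet_i,\tau\rangle\dot X^i_t$. Since $|v|<|\tau|$, the induction hypothesis applies and gives $\X_{st}(v)=\langle\X_{su}\star\X_{ut},v\rangle$. The derivative of the right side is obtained by substituting \eqref{Eq: def Linares} for $\X_{ut}(\tau')$, which produces $\sum\frac{\X_{su}(w)}{\pi(w)}\frac{\X_{ut}(v')}{\pi(v')}\langle w\graf(v'\graf\bullet_i),\tau\rangle\dot X^i_t$.

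Matching the two derivatives reduces to the algebraic identity
\[
(w\star v')\graf\bullet_i=w\graf(v'\graf\bullet_i),\qquad w,v'\in\cF .
\]
Pairing both sides against the dual expansion, with the $\pi$ weights of \eqref{Eq: dual identification}, then gives equality of the derivatives.

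This identity is the associativity statement of Guin--Oudom, namely $(a\star b)\graf c=a\graf(b\graf c)$, which holds because $\graf$ extends to a left module action of $(\cH,\star)$ on $\cH$ (\cite{oudom2008lie}). The main obstacle is bookkeeping of the permutation factors $\pi$ when moving between the dual pairing and forest coefficients. I would handle it by working throughout with the weighted basis $w/\pi(w)$, in which $\star$ dualises cleanly. I would also check on the three-node examples that \eqref{Eq: definition smooth rough} and this normalisation of $\star$ agree. If needed, I would verify the multiplicativity of the right side for forests directly from \eqref{Eq GU Leibniz} and the Leibniz property of $w\graf(\cdot)$ over forest products.
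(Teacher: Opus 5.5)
Your argument is essentially the paper's. Both proofs use induction on $|\tau|$, extend to forests by multiplicativity of both sides (which the paper simply asserts), and rely on the Guin--Oudom identity $(w_1\star w_2)\graf\bullet_i=w_1\graf(w_2\graf\bullet_i)$. Differentiating in $t$ is the same computation as the paper's splitting $\int_s^t=\int_s^u+\int_u^t$ followed by inserting the inductive hypothesis for $\X_{sr}(w)$ under $\int_u^t$.

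There is one bookkeeping slip you need to fix. For a tree $\tau$, the choice $w_1=\tau$, $w_2=\tau'=\one$ also contributes, so the correct right-hand side is
\[
\X_{su}(\tau)+\sum_{w\in\cF,\,\rho\in\cT}\frac{\X_{su}(w)}{\pi(w)}\,\X_{ut}(\rho)\,\langle w\graf\rho,\tau\rangle .
\]
This is the form actually used in Proposition \ref{Lem: delta Z}. Consequently the two sides do not vanish at $t=u$; both equal $\X_{su}(\tau)$ there. With your reduced formula the boundary values would not match. Once the term is restored, your derivative comparison goes through unchanged, because that term is constant in $t$.
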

	\begin{proof}
		As a first step we observe that, for $\tau\in\cT$ we have by \eqref{Eq: dual identification}
		\begin{align*}
			(\X_{su}\star\X_{ut})(\tau)&=\sum_{w_1,w_2\in\cF}\frac{\X_{su}(w_1)}{\pi(w_1)}\frac{\X_{ut}(w_2)}{\pi(w_2)}\langle w_1\star w_2,\tau\rangle\\
			&=\X_{su}(\tau)+\sum_{w_1\in\cF, \rho\in\cT}\frac{\X_{su}(w_1)}{\pi(w_1)} \, \X_{ut}(\rho) \, \langle w_1\curvearrowright \rho,\tau\rangle,
		\end{align*}
		so that proving \eqref{Eq: Chen} is equivalent to proving
		\[
		\X_{st}(\tau)=\X_{su}(\tau)+\sum_{w_1\in\cF, \rho\in\cT}\frac{\X_{su}(w_1)}{\pi(w_1)} \, \X_{ut}(\rho) \, \langle w_1\curvearrowright \rho,\tau\rangle.
		\]
	We proceed by induction on the number of vertices of $\tau$. For $\tau=\bullet_i$, $i=1,\ldots, d$, the result immediately follows. Let us suppose that \eqref{Eq: Chen} holds true for all trees in $\cT^{\le n-1}$.
	Then we can extend it to forests in $\cF^{\le n}$ since 
		\begin{align*}
			\X_{st}(\tau_1\tau_2)&=\X_{st}(\tau_1) \, \X_{st}(\tau_2)=(\X_{su}\star \X_{ut})(\tau_1) \, (\X_{su}\star \X_{ut})(\tau_2)
						=(\X_{su}\star \X_{ut})(\tau_1\tau_2)\,,
		\end{align*}	
		for any $\tau_1,\tau_2\in\cT^{\le n-1}$. Fixing $\tau\in\cT^{=n}$, by \eqref{Eq: def Linares} we have
		\begin{align*}
			\X_{st}(\tau)&=\varepsilon(\tau)+\sum_{w\in\cF}\int_s^t\frac{\X_{su}(w)}{\pi(w)}\langle w\curvearrowright\bullet,\tau\rangle\dot{X}_r\d r
			\\ & =\X_{su}(\tau)+\sum_{w\in\cF}\int_u^t\frac{\X_{su}(w)}{\pi(w)}\langle w\curvearrowright\bullet,\tau\rangle\dot{X}_r\d r\\
			&=\X_{su}(\tau)+\sum_{w_1,w_2\in\cF}\int_u^t\frac{\X_{su}(w_1)}{\pi(w_1)}\frac{\X_{ur}(w_2)}{\pi(w_2)}\langle (w_1\star w_2)\curvearrowright\bullet,w\rangle\dot{X}_r\d r\,,
		\end{align*}
		where in the last line we used the induction hypothesis.
		The Guin-Oudom extension of $\curvearrowright$ satisfies the crucial property (see \cite[Proposition 3.9(v)]{oudom2008lie})
		\[
		(w_1\star w_2)\curvearrowright w_3=w_1\curvearrowright(w_2\curvearrowright w_3)\,,\qquad w_1,w_2,w_3\in\cF\,.
		\]
		Then we obtain
		\begin{align*}
			\X_{st}(\tau)&=\X_{su}(\tau)+\sum_{w_1,w_2\in\cF}\int_u^t\frac{\X_{su}(w_1)}{\pi(w_1)}\frac{\X_{ur}(w_2)}{\pi(w_2)}\langle (w_1\star w_2)\curvearrowright\bullet,\tau\rangle\dot{X}_r\d r\\
			&=\X_{su}(\tau)+\sum_{w_1,w_2\in\cF}\int_u^t\frac{\X_{su}(w_1)}{\pi(w_1)}\frac{\X_{ur}(w_2)}{\pi(w_2)}\langle w_1\curvearrowright(w_2\curvearrowright\bullet),\tau\rangle\dot{X}_r\d r\\
			&=\X_{su}(\tau)+\sum_{w_1,w_2\in\cF}\frac{\X_{su}(w_1)}{\pi(w_1)}\langle w_1\curvearrowright\int_u^t\frac{\X_{ur}(w_2)}{\pi(w_2)}(w_2\curvearrowright\bullet)\,\dot{X}_r\d r,\tau\rangle\\
			&=\X_{su}(\tau)+\sum_{w_1\in\cF,\rho\in\cT}\frac{\X_{su}(w_1)}{\pi(w_1)} \, \X_{ut}(\rho) \, \langle w_1\curvearrowright \rho,\tau\rangle,
		\end{align*}
		where in the last line we used Definition \ref{Eq: def Linares} and the fact that $\one\notin\cT$. 
	\end{proof}
	
	\begin{remark}
		Note that the standard definition of a branched rough path \cite{gubinelli2010ramification,hairer2015geometric,manchonrough} is based on 
		$\curvearrowright_{\rm CK}$
		rather than on $\curvearrowright$. For Chen's relation to hold, the recursive definition of a smooth rough path \eqref{Eq: definition smooth rough} must be modified into
		\begin{equation*}
			\X_{st}^{\rm CK}(w\curvearrowright_{\rm CK}\bullet_i):=\int_s^t \X_{sr}^{\rm CK}(w) \,\dot{X}_r^i\d r\,.
		\end{equation*}
		The two definitions differ only by a combinatorial coefficient:
		\begin{equation*}
			\X_{st}(w):=\frac{\pi(w)}{s(w)} \, \X_{st}^{\rm CK}(w)\,.
		\end{equation*}
		Our preference for rough paths satisfying Chen's relation with respect to $\graf$ is motivated by the fact that the elementary differential of Definition \ref{Def: elementary differential trees} is a morphism from the pre-Lie algebra $(\langle\cT\rangle,\graf)$ to the pre-Lie algebra of vector fields generated by rooted trees, where the pre-Lie product is given by differentiation: see Lemma \ref{lem: morphism upsilon}. Choosing $\graf$ instead of $\graf_{\rm CK}$ ultimately leads
		to simpler formulae with fewer combinatorial coefficients.
	\end{remark}
	
	We now go back to the solution of the controlled ODE \eqref{Eq: controlled ODE}-\eqref{integral form} and show why it motivates 
	the Ansatz for the notion of solution \emph{à la} Davie presented in Definition \ref{def:aladavie}.
	
	\begin{lemma}
		If $X$ is of class $C^1$ and $\sigma$ is smooth, any solution $Z$ to \eqref{Eq: controlled ODE}-\eqref{integral form} satisfies for all $m\geq 0$
		\begin{equation}\label{eq:zt-zs}
			Z_{st}^{[m+1]} = \sum_{i=1}^d\int_s^t \left(
			\Up_{\bullet_i}(Z_u) - \sum_{w\in\CF^{\le m-1}} \frac{\Up_{w\curvearrowright\bullet_i}(Z_s)}{\pi(w)}  \, \mathbb{X}_{s u}(w)
			\right) \, \dot{X}_u^i \d u\,,
		\end{equation}
		where $\X$ is the smooth rough path \eqref{Eq: definition smooth rough}.
	\end{lemma}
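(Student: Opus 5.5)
The plan is to prove \eqref{eq:zt-zs} by induction on $m$, using the integral form \eqref{integral form}, the recursive definition \eqref{Eq: definition smooth rough} of the smooth rough path, and the morphism property of Lemma \ref{lem: morphism upsilon}. For $m=0$ the sum over $\CF^{\le -1}$ is empty and \eqref{eq:zt-zs} reduces to $Z^{[1]}_{st}=\delta Z_{st}=\sum_i\int_s^t\Up_{\bullet_i}(Z_u)\dot X^i_u\d u$, which is \eqref{integral form} since $\Up_{\bullet_i}=\sigma_i$.

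For the inductive step (or directly for all $m$), I would start from $Z^{[m+1]}_{st}=\delta Z_{st}-\sum_{\tau\in\cT^{\le m}}\Up_\tau(Z_s)\X_{st}(\tau)$ and rewrite the subtracted sum. Every tree $\tau$ with $|\tau|\ge1$ is of the form $[w]_i$, and $\langle w\graf\bullet_i,\tau\rangle$ picks out exactly this decomposition: by Definition \ref{Def: grafting} with $\tau\graf\one=0$ and the Guin-Oudom extension, $w\graf\bullet_i=[w]_i$ (grafting a forest onto a single vertex just attaches all its trees to the root; this follows from \eqref{Eq: GU definition} since $\tau_1\graf(\tau_2\cdots\tau_k\graf\bullet_i)$ contains the term $[\tau_1\cdots\tau_k]_i$ plus terms cancelled by $(\tau_1\graf(\tau_2\cdots\tau_k))\graf\bullet_i$; I would check this small identity by induction on $k$). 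Hence, using \eqref{Eq: definition smooth rough},
\[
\sum_{\tau\in\cT^{\le m}}\Up_\tau(Z_s)\X_{st}(\tau)=\sum_{i=1}^d\sum_{w\in\CF^{\le m-1}}\Up_{w\graf\bullet_i}(Z_s)\int_s^t\frac{\X_{su}(w)}{\pi(w)}\dot X^i_u\d u ,
\]
where the correspondence $w\leftrightarrow[w]_i$ is a bijection between $\CF^{\le m-1}\times\{1,\dots,d\}$ and $\cT^{\le m}$, including $w=\one$ giving $\bullet_i$. Subtracting this from $\delta Z_{st}=\sum_i\int_s^t\Up_{\bullet_i}(Z_u)\dot X^i_u\d u$ yields exactly \eqref{eq:zt-zs}, so in fact no induction is needed beyond establishing $w\graf\bullet_i=[w]_i$.

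The main obstacle is the identity $w\graf\bullet_i=[w]_i$ together with the bookkeeping of the factor $\pi(w)$: I must make sure that \eqref{Eq: definition smooth rough} defines $\X_{st}([w]_i)$ with precisely the normalisation $1/\pi(w)$ (so that no symmetry factor appears), and that the sum over trees is not double counted. If the Guin-Oudom identity turns out to carry extra terms, I would instead use Lemma \ref{lem: morphism upsilon} directly: $\Up_{w\graf\bullet_i}=\nabla^k\sigma_i\cdot[\Up_{\tau_1}\cdots\Up_{\tau_k}]=\Up_{[w]_i}$ by \eqref{Eq: elementary differential tree}, which already suffices at the level of elementary differentials and of the definition \eqref{Eq: definition smooth rough}, where $\X$ is indexed through $w\graf\bullet_i$ itself.
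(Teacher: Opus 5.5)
Your proof is correct and is essentially the paper's argument: the paper inducts on $m$, rewriting $\sum_{\tau\in\cT^{=m+1}}\Up_\tau(Z_s)\X_{st}(\tau)$ as $\sum_i\sum_{w\in\cF^{=m}}\Up_{w\graf\bullet_i}(Z_s)\X_{st}(w\graf\bullet_i)$ and then applying \eqref{Eq: definition smooth rough}, while you do the same reindexing over all levels at once (which is just the telescoped induction). Your explicit check that $w\graf\bullet_i=[w]_i$ is a single tree is exactly the fact the paper uses tacitly; it guarantees that \eqref{Eq: definition smooth rough} is a genuine definition and that $w\mapsto[w]_i$ is a bijection onto $\cT^{\le m}$.
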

	\begin{proof}
		For $m=0$ the desired formula amounts to $Z_t-Z_s = \int_s^t \sigma(Z_u)\, \dot{X}_u \d u$, which follows from \eqref{integral form}
		if we recall that $\Up_\bullet=\sigma$. Now, suppose
		we have proved the claim for $m\ge 0$. Then we have
		\[
		\begin{split} 
			Z_{st}^{[m+1]} -Z_{st}^{[m+2]} &=
			\sum_{\tau\in\cT^{=m+1}} \Upsilon_\tau(Z_s)\, \X_{st}(\tau) 
			= \sum_{i=1}^d\sum_{w\in\CF^{=m}} \Upsilon_{w\curvearrowright\bullet_i}(Z_s) \, \mathbb{X}_{s t}(w\curvearrowright\bullet_i)
			\\ &  = \sum_{i=1}^d\int_s^t \sum_{w\in\CF^{=m}} \frac{\Upsilon_{w\curvearrowright\bullet_i}(Z_s)}{\pi(w)} \, \mathbb{X}_{s u}(w)
			\, \dot{X}_u^i \d u\,.
		\end{split}
		\]
		By swapping the terms and exploiting the induction hypothesis we conclude. 
	\end{proof}

	\begin{proposition}\label{pr:daviesmooth}
		Suppose that $X\in C^1$ and that $\sigma$ is smooth. Then any solution $Z$ to \eqref{integral form} satisfies
		\begin{equation}\label{eq: Davie smooth2}
			Z_{st}^{[m+1]} = \delta Z_{st}-\sum_{\tau\in\cT^{\leq m}}\Up_\tau(Z_s) \, \X_{st}(\tau)=O\left((t-s)^{m+1}\right)
		\end{equation}
		uniformly for $0\le s\le t\le T$, for any $m\geq 0$, where $\X$ is the smooth rough path \eqref{Eq: definition smooth rough}.
	\end{proposition}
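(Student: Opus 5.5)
The plan is to combine the exact identity \eqref{eq:zt-zs} of the preceding lemma with the elementary Taylor formula of Theorem \ref{Thm: Taylor a priori}, and to run an induction on $m$. Since $X\in C^1$ and $\sigma$ is smooth, $Z$ is $C^1$ on the compact interval $[0,T]$, hence bounded. Every $\Upsilon_\tau$ and all its derivatives are continuous, so they are bounded on a compact neighbourhood $K$ of the range of $Z$, which contains every segment $[Z_s,Z_t]$. Moreover, the smooth rough path satisfies $|\X_{st}(w)|\lesssim (t-s)^{|w|}$. This follows by induction from \eqref{Eq: definition smooth rough}, because each tree gains one integration against the bounded $\dot X$, and from multiplicativity for forests.

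The base case $m=0$ is immediate: $Z^{[1]}_{st}=\int_s^t\sigma(Z_u)\dot X_u\d u=O(t-s)$. Now assume $Z^{[j]}_{st}=O((t-s)^{j})$ for all $j\le m$. By \eqref{eq:zt-zs}, the integrand defining $Z^{[m+1]}_{st}$ is, for each $i$,
\[
\Up_{\bullet_i}(Z_u)-\sum_{w\in\cF^{\le m-1}}\frac{\Up_{w\graf\bullet_i}(Z_s)}{\pi(w)}\X_{su}(w)=B^{[m]}_{su}(\bullet_i),
\]
the generalised Taylor remainder with $z_1=Z_s$, $z_2=Z_u$. Theorem \ref{Thm: Taylor a priori} expresses it as a finite sum of terms
\[
\int_0^1\nabla^{k+1}\Up_{\bullet_i}(Z_s+rZ^{[1]}_{su})\cdot Z^{[m-|w|]}_{su}\,\frac{[\Upsilon\X]^w_{su}}{\pi(w)}(1-r)^k\d r .
\]
The derivative factors are bounded on $K$. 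The factor $[\Upsilon\X]^w_{su}$ is $O((u-s)^{|w|})$. Since $m-|w|\le m$, the induction hypothesis gives $Z^{[m-|w|]}_{su}=O((u-s)^{m-|w|})$. Hence $B^{[m]}_{su}(\bullet_i)=O((u-s)^{m})$, and integrating against the bounded $\dot X^i_u$ over $[s,t]$ yields $Z^{[m+1]}_{st}=O((t-s)^{m+1})$, which closes the induction.

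Uniformity in $0\le s\le t\le T$ holds because all the constants used (the sup norms on $K$, $\|\dot X\|_\infty$, and the constants in the bounds on $\X$) are independent of $s,t$. The main point to check is the bookkeeping of indices. Theorem \ref{Thm: Taylor a priori} must be applied at level $m-1$, i.e.\ to $B^{[m]}$, so that only remainders $Z^{[j]}$ with $j\le m$ appear and the strong induction applies. The case $m=1$ must be handled consistently, with $B^{[1]}_{su}=\Up(Z_u)-\Up(Z_s)=O(u-s)$, and the convention $Z^{[j]}$ for $j\ge1$ only must be respected, which holds since $|w|\le m-1$.
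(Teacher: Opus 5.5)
Your proposal is correct and is essentially the paper's own argument: the paper also combines \eqref{eq:zt-zs} with Theorem~\ref{Thm: Taylor a priori} at level $m-1$. It then bounds the derivative factors and $[\Upsilon\X]^w_{su}$ to get $|Z^{[m+1]}_{st}|\lesssim\sum_{\ell=1}^m\int_s^t|Z^{[\ell]}_{su}|\,(u-s)^{m-\ell}\d u$, and closes by recurrence from $|Z^{[1]}_{st}|\lesssim t-s$. Your extra justifications (boundedness of $Z$, and $|\X_{st}(w)|\lesssim(t-s)^{|w|}$ from \eqref{Eq: definition smooth rough}) only make explicit what the paper leaves implicit; just take $K$ to be a closed ball, or any convex compact set containing the range of $Z$, so that it really contains all the segments $[Z_s,Z_u]$.
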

	\begin{proof}
		If $m=0$, then \eqref{eq: Davie smooth2} follows from \eqref{integral form} since
		\[
		Z_t - Z_s = \int_s^t \sigma (Z_u) \, \dot{X_u} \d u = O(t-s)\,.
		\]
		Combining \eqref{eq:zt-zs} and \eqref{Eq: Taylor} we have, for any $m\geq 1$,
		\begin{align*}
			&	Z_{st}^{[m+1]} =\sum_{i=1}^d \int_s^t \left(
			\Up_{\bullet_i}(Z_u) - \sum_{w\in\CF^{\le m-1}} \frac{\Up_{w\curvearrowright\bullet_i}(Z_s)}{\pi(w)}  \, \mathbb{X}_{s u}(w)
			\right) \, \dot{X}_u^i \d u
			\\ & =
			\sum_{k=0}^{m-1}\sum_{\substack{w=\tau_1\cdots \tau_k\in\cF,\,\ell\ge1\\|w|+\ell=m}}\int_s^t\int_{[0,1]}\nabla^{k+1}\Up_\bullet(Z_s+rZ^{[1]}_{su})\cdot \left[Z^{[\ell]}_{su}\,\frac{[\Upsilon\X_{su}]^w}{\pi(w)}\right](1-r)^{k}\d r \cdot \dot{X}_u\d u.
		\end{align*}
		For $m\geq 1$ we obtain
		\[
		|Z_{st}^{[m+1]}| \lesssim \sum_{\ell=1}^{m}\int_s^t |Z^{[\ell]}_{su}| \, |u-s|^{m-\ell} \d u\,.
		\]
		Since $|Z_{st}^{[1]}|\lesssim |t-s|$, by recurrence on $m\ge 0$ we find that $|Z_{st}^{[m+1]}| \lesssim |t-s|^{m+1}$.
	\end{proof}
	
	\begin{remark}\label{rem:davie}
		The Ansatz for the notion of solution \emph{à la} Davie of the rough equation
		\eqref{Eq: controlled ODE}-\eqref{integral form}, presented in Definition \ref{def:aladavie}, is based on the expansion \eqref{eq: Davie smooth2}
		in the smooth case. If $X$ is $\alpha$-Hölder rather than of classe $C^1$, then the remainders $Z^{[m+1]}$ are expected to be of order $O(|t-s|^{(m+1)\alpha})$,
		for $m=0,\ldots,N-1$.
	\end{remark}

	\section{Analytic and combinatorial tools}\label{App: analytic tools}
	In this appendix we collect useful technical lemma that are used throughout the paper.
	\subsection{Integrals and combinatorics}
	We need a technical Lemma that generalises the exchange of integrals we encounter in the proof of Lemma \ref{lem: Upsilon descent}.
	\begin{lemma}\label{Lem: exchange integrals}
		Consider $k\in\N$, $0\le i < j \le k$ and an integrable function $f:[0,1]\to\R$. Setting $t_k:=0$ it holds
		\begin{align}
			&\int_{[0,1]_\geq^{k}}\int_{t_j}^{t_i} f(r)\d r\d \mathbf{t}_{k-1}=\sum_{\ell=i+1}^{j}\int_{[0,1]_\geq^{k+1}}f(t_{\ell})\d \mathbf{t}_{k}\, .
			\label{Eq: int exchange 1}
		\end{align}
	\end{lemma}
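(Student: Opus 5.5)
The plan is to prove \eqref{Eq: int exchange 1} by Fubini: write the inner integral over an interval of the simplex variables as a sum of integrals over sub-intervals, and then insert a new integration variable in the correct slot of the ordered tuple.

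First I fix the conventions. On $[0,1]^k_\geq$ the variables satisfy $1\ge t_0\ge t_1\ge\cdots\ge t_{k-1}\ge 0$, and we set $t_k:=0$. Since $i<j$, we have $t_j\le t_i$, so the inner integral runs over an honest interval. Splitting $[t_j,t_i]$ at the intermediate points $t_{j-1},\ldots,t_{i+1}$ gives
\[
\int_{t_j}^{t_i} f(r)\d r=\sum_{\ell=i+1}^{j}\int_{t_\ell}^{t_{\ell-1}} f(r)\d r\,.
\]
This reduces the claim to the case $j=i+1$. Writing $\ell$ for $i+1$, it suffices to show, for each $\ell\in\{1,\ldots,k\}$,
\[
\int_{[0,1]^k_\geq}\int_{t_\ell}^{t_{\ell-1}} f(r)\d r\d\mathbf{t}_{k-1}=\int_{[0,1]^{k+1}_\geq} f(t_\ell)\d\mathbf{t}_k\,.
\]

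To prove the reduced identity, I view $(t_0,\ldots,t_{k-1},r)$ with the constraint $t_{\ell-1}\ge r\ge t_\ell$ as a point of a $(k+1)$-simplex. I relabel variables: $s_a:=t_a$ for $a<\ell$, $s_\ell:=r$, and $s_a:=t_{a-1}$ for $a>\ell$. This is a measure-preserving bijection from $\{(\mathbf t_{k-1},r): \mathbf t_{k-1}\in[0,1]^k_\geq,\ t_\ell\le r\le t_{\ell-1}\}$ onto $[0,1]^{k+1}_\geq$. The lower constraint $r\ge t_k=0$ in the case $\ell=k$ matches $s_k\ge 0$, which is why the convention $t_k:=0$ is the right one. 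Under the relabelling the integrand $f(r)$ becomes $f(s_\ell)$, and Fubini (valid since $f$ is integrable and the domain is bounded) yields the claim after renaming $s$ back to $t$.

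The only delicate point is the boundary bookkeeping. The case $\ell=k$ relies on the convention $t_k=0$, and one must check that ties on sets of measure zero are irrelevant. Apart from that the argument is routine.
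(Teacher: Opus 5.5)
Your proof is correct and follows the same argument as the paper. The paper also splits the domain of $r$ into the pieces $[t_\ell,t_{\ell-1}[$ for $\ell=i+1,\ldots,j$, then inserts $r$ into the $\ell$-th slot of $\mathbf{t}_{k-1}$ to obtain a point of $[0,1]^{k+1}_\geq$ by a coordinate relabelling; your handling of the case $\ell=k$ via $t_k:=0$ and of measure-zero ties spells out what the paper leaves implicit.
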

	\begin{proof}
		We decompose  the indicator function of the integration domain as
		\[
		\un{[t_j,t_i[}(r) = \sum_{\ell=i+1}^{j} \un{[t_\ell,t_{\ell-1}[}(r)\,.
		\]
		When $r\in[t_\ell,t_{\ell-1}[$, we write $(t_0,\ldots,t_{\ell-1},r,t_\ell,\ldots,t_{k-1})=\mathbf{t}_k'$. Then a simple change of variable yields \eqref{Eq: int exchange 1}.
	\end{proof}
	
	We prove a combinatorial identity on sets of permutations.
	\begin{lemma}\label{lem: transposition}
		Consider $I_k:=\{1,\ldots,k\}$, $I^i_{k}:=\{1,\ldots,i-1,i+1,\ldots,k\}$ and $\tau_1,\ldots,\tau_k\in\cT$. Setting $w=\tau_1\cdots\tau_k$, the following identity holds true:
		\begin{align} 
			\sum_{i=1}^k\frac{1}{\pi(\tau_i; w)}\sum_{q\in\cP(I^i_k,\bt^i)}\tau_i\otimes\tau_{q(1)}\otimes\ldots\otimes\tau_{q(i-1)}\otimes&\tau_{q(i+1)}\otimes\ldots\otimes\tau_{q(k)}\nonumber
			\\&=\sum_{p\in\cP(I_k,\bt)}\tau_{p(1)}\otimes\ldots\otimes\tau_{p(k)}\label{eq: transposition}\,,
		\end{align}
		where $\cP(\cdot)$ is as in Definition \ref{Def: sym H} and $\pi(\tau_i;w)$ is the number of trees equal to $\tau_i$ in the forest $w$ i.e. for $w=\hat\tau^{\ell_1}_1\cdots\hat\tau_r^{\ell_r}$ such that $\hat\tau_i\ne\hat\tau_j$ for $i\ne j$, we have $\pi(\tau_i;w)=\ell_i$.
	\end{lemma}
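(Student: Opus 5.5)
The plan is to prove \eqref{eq: transposition} by a direct bijective counting argument on ordered sequences. Both sides are linear combinations of ordered tensors $\tau_{a_1}\otimes\cdots\otimes\tau_{a_k}$. Here two tensors are equal exactly when the underlying sequences of trees (not of indices) coincide. So I will compare the coefficient of each \emph{tree sequence}. Write $w=\hat\tau_1^{\ell_1}\cdots\hat\tau_r^{\ell_r}$ with distinct $\hat\tau_j$.

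The first step is to read Definition \ref{Def: sym H} as selecting one representative per distinct arrangement, so that $\cP(I_k,\bt)$ has cardinality $k!/\pi(w)$. The map $p\mapsto(\tau_{p(1)},\ldots,\tau_{p(k)})$ should then be a bijection onto the set $S(w)$ of distinct orderings of the multiset $w$. I will adopt this interpretation, which is also the one used in the proof of Proposition \ref{Prop: recursive form remainder}, where $|\cP(I_k,\bt)|=k!/\pi(w)$ is invoked. With it, the right-hand side of \eqref{eq: transposition} equals $\sum_{s\in S(w)}s$, each arrangement counted once.

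The second step treats the left-hand side in the same way. For fixed $i$, the inner sum equals $\tau_i\otimes\sum_{s'\in S(w\setminus\tau_i)}s'$. If $\tau_i=\hat\tau_j$, there are exactly $\ell_j=\pi(\tau_i;w)$ indices $i$ giving the same term. Grouping these indices therefore gives
\[
\sum_{j=1}^r \hat\tau_j\otimes\sum_{s'\in S(w\setminus\hat\tau_j)}s',
\]
with coefficient one, since the prefactor $1/\pi(\tau_i;w)$ cancels the multiplicity $\ell_j$.

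The third step is the elementary bijection $S(w)\ni s\mapsto(\text{first entry } \hat\tau_j,\ \text{rest}\in S(w\setminus\hat\tau_j))$. Its inverse is concatenation, and distinctness is preserved in both directions. This identifies the two sides.

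The only delicate point is the first step. Definition \ref{Def: sym H} literally says ``$p(i)\ne j$ if $\tau_i=\tau_j$'', which must be understood as a choice of coset representatives of the stabiliser subgroup $\prod_j\Sigma_{\ell_j}$, rather than as a literal derangement condition. I will state this reading explicitly, justified by the counting $|\cP(I_k,\bt)|=k!/\pi(w)$ used in Proposition \ref{Prop: recursive form remainder}. As an alternative that avoids the interpretation issue, I would also note the identity obtained by summing over all of $\Sigma_k$ and dividing by $\pi(w)$. Both sides then become $\frac{1}{\pi(w)}\sum_{p\in\Sigma_k}\tau_{p(1)}\otimes\cdots\otimes\tau_{p(k)}$. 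This follows by splitting $\Sigma_k$ according to $p(1)=i$, using $\pi(w\setminus\tau_i)\pi(\tau_i;w)=\pi(w)$.
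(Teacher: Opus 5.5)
Your proof is correct and follows essentially the same route as the paper's. Both split according to the first tensor factor, let the prefactor $1/\pi(\tau_i;w)$ cancel the $\pi(\tau_i;w)$ indices carrying the same tree, and conclude with the bijection between orderings and pairs (first entry, ordering of the rest); the only difference is that you work with tree sequences rather than index permutations, which makes the multiplicity bookkeeping more transparent.

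Your caveat about Definition~\ref{Def: sym H} is well founded and worth keeping. Read literally, for $k=4$ and $\tau_1=\tau_2\neq\tau_3=\tau_4$ the definition admits $9\neq 4!/\pi(w)=6$ permutations, the tensor $\tau_3\otimes\tau_4\otimes\tau_1\otimes\tau_2$ then appears with coefficient $4$ on the right-hand side, and the identity fails. So the coset-representative reading, which the paper uses tacitly both here and in Proposition~\ref{Prop: recursive form remainder}, is genuinely needed.
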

	\begin{proof}
		We fix a permutation $p\in \cP(I_k,\bt)$ and we represent by $w=\tau_1^{\ell_1}\cdots\tau^{\ell_r}_r$, for some $r\leq k$, the forest induced by $\tau_1,\ldots, \tau_k$. Then
		\begin{align*}
			\sum_{p\in\cP(I_k,\bt)}\tau_{p(1)}\otimes\ldots&\otimes\tau_{p(k)}=\sum_{p\in\cP(I_k,\bt)}\sum_{j=1}^r\un{(\tau_{p(1)}=\hat\tau_j)}\hat\tau_j\otimes(\tau_{p(2)}\otimes\ldots\otimes\tau_{p(k)})
			\\&=\sum_{p\in\cP(I_k,\bt)}\sum_{j=1}^r\frac{1}{\pi(\hat\tau_j;w)}\un{(\tau_{p(1)}=\hat\tau_j)}\sum_{a=1}^k\un{(\tau_i=\hat\tau_j)}\tau_a\otimes(\tau_{p(2)}\otimes\ldots\otimes\tau_{p(k)})\,,
		\end{align*}
		where in the last line the factor $\frac{1}{\pi(\hat\tau_j;w)}$ was introduced to compensate the repetitions of $\hat\tau_j$ in $w$. Then, exchanging the sums over $j$ and $a$ leads to
		\begin{align*}
			\sum_{p\in\cP(I_k,\bt)}\sum_{a=1}^k\frac{1}{\pi(\tau_a;w)}&\un{(\tau_{p(1)}=\tau_a)}\sum_{j=1}^r\un{(\tau_a=\hat\tau_j)}\tau_a\otimes(\tau_{p(2)}\otimes\ldots\otimes\tau_{p(k)})
			\\&=\sum_{p\in\cP(I_k,\bt)}\sum_{a=1}^k\frac{1}{\pi(\tau_a;w)}\un{(\tau_{p(1)}=\tau_a)}\tau_{p(1)}\otimes(\tau_{p(2)}\otimes\ldots\otimes\tau_{p(k)})\,.
		\end{align*}
		To conclude we observe that \eqref{eq: transposition} follows from using, for $p\in\cP(I_k,\bt)$ and $q\in\cP(I^i_k,\bt^i)$, the bijection defined by: $i:=p(1)$, \, $p(\,j+1)=q(\,j)$ for $0<j<i$,\,
		$p(\,j)=q(\,j)$ for $i<j\leq k$.
	\end{proof}
	
	\subsection{Useful bounds}
	We recall useful bounds, especially on weighted norms, that form the toolset for proving the a priori bounds on the solution of the rough equation, see Section \ref{Sec: well-posedness}.

	\begin{lemma}\label{Lem: bound weighted holder}
		Fix $f\in C([0,T],\R^n)$ and $F \in C ([0, T]^2_{\leq}, \mathbb{R}^n)$. For all $\alpha_1,
		\alpha_2 > 0$ and $\beta\in(0,1]$ we have
		\begin{align}
			&\| F \|_{\alpha_1, \mu} \leq (\mu \wedge T)^{\alpha_2} \| F \|_{\alpha_1 +
				\alpha_2, \mu}\,, \label{eta1}
			\\&\|F^\beta\|_{\alpha_1,\mu}\leq \|F\|_{\frac{\alpha_1}{\beta},\mu}^{\beta}\,,\label{Eq: weighted norm power}
			\\&\| f \|_{\infty}  \leq  | f_0 | + T^{\beta} \| \delta f \|_{\beta}\,, 
			\label{suphold}
			\\&\| f \|_{\infty, \mu} \leq | f_0 | + 3 (\mu \wedge T)^{\beta} \|
			\delta f \|_{\beta, \mu}\, .  \label{supholdweighted}
		\end{align}
	\end{lemma}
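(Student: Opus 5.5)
We must prove the four inequalities of Lemma \ref{Lem: bound weighted holder}. Each is elementary and follows directly from Definition \ref{Def: weighted norms} and the weighted supremum norm; we treat them in order.

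For \eqref{eta1}, fix $0\le s<t\le T$ with $t-s\le\mu$. Then $t-s\le \mu\wedge T$, so
\[
e^{-t/\mu}\frac{|F_{st}|}{(t-s)^{\alpha_1}}=(t-s)^{\alpha_2}\,e^{-t/\mu}\frac{|F_{st}|}{(t-s)^{\alpha_1+\alpha_2}}\le(\mu\wedge T)^{\alpha_2}\|F\|_{\alpha_1+\alpha_2,\mu}.
\]
Taking the supremum gives the claim.

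For \eqref{Eq: weighted norm power}, with $F^\beta$ understood as $|F|^\beta$, write
\[
e^{-t/\mu}\frac{|F_{st}|^\beta}{(t-s)^{\alpha_1}}=e^{-(1-\beta)t/\mu}\Bigl(e^{-t/\mu}\frac{|F_{st}|}{(t-s)^{\alpha_1/\beta}}\Bigr)^{\beta}.
\]
Since $\beta\le1$, the prefactor $e^{-(1-\beta)t/\mu}$ is at most $1$, which yields the bound.

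For \eqref{suphold}, simply use $|f_t|\le|f_0|+|f_t-f_0|\le|f_0|+t^\beta\|\delta f\|_\beta$ and $t\le T$.

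The only nontrivial point is \eqref{supholdweighted}, because the weighted seminorm only sees increments of length at most $\mu$. Fix $t\in[0,T]$. If $\mu\ge T$, we have $t-0\le\mu$, so $e^{-t/\mu}|f_t-f_0|\le t^\beta\|\delta f\|_{\beta,\mu}$, which is stronger than needed.

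Otherwise, chain through the grid $t_j:=j\mu$ for $j\le n:=\lfloor t/\mu\rfloor$, followed by the final point $t$. Each increment has length at most $\mu$, and $e^{-t/\mu}\le e^{-t_{j+1}/\mu}$, so
\[
e^{-t/\mu}|f_t|\le |f_0|+\sum_{j=0}^{n}e^{-(t-t_{j+1})/\mu}\,e^{-t_{j+1}/\mu}|f_{t_{j+1}}-f_{t_j}|,
\]
with $t_{n+1}:=t$. Each term is bounded by $e^{-(t-t_{j+1})/\mu}\mu^\beta\|\delta f\|_{\beta,\mu}$.

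Since $t-t_{j+1}\ge(n-j-1)\mu$ for $j<n$, the sum of the exponential weights is at most
\[
1+\sum_{i\ge0}e^{-i}=1+\frac{e}{e-1}<3.
\]
With $\mu=\mu\wedge T$ in this case, this gives the constant $3(\mu\wedge T)^\beta$.

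The only care needed is this geometric-series bookkeeping, specifically getting the constant $3$ to cover both the last partial step and the full steps. Everything else is immediate from the definitions.
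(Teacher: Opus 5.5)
Your proof is correct and follows the paper's argument for all four inequalities, including the telescoping-plus-geometric-series idea for \eqref{supholdweighted}. The only difference is cosmetic: you chain along the grid $j\mu$ with a final partial step (ratio $e^{-1}$, total weight at most $1+\frac{e}{e-1}$). The paper instead splits $[0,t]$ uniformly into $N=\min\{n\in\mathbb{N}: n\mu\ge t\}$ steps of length $t/N\in(\mu/2,\mu]$ (ratio $e^{-1/2}$, total weight at most $(1-e^{-1/2})^{-1}$), and both constants are below $3$.
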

	\begin{proof}
		The continuous embedding of weighted norms represented by \eqref{eta1} follows from the chain of inequalities:
		\begin{align*}
			\| F \|_{\alpha_1, \mu} & = \sup_{0 \leq s \leq t \leq T}
			\mathbbm{1}_{\{ 0 < t - s \leq \mu \}} e^{- \frac{t}{\mu}}
			\frac{| F_{s t} |}{(t - s)^{\alpha_1}}\leq \sup_{0 \leq s \leq t \leq T}
			\mathbbm{1}_{\{ 0 < t - s \leq \mu \}} e^{- \frac{t}{\mu}} (\mu
			\wedge T)^{\alpha_2} \frac{| F_{s t} |}{(t - s)^{\alpha_1 + \alpha_2}}\\
			& = (\mu \wedge T)^{\alpha_2} \| F \|_{\alpha_1 + \alpha_2, \mu}\,.
		\end{align*}
		In a similar way, \eqref{Eq: weighted norm power} follows from a manipulation of the definition of weighted seminorm:
		\begin{align*}
			\|F^\beta\|_{\alpha_!,\mu}&=\sup_{0 \leq s \leq t \leq T}
			\mathbbm{1}_{\{ 0 < t - s \leq \mu \}} e^{- \frac{t}{\mu}}
			\frac{| F_{s t}^\beta |}{(t - s)^{\alpha_1}}\leq \sup_{0 \leq s \leq t \leq T}
			\mathbbm{1}_{\{ 0 < t - s \leq \mu \}} e^{-\frac{\beta t}{\mu}}
			\left(\frac{| F_{s t} |}{(t - s)^{\alpha_1/\beta}}\right)^\beta \\
			& \leq \sup_{0 \leq s \leq t \leq T}\left(\mathbbm{1}_{\{ 0 < t - s \leq \mu \}}e^{- \frac{t}{\mu}}\frac{| F_{s t} |}{(t - s)^{\alpha_1/\beta}}\right)^\beta\leq \|F\|_{\frac{\alpha_1}{\beta},\mu}^{\beta}\,.
		\end{align*}
		For any $f \in C ([0, T] _{\leq}, \mathbb{R}^n)$ and for fixed $t \in (0,
		T]$ we can write
		\[ | f_t | \leq | f_0 | + | f_t - f_0 | = | f_0 | + t^{\beta} \frac{|
			f_t - f_0 |}{t^{\beta}} \leq | f_0 | + T^{\beta} \| \delta f
		\|_{\beta} . \]
		When taking the supremum over $[0,T]$, this coincides with \eqref{suphold}. For what concerns \eqref{supholdweighted} we consider two
		cases: if $t \in (0, \mu \wedge T]$
		we have
		\[ e^{- \frac{t}{\mu}} | f_t | \leq | f_0 | + t^{\beta} e^{-
			\frac{t}{\mu}} \frac{| f_t - f_0 |}{t^{\beta}} \leq | f_0 | +
		(\mu \wedge T)^{\beta} \| \delta f \|_{\beta, \mu}, \]
		which implies (\ref{supholdweighted}). Instead, if $\mu < t \leq T$ we define $N
		:=\min \{ n \in \mathbb{N}: n \mu \geqslant t \} \geqslant 2$, so
		that $\frac{t}{N} \leq \mu$. For $i \geqslant 0$ set $t_i = i
		\frac{t}{N}$, in such a way that $t_N = t$. We get
		\begin{align*}
			e^{- \frac{t}{\mu}} | f_t | &\leq  | f_0 | + \sum_{i = 1}^N (t_i
			- t_{i - 1})^{\beta} e^{- \frac{t - t_i}{\mu}} \left[ e^{-
				\frac{t_i}{\mu}} \frac{| f_{t_i} - f_{t_{i - 1}} |}{(t_i - t_{i -
					1})^{\beta}} \right]\\ & \leq | f_0 | + (\mu \wedge T)^{\beta} \| \delta f \|_{\beta,
				\mu} \sum_{i = 1}^N e^{- \frac{t - t_i}{\mu}}
			 = | f_0 | + (\mu \wedge T)^{\beta} \| \delta f \|_{\beta, \mu}
			\sum_{i = 1}^N e^{- (N - i) \frac{t}{N \mu}}\,.
		\end{align*}
		Since $(N - 1) \mu < t$ and $\mu < t$, then $N \mu < 2 t$ or, equivalently $\frac{t}{N \mu} \geqslant \frac{1}{2}$. As a result
		\[ \sum_{i = 1}^N e^{- (N - i) \frac{t}{N \mu}} = \sum_{(N - i) = 0}^{N -
			1} e^{- (N - i) \frac{t}{N \mu}} = \frac{1 - e^{- \frac{t}{\mu}}}{1 -
			e^{- \frac{t}{N \mu}}} \leq \frac{1}{1 - e^{- \frac{1}{2}}}
		\leq 3\,. \]
		This concludes the proof.
	\end{proof}
	
	When handling the expansions over trees at the heart of the solution theory \emph{à la} Davie, one has to deal multiple times with norms of products of functions and increments. The following bounds show how to deal with the weight when splitting the norm over single components of the product.
	\begin{lemma}\label{Lem: Weighted bounds}
		Let $\beta, \beta_1,\beta_2, \mu \in (0, \infty)$ .
		\begin{align}
			&\text{If} \quad F_{s t} = g_s H_{s t} \quad \text{or} \quad F_{s t} = g_t
			H_{s t} \quad \text{then} \quad \| F \|_{\beta, \mu} \leq \| g
			\|_{\infty, \mu} \| H \|_{\beta}\,.  \label{2factors}\\
			&\text{If} \quad G_{s u t} = F_{s u} H_{u t} \quad  \text{then} \quad  \| G
			\|_{\beta_1+\beta_2, \mu} \leq \| F \|_{\beta_1, \mu} \| H \|_{\beta_2}\,. \nonumber 
		\end{align}
	\end{lemma}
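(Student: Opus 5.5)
The plan is to prove both inequalities in Lemma \ref{Lem: Weighted bounds} directly from Definition \ref{Def: weighted norms} and the weighted supremum norm, by a pointwise estimate followed by taking suprema. No earlier result is needed beyond these definitions. The only care required is in splitting the exponential weight $e^{-t/\mu}$ and the indicator $\mathbbm{1}_{\{0<t-s\le\mu\}}$ between the two factors.

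For \eqref{2factors}, fix $0\le s\le t\le T$ with $0<t-s\le\mu$.
\begin{itemize}
\item If $F_{st}=g_tH_{st}$, write
\[
e^{-t/\mu}\frac{|F_{st}|}{(t-s)^\beta}=\big(e^{-t/\mu}|g_t|\big)\,\frac{|H_{st}|}{(t-s)^\beta}\le \|g\|_{\infty,\mu}\,\|H\|_\beta .
\]
\item If $F_{st}=g_sH_{st}$, use that $s\le t$ gives $e^{-t/\mu}\le e^{-s/\mu}$. Hence $e^{-t/\mu}|g_s|\le e^{-s/\mu}|g_s|\le\|g\|_{\infty,\mu}$, and the same bound follows.
\end{itemize}
Taking the supremum over admissible $(s,t)$ gives the claim. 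Pairs outside the indicator contribute zero.

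For the second bound, fix $s\le u\le t$ with $0<t-s\le\mu$. The constraint $t-s\le\mu$ implies $0\le u-s\le\mu$, so the pair $(s,u)$ is admissible for $\|F\|_{\beta_1,\mu}$ whenever $u>s$. The case $u=s$ needs separate attention: either it contributes zero, or one can note that $\|F\|_{\beta_1,\mu}<\infty$ forces $F_{ss}=0$ by continuity. The pointwise estimate is then
\[
e^{-t/\mu}\frac{|F_{su}H_{ut}|}{(t-s)^{\beta_1+\beta_2}}
\le e^{-u/\mu}\frac{|F_{su}|}{(u-s)^{\beta_1}}\cdot\frac{|H_{ut}|}{(t-u)^{\beta_2}} .
\]
Here we used $e^{-t/\mu}\le e^{-u/\mu}$ together with $(u-s),(t-u)\le t-s$ and $\beta_1,\beta_2>0$. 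The right-hand side is at most $\|F\|_{\beta_1,\mu}\|H\|_{\beta_2}$. If $t=u$, the pointwise bound is replaced by the trivial one, interpreting $H_{tt}=0$.

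I do not expect a genuine obstacle. The only delicate point is the degenerate endpoints $u=s$ or $u=t$, where the quotient definitions break down. I would handle these by noting that finiteness of the seminorms and continuity force $F_{ss}=0$ and $H_{tt}=0$, so these terms vanish.
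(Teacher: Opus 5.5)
Your proof is correct and follows the paper's argument: a pointwise estimate on admissible pairs followed by taking suprema, using monotonicity of the exponential weight and $(u-s)^{\beta_1}(t-u)^{\beta_2}\le (t-s)^{\beta_1+\beta_2}$. In the second bound you compare $e^{-t/\mu}$ with $e^{-u/\mu}$, which is exactly the weight attached to the pair $(s,u)$ in $\|F\|_{\beta_1,\mu}$; the paper's proof writes $e^{-s/\mu}$ there, which is a slip. Your handling of the degenerate cases $u=s$ and $u=t$ via continuity also fills in a point the paper leaves implicit.
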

	\begin{proof}
		If $F_{s t} = g_t H_{s t}$ we write
		\begin{align*}
			\| F \|_{\beta, \mu} & =  \sup_{0 \leq s \leq t \leq T}
			\mathbbm{1}_{\{ 0 < t - s \leq \mu \}} e^{- \frac{t}{\mu}}
			\frac{| F_{s t} |}{(t - s)^{\beta}} = \sup_{0 \leq s \leq t \leq T} \mathbbm{1}_{\{ 0 <
				t - s \leq \mu \}} e^{- \frac{t}{\mu}} \frac{| g_t H_{s t} |}{(t
				- s)^{\beta}}\\
			& \leq  \| g \|_{\infty, \mu} \sup_{0 \leq s \leq t
				\leq T} \frac{| H_{s t} |}{(t - s)^{\beta}} = \| g \|_{\infty, \mu} \| H \|_{\beta}\,.
		\end{align*}
		For $F_{s t} = g_s H_{s t}$ the desired bound follows from the previous calculation by observing that $e^{- t / \mu} \leq e^{- s / \mu}$ for all $s \leq t$. If $G_{s u t} = F_{s u} H_{u t}$, its seminorm reads
		\begin{align*}
			\| G \|_{\beta_1+\beta_2, \mu} & = \sup_{0 \leq s \leq u \leq t
				\leq T} \mathbbm{1}_{\{ 0 < t - s \leq \mu \}} e^{-
				\frac{t}{\mu}}  \frac{| F_{s u} H_{u t} |}{(t - s)^{\beta_1+\beta_2}}\\
			&\leq \sup_{0 \leq s \leq u \leq t \leq T}
			\mathbbm{1}_{\{ 0 < t - s \leq \mu \}} e^{- \frac{s}{\mu}} 
			\frac{| F_{s u} | | H_{u t} |}{(u - s)^{\beta_1} (t - u)^{\beta_2}} \leq \| F \|_{\beta_1, \mu} \| H \|_{\beta_2} .
		\end{align*}
	\end{proof}

	\endappendix
	
	\bibliographystyle{alpha}
	
	\bibliography{bibliography}

\end{document}